\documentclass{amsart}%
\usepackage{amssymb}
\usepackage{amsfonts}
\usepackage{geometry}%
\usepackage{amsmath}%
\setcounter{MaxMatrixCols}{30}%
\usepackage{graphicx}
\providecommand{\U}[1]{\protect\rule{.1in}{.1in}}
\newtheorem{theorem}{Theorem}
\theoremstyle{plain}
\newtheorem{acknowledgement}[theorem]{Acknowledgement}

\newtheorem{conclusion}[theorem]{Conclusion}

\newtheorem{conjecture}[theorem]{Conjecture}
\newtheorem{corollary}[theorem]{Corollary}

\newtheorem{definition}[theorem]{Definition}

\newtheorem{lemma}[theorem]{Lemma}
\newtheorem{notation}[theorem]{Notation}

\newtheorem{proposition}[theorem]{Proposition}
\newtheorem{remark}[theorem]{Remark}

\numberwithin{equation}{section}
\geometry{left=1in,right=1in,top=1in,bottom=1in}

\setcounter{tocdepth}{3}
\begin{document}
\title[Probabilistic Fourier extension]{A probabilistic analogue of the Fourier extension conjecture}
\author[E. T. Sawyer]{Eric T. Sawyer$^{\dagger}$}
\address{Eric T. Sawyer, Department of Mathematics and Statistics\\
McMaster University\\
1280 Main Street West\\
Hamilton, Ontario L8S 4K1 Canada}
\email{sawyer@mcmaster.ca}
\thanks{$\dagger$ Research supported in part by a grant from the National Science and
Engineering Research Council of Canada.}
\date{\today }

\begin{abstract}
The Fourier extension conjecture in $n$ dimensions is equivalent to%
\[
\left\Vert T\mathbf{1}_{U_{0}}f\right\Vert _{L^{p}\left(  \lambda_{n}\right)
}\leq C\left\Vert f\right\Vert _{L^{p}\left(  B_{n-1}\left(  0,\frac{1}%
{2}\right)  \right)  }\ ,\ \ \ \ \ p>\frac{2n}{n-1},
\]
where $Tf\left(  \xi\right)  \equiv\int_{B_{n-1}\left(  0,\frac{1}{2}\right)
}e^{-i\Phi\left(  x\right)  \cdot\xi}f\left(  x\right)  dx$, $U_{0}\subset
B_{n-1}\left(  0,\frac{1}{2}\right)  \subset\mathbb{R}^{n-1}$, $\Phi\left(
x\right)  =\left(  x,\sqrt{1-\left\vert x\right\vert ^{2}}\right)  $ and
$\lambda_{n}$ is Lebesgue measure on $\mathbb{R}^{n}$. Noting that
$f=\sum_{I\in\mathcal{G}}\bigtriangleup_{I;\kappa}^{\eta}f$, we prove that the
following probabilistic analogue of the Fourier extension conjecture for
$p=q$,%
\[
\mathbb{E}_{2^{\mathcal{G}}}\left\Vert T\mathbf{1}_{U_{0}}\sum_{I\in
\mathcal{G}}\pm\bigtriangleup_{I;\kappa}^{\eta}f\right\Vert _{L^{p}\left(
\lambda_{n}\right)  }\leq C\left\Vert f\right\Vert _{L^{p}\left(
B_{n-1}\left(  0,\frac{1}{2}\right)  \right)  }\ ,
\]
holds for all $f\in L^{p}\left(  B_{n-1}\left(  0,\frac{1}{2}\right)  \right)
$ if and only if $p>\frac{2n}{n-1}$. The operator $\mathbb{E}_{2^{\mathcal{G}%
}} $ averages over all sequences of $\pm1$, where $\mathcal{G}$ is a grid of
dyadic subcubes containing $U_{0}$, and where $\bigtriangleup_{I;\kappa}%
^{\eta}$ is a smooth Alpert pseudoprojection, resulting in a `martingale
tranform' analogue.

By Khintchine's inequalities, the probabilistic analogue of the Fourier
extension conjecture is equivalent to the Fourier square function estimate,%
\[
\left\Vert \mathcal{S}_{T\mathbf{1}_{U_{0}}}f\right\Vert _{L^{p}\left(
\lambda_{n}\right)  }\lesssim\left\Vert f\right\Vert _{L^{p}\left(
B_{n-1}\left(  0,\frac{1}{2}\right)  \right)  },\ \ \ \ \ \text{if and only if
}\frac{2n}{n-1}<p\leq\infty,
\]
where%
\[
\mathcal{S}_{T\mathbf{1}_{U_{0}}}f\equiv\left(  \sum_{I\in\mathcal{G}%
}\left\vert T\mathbf{1}_{U_{0}}\bigtriangleup_{I;\kappa}^{n-1,\eta
}f\right\vert ^{2}\right)  ^{\frac{1}{2}}.
\]

To prove this probabilistic analogue of the extension conjecture, we use
frames for $L^{p}$ consisting of smooth compactly supported Alpert wavelets
having a large number $\kappa>\frac{n}{2}$ of vanishing moments, along with
stationary phase and interpolation of $L^{2}$ and probabilistic $L^{4}$
estimates, thus circumventing the most challenging issues arising in the
Fourier extension conjecture.

\end{abstract}
\maketitle
\tableofcontents

\section{Introduction}

In this paper we consider a probabilistic analogue of the Fourier extension
conjecture (Theorem \ref{FEC}),

\begin{conjecture}
\label{Fourier 1}Let $1<p,q<\infty$, $\sigma_{n-1}$ be surface measure on the
sphere $\mathbb{S}^{n-1}$, and $\mathcal{F}\left(  \mu\right)  \equiv
\int_{\mathbb{R}^{n}}e^{-ix\cdot\xi}d\mu\left(  x\right)  $ denote the Fourier
transform of the measure $\mu$. Then%
\begin{equation}
\left(  \int_{\mathbb{R}^{n}}\left\vert \mathcal{F}\left(  f\sigma
_{n-1}\right)  \left(  \xi\right)  \right\vert ^{q}d\xi\right)  ^{\frac{1}{q}%
}\leq C\left(  \int_{\mathbb{S}^{n-1}}\left\vert f\left(  x\right)
\right\vert ^{p}d\sigma_{n-1}\left(  x\right)  \right)  ^{\frac{1}{p}%
},\ \ \ \ \ f\in L^{p}\left(  \sigma_{n-1}\right)  ,\label{extension}%
\end{equation}
if and only if $q>\frac{2n}{n-1}$ and $\frac{1}{p}+\frac{n+1}{n-1}\frac{1}%
{q}=1$.
\end{conjecture}

\subsection{The probabilistic extension problem}

Let $\Phi\left(  x\right)  \equiv\left(  x,\sqrt{1-\left\vert x\right\vert
^{2}}\right)  \in\mathbb{S}^{n-1}$ be the standard parametization of the
northern hemisphere of $\mathbb{S}^{n-1}$. Let $B_{n-1}\left(  0,\frac{1}%
{2}\right)  $ be the ball of radius $\frac{1}{2}$ centered at the origin in
$\mathbb{R}^{n-1}$, and define%
\begin{equation}
Tf\left(  \xi\right)  \equiv\int_{B_{n-1}\left(  0,\frac{1}{2}\right)
}e^{-i\Phi\left(  x\right)  \cdot\xi}f\left(  x\right)  \frac{dx}{\left\vert
\det\nabla\Phi\left(  x\right)  \right\vert },\ \ \ \ \ \xi\in\mathbb{R}%
^{n},\label{def T}%
\end{equation}
for $f\in L^{p}\left(  B_{n-1}\left(  0,\frac{1}{2}\right)  \right)  $. Thus
$Tf=\mathcal{F}\Phi_{\ast}\left(  f\lambda_{n-1}\right)  =\widehat{\Phi_{\ast
}\left(  f\lambda_{n-1}\right)  }$, where $\Phi_{\ast}\nu$ denotes the
pushforward of a measure $\nu$ under the map $\Phi$. Then the Fourier
extension inequality (\ref{extension}) is equivalent to boundedness of the
operator $T\mathbf{1}_{U_{0}}$, i.e.
\begin{equation}
\left\Vert T\mathbf{1}_{U_{0}}f\right\Vert _{L^{q}\left(  \lambda_{n}\right)
}\leq C\left\Vert f\right\Vert _{L^{p}\left(  B\left(  0,\frac{1}{2}\right)
\right)  },\label{T_S equiv}%
\end{equation}
for a fixed sufficiently small subcube $U_{0}$ of $B_{n-1}\left(  0,\frac
{1}{2}\right)  $ (after considering finitely many rotations). The Jacobian
$\frac{1}{\left\vert \det\nabla\Phi\left(  x\right)  \right\vert }$ is roughly
$1$ on $B\left(  0,\frac{1}{2}\right)  $ and can be absorbed into the function
$f\left(  x\right)  $ - we will often abuse notation by simply ignoring it.

Now let $\left\{  \bigtriangleup_{I;\kappa}^{n-1,\eta}\right\}  _{I\in
\mathcal{G}}$ be the family of smooth Alpert pseudoprojections
\[
\bigtriangleup_{I;\kappa}^{n-1,\eta}=\sum_{a\in\Gamma_{n-1}}\left\langle
\left(  S_{\kappa,\eta}\right)  ^{-1}f,h_{I;\kappa}^{a}\right\rangle
h_{I;\kappa}^{a,\eta}%
\]
on $L^{2}\left(  \mathbb{R}^{n-1}\right)  $ as given in Theorem \ref{frame}
below, where $\mathcal{G}$ is a dyadic grid containing $U_{0}$. Then we can
rewrite (\ref{T_S equiv}) as,%
\begin{equation}
\left\Vert T\mathbf{1}_{U_{0}}\sum_{I\in\mathcal{G}}\bigtriangleup_{I;\kappa
}^{n-1,\eta}f\right\Vert _{L^{q}\left(  \lambda_{n}\right)  }\leq C\left\Vert
f\right\Vert _{L^{p}\left(  B\left(  0,\frac{1}{2}\right)  \right)
}\ .\label{T_S expan}%
\end{equation}

The \emph{probabilistic} Fourier extension problem is then to decide when the
following `martingale transform'\ analogue of (\ref{T_S expan}) holds,%
\begin{equation}
\mathbb{E}_{2^{\mathcal{G}}}^{\mu}\left\Vert T\mathbf{1}_{U_{0}}\sum
_{I\in\mathcal{G}}\pm\bigtriangleup_{I;\kappa}^{n-1,\eta}f\right\Vert
_{L^{q}\left(  \lambda_{n}\right)  }\leq C\left\Vert f\right\Vert
_{L^{p}\left(  B\left(  0,\frac{1}{2}\right)  \right)  },\label{prob ext}%
\end{equation}
where the expectation $\mathbb{E}_{2^{\mathcal{G}}}^{\mu}$ is taken over all
choices of $\pm$ for each $I\in\mathcal{G}$. We point out that it is not hard
to see that the probabilistic analogue (\ref{prob ext}) fails for the same
pairs $\left(  p,q\right)  $ that (\ref{extension}) is currently known to fail
for - see the discussion below.

By Khinchine's inequalities, (\ref{prob ext}) is equivalent to the Fourier
square function estimate%
\begin{equation}
\left\Vert \mathcal{S}_{T\mathbf{1}_{U_{0}}}f\right\Vert _{L^{q}\left(
\lambda_{n}\right)  }\lesssim\left\Vert f\right\Vert _{L^{p}\left(  B\left(
0,\frac{1}{2}\right)  \right)  }\ ,\label{prob ext square}%
\end{equation}
where $\mathcal{S}_{T\mathbf{1}_{U_{0}}}$ is the Fourier square function
defined by%
\begin{equation}
\mathcal{S}_{T\mathbf{1}_{U_{0}}}f\equiv\left(  \sum_{I\in\mathcal{G}\left[
U\right]  }\left\vert T\mathbf{1}_{U_{0}}\bigtriangleup_{I;\kappa}^{n-1,\eta
}f\right\vert ^{2}\right)  ^{\frac{1}{2}}.\label{def T square}%
\end{equation}

\subsubsection{A precise description of the martingale transform}

We begin with a more precise description of the `martingale
transform'\ inequality (\ref{prob ext}), and then establish a reduction to
certain Alpert projections. Let $\mathcal{G}$ be a grid in $\mathbb{R}^{n-1}$,
and let $\left\{  \bigtriangleup_{I;\kappa}^{n-1}\right\}  _{I\in\mathcal{G}}$
be the orthogonal family of Alpert projections $\bigtriangleup_{I;\kappa
}^{n-1}=\sum_{a\in\Gamma_{n-1}}\left\langle f,h_{I;\kappa}^{n-1,a}%
\right\rangle h_{I;\kappa}^{n-1,a}$ on $L^{2}\left(  \mathbb{R}^{n-1}\right)
$ as in Theorem \ref{frame}, and let $\left\{  \bigtriangleup_{I;\kappa
}^{n-1,\eta}\right\}  _{I\in\mathcal{G}}$ be the frame of \emph{smooth} Alpert
pseudoprojections on $L^{p}\left(  \mathbb{R}^{n-1}\right)  $. For
$\mathbf{a}=\left\{  a_{I}\right\}  _{I\in\mathcal{G}}\in\left\{
1,-1\right\}  ^{\mathcal{G}}$ and $f\in L^{p}\left(  \mathbb{R}^{n-1}\right)
$, define the \emph{Alpert martingale transform} $\mathcal{A}_{\mathbf{a}}$
by
\[
\mathcal{A}_{\mathbf{a}}f\equiv\sum_{I\in\mathcal{G}}a_{I}\bigtriangleup
_{I;\kappa}^{n-1}f,
\]
which is $\sum_{I\in\mathcal{G}}\pm\bigtriangleup_{I;\kappa}f$ for a choice of
$\pm$ determined by $\mathbf{a}$.

Given linear operators $L$ and $S$ with $S$ invertible, define the conjugation
of $L$ by $S$ as%
\[
L^{S}\equiv SLS^{-1}.
\]
Let $S_{\kappa,\eta}$ be the bounded invertible linear map on $L^{p}$ given in
Theorem \ref{frame}, that takes Alpert wavelets $h_{I;\kappa}^{n-1,a}$ to
their smooth counterparts $h_{I;\kappa}^{n-1,a,\eta}=h_{I;\kappa}^{n-1,a}%
\ast\phi_{\eta\ell\left(  I\right)  }$. For $\mathbf{a}=\left\{
a_{I}\right\}  _{I\in\mathcal{G}}\in\left\{  1,-1\right\}  ^{\mathcal{G}}$ and
$f\in L^{p}\left(  \mathbb{R}^{n-1}\right)  $, define the
\emph{\textbf{smooth} Alpert martingale transform}
\[
\mathcal{A}_{\mathbf{a}}^{S_{\kappa,\eta}}f\equiv\sum_{I\in\mathcal{G}}%
a_{I}\bigtriangleup_{I;\kappa}^{n-1,\eta}f=\sum_{I\in\mathcal{G}}%
\pm\bigtriangleup_{I;\kappa}^{n-1,\eta}f
\]
by conjugating $\mathcal{A}_{\mathbf{a}}$ with the bounded invertible map
$S_{\kappa,\eta}$, i.e.
\[
\mathcal{A}_{\mathbf{a}}^{S_{\kappa,\eta}}f\equiv S_{\kappa,\eta}%
\mathcal{A}_{\mathbf{a}}S_{\kappa,\eta}^{-1}f=S_{\kappa,\eta}\sum
_{I\in\mathcal{G}}a_{I}\left\langle S_{\kappa,\eta}^{-1}f,h_{I;\kappa}%
^{n-1}\right\rangle h_{I;\kappa}^{n-1}=\sum_{I\in\mathcal{G}}a_{I}\left\langle
S_{\kappa,\eta}^{-1}f,h_{I;\kappa}^{n-1}\right\rangle h_{I;\kappa}^{n-1,\eta
}=\sum_{I\in\mathcal{G}}a_{I}\bigtriangleup_{I;\kappa}^{n-1,\eta}f.
\]
Note that both $\mathcal{A}_{\mathbf{a}}$ and $\mathcal{A}_{\mathbf{a}%
}^{S_{\kappa,\eta}}$ are involutions, $\mathcal{A}_{\mathbf{a}}^{2}=\left(
\mathcal{A}_{\mathbf{a}}^{S_{\kappa,\eta}}\right)  ^{2}=\operatorname*{Id}$.

Since we will be using the notation $L^{S_{\kappa,\eta}}$ for various
operators $L=\mathcal{A}_{\mathbf{a}},\mathcal{A}_{\mathbf{a}}\mathsf{P}%
_{S},\mathcal{A}_{\mathbf{a}}\mathsf{Q}_{K}^{s}$ etc., we declutter the
exponent by writing%
\[
L^{\spadesuit}\equiv L^{S_{\kappa,\eta}},
\]
when the bounded invertible linear operator is $S_{\kappa,\eta}$.

Then we identify $2^{\mathcal{G}}$ and $\left\{  1,-1\right\}  ^{\mathcal{G}}$
and equip $2^{\mathcal{G}}$ with the probability measure $\mu$ that satisfies,%
\[
\mu_{\Lambda}\left(  E\right)  \equiv\mu\left(  \left\{  E\mid E\subset
2^{\Lambda}\right\}  \right)  =\frac{\left\vert E\right\vert }{\left\vert
2^{\Lambda}\right\vert },\ \ \ \ \ E\subset2^{\Lambda}\text{ with }%
\Lambda\subset\mathcal{G}\text{ finite},
\]
where $\left\vert F\right\vert $ denotes cardinality of a finite subset of
$\mathcal{G}$, and $\mu\left(  \left\{  E\mid E\subset2^{\Lambda}\right\}
\right)  $ is the conditional probability of $E$ given that $E\subset
2^{\Lambda}$ (here $2^{\Lambda}$ is a set of $\mu$-measure zero, and see e.g.
\cite{Hyt} for a construction of such a measure $\mu$). We define the
expectation operator $\mathbb{E}_{2^{\mathcal{G}}}^{\mu}$ by
\[
\mathbb{E}_{2^{\mathcal{G}}}^{\mu}F\equiv\int_{2^{\mathcal{G}}}F\left(
\mathbf{a}\right)  d\mu\left(  \mathbf{a}\right)
\]
for $F$ a nonnegative function on $2^{\mathcal{G}}=\left\{  1,-1\right\}
^{\mathcal{G}}$, so that (\ref{prob ext}) becomes,%
\begin{equation}
\mathbb{E}_{2^{\mathcal{G}}}^{\mu}\left\Vert T\mathbf{1}_{U_{0}}\left(
\mathcal{A}_{\mathbf{a}}\right)  ^{\spadesuit}f\right\Vert _{L^{q}\left(
\lambda_{n}\right)  }=\mathbb{E}_{2^{\mathcal{G}}}^{\mu}\int_{2^{\mathcal{G}}%
}\left\Vert T\mathbf{1}_{U_{0}}\left(  \mathcal{A}_{\mathbf{a}}\right)
^{\spadesuit}f\right\Vert _{L^{q}\left(  \lambda_{n}\right)  }d\mu\left(
\mathbf{a}\right)  \leq C\left\Vert f\right\Vert _{L^{p}\left(  B\left(
0,\frac{1}{2}\right)  \right)  }.\label{prob ext'}%
\end{equation}

\subsubsection{A reduction of the martingale transform inequality}

We now replace $\mathbf{1}_{U_{0}}\left(  \mathcal{A}_{\mathbf{a}}\right)
^{\spadesuit}f=\mathbf{1}_{U_{0}}S_{\kappa,\eta}\mathcal{A}_{\mathbf{a}%
}S_{\kappa,\eta}^{-1}$ in (\ref{prob ext'}) with
\[
\left(  \mathcal{A}_{\mathbf{a}}\mathsf{P}_{U}\right)  ^{\spadesuit}f\equiv
S_{\kappa,\eta}\mathcal{A}_{\mathbf{a}}\mathsf{P}_{U}S_{\kappa,\eta}%
^{-1}=S_{\kappa,\eta}\mathcal{A}_{\mathbf{a}}\sum_{I\in\mathcal{G}\left[
U\right]  }\bigtriangleup_{I;\kappa}S_{\kappa,\eta}^{-1}f=\sum_{I\in
\mathcal{G}\left[  U\right]  }a_{I}\bigtriangleup_{I;\kappa}^{\eta}f,
\]
where $\mathsf{P}_{U}g\equiv\sum_{I\in\mathcal{G}\left[  U\right]
}\bigtriangleup_{I;\kappa}g$ is the Alpert projection of a function $g$ in
which the sum over cubes $I$ is restricted to those contained in $U$, and
where $U\subset B_{n-1}\left(  0,\frac{1}{8}\right)  $. We claim that this new
inequality is sufficient for (\ref{prob ext'}) in the case
\[
U=\pi_{\mathcal{G}}^{\left(  2\right)  }U_{0}%
\]
is the $\mathcal{G}$-grandparent of $U_{0}$, where we assume $3U_{0}\subset U
$, i.e. $U_{0}$ is an \emph{interior} grandchild of $U$.

More precisely, we will show in a moment that (\ref{prob ext'}) is implied by
the following \emph{truncated} inequality,
\begin{equation}
\mathbb{E}_{2^{\mathcal{G}}}^{\mu}\left\Vert T\left(  \mathcal{A}_{\mathbf{a}%
}\mathsf{P}_{U}\right)  ^{\spadesuit}f\right\Vert _{L^{q}\left(  \lambda
_{n}\right)  }\leq C\left\Vert f\right\Vert _{L^{p}\left(  B\left(  0,\frac
{1}{2}\right)  \right)  },\label{prob ext''}%
\end{equation}
in which we have replaced $\mathbf{1}_{U_{0}}\left(  \mathcal{A}_{\mathbf{a}%
}\right)  ^{\spadesuit}f$ by the truncation $\left(  \mathcal{A}_{\mathbf{a}%
}\mathsf{P}_{U}\right)  ^{\spadesuit}f=\sum_{I\in\mathcal{G}\left[  U\right]
}a_{I}\bigtriangleup_{I;\kappa}^{\eta}f$. This latter inequality is what we
will prove in the remainder of this paper.

\begin{lemma}
\label{prob trunc}The probabilistic Fourier extension inequality
(\ref{prob ext'}) is equivalent to the truncated probabilistic extension
inequality (\ref{prob ext''}) taken over finitely many rotations.
\end{lemma}

The proof of Lemma \ref{prob trunc}, given at the end of the introduction,
also gives the following lemma upon removing the expectations $\mathbb{E}%
_{2^{\mathcal{G}}}^{\mu}$ and the random coefficients $a_{I}$ from the proof.

\begin{lemma}
\label{det trunc}The \emph{deterministic} Fourier extension inequality
(\ref{T_S equiv}) is equivalent to the truncated deterministic inequality
taken over finitely many rotations,%
\begin{equation}
\left\Vert T\sum_{I\in\mathcal{G}\left[  U\right]  }\bigtriangleup_{I;\kappa
}^{\eta}f\right\Vert _{L^{q}\left(  \lambda_{n}\right)  }\leq C\left\Vert
f\right\Vert _{L^{p}\left(  B\left(  0,\frac{1}{2}\right)  \right)
}.\label{T_S trunc}%
\end{equation}

\end{lemma}

\subsection{The main results and a brief history}

The following Fourier extension conjecture arose from unpublished work of E.
Stein in 1967, see e.g. \cite[see the Notes at the end of Chapter IX, p. 432,
where Stein proved the restriction conjecture for $1\leq p<\frac{4n}{3n+1}$%
]{Ste2} and \cite{Ste},
\begin{equation}
\left(  \int_{\mathbb{R}^{n}}\left\vert \mathcal{F}\left(  f\sigma
_{n-1}\right)  \right\vert ^{p}d\xi\right)  ^{\frac{1}{p}}\leq C\left(
\int_{\mathbb{S}^{n-1}}\left\vert f\left(  x\right)  \right\vert ^{p}%
d\sigma_{n-1}\left(  x\right)  \right)  ^{\frac{1}{p}},\ \ \ \ \ \text{for
}\frac{2n}{n-1}<p\leq\infty.\label{Stein conjecture}%
\end{equation}
Our probabilistic analogue\ of (\ref{Stein conjecture}) is the following
conjecture for the case $p=q$, where $\left(  \mathcal{A}_{\mathbf{a}}\right)
^{\spadesuit}=S_{\kappa,\eta}\mathcal{A}_{\mathbf{a}}\left(  S_{\kappa,\eta
}\right)  ^{-1}$ is the conjugation of the martingale transform $\mathcal{A}%
_{\mathbf{a}}$ with the bounded invertible linear map $S_{\kappa,\eta}$ used
in constructing the smooth Alpert wavelets in Theorem \ref{frame} below.

\begin{conjecture}
For $\kappa>\frac{n}{2}$\footnote{It seems likely this conjecture holds for
the classical Haar expansion (it is of course implied by the Fourier extension
conjecture), but we need $\kappa>\frac{n}{2}\geq1$ in our proof of the smooth
wavelet decomposition in Theorem \ref{frame}.} and notation as above,%
\begin{equation}
\mathbb{E}_{2^{\mathcal{G}}}^{\mu}\left\Vert T\mathbf{1}_{U_{0}}\left(
\mathcal{A}_{\mathbf{a}}\right)  ^{\spadesuit}f\right\Vert _{L^{p}\left(
\lambda_{n}\right)  }\lesssim\left\Vert f\right\Vert _{L^{p}\left(  B\left(
0,\frac{1}{2}\right)  \right)  }\ ,\ \ \ \ \ \text{if and only if }\frac
{2n}{n-1}<p\leq\infty,\label{Stein conjecture prob}%
\end{equation}
equivalently, the Fourier square function estimate,%
\begin{equation}
\left\Vert \mathcal{S}_{T\mathbf{1}_{U_{0}}}f\right\Vert _{L^{q}\left(
\lambda_{n}\right)  }\lesssim\left\Vert f\right\Vert _{L^{p}\left(  B\left(
0,\frac{1}{2}\right)  \right)  },\ \ \ \ \ \text{if and only if }\frac
{2n}{n-1}<p\leq\infty,\label{square equiv}%
\end{equation}
where%
\[
\mathcal{S}_{T\mathbf{1}_{U_{0}}}f\equiv\left(  \sum_{I\in\mathcal{G}\left[
U\right]  }\left\vert T\mathbf{1}_{U_{0}}\bigtriangleup_{I;\kappa}^{n-1,\eta
}f\right\vert ^{2}\right)  ^{\frac{1}{2}}.
\]

\end{conjecture}

\begin{theorem}
[Probabilistic extension conjecture]\label{FEC}The probabilistic Fourier
extension inequalities (\ref{Stein conjecture prob}) and (\ref{square equiv})
hold in all dimensions $n\geq2$.
\end{theorem}

Here the implied constant in $\lesssim$ depends only on harmless quantities
determined by context, which in the display (\ref{Stein conjecture prob}) are
$n$, $p$ and $U_{0}$.

Sections 2 through 10 are devoted to proving Theorem \ref{FEC}. Some
concluding remarks are made in Section 11.

\begin{acknowledgement}
I am indebted to Hong Wang and Ruixiang Zhang for pointing out serious gaps in
earlier versions of this paper, which claimed stronger results.
\end{acknowledgement}

There is a long history of progress on the Fourier extension conjecture in the
past half century, and we refer the reader to the excellent survey articles by
Thomas Wolff \cite{Wol}, Terence Tao \cite{Tao} and Betsy Stovall \cite{Sto}
for this history up to 2019, as well as for connections with related
conjectures and topics. Recently, a proof of the Kakeya set conjecture in
$\mathbb{R}^{3}$ has been posted to the arXiv by Hong Wang and Joshua Zahl
\cite{WaZa}. See further references below.

The following $\left(  \frac{1}{p},\frac{1}{q}\right)  $-rectangle for
boundedness of the extension operator illustrates this progression of positive
results:%
\begin{align*}
& \frame{$%
\begin{array}
[c]{cccccccccccccccc}%
\left(  \mathbf{0,}\frac{1}{2}\right)  & \bigstar & \bigstar & \bigstar &
\bigstar & \bigstar & \bigstar & \mathbf{C} & \bigstar & \bigstar & \bigstar &
\bigstar & \bigstar & \bigstar & \bigstar & \left(  \mathbf{1,}\frac{1}%
{2}\right) \\
\bigstar & \bigstar & \bigstar & \bigstar & \bigstar & \bigstar & \bigstar &
\bigstar & \bigstar & \bigstar & \bigstar & \bigstar & \bigstar & \bigstar &
\bigstar & \bigstar\\
\bigstar & \bigstar & \bigstar & \bigstar & \bigstar & \mathbf{A} & \bigstar &
\bigstar & \bigstar & \bigstar & \bigstar & \bigstar & \bigstar & \bigstar &
\bigstar & \bigstar\\
&  &  &  & \cdot &  &  & \mathbf{B} & \bigstar & \bigstar & \bigstar &
\bigstar & \bigstar & \bigstar & \bigstar & \bigstar\\
&  &  & \cdot &  &  &  &  &  & \cdot & \bigstar & \bigstar & \bigstar &
\bigstar & \bigstar & \bigstar\\
&  & \cdot &  &  &  &  &  &  &  &  & \cdot & \bigstar & \bigstar & \bigstar &
\bigstar\\
& \cdot &  &  &  &  &  &  &  &  &  &  &  & \cdot & \bigstar & \bigstar\\
\left(  \mathbf{0,0}\right)  &  &  &  &  &  &  &  &  &  &  &  &  &  &  &
\left(  \mathbf{1,0}\right)
\end{array}
$}\\
& \ \ \ \ \ \ \ \mathbf{A}=\left(  \frac{n-1}{2n},\frac{n-1}{2n}\right)
\text{ and }\mathbf{B}=\left(  \frac{1}{2},\frac{n-1}{2n+2}\right)  \text{ and
}\mathbf{C}=\left(  \frac{1}{2},\frac{1}{2}\right)
\end{align*}
The region marked with $\bigstar$ is where boundedness of the extension
operator (\ref{extension}) is known to fail, i.e. on and above the line
$\frac{1}{q}=\frac{n-1}{2n}$, and strictly above the Knapp line joining
$\mathbf{A}$ to $\left(  1,0\right)  $. The probabilistic analogue
(\ref{prob ext}) also fails for these pairs $\left(  \frac{1}{p},\frac{1}%
{q}\right)  $, as is shown below. The point $\mathbf{B}$ on the Knapp line is
the Stein-Tomas point, where boundedness is known from their 1975 result.
Since the set of points $\left(  \frac{1}{p},\frac{1}{q}\right)  $ for which
boundedness holds is both left-filled by embedding of $L^{p}$ spaces on the
sphere, and convex by interpolation, we see that as of 1975, the region
consisting of the line joining $\mathbf{B}$ to $\left(  1,0\right)  $, and
everything to the left of it, was known to be bounded for the extension
operator. The point $\left(  \frac{1}{2+\frac{4}{n}},\frac{1}{2+\frac{4}{n}%
}\right)  $ was added by Tao \cite{Tao4} in 2003, and points slightly better
than $\left(  \frac{1}{2+\frac{3}{n}},\frac{1}{2+\frac{3}{n}}\right)  $ were
added by Bourgain and Guth \cite[BoGu]{BoGu} in 2018.

Note also that any progress along the open diagonal line joining $\left(
0,0\right)  $ and $\mathbf{A}$, such as showing that $\left(  \frac{1}%
{p},\frac{1}{p}\right)  $ is bounded, yields boundedness for the convex hull
of $\left(  \frac{1}{p},\frac{1}{p}\right)  $ and the line $\frac{1}{q}=0$, as
well as all points to the left. Of course, even if the open diagonal segment
joining $\left(  0,0\right)  $ and $\mathbf{A}$ was known to be bounded, this
would still leave the open segment of the Knapp line joining $\mathbf{A}$ to
$\mathbf{B}$.

Our probabilistic theorem shows that the boundedness region for the
probabilistic extension conjecture includes all points not already eliminated
for the extension conjecture, except possibly for the open segment of the
Knapp line joining $\mathbf{A}$ to $\mathbf{B}$. Indeed, the conditions $q\geq
p^{\prime}\frac{n+1}{n-1}$ and $\frac{2n}{n-1}<q$ are necessary for the
extension inequality (\ref{extension}) to hold, see e.g. \cite{Tao}. The same
arguments show that these conditions on $p$ and $q$ are necessary for the
probabilistic analogue (\ref{prob ext}) to hold, upon considering individual
smooth Alpert wavelets $h_{I;\kappa}^{\eta}$ (see below for definitions).
Since $\sigma_{n-1}$ is a finite measure, embedding and interpolation with the
trivial $L^{1}\rightarrow L^{\infty}$ bound, together with Theorem \ref{FEC},
prove the probabilistic extension inequality for this range of exponents,
except for the range $q=p^{\prime}\frac{n+1}{n-1}$ and $1<p<\frac{2n}{n-1}$.
Since the Stein Tomas result \cite{Tom} captures the subcase of
(\ref{extension}) when $1\leq p\leq2$, this leaves only $q=p^{\prime}%
\frac{n+1}{n-1}$ and $2<p<\frac{2n}{n-1}$ open in the probabilistic extension conjecture.

\subsection{Quick overview of the proof using smooth Alpert wavelets}

We begin with a short and informal narrative.

\begin{description}
\item[Narrative] In the theory of nonhomogeneous harmonic analysis, and
especially that of two weight norm inequalities for the \emph{Hilbert}
transform, Nazarov, Treil and Volberg initiated the systematic use of weighted
Haar wavelets to analyze boundedness. The Hilbert transform has kernel
$\frac{1}{x-\xi}$ , and thus the action of a Haar wavelet against such a
kernel typically has geometric decay away from the origin, which permits
`error' off diagonal terms to be controlled. This two weight theory has
concentrated mainly on the Hilbert space case $p=2$ in the past couple of
decades, but more recently $L^{p}$ estimates and square functions have
attracted attention, especially with the recent work of Hyt\"{o}nen and
Vuorinen.\medskip\newline At this point it becomes conceivable that square
function and two weight techniques might be applicable to two weight $L^{p}$
norm inequalities for the \emph{Fourier} transform, such as the Fourier
restriction conjecture, equivalent to the norm inequality with measures
$d\sigma_{n-1}$ and $d\lambda_{n}$ in $\mathbb{R}^{n}$,%
\[
\left\Vert \mathcal{F}\left(  f\sigma_{n-1}\right)  \right\Vert _{L^{p}\left(
\lambda_{n}\right)  }\lesssim\left\Vert f\right\Vert _{L^{p}\left(
\sigma_{n-1}\right)  }.
\]
However, the kernel $K\left(  x,\xi\right)  =e^{-ix\cdot\xi}$ of the Fourier
transform $\mathcal{F}$ is purely oscillatory with no decay at all, but this
is\ partially offset by the curvature of the support of $\sigma_{n-1}$, that
produces decay from the principle of stationary phase. Moreover, the action of
a Haar wavelet against this kernel will be small if there is little variation
of the kernel over the support of the wavelet (i.e. long wavelength), since
the wavelet has vanishing mean, but this gain is limited by the absence of
higher order vanishing moments in a Haar wavelet.\medskip\newline Addressing
this defect, Alpert constructed wavelets with similar properties to those of
Haar, but with additional vanishing moments that confer extra geometric gain.
But even with Alpert wavelets in place of Haar wavelets, there is no geometric
gain when the wavelength of the kernel is small compared to the size of the
wavelet, due to the abrupt cutoffs in the dyadic construction of these
wavelets.\medskip\newline In this paper we construct \emph{smooth} Alpert
wavelets that permit geometric decay when the wavelengths are small, i.e. when
there is sufficient oscillation of the kernel over the support of the wavelet
to permit gain from repeated integration by parts. Thus we will have gain
except in the case of \emph{resonance}, when there is neither sufficient
smoothness nor oscillation in the restriction of the kernel to the support of
either the $n-1$ or $n$ dimensional wavelet. In these resonant situations,
which form the core of difficulty in the deterministic Fourier extension
conjecture, we must appeal to probability in order to obtain the desired
$L^{4}$ bound needed for interpolation. The remainder of the paper holds
without the intervention of probability.\medskip\newline
\end{description}

Our proof of the probabilistic Fourier extension conjecture uses some
techniques arising in the\ two weight testing theory of operator norms,
\cite{NTV4}, \cite{Vol}, \cite{LaSaShUr3}, \cite{SaShUr7}, \cite{AlSaUr} and
\cite{SaWi}, that were in turn based on older work with roots in \cite{FeSt},
\cite{DaJo}, \cite{Saw} and \cite{Saw3}, and followed by many other papers as
well, such as \cite{Hyt}, \cite{LaWi}, \cite{SaShUr12} and \cite{HyVu} to
mention just a few\footnote{Some of the deepest results in testing theory,
namely the good/bad machinery of Nazarov, Treil and Volberg in e.g.
\cite{NTV4}, the functional energy from \cite{LaSaShUr3}, the two weight
inequalities for Poisson integrals from \cite{Saw3}, and the upside down
corona and recursion from Lacey \cite{Lac}, are not used here. Some reasons
for this are the lack of `edge effects' in smooth Alpert wavelets, the lack of
a paraproduct/stopping form decomposition, the `niceness' of surface measure
on the sphere and Lebesgue measure, and of course that the probabilistic
conjecture is significantly weaker than the deterministic one. Indeed, the
higher frequencies are damped to a greater extent by expectation, and this is
why Kakeya phenomena do not enter into probabilistic arguments. On the other
hand we make extensive use of pigeonholing into bilinear subforms according to
the uncertainty principle, and then applying square function techniques for
Alpert frames.}. One of the main new ingredients used here is the construction
of compactly supported smooth frames in $L^{p}$ with derivative estimates
adapted to the support, and as many vanishing moments as we wish. In fact, we
will show that the wavelets $h_{I;\kappa}^{a,\eta}$ in the following theorem,
can be constructed in the spirit of symbol smoothing, as appropriate
convolutions of a certain approximate identity with the Alpert wavelets in
\cite{Alp}, see also their weighted versions in \cite{RaSaWi}.

As already noted, for the proof of the probabilistic extension conjecture, it
is enough to prove (\ref{prob ext''}),%
\[
\mathbb{E}_{2^{\mathcal{G}}}^{\mu}\left\Vert T\left(  \sum_{I\in
\mathcal{G}\left[  U\right]  }a_{I}\bigtriangleup_{I;\kappa}^{n-1,\eta
}f\right)  \right\Vert _{L^{p}}\lesssim\left\Vert f\right\Vert _{L^{p}}\ .
\]
However, we begin by writing the Fourier bilinear form $\left\langle T\left(
\sum_{I\in\mathcal{G}\left[  U\right]  }a_{I}\bigtriangleup_{I;\kappa
}^{n-1,\eta}f\right)  ,g\right\rangle _{\mathbb{R}^{n}}$ as a finite sum of
subforms%
\[
\mathsf{B}_{\mathcal{P}}\left(  f,g\right)  \equiv\sum_{\left(  I,J\right)
\in\mathcal{P}}\left\langle T\left(  a_{I}\bigtriangleup_{I;\kappa}^{n-1,\eta
}f\right)  ,\bigtriangleup_{J;\kappa}^{n,\eta}g\right\rangle _{\mathbb{R}^{n}}%
\]
where $\mathcal{P}$ is a collection of pairs of dyadic cubes $I\in
\mathcal{G}\left[  U\right]  $ and $J\in\mathcal{D}$, and where
$\bigtriangleup_{I;\kappa}^{n-1,\eta}$ and $\bigtriangleup_{J;\kappa}^{n,\eta
}$ are smooth Alpert pseudoprojections in $\mathbb{R}^{n-1}$ and
$\mathbb{R}^{n}$ respectively. This decomposition into subforms follows that
used by Nazarov, Treil and Volberg in the setting of singular integrals with
weighted Haar wavelets, but using the uncertainty principle to compare sizes
of cubes here. There are six main subforms, the below $\mathsf{B}%
_{\operatorname*{below}}\left(  f,g\right)  $, above $\mathsf{B}%
_{\operatorname*{above}}\left(  f,g\right)  $, upper disjoint and distal
$\mathsf{B}_{\operatorname*{disjoint}}^{\operatorname*{upper}}\left(
f,g\right)  ,\mathsf{B}_{\operatorname*{distal}}^{\operatorname*{upper}%
}\left(  f,g\right)  $, and lower disjoint and distal $\mathsf{B}%
_{\operatorname*{disjoint}}^{\operatorname*{lower}}\left(  f,g\right)
,\mathsf{B}_{\operatorname*{distal}}^{\operatorname*{lower}}\left(
f,g\right)  $ subforms.

The first two subforms are handled by the classical methods of integration by
parts and stationary phase, but also use the smoothness and moment vanishing
properties of the Alpert wavelets constructed in the next theorem, while the
next two upper forms also use tangential integration by parts.

Finally, the last two most challenging forms, namely the lower disjoint and
distal forms \footnote{challenging because of the resonance that arises when
the cubes $I$ and $J$ are appropriately positioned and sized, with the
consequence that neither integration by parts nor moment vanishing can be put
to use. In fact, it was precisely this difficulty that led to the serious gap
in an earlier version \texttt{v4} of this paper, and which was pointed out to
the author by Hong Wang and Ruixiang Zhang.}, are handled using properties of
smooth Alpert wavelets with expectation taken over involutive smooth Alpert
multipliers. While the deterministic \emph{form} estimates for the previous
four forms imply corresponding deterministic \emph{norm} estimates by duality,
this is no longer true for the probabilistic estimates we obtain, and it is
important that we obtain the stronger probabilistic \emph{norm} estimates in
these cases. In fact, we will obtain $L^{2}$ and average $L^{4}$ norm
estimates for smooth Alpert pseudoprojections (essentially because these
spaces have the upper majorant property), which can then be interpolated to
obtain the required norm bounds. However, this argument fails without
expectation, and so fails to obtain the Fourier extension conjecture, whose
attack requires far more sophisticated techniques. See Proposition
\ref{prop interp}, and Lemmas \ref{fattened red} and \ref{interp red} below.

Here is the smooth compactly supported frame of wavelets for $L^{p}$ that we
will use\footnote{This particular theorem does not appear to be in the
literature on frames.}.

\begin{theorem}
\label{frame}Let $n,\kappa\in\mathbb{N}$ with $\kappa>\frac{n}{2}$, and
$\eta>0$ be sufficiently small depending on $n$ and $\kappa$. Then there are a
bounded invertible linear map $S_{\kappa,\eta}:L^{p}\rightarrow L^{p} $
($1<p<\infty$) satisfying%
\begin{equation}
\left\Vert \operatorname{Id}-S_{\kappa,\eta}\right\Vert _{L^{p}\rightarrow
L^{p}}\leq C_{n,p}\eta\ ,\label{little oh}%
\end{equation}
and `wavelets' $\left\{  h_{I;\kappa}^{a}\right\}  _{I\in\mathcal{D}%
,\ a\in\Gamma_{n}}$ and $\left\{  h_{I;\kappa}^{a,\eta}\right\}
_{I\in\mathcal{D},\ a\in\Gamma_{n}}$ (with $\Gamma_{n}$ a finite index set
depending only on $\kappa$ and $n$), and corresponding projections and
pseudoprojections $\left\{  \bigtriangleup_{I;\kappa}\right\}  _{I\in
\mathcal{D}}$ and $\left\{  \bigtriangleup_{I;\kappa}^{\eta}\right\}
_{I\in\mathcal{D}}$ defined by%
\[
\bigtriangleup_{I;\kappa}f\equiv\sum_{a\in\Gamma_{n}}\left\langle
f,h_{I;\kappa}^{a}\right\rangle h_{I;\kappa}^{a}\text{ and }\bigtriangleup
_{I;\kappa}^{\eta}f\equiv\sum_{a\in\Gamma_{n}}\left\langle \left(
S_{\kappa,\eta}\right)  ^{-1}f,h_{I;\kappa}^{a}\right\rangle h_{I;\kappa
}^{a,\eta}\ ,
\]
satisfying

\begin{enumerate}
\item the standard properties,%
\begin{align}
\left\Vert h_{I;\kappa}^{a,\eta}\right\Vert _{L^{2}}  & \approx\left\Vert
h_{I;\kappa}^{a}\right\Vert _{L^{2}}=1,\label{wavelet prop}\\
\operatorname*{Supp}h_{I;\kappa}^{a}  & \subset I\text{ and }%
\operatorname*{Supp}h_{I;\kappa}^{a,\eta}\subset\left(  1+\eta\right)
I,\nonumber\\
\left\Vert \nabla^{m}h_{I;\kappa}^{a,\eta}\right\Vert _{\infty}  & \leq
C_{m}\left(  \frac{1}{\eta\ell\left(  I\right)  }\right)  ^{m}\frac{1}%
{\sqrt{\left\vert I\right\vert }},\ \ \ \ \ \text{for all }m\geq0,\nonumber\\
\int h_{I;\kappa}^{a}\left(  x\right)  x^{\alpha}dx  & =\int h_{I;\kappa
}^{a,\eta}\left(  x\right)  x^{\alpha}dx=0,\ \ \ \ \ \text{for all }%
0\leq\left\vert \alpha\right\vert <\kappa.\nonumber
\end{align}

\item and for each $a\in\Gamma_{n}$ the wavelets $h_{I;\kappa}^{a}$ and
$h_{I;\kappa}^{a,\eta}$ are translations and $L^{2}$-dilations of the unit
wavelets $h_{Q_{0};\kappa}^{a}$ and $h_{Q_{0};\kappa}^{a,\eta}$ respectively,
where $Q_{0}=\left[  0,1\right)  ^{n}$ is the unit cube in $\mathbb{R}^{n}$,%
\begin{equation}
h_{I;\kappa}^{a}=\sqrt{\frac{\left\vert Q_{0}\right\vert }{\left\vert
I\right\vert }}h_{Q_{0};\kappa}^{a}\circ\varphi_{I}\text{ and }h_{I;\kappa
}^{a,\eta}=\sqrt{\frac{\left\vert Q_{0}\right\vert }{\left\vert I\right\vert
}}h_{Q_{0};\kappa}^{a,\eta}\circ\varphi_{I}\ ,\label{mother}%
\end{equation}
where $\varphi_{I}:I\rightarrow Q_{0}$ is the affine map taking $I$ one-to-one
and onto $Q_{0}$,

\item and for all $1<p<\infty$,%
\begin{align}
& \ \ \ \ \ \ \ \ \ \ \ \ \ \ \ f=\sum_{I\in\mathcal{D},\ a\in\Gamma_{n}%
}\bigtriangleup_{I;\kappa}^{a}f=\sum_{I\in\mathcal{D},\ a\in\Gamma_{n}%
}\bigtriangleup_{I;\kappa}^{a,\eta}f,\ \ \ \ \ \text{with convergence in norm
for }f\in L^{p}\cap L^{2},\label{squ est}\\
& \left\Vert \left(  \sum_{I\in\mathcal{D},\ a\in\Gamma_{n}}\left\vert
\bigtriangleup_{I;\kappa}^{a}f\right\vert ^{2}\right)  ^{\frac{1}{2}%
}\right\Vert _{L^{p}\left(  \mathbb{R}^{n}\right)  }\approx\left\Vert \left(
\sum_{I\in\mathcal{D},\ a\in\Gamma_{n}}\left\vert \bigtriangleup_{I;\kappa
}^{a,\eta}f\right\vert ^{2}\right)  ^{\frac{1}{2}}\right\Vert _{L^{p}\left(
\mathbb{R}^{n}\right)  }\approx\left\Vert f\right\Vert _{L^{p}\left(
\mathbb{R}^{n}\right)  },\ \ \ \ \ \text{for }f\in L^{p}\cap L^{2},\nonumber
\end{align}

\item and for all $I\in\mathcal{D}$,%
\[
h_{Q;\kappa}^{a}\left(  x\right)  =h_{Q;\kappa}^{a,\eta}\left(  x\right)
,\ \ \ \ \ \text{for }x\in\mathbb{R}^{n}\setminus\mathcal{H}_{\eta}\left(
Q\right)  ,
\]
where $\mathcal{H}_{\eta}\left(  Q\right)  $ is the $\eta$-halo of the
skeleton of $Q$ defined in (\ref{def halo}) below.

\item and finally, the unsmoothed operators $\bigtriangleup_{I;\kappa}$ are
self-adjoint orthogonal projections\footnote{The operators $\bigtriangleup
_{I;\kappa}^{\eta}$ are neither self-adjoint, projections nor orthogonal, but
come close as we will see.},%
\begin{equation}
\bigtriangleup_{I;\kappa}\bigtriangleup_{J;\kappa}=\left\{
\begin{array}
[c]{ccc}%
\bigtriangleup_{I;\kappa} & \text{ if } & I=J\\
0 & \text{ if } & I\not =J
\end{array}
\right.  .\label{ortho proj}%
\end{equation}

\end{enumerate}
\end{theorem}

\begin{remark}
This theorem shows that the collection of `almost' $L^{2}$ projections
$\left\{  \bigtriangleup_{I;\kappa}^{\eta,a}\right\}  _{I\in\mathcal{D}%
,\ a\in\Gamma_{n}}$ is a `frame' for the Banach space $L^{p}$, $1<p<\infty$.
The case $\eta=0$ of (\ref{squ est}) was obtained in the generality of
doubling measures $\mu$ in \cite{SaWi}.
\end{remark}

\begin{acknowledgement}
I thank Brett Wick for instigating our work on two weight $L^{p}$ norm
inequalities in \cite{LaWi}, Michel Alexis and Ignacio Uriarte-Tuero for
completing in our joint paper \cite{AlSaUr} the work begun in \cite{Saw6} on
doubling measures, and Michel and Jose Luis Luna-Garcia for our work
\cite{AlLuSa} on $L^{p}$ frames. Ideas from these papers have played a key
role in the development of the arguments used here, as well as ideas from past
collaborations and other works. I also thank Cristian Rios for valuable
discussions, suggestions and critical reading of portions of the manuscript,
including a fruitful week long visit to Hamilton. Finally, I thank Ruixiang
Zhang for many enlightening comments, and for pointing to several problems in
the proof.
\end{acknowledgement}

\subsubsection{Organization of the paper}

In the next section we will construct and prove the required properties of
smooth Alpert wavelets, and in Section 3 we introduce the extension operator
and recall what we need regarding stationary phase. This material is
well-known but we repeat it here due to the explicit error estimates we use.
In Section 4 we discuss the initial wavelet decompositions into various
subforms and describe the classical and well-known decay principles we use.
Then in Section 5 we turn to the interpolation of $L^{2}$ and $L^{4}$
estimates using probability. Then in Sections 6, 7 and 8 we will control the
below, above and upper disjoint/distal forms respectively in the deterministic
sense. Then in Section 9 we will use probability to control the lower
disjoint/distal form by averaging over smooth Alpert martingale transforms.
Then we collect these results to finish the proof of the probabilistic Fourier
extension theorem in Section 10, and in Section 11 we make some concluding comments.

\subsection{The initial setup}

Fix a small cube $U_{0}$ in $\mathbb{R}^{n-1}$ with side length a negative
power of $2$, and such that there is a translation $\mathcal{G}$ of the
standard grid on $\mathbb{R}^{n-1}$ with the property that $U_{0}%
\in\mathcal{G}$, the grandparent $U\equiv\pi_{\mathcal{G}}^{\left(  2\right)
}U_{0}$ of $U_{0}$ has the origin as a vertex, and $U_{0}$ is an interior
grandchild of $U_{0}$, so that%
\begin{equation}
U_{0},U\in\mathcal{G}\text{ with }U_{0}\subset\frac{1}{2}U\text{ and }U\subset
B\left(  0,\frac{1}{8}\right)  \text{.}\label{support}%
\end{equation}
The radius $\frac{1}{8}$ is chosen small enough that the various definitions
of forms below are well-defined.

Now parameterize a patch of the sphere $\mathbb{S}^{n-1}$ in the usual way,
i.e. $\Phi:U\rightarrow\mathbb{S}^{n-1}$ by
\[
z=\Phi\left(  x\right)  \equiv\left(  x,\sqrt{1-\left\vert x\right\vert ^{2}%
}\right)  =\left(  x_{1},x_{2},...,x_{n-1},\sqrt{1-\left\vert x\right\vert
^{2}}\right)  .
\]
For $f\in L^{p}\left(  B_{n-1}\left(  0,\frac{1}{2}\right)  \right)  $ define
\begin{equation}
Tf\left(  \xi\right)  \equiv\mathcal{F}\left(  \Phi_{\ast}\left[  f\left(
x\right)  dx\right]  \right)  =\int_{B_{n-1}\left(  0,\frac{1}{2}\right)
}e^{-i\Phi\left(  x\right)  \cdot\xi}f\left(  x\right)  \frac{dx}{\left\vert
\det\Phi\left(  x\right)  \right\vert },\label{def T_S}%
\end{equation}
where $\Phi_{\ast}\left[  f\left(  x\right)  dx\right]  $ is the pushforward
of the measure $f\left(  x\right)  dx$ in $B_{n-1}\left(  0,\frac{1}%
{2}\right)  $ to the patch of sphere $\Phi\left(  B_{n-1}\left(  0,\frac{1}%
{2}\right)  \right)  $ lying above $B_{n-1}\left(  0,\frac{1}{2}\right)  $,
and that we typically abuse notation by ignoring the harmless factor $\frac
{1}{\left\vert \det\Phi\left(  x\right)  \right\vert }$. Recall that the
Fourier extension inequality is equivalent to (\ref{T_S equiv}). The bilinear
form associated to $T\mathbf{1}_{U_{0}}$ in (\ref{T_S equiv}) can be
decomposed by,%
\[
\left\langle T\mathbf{1}_{U_{0}}f,g\right\rangle =\left\langle T\mathbf{1}%
_{U_{0}}\left(  \sum_{I\in\mathcal{G}}\bigtriangleup_{I;\kappa}^{n-1}f\right)
,\sum_{J\in\mathcal{D}}\bigtriangleup_{J;\kappa}^{n}g\right\rangle
=\sum_{\left(  I,J\right)  \in\mathcal{G}\times\mathcal{D}}\left\langle
T\mathbf{1}_{U_{0}}\bigtriangleup_{I;\kappa}^{n-1}f,\bigtriangleup_{J;\kappa
}^{n}g\right\rangle \ ,
\]
where $\left\{  \bigtriangleup_{J;\kappa}^{n}\right\}  _{J\in\mathcal{D}}$ is
an Alpert basis of projections for $L^{2}\left(  \mathbb{R}^{n}\right)  $, and
$\left\{  \bigtriangleup_{I;\kappa}^{n-1}\right\}  _{I\in\mathcal{G}}$ is an
Alpert basis of projections for $L^{2}\left(  \mathbb{R}^{n-1}\right)  $.
Using rotation invariance, the Fourier extension conjecture is shown at the
beginning of Section 3 below, to be equivalent to boundedness of
$T\mathbf{1}_{U_{0}}$, taken over a finite collection of patches $\Phi\left(
U_{0}\right)  $.

\begin{notation}
\label{Notation Alpert'}We are using the index $n-1$ or $n$ in the superscript
of the notation $\bigtriangleup_{I;\kappa}^{n-1,\eta}f$ for an Alpert
projection, to denote whether the wavelet lives in $\mathbb{R}^{n-1}$ or in
$\mathbb{R}^{n}$. The index $\eta$ in the superscript denotes the smoothness
injected by convolution in the construction of the smooth Alpert wavelets
below. Moreover, we usually suppress the index $a\in\Gamma$ that runs over the
set of all Alpert wavelets associated with a given cube.
\end{notation}

However, in order to carry out the standard two weight approach to bounding
$T$, it will be necessary to fix $\kappa\in\mathbb{N}$, $\kappa>\frac{n}{2} $,
and instead expand the bilinear form $\left\langle T\left(  \mathsf{P}%
_{U}\right)  ^{\spadesuit}f,g\right\rangle =\left\langle T\sum_{I\in
\mathcal{G}\left[  U\right]  }\bigtriangleup_{I;\kappa}^{n-1,\eta
}f,g\right\rangle $, corresponding to the equivalent inequality
(\ref{T_S trunc}), in terms of the \emph{smooth} $\kappa$-Alpert
decompositions of $f$ and $g$,%
\[
\left\langle T\left(  \mathsf{P}_{U}\right)  ^{\spadesuit}f,g\right\rangle
=\sum_{\left(  I,J\right)  \in\mathcal{G}\left[  U\right]  \times\mathcal{D}%
}\left\langle T\bigtriangleup_{I;\kappa}^{n-1,\eta}f,\bigtriangleup_{J;\kappa
}^{n,\eta}g\right\rangle ,
\]
so as to exploit the cancellation inherent in the oscillatory kernel
$e^{-i\Phi\left(  x\right)  \cdot\xi}$ of the operator $T_{S}$.

\begin{definition}
A subset $E$ of the unit sphere $\mathbb{S}^{n-1}$ in $\mathbb{R}^{n}$ is said
to be a \emph{ball} if it is the intersection of the sphere with a halfspace,
and is said to be a \emph{pseudoball with constant }$C_{\operatorname{pseudo}%
}$, if there are concentric balls $B_{1}$ and $B_{2}$ such that
\begin{equation}
B_{1}\subset E\subset B_{2}\text{ and }\left\vert B_{2}\right\vert \leq
C_{\operatorname{pseudo}}\left\vert B_{1}\right\vert ,\label{def pseudoball}%
\end{equation}
where $\left\vert E\right\vert $ denotes surface measure on the sphere. We
simply say that $E$ is a \emph{pseudoball} when $C_{\operatorname{pseudo}}$ is
understood from context, and we will sometimes define a `center' of $E$ to be
the center (not uniquely determined) of the balls $B_{1}$ and $B_{2}$ in
(\ref{def pseudoball}).
\end{definition}

\begin{definition}
Given a subset $F$ of Euclidean space $\mathbb{R}^{n}$, we define the
tangential and radial `projections' of $F$, onto $\mathbb{S}^{n-1}$ and
$\left[  0,\infty\right)  $ respectively, by
\[
\pi_{\tan}\left(  F\right)  \equiv\left\{  \frac{\xi}{\left\vert
\xi\right\vert }:\xi\in F\right\}  \text{ and }\pi_{\operatorname{rad}}\left(
F\right)  \equiv\left\{  \left\vert \xi\right\vert :\xi\in F\right\}  .
\]

\end{definition}

Then for $C_{\operatorname{pseudo}}$ chosen large enough in
(\ref{def pseudoball}), the subsets $\Phi\left(  I\right)  $ and $\pi_{\tan
}\left(  J\right)  $ of the sphere $\mathbb{S}^{n-1}$ are pseudoballs with
constant $C_{\operatorname{pseudo}}$, for all $I\in\mathcal{G}\left[
U\right]  $ and $J\in\mathcal{D}$. For $E\subset\mathbb{S}^{n-1}$, we denote
by $-E$ the set antipodal to $E$, i.e. $-E=\left\{  \zeta\in\mathbb{S}%
^{n-1}:-\zeta\in E\right\}  $.

We now divide the collection of pairs $\left(  I,J\right)  \in\mathcal{G}%
\left[  U\right]  \times\mathcal{D}$ according to the relative size and
location of their associated pseudoballs $\Phi\left(  I\right)  $ and
$\pi_{\tan}\left(  J\right)  $, as dictated by the uncertainty principle:
\begin{align}
& \mathcal{G}\left[  U\right]  \times\mathcal{D}\subset\mathcal{P}%
\ \cup\ \mathcal{P}^{-}\ ,\label{decomp}\\
\text{where }\mathcal{P}  & =\mathcal{P}_{0}\ \cup\ \bigcup_{m=1}^{\infty
}\mathcal{P}_{m}\ \cup\ \mathcal{R}\cup\ \mathcal{X\ },\nonumber\\
\text{and }\mathcal{P}^{-}  & =\left\{  \left(  I,-J\right)  :\left(
I,J\right)  \in\mathcal{P}\right\}  \ ,\nonumber
\end{align}
and where
\begin{align*}
\mathcal{P}_{0}  & \equiv\left\{  \left(  I,J\right)  \in\mathcal{G}\left[
U\right]  \times\mathcal{D}:\pi_{\tan}\left(  J\right)  \subset\Phi\left(
C_{\operatorname{pseudo}}I\right)  \right\}  \ ,\\
\mathcal{P}_{m}  & \equiv\left\{  \left(  I,J\right)  \in\mathcal{G}\left[
U\right]  \times\mathcal{D}:2^{m+1}I\subset2U\text{, }\pi_{\tan}\left(
J\right)  \subset\Phi\left(  4U\cap2^{m+1}C_{\operatorname{pseudo}}I\right)
\setminus\Phi\left(  2^{m}\frac{1}{C_{\operatorname{pseudo}}}I\right)
\right\}  ,\ \ \ \ \ m\in\mathbb{N}\ ,\\
\mathcal{R}  & \equiv\left\{  \left(  I,J\right)  \in\mathcal{G}\left[
U\right]  \times\mathcal{D}:\Phi\left(  I\right)  \subset\pi_{\tan}\left(
C_{\operatorname{pseudo}}J\right)  \right\}  \ ,\\
\mathcal{X}  & \equiv\left\{  \left(  I,J\right)  \in\mathcal{G}\left[
U\right]  \times\mathcal{D}:J\subset\mathbb{R}_{+}^{n}\text{ and }\pi_{\tan
}\left(  C_{\operatorname{pseudo}}J\right)  \cap\Phi\left(  2U\right)
=\emptyset\right\}  \ .
\end{align*}
Note that $\left(  I,J\right)  \in\mathcal{P}_{m}$ implies that $m\leq cs$ for
some fixed constant $c$, and that there is some bounded overlap among the
pairs in this decomposition, but this overcounting turns out to be
inconsequential. Finally we point out that it suffices to show that%
\[
\left\vert \sum_{\left(  I,J\right)  \in\mathcal{P}}\left\langle
T\bigtriangleup_{I;\kappa}^{n-1,\eta}f,\bigtriangleup_{J;\kappa}^{n,\eta
}g\right\rangle \right\vert \lesssim\left\Vert f\right\Vert _{L^{p}}\left\Vert
g\right\Vert _{L^{p^{\prime}}}\ ,
\]
since $\left(  I,J\right)  \in\mathcal{P}^{-}$ if and only if $\left(
I,-J\right)  \in\mathcal{P}$, and this amounts to replacing the kernel
$e^{-i\Phi\left(  x\right)  \cdot\xi}$ with the kernel $e^{i\Phi\left(
x\right)  \cdot\xi}$, for which the estimates obtained below are identical.

\subsubsection{Proof of reduction to the truncated inequality}

Here we prove Lemma \ref{prob trunc}.

\begin{proof}
[Proof of Lemma \ref{prob trunc}]Using $f=\sum_{I\in\mathcal{G}}%
\bigtriangleup_{I;\kappa}^{n-1,\eta}f$ from the first line in (\ref{squ est})
of Theorem \ref{frame} below, we write\footnote{I thank Cristian Rios for
pointing out this simplification to an earlier proof.}%
\[
\mathbf{1}_{U_{0}}\left(  \mathcal{A}_{\mathbf{a}}\right)  ^{\spadesuit
}f=\mathbf{1}_{U_{0}}\sum_{I\in\mathcal{G}}a_{I}\bigtriangleup_{I;\kappa
}^{n-1,\eta}f=\mathbf{1}_{U_{0}}\sum_{I\in\mathcal{G}\left[  U\right]  }%
a_{I}\bigtriangleup_{I;\kappa}^{n-1,\eta}f+\mathbf{1}_{U_{0}}\sum
_{k=1}^{\infty}\sum_{I\in\mathcal{N}\left(  \pi^{\left(  k\right)  }%
U_{0}\right)  }a_{I}\bigtriangleup_{I;\kappa}^{n-1,\eta}f\equiv L_{1}%
^{\mathbf{a}}f+L_{2}^{\mathbf{a}}f.
\]
since $\mathbf{1}_{U_{0}}\bigtriangleup_{I;\kappa}^{n-1,\eta}f$ vanishes if
$I\notin\mathcal{G}\left[  U\right]  \cup\left\{  \mathcal{N}\left(
\pi^{\left(  k\right)  }U_{0}\right)  \right\}  _{k=1}^{\infty}$. Indeed,
$\operatorname*{Supp}\bigtriangleup_{I;\kappa}^{n-1,\eta}\subset\left(
1+\eta\right)  U$ which is disjoint from $U_{0}$ if $I\notin\mathcal{G}\left[
U\right]  \cup\left\{  \mathcal{N}\left(  \pi^{\left(  k\right)  }%
U_{0}\right)  \right\}  _{k=1}^{\infty}$. We will now show that%
\begin{align}
\mathbb{E}_{2^{\mathcal{G}}}^{\mu}\left\Vert TL_{1}^{\mathbf{a}}f\right\Vert
_{L^{q}}  & =\mathbb{E}_{2^{\mathcal{G}}}^{\mu}\left\Vert T\mathbf{1}_{U_{0}%
}\sum_{I\in\mathcal{G}\left[  U\right]  }a_{I}\bigtriangleup_{I;\kappa
}^{n-1,\eta}f\right\Vert _{L^{q}}\lesssim\mathbb{E}_{2^{\mathcal{G}}}^{\mu
}\left\Vert T\sum_{I\in\mathcal{G}\left[  U\right]  }a_{I}\bigtriangleup
_{I;\kappa}^{n-1,\eta}f\right\Vert _{L^{q}}\ ,\label{two ineq}\\
\sup_{\mathbf{a}}\left\Vert TL_{2}^{\mathbf{a}}f\right\Vert _{L^{q}}  &
=\sup_{\mathbf{a}}\left\Vert T\mathbf{1}_{U_{0}}\sum_{k=1}^{\infty}\sum
_{I\in\mathcal{N}\left(  \pi^{\left(  k\right)  }U_{0}\right)  }%
a_{I}\bigtriangleup_{I;\kappa}^{n-1,\eta}f\right\Vert _{L^{q}}\lesssim
\left\Vert f\right\Vert _{L^{p}\left(  B\left(  0,\frac{1}{2}\right)  \right)
}\ ,\nonumber
\end{align}
which is easily seen to complete the proof that (\ref{prob ext''}) implies
(\ref{prob ext'}).

To see the first line in (\ref{two ineq}), choose a rectangle $R_{0}$ in
$\mathbb{R}^{n}$ with base $U_{0}$ and height $1$ so that $R_{0}\cap
\mathbb{S}^{n-1}=\Phi\left(  U_{0}\right)  $. Then $\Phi_{\ast}\mathbf{1}%
_{U_{0}}=\mathbf{1}_{R_{0}}\Phi_{\ast}$, and since $\mathcal{F}\mathbf{1}%
_{R_{0}}\mathcal{F}^{-1}$ is a bounded Fourier multiplier on $L^{q}\left(
\mathbb{R}^{n}\right)  $ for all $1<q<\infty$, we obtain
\begin{align*}
& \mathbb{E}_{2^{\mathcal{G}}}^{\mu}\left\Vert TL_{1}f\right\Vert _{L^{q}%
}=\mathbb{E}_{2^{\mathcal{G}}}^{\mu}\left\Vert \mathcal{F}\Phi_{\ast
}\mathbf{1}_{U_{0}}\sum_{I\in\mathcal{G}\left[  U\right]  }a_{I}%
\bigtriangleup_{I;\kappa}^{n-1,\eta}f\right\Vert _{L^{q}}\\
& =\mathbb{E}_{2^{\mathcal{G}}}^{\mu}\left\Vert \mathcal{F}\mathbf{1}_{R_{0}%
}\Phi_{\ast}\sum_{I\in\mathcal{G}\left[  U\right]  }a_{I}\bigtriangleup
_{I;\kappa}^{n-1,\eta}f\right\Vert _{L^{q}}=\mathbb{E}_{2^{\mathcal{G}}}^{\mu
}\left\Vert \left(  \mathcal{F}\mathbf{1}_{R_{0}}\mathcal{F}^{-1}\right)
\mathcal{F}\Phi_{\ast}\sum_{I\in\mathcal{G}\left[  U\right]  }a_{I}%
\bigtriangleup_{I;\kappa}^{n-1,\eta}f\right\Vert _{L^{q}}\\
& \lesssim\mathbb{E}_{2^{\mathcal{G}}}^{\mu}\left\Vert \mathcal{F}\Phi_{\ast
}\sum_{I\in\mathcal{G}\left[  U\right]  }a_{I}\bigtriangleup_{I;\kappa
}^{n-1,\eta}f\right\Vert _{L^{q}}=\mathbb{E}_{2^{\mathcal{G}}}^{\mu}\left\Vert
T\sum_{I\in\mathcal{G}\left[  U\right]  }a_{I}\bigtriangleup_{I;\kappa
}^{n-1,\eta}f\right\Vert _{L^{q}}.
\end{align*}

Now we turn to proving the second line in (\ref{two ineq}). Let $\psi$ be a
smooth bump function that is $1$ on $U_{0}$ and supported in $U$. Then arguing
once more as above,
\begin{align*}
& \left\Vert T\mathbf{1}_{U_{0}}L_{2}^{\mathbf{a}}f\right\Vert _{L^{q}%
}=\left\Vert \mathcal{F}\Phi_{\ast}\mathbf{1}_{U_{0}}\psi L_{2}^{\mathbf{a}%
}f\right\Vert _{L^{q}}=\left\Vert \mathcal{F}\mathbf{1}_{R_{0}}\Phi_{\ast}\psi
L_{2}^{\mathbf{a}}f\right\Vert _{L^{q}}\\
& =\left\Vert \mathcal{F}\mathbf{1}_{R_{0}}\mathcal{F}^{-1}\mathcal{F}%
\Phi_{\ast}\psi L_{2}^{\mathbf{a}}f\right\Vert _{L^{q}}\lesssim\left\Vert
\mathcal{F}\Phi_{\ast}\psi L_{2}^{\mathbf{a}}f\right\Vert _{L^{q}}=\left\Vert
T\psi L_{2}^{\mathbf{a}}f\right\Vert _{L^{q}}\ ,
\end{align*}
where%
\[
\psi L_{2}^{\mathbf{a}}f=\sum_{k=1}^{\infty}\sum_{I\in\mathcal{N}\left(
\pi^{\left(  k\right)  }U_{0}\right)  }a_{I}\left\langle \mathbf{1}_{U_{0}%
}\left(  S_{\kappa,\eta}\right)  ^{-1}f,h_{I;\kappa}\right\rangle \psi
h_{I;\kappa}^{\eta}\ .
\]
Thus we see that $\psi L_{2}^{\mathbf{a}}f$ is smooth and compactly supported
upon using that (\textbf{i}) the functions $\psi h_{\pi^{\left(  k\right)
}U_{0};\kappa}^{\eta}$ are smooth and compactly supported uniformly in $k$ for
$I\in\mathcal{N}\left(  \pi^{\left(  k\right)  }U_{0}\right)  $, and that
(\textbf{ii}) we have the pointwise inequality,
\begin{align*}
& \left\vert \sum_{k=1}^{\infty}\sum_{I\in\mathcal{N}\left(  \pi^{\left(
k\right)  }U_{0}\right)  }a_{I}\left\langle \mathbf{1}_{U_{0}}\left(
S_{\kappa,\eta}\right)  ^{-1}f,h_{I;\kappa}\right\rangle \psi h_{I;\kappa
}^{\eta}\right\vert \lesssim\left\Vert \psi\right\Vert _{L^{\infty}}\sum
_{k=1}^{\infty}\sum_{I\in\mathcal{N}\left(  \pi^{\left(  k\right)  }%
U_{0}\right)  }\left\Vert \mathbf{1}_{U_{0}}\left(  S_{\kappa,\eta}\right)
^{-1}f\right\Vert _{L^{1}}\left\Vert h_{I;\kappa}\right\Vert _{L^{\infty}}%
^{2}\\
& \lesssim\sum_{k=1}^{\infty}\sum_{I\in\mathcal{N}\left(  \pi^{\left(
k\right)  }U_{0}\right)  }\left\Vert \mathbf{1}_{U_{0}}\left(  S_{\kappa,\eta
}\right)  ^{-1}f\right\Vert _{L^{p}}\left\Vert h_{I;\kappa}\right\Vert
_{L^{\infty}}^{2}\lesssim\sum_{k=1}^{\infty}\left\Vert f\right\Vert _{L^{p}%
}\frac{1}{\left\vert \pi^{\left(  k\right)  }U_{0}\right\vert }\lesssim
\left\Vert f\right\Vert _{L^{p}}.
\end{align*}

Consequently, the Fourier transform $\widehat{\Phi_{\ast}\left(  \psi
L_{2}^{\mathbf{a}}f\right)  }$ of the smooth surface measure $\Phi_{\ast
}\left(  \psi L_{2}^{\mathbf{a}}f\right)  $ has decay
\[
\left\vert \widehat{\Phi_{\ast}\left(  \psi L_{2}^{\mathbf{a}}f\right)
}\left(  \xi\right)  \right\vert \lesssim\left\Vert \psi\right\Vert
_{C^{\frac{n}{2}+2}}\left\Vert f\right\Vert _{L^{p}}\left(  1+\left\vert
\xi\right\vert \right)  ^{-\frac{n-1}{2}},
\]
by e.g. \cite[Theorem 1 page 348]{Ste2} or Theorem \ref{osc int} below. Since
this function is in $L^{q}\left(  \mathbb{R}^{n}\right)  $ for all
$q>\frac{2n}{n-1}$, it follows that%
\[
\left\Vert TL_{2}^{\mathbf{a}}f\right\Vert _{L^{q}}\lesssim\left\Vert
f\right\Vert _{L^{p}\left(  U\right)  },
\]
which proves the second line in (\ref{two ineq}), and completes the proof that
(\ref{prob ext''}) implies (\ref{prob ext'}).
\end{proof}

\section{Smooth Alpert frames in $L^{p}$ spaces}

Recall the Alpert projections $\left\{  \bigtriangleup_{Q;\kappa}\right\}
_{Q\in\mathcal{D}}$ and corresponding wavelets $\left\{  h_{Q;\kappa}%
^{a}\right\}  _{Q\in\mathcal{D},\ a\in\Gamma_{n}}$ of order $\kappa$ in
$\mathbb{R}^{n}$ that were constructed in B. Alpert \cite{Alp} - see also
\cite{RaSaWi} for an extension to doubling measures, and for the terminology
we use here. In fact, $\left\{  h_{Q;\kappa}^{a}\right\}  _{a\in\Gamma}$ is an
orthonormal basis for the finite dimensional vector subspace of $L^{2}$ that
consists of linear combinations of the indicators of\ the children
$\mathfrak{C}\left(  Q\right)  $ of $Q$ multiplied by polynomials of degree at
most $\kappa-1$, and such that the linear combinations have vanishing moments
on the cube $Q$ up to order $\kappa-1$:%
\[
L_{Q;k}^{2}\left(  \mu\right)  \equiv\left\{  f=%
{\displaystyle\sum\limits_{Q^{\prime}\in\mathfrak{C}\left(  Q\right)  }}
\mathbf{1}_{Q^{\prime}}p_{Q^{\prime};k}\left(  x\right)  :\int_{Q}f\left(
x\right)  x_{i}^{\ell}d\mu\left(  x\right)  =0,\ \ \ \text{for }0\leq\ell\leq
k-1\text{ and }1\leq i\leq n\right\}  ,
\]
where $p_{Q^{\prime};k}\left(  x\right)  =\sum_{\alpha\in\mathbb{Z}_{+}%
^{n}:\left\vert \alpha\right\vert \leq k-1\ }a_{Q^{\prime};\alpha}x^{\alpha}$
is a polynomial in $\mathbb{R}^{n}$ of degree $\left\vert \alpha\right\vert
=\alpha_{1}+...+\alpha_{n}$ at most $\kappa-1$, and $x^{\alpha}=x_{1}%
^{\alpha_{1}}x_{2}^{\alpha_{2}}...x_{n-1}^{\alpha_{n-1}}$. Let $d_{Q;\kappa
}\equiv\dim L_{Q;\kappa}^{2}\left(  \mu\right)  $ be the dimension of the
finite dimensional linear space $L_{Q;\kappa}^{2}\left(  \mu\right)  $.
Moreover, for each $a\in\Gamma_{n}$, we may assume the wavelet $h_{Q;\kappa
}^{a}$ is a translation and dilation of the unit wavelet $h_{Q_{0};\kappa}%
^{a}$, where $Q_{0}=\left[  0,1\right)  ^{n}$ is the unit cube in
$\mathbb{R}^{n}$.

\subsection{Alpert square functions}

It is shown in \cite[Corollary 14]{SaWi} (even for doubling measures in place
of Lebesgue measure) that despite the failure of the $\kappa$-Alpert expansion
to be a martingale when $\kappa\geq2$, Burkholder's proof of the martingale
transform theorem nevertheless carries over to prove, along with Khintchine's
inequalities, that the $L^{p}$ norm of the Alpert square function
$\mathcal{S}f$ of $f$ is comparable to the $L^{p}$ norm of $f$, where%
\[
\mathcal{S}f\left(  x\right)  \equiv\left(  \sum_{Q\in\mathcal{D},\ a\in
\Gamma_{n}}\left\vert \bigtriangleup_{Q;\kappa}^{a}f\left(  x\right)
\right\vert ^{2}\right)  ^{\frac{1}{2}},\ \ \ \ \ x\in\mathbb{R}^{n}.
\]
Of course $\mathcal{S}f$ also depends on the grid $\mathcal{D}$ and $\kappa$,
but we suppress this in the notation.

\begin{theorem}
[Sawyer and Wick \cite{SaWi}]\label{Alpert square thm}For $\kappa\in
\mathbb{N}$ and $1<p<\infty$, we have%
\begin{equation}
\left\Vert \mathcal{S}f\right\Vert _{L^{p}\left(  \mathbb{R}^{n}\right)  }\leq
C_{p,n,\kappa}\left\Vert f\right\Vert _{L^{p}\left(  \mathbb{R}^{n}\right)
}.\label{square}%
\end{equation}

\end{theorem}

Thus the Alpert square function enjoys $L^{p}$ inequalities, whereas
boundedness of the Fourier square function $\mathcal{S}_{T\mathbf{1}_{U_{0}}}
$ for $p>\frac{2n}{n-1}$ is the subject of this paper.

\subsection{Smoothing the Alpert wavelets}

Given a small positive constant $\eta>0$, define a smooth approximate identity
by $\phi_{\eta}\left(  x\right)  \equiv\eta^{-n}\phi\left(  \frac{x}{\eta
}\right)  $ where $\phi\in C_{c}^{\infty}\left(  B_{\mathbb{R}^{n}}\left(
0,1\right)  \right)  $ has unit integral, $\int_{\mathbb{R}^{n}}\phi\left(
x\right)  dx=1$, and vanishing moments of \emph{positive} order less than
$\kappa$, i.e.
\begin{equation}
\int_{\mathbb{R}^{n}}\phi\left(  x\right)  x^{\gamma}dx=\delta_{\left\vert
\gamma\right\vert }^{0}=\left\{
\begin{array}
[c]{ccc}%
1 & \text{ if } & \left\vert \gamma\right\vert =0\\
0 & \text{ if } & 0<\left\vert \gamma\right\vert <\kappa
\end{array}
\right.  .\label{van pos}%
\end{equation}
In fact we may take for $\phi\left(  x\right)  $ a product function
$\phi\left(  x\right)  =\prod_{i=1}^{n}\varphi\left(  x_{i}\right)  $ where
$\varphi\in C_{c}^{\infty}\left(  \left(  -1,1\right)  \right)  $ satisfies%
\begin{equation}
\int_{\mathbb{R}}\varphi\left(  x\right)  x^{\gamma}dx=\left\{
\begin{array}
[c]{ccc}%
1 & \text{ if } & \gamma=0\\
0 & \text{ if } & 0<\gamma<\kappa
\end{array}
\right.  ,\ \ \ \ \ \text{for }1\leq i\leq n.\label{van pos 1}%
\end{equation}
One way to construct a function $\varphi$ satisfying (\ref{van pos 1}) is to
pick $\chi\in C_{c}^{\infty}\left(  \left(  \frac{3}{4},1\right)  \right)  $
with $\int\chi\left(  y\right)  dy=1$, a large $N\in\mathbb{N}$, and then for
$\lambda\equiv\left(  \lambda_{1},...,\lambda_{N}\right)  $ to define,%
\[
\varphi_{\lambda}\left(  x\right)  =\sum_{m=1}^{N}\lambda_{m}\chi\left(
2^{m}x\right)  .
\]

Then with the change of variable $y=2^{m}x$ we have,%
\[
\int\varphi_{\lambda}\left(  x\right)  x^{\gamma}dx=\sum_{m=1}^{N}\lambda
_{m}\int\chi\left(  2^{m}x\right)  x^{\gamma}dx=\sum_{m=1}^{N}\lambda
_{m}2^{-m\left(  \gamma+1\right)  }\int\chi\left(  y\right)  y^{\gamma
}dy=C_{\gamma}\sum_{m=1}^{N}\lambda_{m}2^{-m\left(  \gamma+1\right)  }.
\]
In order to achieve $\int\varphi_{\lambda}\left(  x\right)  x^{\gamma
}dx=\left\{
\begin{array}
[c]{ccc}%
1 & \text{ if } & \gamma=0\\
0 & \text{ if } & 0<\gamma<\kappa
\end{array}
\right.  $ we need to solve the linear system,%
\[
1=\sum_{m=1}^{N}\lambda_{m}2^{-m}\text{ and }0=\sum_{m=1}^{N}\lambda
_{m}2^{-m\left(  \gamma+1\right)  },\ \ \ \ \ \text{for }0<\gamma<\kappa,
\]
which in matrix form is%
\[
\mathbf{e}_{1}=M_{\kappa}\mathbf{\lambda\ .}\ \ \ \ \ \text{where }M_{\kappa
}\equiv\left[  2^{-m\ell}\right]  _{\substack{1\leq m\leq N \\1\leq\ell
\leq\kappa}}\ .
\]
We take $N\geq\kappa$ and observe that the square matrix $M_{\kappa}%
\equiv\left[  2^{-m\ell}\right]  _{\substack{1\leq m\leq\kappa\\1\leq\ell
\leq\kappa}}$ has nonzero determinant, in fact $\left\vert \det M_{\kappa
}\right\vert $ is bounded below by $2^{-\frac{\kappa^{2}\left(  \kappa
-1\right)  }{2}}$. Indeed, the square Vandermonde matrix%
\[
V\left(  x\right)  =V\left(  x_{1},x_{2},...,x_{n}\right)  \equiv\left[
\begin{array}
[c]{cccc}%
x_{1} & x_{1}^{2} & \cdots & x_{1}^{n}\\
x_{2} & x_{2}^{2} & \cdots & x_{2}^{n}\\
\vdots & \vdots & \ddots & \vdots\\
x_{n} & x_{n}^{2} & \cdots & x_{n}^{n}%
\end{array}
\right]
\]
has determinant $\det V\left(  x\right)  =%
{\displaystyle\prod\limits_{1\leq i<j\leq n}}
\left(  x_{j}-x_{i}\right)  $. Thus with $x\left(  \kappa\right)  =\left(
2^{-1},2^{-2},...,2^{-\kappa}\right)  \in\mathbb{R}^{\kappa}$, we have
$V\left(  x\left(  \kappa\right)  \right)  =\left[  2^{-m\ell}\right]
_{\substack{1\leq m\leq\kappa\\1\leq\ell\leq\kappa}}=M_{\kappa}$ and so%
\[
\left\vert \det M_{\kappa}\right\vert =%
{\displaystyle\prod\limits_{1\leq i<j\leq\kappa}}
\left\vert 2^{-j}-2^{-i}\right\vert \geq%
{\displaystyle\prod\limits_{1\leq i<j\leq\kappa}}
2^{-\kappa}=2^{-\kappa\frac{\kappa\left(  \kappa-1\right)  }{2}}.
\]
Thus we can find coefficients $\lambda\equiv\left(  \lambda_{1},...,\lambda
_{N}\right)  $ such that $\varphi=\varphi_{\lambda}$ satisfies
(\ref{van pos 1}).

In the spirit of symbol smoothing for pseudodifferential operators, we define
\emph{smooth} Alpert `wavelets' by%
\[
h_{Q;\kappa}^{a,\eta}\equiv h_{Q;\kappa}^{a}\ast\phi_{\eta\ell\left(
Q\right)  },
\]
and we claim that $h_{Q;\kappa}^{a}$ and $h_{Q;\kappa}^{a,\eta}$ coincide away
from the $\eta$-neighbourhood (often referred to as a `halo')%
\begin{equation}
\mathcal{H}_{\eta}\left(  Q\right)  \equiv\left\{  x\in\mathbb{R}%
^{n}:\operatorname*{dist}\left(  x,S_{Q}\right)  <\eta\right\}
,\label{def halo}%
\end{equation}
of the skeleton $S_{Q}\equiv\bigcup_{Q^{\prime}\in\mathfrak{C}_{\mathcal{D}%
}\left(  Q\right)  }\partial Q^{\prime}$. Note that away from the skeleton,
the Alpert wavelet $h_{Q;\kappa}^{a}$ restricts to a polynomial of degree less
than $\kappa$ on each dyadic child of $Q$. We now show the same for smooth
Alpert wavelets away from the halo of the skeleton.

\begin{lemma}
With notation as above and $\phi$ satisfying (\ref{van pos}), we have%
\begin{equation}
h_{Q;\kappa}^{a}\left(  x\right)  =h_{Q;\kappa}^{a,\eta}\left(  x\right)
,\ \ \ \ \ x\in\mathbb{R}^{n}\setminus\mathcal{H}_{\eta}\left(  Q\right)
.\label{coin}%
\end{equation}

\end{lemma}

\begin{proof}
If $m_{\alpha}\left(  x\right)  \equiv x^{\alpha}=x_{1}^{\alpha_{1}}%
x_{2}^{\alpha_{2}}...x_{n}^{\alpha_{n}}$ is a multinomial, then%
\[
\left(  m_{\alpha}\ast\phi\right)  \left(  x\right)  =\sum_{0\leq\beta
\leq\alpha}\left(  c_{\alpha,\beta}\int y^{\alpha-\beta}\phi\left(  y\right)
dy\right)  x^{\beta}=x^{\alpha}=m_{\alpha}\left(  x\right)  ,
\]

which shows that (\ref{coin}) holds.
\end{proof}

We also observe that for $0\leq\left\vert \beta\right\vert <\kappa$,%
\begin{align*}
& \int h_{Q;\kappa}^{a,\eta}\left(  x\right)  x^{\beta}dx=\int\phi_{\eta
\ell\left(  I\right)  }\ast h_{Q;\kappa}^{a}\left(  x\right)  x^{\beta}%
dx=\int\int\phi_{\eta\ell\left(  I\right)  }\left(  y\right)  h_{Q;\kappa}%
^{a}\left(  x-y\right)  x^{\beta}dx\\
& =\int\phi_{\eta\ell\left(  I\right)  }\left(  y\right)  \left\{  \int
h_{Q;\kappa}^{a}\left(  x-y\right)  x^{\beta}dx\right\}  dy=\int\phi_{\eta
\ell\left(  I\right)  }\left(  y\right)  \left\{  \int h_{Q;\kappa}^{a}\left(
x\right)  \left(  x+y\right)  ^{\beta}dx\right\}  dy\\
& =\int\phi_{\eta\ell\left(  I\right)  }\left(  y\right)  \left\{  0\right\}
dy=0,
\end{align*}
by translation invariance of Lebesgue measure.

\subsection{The reproducing formula}

For the purposes of this subsection we will change notation from that in
Theorem \ref{frame} in the introduction by defining%
\[
\bigtriangleup_{I;\kappa}^{\eta}f\equiv\sum_{a\in\Gamma_{n}}\left\langle
f,h_{I;\kappa}^{a}\right\rangle h_{I;\kappa}^{a,\eta}=\left(  \bigtriangleup
_{I;\kappa}f\right)  \ast\phi_{\eta\ell\left(  I\right)  }\ .
\]
Next, for any grid $\mathcal{D}$, we wish to show that for $\eta>0$
sufficiently small, the linear map $S_{\kappa,\eta}^{\mathcal{D}}$ defined by%
\begin{equation}
S_{\kappa,\eta}^{\mathcal{D}}f\equiv\sum_{I\in\mathcal{D},\ a\in\Gamma_{n}%
}\left\langle f,h_{I;\kappa}^{a}\right\rangle h_{I;\kappa}^{a,\eta}=\sum
_{I\in\mathcal{D}}\bigtriangleup_{I;\kappa}^{\eta}f\ ,\ \ \ \ \ f\in
L^{p},\label{def S_eta}%
\end{equation}
is bounded and invertible on $L^{p}$, and that we have the reproducing
formula,
\[
f\left(  x\right)  =\sum_{I\in\mathcal{D},\ a\in\Gamma_{n}}\left\langle
\left(  S_{\kappa,\eta}^{\mathcal{D}}\right)  ^{-1}f,h_{I;\kappa}%
^{a}\right\rangle \ h_{I;\kappa}^{a,\eta}\left(  x\right)
,\ \ \ \ \ \text{for all }f\in L^{p}\cap L^{2},
\]
with convergence in the $L^{p}$ norm. Since $\kappa>\frac{n}{2}$ is fixed
throughout our arguments we will often write $S_{\eta}^{\mathcal{D}}$ instead
of $S_{\kappa,\eta}^{\mathcal{D}}$ in the sequel.

\begin{proof}
[Proof of Theorem \ref{frame}]Theorem \ref{frame} follows easily, together
with what was proved just above, from Theorem \ref{reproducing} below if we
define the pseudoprojection $\bigtriangleup_{I;\kappa}^{\eta}$ in Theorem
\ref{frame} as the pseudoprojection $\widetilde{\bigtriangleup}_{I;\kappa
}^{\eta}$ in Theorem \ref{reproducing}.
\end{proof}

We include arbitrary grids $\mathcal{D}$ in Theorem \ref{reproducing} since
this may be useful in other contexts where probability of grids plays a role,
originating with the work of Nazarov, Treil and Volberg, see e.g. \cite{NTV4}
and \cite{Vol}, and references given there.

\begin{theorem}
\label{reproducing}Let $n\geq2$ and $\kappa\in\mathbb{N}$ with $\kappa
>\frac{n}{2}$. Then there is $\eta_{0}>0$ depending on $n$ and $\kappa$\ such
that for all $0<\eta<\eta_{0}$, and for all grids $\mathcal{D}$ in
$\mathbb{R}^{n}$, and all $1<p<\infty$, there is a bounded invertible operator
$S_{\eta}^{\mathcal{D}}=S_{\kappa,\eta}^{\mathcal{D}}$ on $L^{p}$, and a
positive constant $C_{p,n,\eta}$ such that the collection of functions
$\left\{  h_{I;\kappa}^{a,\eta}\right\}  _{I\in\mathcal{D},\ a\in\Gamma_{n}}$
is a $C_{p,n,\eta}$-frame for $L^{p}$, by which we mean\footnote{See
\cite{AlLuSa} and \cite{CaHaLa} for more detail on frames in $L^{p}$ spaces.},%
\begin{align}
f\left(  x\right)   & =\sum_{I\in\mathcal{D},\ a\in\Gamma_{n}}\widetilde
{\bigtriangleup}_{I;\kappa}^{\eta}f\left(  x\right)  ,\ \ \ \ \ \text{for a.e.
}x\in\mathbb{R}^{n}\text{, and for all }f\in L^{p},\label{bounded below}\\
\text{where }\widetilde{\bigtriangleup}_{I;\kappa}^{\eta}f  & \equiv\sum
_{a\in\Gamma_{n}}\left\langle \left(  S_{\eta}^{\mathcal{D}}\right)
^{-1}f,h_{I;\kappa}^{a}\right\rangle \ h_{I;\kappa}^{a,\eta}\ ,\nonumber
\end{align}
and with convergence of the sum in the $L^{p}$ norm, and%
\begin{align*}
& \frac{1}{C_{p,n,\eta}}\left\Vert f\right\Vert _{L^{p}}\leq\left\Vert \left(
\sum_{I\in\mathcal{D}}\left\vert \widetilde{\bigtriangleup}_{I;\kappa}^{\eta
}f\right\vert ^{2}\right)  ^{\frac{1}{2}}\right\Vert _{L^{p}},\left\Vert
\left(  \sum_{I\in\mathcal{D}}\left\vert \bigtriangleup_{I;\kappa}^{\eta
}f\right\vert ^{2}\right)  ^{\frac{1}{2}}\right\Vert _{L^{p}}\leq C_{p,n,\eta
}\left\Vert f\right\Vert _{L^{p}},\\
&
\ \ \ \ \ \ \ \ \ \ \ \ \ \ \ \ \ \ \ \ \ \ \ \ \ \ \ \ \ \ \ \ \ \ \ \ \ \ \ \text{for
all }f\in L^{p}.
\end{align*}

\end{theorem}

\begin{notation}
\label{Notation Alpert} We will often drop the index $a$ parameterized by the
finite set $\Gamma_{n}$ as it plays no essential role in most of what follows,
and it will be understood that when we write
\[
\bigtriangleup_{Q;\kappa}^{\eta}f=\left\langle f,h_{Q;\kappa}\right\rangle
h_{Q;\kappa}^{\eta},
\]
we \emph{actually} mean the Alpert \emph{pseudoprojection},%
\[
\bigtriangleup_{Q;\kappa}^{\eta}f=\sum_{a\in\Gamma_{n}}\left\langle
f,h_{Q;\kappa}^{a}\right\rangle h_{Q;\kappa}^{a,\eta}\ .
\]

\end{notation}

Now we turn to two propositions that we will use in the proof of Theorem
\ref{reproducing}.

\begin{proposition}
\label{invert}For $\kappa>\frac{n}{2}$ and $\eta>0$ sufficiently small, we
have%
\[
\left\Vert S_{\eta}^{\mathcal{D}}f\right\Vert _{L^{p}}\approx\left\Vert
f\right\Vert _{L^{p}}\ ,\ \ \ \ \ \text{for }f\in L^{p}\cap L^{2}\text{ and
}1<p<\infty.
\]

\end{proposition}

\begin{proposition}
\label{invert dual}For $\kappa>\frac{n}{2}$ and $\eta>0$ sufficiently small,
we have%
\[
\left\Vert \left(  S_{\eta}^{\mathcal{D}}\right)  ^{\ast}f\right\Vert _{L^{p}%
}\approx\left\Vert f\right\Vert _{L^{p}}\ ,\ \ \ \ \ \text{for }f\in L^{p}\cap
L^{2}\text{ and }1<p<\infty.
\]

\end{proposition}

To prove these propositions, we will need some estimates on the inner products
$\left\langle h_{I;\kappa}^{\eta},h_{Q;\kappa}\right\rangle $ where one
wavelet is smooth and the other is not. Fix a dyadic grid $\mathcal{D}$. We
say that dyadic cubes $Q_{1}$ and $Q_{2}$ are \emph{siblings} if $\ell\left(
Q_{1}\right)  =\ell\left(  Q_{2}\right)  $, $Q_{1}\cap Q_{2}=\emptyset$ and
$\overline{Q_{1}}\cap\overline{Q_{2}}\neq\emptyset$, and we say they are
\emph{dyadic} siblings if in addition they have a common dyadic parent, i.e.
$\pi_{\mathcal{D}}Q_{1}=\pi_{\mathcal{D}}Q_{2}$. Finally, we define
$\operatorname*{Car}\left(  Q\right)  $ to be the set of $I\in\mathcal{D}$
with $\ell\left(  I\right)  <\ell\left(  Q\right)  $ such that $I$ and $Q$
share a face. We refer to these cubes $I$ as Carleson cubes of $Q$, and note
they can be either outside $Q$ or inside $Q$. Finally, we may assume without
loss of generality that $\eta$ is a negative integer power of $2$.

\begin{lemma}
\label{inner est}Suppose $\kappa\in\mathbb{N}$ with $\kappa>\frac{n}{2}$,
$0<\eta=2^{-k}<1$, and $I,Q\in\mathcal{D}$, where $\mathcal{D}$ is a grid in
$\mathbb{R}^{n}$. Then we have%
\begin{align*}
\left\vert \left\langle h_{Q;\kappa}^{\eta},h_{Q;\kappa}\right\rangle
\right\vert  & \approx1\text{ and }\left\vert \left\langle h_{Q;\kappa}^{\eta
},h_{Q^{\prime};\kappa}\right\rangle \right\vert \lesssim\eta
,\ \ \ \ \ \text{for }Q\text{ and }Q^{\prime}\text{ siblings},\\
\left\vert \left\langle h_{I;\kappa}^{\eta},h_{Q;\kappa}\right\rangle
\right\vert  & \lesssim\eta\left(  \frac{\ell\left(  I\right)  }{\ell\left(
Q\right)  }\right)  ^{\frac{n}{2}},\ \ \ \ \ \text{for }I\in
\operatorname*{Car}\left(  Q\right)  ,\\
\left\vert \left\langle h_{I;\kappa}^{\eta},h_{Q;\kappa}\right\rangle
\right\vert  & \lesssim\eta\left(  \frac{\ell\left(  Q\right)  }{\ell\left(
I\right)  }\right)  ^{\frac{n}{2}-1},\ \ \ \ \ \text{for }Q\in
\operatorname*{Car}\left(  I\right)  \text{ and }\ell\left(  Q\right)
\geq\eta\ell\left(  I\right)  ,\\
\left\vert \left\langle h_{I;\kappa}^{\eta},h_{Q;\kappa}\right\rangle
\right\vert  & \lesssim\frac{1}{\eta^{\kappa}}\left(  \frac{\ell\left(
Q\right)  }{\ell\left(  I\right)  }\right)  ^{\kappa+\frac{n}{2}%
},\ \ \ \ \ \text{for }\ell\left(  Q\right)  \leq\eta\ell\left(  I\right)
\text{ and }Q\cap\mathcal{H}_{\frac{\eta}{2}}\left(  I\right)  \neq
\emptyset,\\
\left\langle h_{I;\kappa}^{\eta},h_{Q;\kappa}\right\rangle  &
=0,\ \ \ \ \ \text{in all other cases}.
\end{align*}

\end{lemma}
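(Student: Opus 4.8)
\emph{The plan.} I would reduce the whole statement to estimating $\langle h_{I;\kappa }^{\eta }-h_{I;\kappa },\,h_{Q;\kappa }\rangle $ and then run a short case analysis on the relative size and position of $I$ and $Q$. First I would record the facts to be used: the Alpert wavelets $\{h_{Q;\kappa }^{a}\}$ are orthonormal; $h_{Q;\kappa }^{a}$ is supported in $Q$, equals a polynomial of degree $<\kappa $ on each component of $\mathbb{R}^{n}\setminus S_{Q}$ (so there $\left\vert h_{Q;\kappa }\right\vert \lesssim \left\vert Q\right\vert ^{-1/2}$ and $\left\vert \nabla ^{m}h_{Q;\kappa }\right\vert \lesssim \ell (Q)^{-m}\left\vert Q\right\vert ^{-1/2}$), and has vanishing moments of order $<\kappa $; while $h_{I;\kappa }^{\eta }$ has the properties in Theorem \ref{frame} and, since $\phi $ reproduces polynomials of degree $<\kappa $, coincides with $h_{I;\kappa }$ off the $\tfrac{\eta }{2}\ell (I)$-neighbourhood of $S_{I}$ (this is \eqref{coin}). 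Hence $g_{I}:=h_{I;\kappa }^{\eta }-h_{I;\kappa }$ is supported in that neighbourhood, which has Lebesgue measure $\lesssim \eta \left\vert I\right\vert $, with $\left\Vert g_{I}\right\Vert _{\infty }\lesssim \left\vert I\right\vert ^{-1/2}$. By orthonormality $\langle h_{I;\kappa }^{\eta },h_{Q;\kappa }\rangle =\delta _{I,Q}+\langle g_{I},h_{Q;\kappa }\rangle $, so the first claim is immediate from $\left\Vert g_{Q}\right\Vert _{L^{2}}^{2}\lesssim \left\Vert g_{Q}\right\Vert _{\infty }^{2}\,\eta \left\vert Q\right\vert \lesssim \eta $ and Cauchy--Schwarz, and for $I\neq Q$ everything reduces to $\int g_{I}\,h_{Q;\kappa }$.

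The ``all other cases $=0$'' line follows from two observations: $\int g_{I}h_{Q;\kappa }=0$ unless $\limfunc{Supp}g_{I}\cap Q\neq \emptyset $, i.e. unless a neighbourhood of $S_{I}$ meets $Q$; and if $(1+\eta )I$ misses $S_{Q}$ (so $h_{Q;\kappa }$ is a single polynomial of degree $<\kappa $, or is $0$, on all of $(1+\eta )I$) the vanishing moments of $h_{I;\kappa }^{\eta }$ kill the pairing, and symmetrically with the roles of $I$ and $Q$ exchanged. Chasing these conditions through the dyadic grid leaves exactly: $Q$ a sibling of $I$; $I\in \limfunc{Car}(Q)$; $Q\in \limfunc{Car}(I)$; and ``$\ell (Q)\le \eta \ell (I)$ with $Q$ meeting $\mathcal{N}_{I}^{\eta /2}$''. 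The sibling bound is the crudest: $h_{Q;\kappa }^{\eta }$ and $h_{Q^{\prime };\kappa }$ overlap only in a slab of width $\sim \eta \ell (Q)$ along the common face, of measure $\lesssim \eta \left\vert Q\right\vert $, and $\left\Vert h_{Q;\kappa }^{\eta }\right\Vert _{\infty },\left\Vert h_{Q^{\prime };\kappa }\right\Vert _{\infty }\lesssim \left\vert Q\right\vert ^{-1/2}$, giving $\lesssim \eta $.

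For the Carleson cases the idea is to localize the pairing to a thin region and then trade the naive $\left\Vert \cdot \right\Vert _{\infty }\times (\text{measure})$ estimate for powers of the scale ratio, by subtracting the appropriate polynomial (using whichever wavelet has vanishing moments on that region) and Taylor-expanding the other one, which varies only on the \emph{larger} scale. When $Q\in \limfunc{Car}(I)$, $g_{I}$ is supported in $Q\cap \mathcal{N}_{I}^{\eta /2}$; if $\ell (Q)\gtrsim \eta \ell (I)$ this is a slab of measure $\lesssim \eta \,\ell (I)\,\ell (Q)^{n-1}$, and $\left\Vert g_{I}\right\Vert _{\infty }\lesssim \left\vert I\right\vert ^{-1/2}$ against $\left\Vert h_{Q;\kappa }\right\Vert _{L^{1}}\le \left\vert Q\right\vert ^{1/2}$ gives $\eta \,(\ell (Q)/\ell (I))^{n/2-1}$; if $\ell (Q)\le \eta \ell (I)$ the cube $Q$ lies inside the smoothing scale, $h_{I;\kappa }^{\eta }$ is genuinely smooth there, and subtracting its degree-$(\kappa -1)$ Taylor polynomial at the centre of $Q$ and using $\left\Vert \nabla ^{\kappa }h_{I;\kappa }^{\eta }\right\Vert _{\infty }\lesssim (\eta \ell (I))^{-\kappa }\left\vert I\right\vert ^{-1/2}$ together with the vanishing moments of $h_{Q;\kappa }$ gives $\eta ^{-\kappa }(\ell (Q)/\ell (I))^{\kappa +n/2}$. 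Dually, when $I\in \limfunc{Car}(Q)$ I would use the vanishing moments of $h_{I;\kappa }^{\eta }$: with $\Pi $ the polynomial piece of $h_{Q;\kappa }$ on the child of $Q$ meeting $I$, the identity $\int h_{I;\kappa }^{\eta }\,\Pi =0$ reduces $\langle h_{I;\kappa }^{\eta },h_{Q;\kappa }\rangle $ to an integral of $h_{I;\kappa }^{\eta }$ against $h_{Q;\kappa }-\Pi $, which is supported only on the thin sliver where $(1+\eta )I$ leaves that child; expanding $\Pi $ there (it oscillates on scale $\ell (Q)\gg \ell (I)$) against the moments of $h_{I;\kappa }^{\eta }$ produces the gain down to $\eta \,(\ell (I)/\ell (Q))^{n/2+1}$.

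The delicate part is the geometric bookkeeping in the Carleson cases: one must track precisely which faces of $S_{I}$ (or which part of $S_{Q}$) the other wavelet actually sees, confirm that the slivers have the stated measures, and handle the crossover $\ell (Q)\sim \eta \ell (I)$ --- exactly the point at which the Taylor-remainder estimate stops improving on the naive one, which is why two different bounds appear in that range. A secondary point needing care is matching, after the polynomial subtraction, the number of available vanishing moments against the number of Taylor terms retained, so that the exponents come out as $\tfrac{n}{2}\pm 1$ (resp. $\kappa +\tfrac{n}{2}$) rather than something weaker; this is where the hypothesis that $\phi $ itself annihilates moments of positive order $<\kappa $ is used.
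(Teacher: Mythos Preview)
Your proposal is correct and follows essentially the same approach as the paper: both reduce to $\langle h_{I;\kappa}^{\eta}-h_{I;\kappa},h_{Q;\kappa}\rangle$ via orthonormality, exploit that the difference is supported in the halo $\mathcal{H}_{\eta}(I)$, and then run the same case analysis---the trivial slab estimate for $Q\in\limfunc{Car}(I)$ with $\ell(Q)\ge\eta\ell(I)$, the $\kappa$-th order Taylor expansion of the mollified wavelet against the moments of $h_{Q;\kappa}$ when $\ell(Q)\le\eta\ell(I)$, and the vanishing moments of $h_{I;\kappa}^{\eta}$ against the polynomial piece of $h_{Q;\kappa}$ when $I\in\limfunc{Car}(Q)$. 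The only cosmetic difference is in this last case: you subtract the polynomial piece $\Pi$ directly and then argue on the residual sliver, whereas the paper subtracts a \emph{dilated} copy of $\mathbf{1}_{Q'}h_{Q;\kappa}$ (dilated by the factor $1+\eta\,\ell(I)/\ell(Q)$) so that its support covers $(1+\eta)I$ and the comparison $\lvert h_{Q;\kappa}-\varphi\rvert\lesssim \eta\,\tfrac{\ell(I)}{\ell(Q)}\lvert Q\rvert^{-1/2}$ produces the extra power of $\ell(I)/\ell(Q)$ in one stroke; your route needs one more pass (e.g.\ another polynomial subtraction on the sliver side) to extract that same gain, but the idea is the same.
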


\begin{proof}
Fix a grid $\mathcal{D}$, and take $0<\eta<1$. We have
\[
\left\langle h_{Q;\kappa}^{\eta},h_{Q;\kappa}\right\rangle =\left\langle
h_{Q;\kappa},h_{Q;\kappa}\right\rangle +\left\langle h_{Q;\kappa}^{\eta
}-h_{Q;\kappa},h_{Q;\kappa}\right\rangle =1+\int_{\mathcal{H}_{\eta}\left(
Q\right)  }\left(  h_{Q;\kappa}^{\eta}-h_{Q;\kappa}\right)  \left(  x\right)
h_{Q;\kappa}\left(  x\right)  dx,
\]
where
\[
\left\vert \int_{\mathcal{H}_{\eta}\left(  Q\right)  }\left(  h_{Q;\kappa
}^{\eta}-h_{Q;\kappa}\right)  \left(  x\right)  h_{Q;\kappa}\left(  x\right)
dx\right\vert \lesssim\left\Vert h_{Q;\kappa}^{\eta}-h_{Q;\kappa}\right\Vert
_{\infty}\left\Vert h_{Q;\kappa}\right\Vert _{\infty}\left\vert \mathcal{H}%
_{\eta}\left(  Q\right)  \right\vert \lesssim\frac{1}{\sqrt{\left\vert
Q\right\vert }}\frac{1}{\sqrt{\left\vert Q\right\vert }}\eta\left\vert
Q\right\vert =\eta.
\]

Next we note that if $I$ is a dyadic cube and $Q\in\operatorname*{Car}\left(
I\right)  $, then $Q\cap\mathcal{H}_{\eta}\left(  I\right)  \neq\emptyset$ and
$\left\langle h_{I;\kappa}^{\eta},h_{Q;\kappa}\right\rangle \neq0$ where
$\eta=2^{-k}$ imply that $\operatorname*{Supp}h_{Q;\kappa}=Q\subset
\mathcal{H}_{\eta}\left(  I\right)  $. If $Q\subset\mathcal{H}_{\eta}\left(
I\right)  $, then we have
\begin{align*}
& \left\langle h_{I;\kappa}^{\eta},h_{Q;\kappa}\right\rangle =\int
_{\mathcal{H}_{\eta}\left(  I\right)  }\mathbf{1}_{Q}h_{I;\kappa}^{\eta
}\left(  x\right)  h_{Q;\kappa}\left(  x\right)  dx=\int_{Q\cap\mathcal{H}%
_{\eta}\left(  I\right)  }\left(  h_{I;\kappa}\ast\phi_{\eta\ell\left(
I\right)  }\right)  \left(  x\right)  h_{Q;\kappa}\left(  x\right)  dx\\
& =\int_{Q\cap\mathcal{H}_{\eta}\left(  I\right)  }\left\{  \int
_{I}h_{I;\kappa}\left(  y\right)  \phi_{\eta\ell\left(  I\right)  }\left(
x-y\right)  dy\right\}  h_{Q;\kappa}\left(  x\right)  dx=\int_{I}h_{I;\kappa
}\left(  y\right)  \left\{  \int_{Q\cap\mathcal{H}_{\eta}\left(  I\right)
}\phi_{\eta\ell\left(  I\right)  }\left(  x-y\right)  h_{Q;\kappa}\left(
x\right)  dx\right\}  dy\\
& =\int_{I\cap2\eta\ell\left(  I\right)  Q}h_{I;\kappa}\left(  y\right)
\left\{  \int_{Q\cap\mathcal{H}_{\eta}\left(  I\right)  }\left[  \phi
_{\eta\ell\left(  I\right)  }\left(  x-y\right)  -\sum_{j=0}^{\kappa-1}\left(
\left(  x-c_{Q}\right)  \cdot\nabla\right)  ^{j}\phi_{\eta\ell\left(
I\right)  }\left(  c_{Q}-y\right)  \right]  h_{Q;\kappa}\left(  x\right)
dx\right\}  dy\\
& \leq\left\Vert h_{I;\kappa}\right\Vert _{\infty}\left\Vert \left(
\nabla^{\kappa}\phi_{\eta\ell\left(  I\right)  }\right)  \right\Vert _{\infty
}\ell\left(  Q\right)  ^{\kappa}\left\Vert h_{Q;\kappa}\right\Vert _{\infty
}\int_{B\left(  c_{Q},\eta\ell\left(  I\right)  \right)  }\int_{Q\cap
\mathcal{H}_{\eta}\left(  I\right)  }dxdy\\
& \lesssim\sqrt{\frac{1}{\left\vert I\right\vert }}\left\Vert \nabla^{\kappa
}\phi\right\Vert _{\infty}\left(  \frac{1}{\eta\ell\left(  I\right)  }\right)
^{n+\kappa}\ell\left(  Q\right)  ^{\kappa}\sqrt{\frac{1}{\left\vert
Q\right\vert }}\left\vert B\left(  c_{Q},\eta\ell\left(  I\right)  \right)
\right\vert \left\vert Q\cap\mathcal{H}_{\eta}\left(  I\right)  \right\vert
\lesssim\frac{1}{\eta^{\kappa}}\left(  \frac{\ell\left(  Q\right)  }%
{\ell\left(  I\right)  }\right)  ^{\kappa+\frac{n}{2}},
\end{align*}
since $\left\Vert h_{I;\kappa}\right\Vert _{\infty}\lesssim\sqrt{\frac
{1}{\left\vert I\right\vert }}$, $\left\Vert h_{Q;\kappa}\right\Vert _{\infty
}\lesssim\sqrt{\frac{1}{\left\vert Q\right\vert }}$ and $\left\Vert
\nabla^{\kappa}\phi_{\eta\ell\left(  I\right)  }\right\Vert _{\infty}%
\leq\left\Vert \nabla^{\kappa}\phi\right\Vert _{\infty}\left(  \frac{1}%
{\eta\ell\left(  I\right)  }\right)  ^{\kappa}$.

If $Q\in\operatorname*{Car}\left(  I\right)  $ and $\ell\left(  Q\right)
\geq\eta\ell\left(  I\right)  $, then we have the trivial estimate%
\[
\left\vert \left\langle h_{I;\kappa}^{\eta},h_{Q;\kappa}\right\rangle
\right\vert \lesssim\eta\ell\left(  I\right)  \ell\left(  Q\right)
^{n-1}\sqrt{\frac{1}{\left\vert I\right\vert \left\vert Q\right\vert }}%
=\eta\left(  \frac{\ell\left(  Q\right)  }{\ell\left(  I\right)  }\right)
^{\frac{n}{2}-1}.
\]

On the other hand, if $I\in\operatorname*{Car}\left(  Q\right)  $, we claim
that
\[
\left\vert \left\langle h_{I;\kappa}^{\eta},h_{Q;\kappa}\right\rangle
\right\vert \lesssim\eta\left(  \frac{\ell\left(  I\right)  }{\ell\left(
Q\right)  }\right)  ^{\frac{n}{2}}.
\]
Indeed, this is clear if $Q\cap I=\emptyset$ since then $\left\vert
\left\langle h_{I;\kappa}^{\eta},h_{Q;\kappa}\right\rangle \right\vert
\leq\eta\left\vert I\right\vert \sqrt{\frac{1}{\left\vert I\right\vert }}%
\sqrt{\frac{1}{\left\vert Q\right\vert }}$, while if $Q^{\prime}%
\in\mathfrak{C}_{D}\left(  I\right)  $ is the child containing $I$, and if
$\varphi\left(  x-c_{Q^{\prime}}\right)  $ is the polynomial whose restriction
to $Q^{\prime}$ is $\left(  \mathbf{1}_{Q^{\prime}}h_{Q;\kappa}\right)
\left(  x\right)  $, then $\left\langle h_{I;\kappa}^{\eta},\varphi
\right\rangle =0$ and so
\[
\left\vert \left\langle h_{I;\kappa}^{\eta},h_{Q;\kappa}\right\rangle
\right\vert =\left\vert \left\langle h_{I;\kappa}^{\eta},h_{Q;\kappa}%
-\varphi\right\rangle \right\vert \lesssim\eta\sqrt{\frac{\left\vert
I\right\vert }{\left\vert Q\right\vert }}=\eta\left(  \frac{\ell\left(
I\right)  }{\ell\left(  Q\right)  }\right)  ^{\frac{n}{2}}.
\]

\end{proof}

We will also need the following consequence of the Marcinkiewicz interpolation theorem.

\begin{lemma}
\label{Marcin}For $1<p<\infty$ and $\kappa\in\mathbb{N}$, we have%
\begin{align*}
& \left\Vert \left(  \sum_{I\in\mathcal{D}}\left(  \frac{\left\vert
\left\langle f,h_{I;\kappa}\right\rangle \right\vert }{\left\vert I\right\vert
^{\frac{1}{2}}}\mathbf{1}_{\mathcal{H}_{\eta}\left(  I\right)  }\left(
x\right)  \right)  ^{2}\right)  ^{\frac{1}{2}}\right\Vert _{L^{p}}\leq
C_{p,n}\eta^{\gamma_{p}}\left\Vert f\right\Vert _{L^{p}},\\
\text{where }\gamma_{p}  & \equiv\left\{
\begin{array}
[c]{ccc}%
\frac{1}{2\left(  p-1\right)  } & \text{ if } & p>2\\
\frac{1}{2} & \text{ if } & p=2\\
\frac{p-1}{p\left(  3-p\right)  } & \text{ if } & 1<p<2
\end{array}
\right.  .
\end{align*}

\end{lemma}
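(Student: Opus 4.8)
Write $\mathcal{G}f(x):=\bigl(\sum_{I\in\mathcal D,\,a\in\Gamma_{n}}\tfrac{|\langle f,h_{I;\kappa}^{a}\rangle|^{2}}{|I|}\mathbf 1_{\mathcal H_{\eta}(I)}(x)\bigr)^{1/2}$ for the quantity in the statement. The plan is to isolate the one $L^{2}$ estimate that carries a gain in $\eta$, pair it with an $\eta$-free $L^{q}$ estimate, and interpolate. The case $p=2$ is immediate: by Tonelli $\|\mathcal{G}f\|_{L^{2}}^{2}=\sum_{I,a}\tfrac{|\langle f,h_{I;\kappa}^{a}\rangle|^{2}}{|I|}|\mathcal H_{\eta}(I)|$, and since the skeleton $S_{I}$ is a finite union of $(n-1)$-faces of total measure $\approx\ell(I)^{n-1}$ one has $|\mathcal H_{\eta}(I)|\approx\eta|I|$; combined with Parseval for the Alpert orthonormal system this gives $\|\mathcal{G}f\|_{L^{2}}\lesssim\eta^{1/2}\|f\|_{L^{2}}$, which is the asserted bound for $p=2$.

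Second, $\|\mathcal{G}f\|_{L^{q}}\le C_{q,n}\|f\|_{L^{q}}$ for every $1<q<\infty$, uniformly in $0<\eta<1$. Indeed $\mathcal H_{\eta}(I)$ lies in a bounded dilate of $I$, hence is covered by at most $C_{n}$ cubes of $\mathcal D$ of the same generation as $I$, so $\mathcal{G}f$ is pointwise dominated by a bounded sum of dyadically shifted Alpert square functions, each controlled in $L^{q}$ by \eqref{square}. Concretely, for $q\ge 2$ one pairs $(\mathcal{G}f)^{2}$ against $g\in L^{(q/2)'}$, uses $\int_{\mathcal H_{\eta}(I)}g\lesssim_{n}\int_{I}Mg$ and $|\langle f,h_{I;\kappa}\rangle|^{2}=\int_{I}|\bigtriangleup_{I;\kappa}f|^{2}$ to reduce to $\int(\mathcal{G}f)^{2}g\lesssim_{n}\int(\mathcal{S}f)^{2}\,Mg$, and then invokes the Hardy--Littlewood maximal theorem and \eqref{square}; for $1<q<2$ one first proves a weak $(1,1)$ bound for $\mathcal{G}$ by a Calder\'on--Zygmund decomposition, the good part being handled by the $L^{2}$ estimate above and the bad part by the fact that $h_{I;\kappa}$ restricts to a polynomial of degree $<\kappa$ on any dyadic subcube disjoint from the skeleton — which forces $|\langle b_{j},h_{I;\kappa}\rangle|\lesssim\|b_{j}\|_{1}\,\ell(Q_{j})/(\ell(I)|I|^{1/2})$ and hence, after summing the halo contributions $\sum_{k}2^{-k(n+1)}|\mathcal H_{\eta}(I_{k})|$ over the tower of dyadic ancestors $I_{k}\supseteq Q_{j}$, $\int_{(CQ_{j})^{c}}\mathcal{G}b_{j}\lesssim\|b_{j}\|_{1}$ — followed by Marcinkiewicz interpolation.

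Third, fix $f$ and choose a measurable unit vector field $c=(c_{I,a}(x))\in\ell^{2}$, $\|c(x)\|_{\ell^{2}}\le 1$, with $\mathcal{G}f(x)=\sum_{I,a}\overline{c_{I,a}(x)}\,\tfrac{\langle f,h_{I;\kappa}^{a}\rangle}{|I|^{1/2}}\mathbf 1_{\mathcal H_{\eta}(I)}(x)$, and let $L_{c}g(x):=\sum_{I,a}\overline{c_{I,a}(x)}\,\tfrac{\langle g,h_{I;\kappa}^{a}\rangle}{|I|^{1/2}}\mathbf 1_{\mathcal H_{\eta}(I)}(x)$, which is linear in $g$. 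By Cauchy--Schwarz $|L_{c}g|\le\mathcal{G}g$ pointwise, so the two preceding paragraphs give $\|L_{c}\|_{L^{2}\to L^{2}}\lesssim\eta^{1/2}$ and $\|L_{c}\|_{L^{q}\to L^{q}}\le C_{q,n}$, both uniform in the choice of $c$. Riesz--Thorin between $L^{2}$ and $L^{q}$ then yields $\|L_{c}\|_{L^{p}\to L^{p}}\lesssim\eta^{(1-\theta)/2}C_{q,n}^{\theta}$ for $\tfrac1p=\tfrac{1-\theta}{2}+\tfrac{\theta}{q}$, whence $\|\mathcal{G}f\|_{L^{p}}=\|L_{c}f\|_{L^{p}}\le C_{p,n}\,\eta^{(1-\theta)/2}\|f\|_{L^{p}}$. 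A short computation shows that taking $q=2p$ when $p>2$ gives $1-\theta=\tfrac1{p-1}$, hence the exponent $\tfrac1{2(p-1)}$, while taking $q=\tfrac{1+p}{2}\in(1,p)$ when $1<p<2$ gives $\tfrac{1-\theta}{2}=\tfrac{p-1}{p(3-p)}$; these are precisely the stated values of $\gamma_{p}$.

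The substance lies in the $\eta$-free $L^{q}$ estimate, and within it in the weak $(1,1)$ bound for $1<q<2$, where one must control $\mathcal{G}b_{j}$ away from $CQ_{j}$ by summing the halo masses across all scales; the $p=2$ step and the interpolation are routine. I note that these $\gamma_{p}$ are not sharp — a larger auxiliary exponent $q$ improves them — so the stated values merely record the clean choices $q=2p$ and $q=\tfrac{1+p}{2}$; any fixed positive power of $\eta$ suffices downstream, and the $q$-dependent square-function constant is absorbed into $C_{p,n}$.
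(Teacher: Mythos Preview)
Your proof is correct and follows the same three-step skeleton as the paper --- an $L^{2}$ bound with gain $\eta^{1/2}$, an $\eta$-free $L^{q}$ bound, and interpolation with the choices $q=2p$ (for $p>2$) and $q=\tfrac{1+p}{2}$ (for $1<p<2$). The difference lies entirely in how you obtain the $\eta$-free $L^{q}$ estimate.

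The paper avoids your duality/Calder\'on--Zygmund work by a single reduction: since $\mathbf{1}_{\mathcal H_{\eta}(I)}\lesssim M\mathbf{1}_{I\cap\mathcal H_{\eta}(I)}$, the Fefferman--Stein vector-valued maximal inequality immediately replaces $\mathbf{1}_{\mathcal H_{\eta}(I)}$ by $\mathbf{1}_{I\cap\mathcal H_{\eta}(I)}$ in the square function, at the cost of a constant depending only on $p$ and $n$. After this, the $L^{q}$ bound is trivial for \emph{every} $1<q<\infty$ because $\mathbf{1}_{I\cap\mathcal H_{\eta}(I)}\le\mathbf{1}_{I}$ reduces the expression to the standard Alpert square function, to which \eqref{square} applies directly. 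No separate treatment of $q\ge 2$ versus $1<q<2$, and in particular no weak-$(1,1)$ argument, is needed. Your route via pairing against $g$ and the maximal function (for $q\ge 2$) and via a Calder\'on--Zygmund decomposition exploiting the polynomial structure of $h_{I;\kappa}$ on dyadic children (for $1<q<2$) is sound and self-contained, but it is appreciably longer; the paper's Fefferman--Stein trick buys the same conclusion in two lines. On the interpolation side, your linearization $L_{c}$ followed by Riesz--Thorin is equivalent to the paper's invocation of the ``scaled Marcinkiewicz theorem'' for the linearizable sublinear operator $\mathcal R_{\eta}$; both yield the product bound $B_{2}^{1-\theta}B_{q}^{\theta}$ and hence the same $\gamma_{p}$.
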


\begin{proof}
Define the square function $\mathcal{R}_{\eta}$ by%
\[
\mathcal{R}_{\eta}f\left(  x\right)  \equiv\left(  \sum_{I\in\mathcal{D}%
}\left(  \frac{\left\vert \left\langle f,h_{I;\kappa}\right\rangle \right\vert
}{\left\vert I\right\vert ^{\frac{1}{2}}}\mathbf{1}_{I\cap\mathcal{H}_{\eta
}\left(  I\right)  }\left(  x\right)  \right)  ^{2}\right)  ^{\frac{1}{2}}.
\]
Using $\mathbf{1}_{\mathcal{H}_{\eta}\left(  I\right)  }\left(  x\right)
\lesssim M\mathbf{1}_{I\cap\mathcal{H}_{\eta}\left(  I\right)  }\left(
x\right)  $, the Fefferman-Stein vector valued maximal inequality \cite{FeSt}
yields,%
\begin{align*}
& \left\Vert \left(  \sum_{I\in\mathcal{D}}\left(  \frac{\left\vert
\left\langle f,h_{I;\kappa}\right\rangle \right\vert }{\left\vert I\right\vert
^{\frac{1}{2}}}\mathbf{1}_{\mathcal{H}_{\eta}\left(  I\right)  }\left(
x\right)  \right)  ^{2}\right)  ^{\frac{1}{2}}\right\Vert _{L^{p}}%
\lesssim\left\Vert \left(  \sum_{I\in\mathcal{D}}\left(  \frac{\left\vert
\left\langle f,h_{I;\kappa}\right\rangle \right\vert }{\left\vert I\right\vert
^{\frac{1}{2}}}M\mathbf{1}_{I\cap\mathcal{H}_{\eta}\left(  I\right)  }\left(
x\right)  \right)  ^{2}\right)  ^{\frac{1}{2}}\right\Vert _{L^{p}}\\
& \ \ \ \ \ \ \ \ \ \ \ \ \ \ \ \ \ \ \ \ \ \ \ \ \ \ \ \ \ \ \lesssim
\left\Vert \left(  \sum_{I\in\mathcal{D}}\left(  \frac{\left\vert \left\langle
f,h_{I;\kappa}\right\rangle \right\vert }{\left\vert I\right\vert ^{\frac
{1}{2}}}\mathbf{1}_{I\cap\mathcal{H}_{\eta}\left(  I\right)  }\left(
x\right)  \right)  ^{2}\right)  ^{\frac{1}{2}}\right\Vert _{L^{p}}=\left\Vert
\mathcal{R}_{\eta}f\left(  x\right)  \right\Vert _{L^{p}}\ .
\end{align*}
Now we note that%
\[
\left\Vert \mathcal{R}_{\eta}f\right\Vert _{L^{p}}\lesssim\left\Vert \left(
\sum_{I\in\mathcal{D}}\left(  \frac{\left\vert \left\langle f,h_{I;\kappa
}\right\rangle \right\vert }{\left\vert I\right\vert ^{\frac{1}{2}}}%
\mathbf{1}_{I}\right)  ^{2}\right)  ^{\frac{1}{2}}\right\Vert _{L^{p}%
}=\left\Vert \left(  \sum_{I\in\mathcal{D}}\left(  \bigtriangleup_{I;\kappa
}f\right)  ^{2}\right)  ^{\frac{1}{2}}\right\Vert _{L^{p}}=\left\Vert
\mathcal{R}f\right\Vert _{L^{p}}\approx\left\Vert f\right\Vert _{L^{p}}%
\]
and
\begin{align*}
\left\Vert \mathcal{R}_{\eta}f\right\Vert _{L^{2}}^{2}  & =\int\sum
_{I\in\mathcal{D}}\left(  \frac{\left\vert \left\langle f,h_{I;\kappa
}\right\rangle \right\vert }{\left\vert I\right\vert ^{\frac{1}{2}}}%
\mathbf{1}_{I\cap\mathcal{H}_{\eta}\left(  I\right)  }\left(  x\right)
\right)  ^{2}dx=\int\sum_{I,I^{\prime}\in\mathcal{D}}\frac{\left\vert
\left\langle f,h_{I;\kappa}\right\rangle \right\vert }{\left\vert I\right\vert
^{\frac{1}{2}}}\frac{\left\vert \left\langle f,h_{I^{\prime};\kappa
}\right\rangle \right\vert }{\left\vert I^{\prime}\right\vert ^{\frac{1}{2}}%
}\mathbf{1}_{I\cap\mathcal{H}_{\eta}\left(  I\right)  }\left(  x\right)
\mathbf{1}_{I^{\prime}\cap\mathcal{H}_{\eta}\left(  I^{\prime}\right)
}\left(  x\right)  dx\\
& =\sum_{I,I^{\prime}\in\mathcal{D}}\frac{\left\vert \left\langle
f,h_{I;\kappa}\right\rangle \right\vert }{\left\vert I\right\vert ^{\frac
{1}{2}}}\frac{\left\vert \left\langle f,h_{I^{\prime};\kappa}\right\rangle
\right\vert }{\left\vert I^{\prime}\right\vert ^{\frac{1}{2}}}\left\vert
I\cap\mathcal{H}_{\eta}\left(  I\right)  \cap I^{\prime}\cap\mathcal{H}_{\eta
}\left(  I^{\prime}\right)  \right\vert \leq\sum_{I,I^{\prime}\in\mathcal{D}%
}\frac{\left\vert \left\langle f,h_{I;\kappa}\right\rangle \right\vert
}{\left\vert I\right\vert ^{\frac{1}{2}}}\frac{\left\vert \left\langle
f,h_{I^{\prime};\kappa}\right\rangle \right\vert }{\left\vert I^{\prime
}\right\vert ^{\frac{1}{2}}}\eta\left\vert I\cap I^{\prime}\right\vert \\
& =\eta\int\sum_{I\in\mathcal{D}}\left(  \frac{\left\vert \left\langle
f,h_{I;\kappa}\right\rangle \right\vert }{\left\vert I\right\vert ^{\frac
{1}{2}}}\mathbf{1}_{I}\left(  x\right)  \right)  ^{2}dx=\eta\int\sum
_{I\in\mathcal{D}}\frac{\left\vert \left\langle f,h_{I;\kappa}\right\rangle
\right\vert ^{2}}{\left\vert I\right\vert }\mathbf{1}_{I}\left(  x\right)
dx=\eta\sum_{I\in\mathcal{D}}\left\vert \left\langle f,h_{I;\kappa
}\right\rangle \right\vert ^{2}=\eta\left\Vert f\right\Vert _{L^{2}}^{2}\ .
\end{align*}
Thus the (linearizable) sublinear operator $\mathcal{R}_{\eta}$ maps
$L^{2}\rightarrow L^{2}$ with bound $B_{2}\equiv\eta^{\frac{1}{2}}$, and
maps$\mathcal{\ }L^{q}\rightarrow L^{q}$ with bound $B_{q}\equiv
C_{n,q}^{\prime}$ for $1<q<\infty$ and $q\neq2$.

In the case $p>2$, let $q=2p$. Then by the scaled Marcinkiewicz theorem
applied to $\mathcal{R}_{\eta}$ with exponents $2$ and $q=2p$, see e.g.
\cite[Remark 29]{Tao2}, we have
\[
\left\Vert \mathcal{R}_{\eta}f\right\Vert _{L^{p}}\leq C_{n,p}^{\prime\prime
}B_{2}^{1-\theta}B_{2p}^{\theta}=C_{n,p}^{\prime\prime}\eta^{\frac{1}%
{2}\left(  1-\theta\right)  }\left(  C_{n,2p}^{\prime}\right)  ^{\theta
}=C_{n,p}\eta^{\frac{1}{2\left(  p-1\right)  }},
\]
with $C_{n,p}=C_{n,p}^{\prime\prime}\left(  C_{n,2p}^{\prime}\right)
^{\frac{p-2}{p-1}}$, since $\frac{1}{p}=\frac{1-\theta}{2}+\frac{\theta}{2p}$
implies $1-\theta=\frac{1}{p-1}$.

In the case $1<p<2$, take $q=\frac{1+p}{2}$ and apply the scaled Marcinkiewicz
theorem to $\mathcal{R}_{\eta}$ with exponents $2$ and $q=\frac{1+p}{2}$ to
obtain%
\[
\left\Vert \mathcal{R}_{\eta}f\right\Vert _{L^{p}}\leq C_{n,p}^{\prime\prime
}B_{2}^{1-\theta}B_{\frac{1+p}{2}}^{\theta}=C_{n,p}^{\prime\prime}\eta
^{\frac{1}{2}\left(  1-\theta\right)  }\left(  C_{n,\frac{1+p}{2}}^{\prime
}\right)  ^{\theta}=C_{n,p}\eta^{\frac{p-1}{p\left(  3-p\right)  }},
\]
with $C_{n,p}=C_{n,p}^{\prime\prime}\left(  C_{n,\frac{1+p}{2}}^{\prime
}\right)  ^{\theta}$, since $\frac{1}{p}=\frac{1-\theta}{2}+\frac{\theta
}{\frac{1+p}{2}}$ implies $1-\theta=\frac{2p-2}{p\left(  3-p\right)  }$.
\end{proof}

\subsubsection{Injectivity}

We can now prove Proposition \ref{invert}.

\begin{proof}
[Proof of Proposition \ref{invert}]We have%
\[
S_{\eta}^{\mathcal{D}}f=\sum_{Q\in\mathcal{D}}\bigtriangleup_{Q;\kappa}%
S_{\eta}f=\sum_{Q\in\mathcal{D}}\left\langle S_{\eta}f,h_{Q;\kappa
}\right\rangle h_{Q;\kappa}=\sum_{Q\in\mathcal{D}}\left\langle \sum
_{I\in\mathcal{D}}\left\langle f,h_{I;\kappa}\right\rangle h_{I;\kappa}^{\eta
},h_{Q;\kappa}\right\rangle h_{Q;\kappa}=\sum_{Q,I\in\mathcal{D}}\left\langle
f,h_{I;\kappa}\right\rangle \left\langle h_{I;\kappa}^{\eta},h_{Q;\kappa
}\right\rangle h_{Q;\kappa}\ ,
\]
and%
\begin{align*}
& \left\Vert S_{\eta}^{\mathcal{D}}f\right\Vert _{L^{p}}\approx\left\Vert
\left(  \sum_{Q\in\mathcal{D}}\left\vert \left\langle S_{\eta}f,h_{Q;\kappa
}\right\rangle h_{Q;\kappa}\right\vert ^{2}\right)  ^{\frac{1}{2}}\right\Vert
_{L^{p}}=\left\Vert \left(  \sum_{Q\in\mathcal{D}}\left\vert \sum
_{I\in\mathcal{D}}\left\langle f,h_{I;\kappa}\right\rangle \left\langle
h_{I;\kappa}^{\eta},h_{Q;\kappa}\right\rangle h_{Q;\kappa}\right\vert
^{2}\right)  ^{\frac{1}{2}}\right\Vert _{L^{p}}\\
& \approx\left\Vert \left(  \sum_{Q\in\mathcal{D}}\left\vert \left\langle
f,h_{Q;\kappa}\right\rangle \left\langle h_{Q;\kappa}^{\eta},h_{Q;\kappa
}\right\rangle \right\vert ^{2}\left\vert h_{Q;\kappa}\right\vert ^{2}\right)
^{\frac{1}{2}}\right\Vert _{L^{p}}+O\left(  \left\Vert \left(  \sum
_{Q\in\mathcal{D}}\left\vert \sum_{I\in\mathcal{D}:\ I\neq Q}\left\langle
f,h_{I;\kappa}\right\rangle \left\langle h_{I;\kappa}^{\eta},h_{Q;\kappa
}\right\rangle \right\vert ^{2}\left\vert h_{Q;\kappa}\right\vert ^{2}\right)
^{\frac{1}{2}}\right\Vert _{L^{p}}\right) \\
& \approx\left\Vert \left(  \sum_{Q\in\mathcal{D}}\left\vert \left\langle
f,h_{Q;\kappa}\right\rangle \right\vert ^{2}\frac{1}{\left\vert Q\right\vert
}\mathbf{1}_{Q}\right)  ^{\frac{1}{2}}\right\Vert _{L^{p}}^{p}+O\left(
\left\Vert \left(  \sum_{Q\in\mathcal{D}}\frac{1}{\left\vert Q\right\vert
}\left\vert \sum_{I\in\mathcal{D}:\ I\neq Q}\left\langle f,h_{I;\kappa
}\right\rangle \left\langle h_{I;\kappa}^{\eta},h_{Q;\kappa}\right\rangle
\right\vert ^{2}\mathbf{1}_{Q}\right)  ^{\frac{1}{2}}\right\Vert _{L^{p}%
}\right)  ,
\end{align*}
where by the Alpert square function estimate (\ref{square}),
\[
C_{p}\left\Vert f\right\Vert _{L^{p}}^{p}\geq\left\Vert \left(  \sum
_{Q\in\mathcal{D}}\left\vert \left\langle f,h_{Q;\kappa}\right\rangle
\right\vert ^{2}\frac{1}{\left\vert Q\right\vert }\mathbf{1}_{Q}\right)
^{\frac{1}{2}}\right\Vert _{L^{p}}^{p}=\left\Vert \left(  \sum_{Q\in
\mathcal{D}}\left\vert \bigtriangleup_{Q;\kappa}f\right\vert ^{2}\right)
^{\frac{1}{2}}\right\Vert _{L^{p}}^{p}\geq c_{p}\left\Vert f\right\Vert
_{L^{p}}^{p}\ ,
\]
for some $C_{p},c_{p}>0$.

Thus we have for each $Q\in\mathcal{D}$,
\begin{align*}
& \sum_{I\in\mathcal{D}:\ I\neq Q}\left\langle f,h_{I}\right\rangle
\left\langle h_{I}^{\eta},h_{Q}\right\rangle =\sum_{\substack{I\in
\mathcal{D}:\ \ell\left(  I\right)  <\ell\left(  Q\right)  \\I\in
\operatorname*{Car}\left(  Q\right)  }}\left\langle f,h_{I}\right\rangle
\left\langle h_{I}^{\eta},h_{Q}\right\rangle +\sum_{\substack{I\in
\mathcal{D}:\ \ell\left(  I\right)  >\ell\left(  Q\right)  \\Q\cap
\mathcal{H}_{\frac{\eta}{2}}\left(  I\right)  \neq\emptyset}}\left\langle
f,h_{I}\right\rangle \left\langle h_{I}^{\eta},h_{Q}\right\rangle \\
&
\ \ \ \ \ \ \ \ \ \ \ \ \ \ \ \ \ \ \ \ \ \ \ \ \ \ \ \ \ \ \ \ \ \ \ \ \ \ \ \ \ \ \ \ \ \ \ +\sum
_{\substack{I\in\mathcal{D}:\ \ell\left(  I\right)  \geq\ell\left(  Q\right)
\geq\eta\ell\left(  I\right)  \\Q\in\operatorname*{Car}\left(  I\right)
}}\left\langle f,h_{I}\right\rangle \left\langle h_{I}^{\eta},h_{Q}%
\right\rangle .
\end{align*}
As a consequence of the estimates in Lemma \ref{inner est}, we have for each
$Q\in\mathcal{D}$,%
\begin{align*}
\left\vert \sum_{I\in\mathcal{D}:\ I\neq Q}\left\langle f,h_{I;\kappa
}\right\rangle \left\langle h_{I;\kappa}^{\eta},h_{Q}\right\rangle
\right\vert  & \lesssim\eta\sum_{\substack{I\in\mathcal{D}:\ \ell\left(
I\right)  <\ell\left(  Q\right)  \\I\in\operatorname*{Car}\left(  Q\right)
}}\left\vert \left\langle f,h_{I;\kappa}\right\rangle \right\vert \left(
\frac{\ell\left(  I\right)  }{\ell\left(  Q\right)  }\right)  ^{\frac{n}{2}%
}+\sum_{\substack{I\in\mathcal{D}:\ \ell\left(  Q\right)  \leq\eta\ell\left(
I\right)  \\Q\cap\mathcal{H}_{\frac{\eta}{2}}\left(  I\right)  \neq\emptyset
}}\left\vert \left\langle f,h_{I;\kappa}\right\rangle \right\vert \frac
{1}{\eta^{\kappa}}\left(  \frac{\ell\left(  Q\right)  }{\ell\left(  I\right)
}\right)  ^{\kappa+\frac{n}{2}}\\
& \ \ \ \ \ \ \ \ \ \ \ \ \ \ \ \ \ \ \ \ +\left\vert \sum_{\substack{I\in
\mathcal{D}:\ \ell\left(  I\right)  \geq\ell\left(  Q\right)  \geq\eta
\ell\left(  I\right)  \\Q\in\operatorname*{Car}\left(  I\right)
}}\left\langle f,h_{I;\kappa}\right\rangle \left\langle h_{I;\kappa}^{\eta
},h_{Q}\right\rangle \right\vert \\
& \equiv A\left(  Q\right)  +B\left(  Q\right)  +C\left(  Q\right)  .
\end{align*}

Altogether we have%
\begin{align}
& \left\Vert \left(  \sum_{Q\in\mathcal{D}}\frac{1}{\left\vert Q\right\vert
}\left\vert \sum_{I\in\mathcal{D}:\ I\neq Q}\left\langle f,h_{I;\kappa
}\right\rangle \left\langle h_{I;\kappa}^{\eta},h_{Q;\kappa}\right\rangle
\right\vert ^{2}\mathbf{1}_{Q}\right)  ^{\frac{1}{2}}\right\Vert _{L^{p}%
}\lesssim\left\Vert \left(  \sum_{Q\in\mathcal{D}}\frac{1}{\left\vert
Q\right\vert }A\left(  Q\right)  ^{2}\mathbf{1}_{Q}\right)  ^{\frac{1}{2}%
}\right\Vert _{L^{p}}\label{two norms}\\
& \ \ \ \ \ \ \ \ \ \ \ \ \ \ \ +\left\Vert \left(  \sum_{Q\in\mathcal{D}%
}\frac{1}{\left\vert Q\right\vert }B\left(  Q\right)  ^{2}\mathbf{1}%
_{Q}\right)  ^{\frac{1}{2}}\right\Vert _{L^{p}}+\left\Vert \left(  \sum
_{Q\in\mathcal{D}}\frac{1}{\left\vert Q\right\vert }C\left(  Q\right)
^{2}\mathbf{1}_{Q}\right)  ^{\frac{1}{2}}\right\Vert _{L^{p}}.\nonumber
\end{align}

We now claim that
\begin{equation}
\left\Vert \left(  \sum_{Q\in\mathcal{D}}\frac{1}{\left\vert Q\right\vert
}\left\vert \sum_{I\in\mathcal{D}:\ I\neq Q}\left\langle f,h_{I;\kappa
}\right\rangle \left\langle h_{I;\kappa}^{\eta},h_{Q;\kappa}\right\rangle
\right\vert ^{2}\mathbf{1}_{Q}\right)  ^{\frac{1}{2}}\right\Vert _{L^{p}%
}\lesssim\eta^{\frac{1}{2}\gamma_{p}}\left(  \log_{2}\frac{1}{\eta}\right)
\left\Vert f\right\Vert _{L^{p}}.\label{claim that}%
\end{equation}
With this established, and since $\kappa>\frac{n}{2}$, we obtain$\ $%
\[
\left\Vert \left(  \sum_{Q\in\mathcal{D}}\frac{1}{\left\vert Q\right\vert
}\left\vert \sum_{I\in\mathcal{D}:\ I\neq Q}\left\langle f,h_{I;\kappa
}\right\rangle \left\langle h_{I;\kappa}^{\eta},h_{Q;\kappa}\right\rangle
\right\vert ^{2}\right)  ^{\frac{1}{2}}\right\Vert _{L^{p}}\leq C\eta
^{\frac{1}{2}\gamma_{p}}\left(  \log_{2}\frac{1}{\eta}\right)  \left\Vert
f\right\Vert _{L^{p}}<\frac{c_{p}}{2}\left\Vert f\right\Vert _{L^{p}}\ ,
\]
with$\ \eta>0$ sufficiently small. This then gives%
\[
C_{p}\left\Vert f\right\Vert _{L^{p}}\geq\left\Vert S_{\eta}^{\mathcal{D}%
}f\right\Vert _{L^{p}}\geq c_{p}\left\Vert f\right\Vert _{L^{p}}-\frac{c_{p}%
}{2}\left\Vert f\right\Vert _{L^{p}}=\frac{c_{p}}{2}\left\Vert f\right\Vert
_{L^{p}}\ ,
\]
which completes the proof of Proposition \ref{invert} modulo (\ref{claim that}).

We prove (\ref{claim that}) by estimating each of the three terms on the right
hand side of (\ref{two norms}) separately, beginning with the term involving
$A\left(  Q\right)  $.

\textbf{Case} $A\left(  Q\right)  $: For each $Q\in\mathcal{D}$, we have for
$0<\varepsilon<1\,$ and $0<\gamma<n-\varepsilon$,%
\begin{align*}
& A\left(  Q\right)  =\eta\sum_{\substack{I\in\mathcal{D}:\ \ell\left(
I\right)  <\ell\left(  Q\right)  \\I\in\operatorname*{Car}\left(  Q\right)
}}\left\vert \left\langle f,h_{I;\kappa}\right\rangle \right\vert \left(
\frac{\ell\left(  I\right)  }{\ell\left(  Q\right)  }\right)  ^{\frac{n}{2}%
}=\eta\sum_{t=1}^{\infty}\sum_{\substack{I\in\mathcal{D}:\ \ell\left(
I\right)  =2^{-t}\ell\left(  Q\right)  \\I\in\operatorname*{Car}\left(
Q\right)  }}\left\vert \left\langle f,h_{I;\kappa}\right\rangle \right\vert
2^{-t\frac{n}{2}}\\
& \lesssim\eta\sum_{t=1}^{\infty}\sqrt{\sum_{\substack{I\in\mathcal{D}%
:\ \ell\left(  I\right)  =2^{-t}\ell\left(  Q\right)  \\I\in
\operatorname*{Car}\left(  Q\right)  }}\left\vert \left\langle f,h_{I;\kappa
}\right\rangle \right\vert ^{2}2^{-t\left(  n-\varepsilon\right)  }}=\eta
\sum_{t=1}^{\infty}2^{-t\frac{n-\varepsilon-\gamma}{2}}\sqrt{\sum
_{\substack{I\in\mathcal{D}:\ \ell\left(  I\right)  =2^{-t}\ell\left(
Q\right)  \\I\in\operatorname*{Car}\left(  Q\right)  }}2^{-t\gamma}\left\vert
\left\langle f,h_{I;\kappa}\right\rangle \right\vert ^{2}}\\
& \leq\eta\sqrt{\sum_{t=1}^{\infty}2^{-t\left(  n-\varepsilon-\gamma\right)
}}\sqrt{\sum_{t=1}^{\infty}\sum_{\substack{I\in\mathcal{D}:\ \ell\left(
I\right)  =2^{-t}\ell\left(  Q\right)  \\I\in\operatorname*{Car}\left(
Q\right)  }}2^{-t\gamma}\left\vert \left\langle f,h_{I;\kappa}\right\rangle
\right\vert ^{2}}=\eta\sqrt{\frac{2^{-\left(  n-\varepsilon-\gamma\right)  }%
}{1-2^{-\left(  n-\varepsilon-\gamma\right)  }}}\sqrt{\sum_{t=1}^{\infty}%
\sum_{\substack{I\in\mathcal{D}:\ \ell\left(  I\right)  =2^{-t}\ell\left(
Q\right)  \\I\in\operatorname*{Car}\left(  Q\right)  }}2^{-t\gamma}\left\vert
\left\langle f,h_{I;\kappa}\right\rangle \right\vert ^{2}}.
\end{align*}
and so%
\[
A\left(  Q\right)  =\eta\sum_{\substack{I\in\mathcal{D}:\ \ell\left(
I\right)  <\ell\left(  Q\right)  \\I\in\operatorname*{Car}\left(  Q\right)
}}\left\vert \left\langle f,h_{I;\kappa}\right\rangle \right\vert \left(
\frac{\ell\left(  I\right)  }{\ell\left(  Q\right)  }\right)  ^{\frac{n}{2}%
}\leq\eta\sqrt{\sum_{t=1}^{\infty}\sum_{\substack{I\in\mathcal{D}%
:\ \ell\left(  I\right)  =2^{-t}\ell\left(  Q\right)  \\I\in
\operatorname*{Car}\left(  Q\right)  }}2^{-t\left(  n-2\varepsilon\right)
}\left\vert \left\langle f,h_{I;\kappa}\right\rangle \right\vert ^{2}}%
\]
if we take $\gamma=n-2\varepsilon$. It follows that%
\begin{align*}
& \left\Vert \left(  \sum_{Q\in\mathcal{D}}\frac{1}{\left\vert Q\right\vert
}A\left(  Q\right)  ^{2}\mathbf{1}_{Q}\right)  ^{\frac{1}{2}}\right\Vert
_{L^{p}}\lesssim\eta\left\Vert \left(  \sum_{Q\in\mathcal{D}}\frac
{1}{\left\vert Q\right\vert }\sum_{t=1}^{\infty}\sum_{\substack{I\in
\mathcal{D}:\ \ell\left(  I\right)  =2^{-t}\ell\left(  Q\right)
\\I\in\operatorname*{Car}\left(  Q\right)  }}2^{-t\left(  n-2\varepsilon
\right)  }\left\vert \left\langle f,h_{I;\kappa}\right\rangle \right\vert
^{2}\mathbf{1}_{Q}\right)  ^{\frac{1}{2}}\right\Vert _{L^{p}}\\
& =\eta\left\Vert \left(  \sum_{I\in\mathcal{D}}\left\vert \left\langle
f,h_{I;\kappa}\right\rangle \right\vert ^{2}\sum_{t=1}^{\infty}\frac
{1}{\left\vert Q\right\vert }\sum_{\substack{Q\in\mathcal{D}:\ \ell\left(
I\right)  =2^{-t}\ell\left(  Q\right)  \\I\in\operatorname*{Car}\left(
Q\right)  }}2^{-t\left(  n-2\varepsilon\right)  }\mathbf{1}_{Q}\right)
^{\frac{1}{2}}\right\Vert _{L^{p}}\\
& \leq\eta\left\Vert \left(  \sum_{I\in\mathcal{D}}\left\vert \left\langle
f,h_{I;\kappa}\right\rangle \right\vert ^{2}\sum_{t=1}^{\infty}\frac
{1}{\left\vert 2^{t}I\right\vert }2^{-t\left(  n-2\varepsilon\right)
}\mathbf{1}_{2^{t}I}\right)  ^{\frac{1}{2}}\right\Vert _{L^{p}}\leq
\eta\left\Vert \left(  \sum_{I\in\mathcal{D}}\frac{\left\vert \left\langle
f,h_{I;\kappa}\right\rangle \right\vert ^{2}}{\left\vert I\right\vert }%
\sum_{t=1}^{\infty}2^{-2tn+2\varepsilon t}\mathbf{1}_{2^{t}I}\right)
^{\frac{1}{2}}\right\Vert _{L^{p}}\\
& \lesssim\eta\left\Vert \left(  \sum_{I\in\mathcal{D}}\frac{\left\vert
\left\langle f,h_{I;\kappa}\right\rangle \right\vert ^{2}}{\left\vert
I\right\vert }\left(  M\mathbf{1}_{I}\right)  ^{2\frac{2-2\varepsilon}{2}%
}\right)  ^{\frac{1}{2}}\right\Vert _{L^{p}}\lesssim\eta\left\Vert \left(
\sum_{I\in\mathcal{D}}\frac{\left\vert \left\langle f,h_{I;\kappa
}\right\rangle \right\vert ^{2}}{\left\vert I\right\vert }\left(
M_{r}\mathbf{1}_{I}\right)  ^{2}\right)  ^{\frac{1}{2}}\right\Vert _{L^{p}}\\
& \lesssim\eta\left\Vert \left(  \sum_{I\in\mathcal{D}}\frac{\left\vert
\left\langle f,h_{I;\kappa}\right\rangle \right\vert ^{2}}{\left\vert
I\right\vert }\mathbf{1}_{I}\right)  ^{\frac{1}{2}}\right\Vert _{L^{p}}%
\approx\eta\left\Vert f\right\Vert _{L^{p}},
\end{align*}
provided $1<r=\frac{2}{2-2\varepsilon}=\frac{1}{1-\varepsilon}<p$. Indeed,
\[
\sum_{t=1}^{\infty}2^{-2tn+2\varepsilon t}\mathbf{1}_{2^{t}I}\lesssim\left(
M\mathbf{1}_{I}\right)  ^{2\frac{2-2\varepsilon}{2}}=\left(  M_{r}%
\mathbf{1}_{I}\right)  ^{2},
\]
where the inequality follows from%
\begin{align*}
& \sum_{t=1}^{\infty}2^{-2tn+2\varepsilon t}\mathbf{1}_{2^{t}I}\left(
x\right)  \approx\sum_{t=1}^{\infty}2^{-2tn+2\varepsilon t}\mathbf{1}%
_{2^{t}I-2^{t-1}I}\left(  x\right) \\
& =\sum_{t=1}^{\infty}2^{-2tn\left(  1-\frac{\varepsilon}{n}\right)
}\mathbf{1}_{2^{t}I-2^{t-1}I}\left(  x\right)  \lesssim\sum_{t=1}^{\infty
}M\mathbf{1}_{I}\left(  x\right)  ^{2\left(  1-\frac{\varepsilon}{n}\right)
}\mathbf{1}_{2^{t}I-2^{t-1}I}\left(  x\right)  =M\mathbf{1}_{I}\left(
x\right)  ^{2\left(  1-\frac{\varepsilon}{n}\right)  },
\end{align*}
and the equality follows by definition of $M_{r}$ and since $\mathbf{1}%
_{I}=\left(  \mathbf{1}_{I}\right)  ^{r}$, namely%
\[
\left(  M\mathbf{1}_{I}\right)  ^{2\frac{2-2\varepsilon}{2}}=\left(  \left(
M\left(  \mathbf{1}_{I}\right)  ^{r}\right)  ^{\frac{1}{r}}\right)
^{2}=\left(  M_{r}\mathbf{1}_{I}\right)  ^{2}.
\]

\textbf{Case} $B\left(  Q\right)  $: Set $\eta=2^{-\beta}$. Note that the
function squared in the second norm in (\ref{two norms}) then satisfies%
\begin{align*}
& \sum_{Q\in\mathcal{D}}\frac{1}{\left\vert Q\right\vert }B\left(  Q\right)
^{2}\mathbf{1}_{Q}\left(  x\right)  =\sum_{Q\in\mathcal{D}}\frac{1}{\left\vert
Q\right\vert }\left(  \sum_{\substack{I\in\mathcal{D}:\ \ell\left(  Q\right)
\leq\eta\ell\left(  I\right)  \\Q\cap\mathcal{H}_{\frac{\eta}{2}}\left(
I\right)  \neq\emptyset}}\left\vert \left\langle f,h_{I;\kappa}\right\rangle
\right\vert \frac{1}{\eta^{\kappa}}\left(  \frac{\ell\left(  Q\right)  }%
{\ell\left(  I\right)  }\right)  ^{\kappa+\frac{n}{2}}\right)  ^{2}%
\mathbf{1}_{Q}\left(  x\right) \\
& =\frac{1}{\eta^{2\kappa}}\sum_{Q\in\mathcal{D}}\frac{1}{\left\vert
Q\right\vert }\sum_{\substack{I\in\mathcal{D}:\ \ell\left(  Q\right)  \leq
\eta\ell\left(  I\right)  \\Q\cap\mathcal{H}_{\frac{\eta}{2}}\left(  I\right)
\neq\emptyset}}\sum_{\substack{I^{\prime}\in\mathcal{D}:\ \ell\left(
Q\right)  \leq\eta\ell\left(  I^{\prime}\right)  \\Q\cap\mathcal{H}%
_{\frac{\eta}{2}}\left(  I^{\prime}\right)  \neq\emptyset}}\left\vert
\left\langle f,h_{I;\kappa}\right\rangle \right\vert \left\vert \left\langle
f,h_{I^{\prime};\kappa}\right\rangle \right\vert \left(  \frac{\ell\left(
Q\right)  }{\ell\left(  I\right)  }\right)  ^{\kappa+\frac{n}{2}}\left(
\frac{\ell\left(  Q\right)  }{\ell\left(  I^{\prime}\right)  }\right)
^{\kappa+\frac{n}{2}}\mathbf{1}_{Q}\left(  x\right) \\
& =\frac{1}{\eta^{2\kappa}}2\sum_{I,I^{\prime}\in\mathcal{D}\text{ and
}I\subset I^{\prime}}\left\vert \left\langle f,h_{I;\kappa}\right\rangle
\right\vert \left\vert \left\langle f,h_{I^{\prime};\kappa}\right\rangle
\right\vert \left(  \frac{1}{\ell\left(  I\right)  \ell\left(  I^{\prime
}\right)  }\right)  ^{\kappa+\frac{n}{2}}\sum_{\substack{Q\in\mathcal{D}%
:\ \ell\left(  Q\right)  \leq\eta\ell\left(  I\right)  \\_{Q\cap
\mathcal{H}_{\frac{\eta}{2}}\left(  I\right)  \neq\emptyset}}}\ell\left(
Q\right)  ^{2\kappa}\mathbf{1}_{Q}\left(  x\right) \\
& \approx\frac{1}{\eta^{2\kappa}}\sum_{I,I^{\prime}\in\mathcal{D}\text{ and
}I\subset I^{\prime}}\left\vert \left\langle f,h_{I;\kappa}\right\rangle
\right\vert \left\vert \left\langle f,h_{I^{\prime};\kappa}\right\rangle
\right\vert \left(  \frac{1}{\ell\left(  I\right)  \ell\left(  I^{\prime
}\right)  }\right)  ^{\kappa+\frac{n}{2}}\ell\left(  I\right)  ^{2\kappa}%
\sum_{t=\beta}^{\infty}\sum_{\substack{Q\in\mathcal{D}:\ \ell\left(  Q\right)
=2^{-t}\ell\left(  I\right)  \\_{Q\cap\mathcal{H}_{\frac{\eta}{2}}\left(
I\right)  \neq\emptyset}}}\mathbf{1}_{Q}\left(  x\right)  2^{-t2\kappa},
\end{align*}
where for $t\geq\beta$ and $x\in\mathcal{H}_{\frac{\eta}{2}}\left(  I\right)
$, we have%
\[
\sum_{\substack{Q\in\mathcal{D}:\ \ell\left(  Q\right)  =2^{-t}\ell\left(
I\right)  \\_{Q\cap\mathcal{H}_{\frac{\eta}{2}}\left(  I\right)  \neq
\emptyset}}}\mathbf{1}_{Q}\left(  x\right)  \leq1,
\]
so that%
\[
\sum_{Q\in\mathcal{D}}\frac{1}{\left\vert Q\right\vert }B\left(  Q\right)
^{2}\mathbf{1}_{Q}\left(  x\right)  \lesssim\frac{1}{\eta^{2\kappa}}%
\sum_{I,I^{\prime}\in\mathcal{D}\text{ and }I\subset I^{\prime}}\left\vert
\left\langle f,h_{I;\kappa}\right\rangle \right\vert \left\vert \left\langle
f,h_{I^{\prime};\kappa}\right\rangle \right\vert \left(  \frac{1}{\ell\left(
I\right)  \ell\left(  I^{\prime}\right)  }\right)  ^{\kappa+\frac{n}{2}}%
\ell\left(  I\right)  ^{2\kappa}\sum_{t=\beta}^{\infty}2^{-t2\kappa}%
\mathbf{1}_{\mathcal{H}_{\frac{\eta}{2}}\left(  I\right)  }\left(  x\right)  .
\]

Now recalling $2^{-t}=\frac{\ell\left(  Q\right)  }{\ell\left(  I\right)  }$,
we have for $t\geq\beta$,
\[
\#\left\{  Q\in\mathcal{D}:\ \operatorname*{dist}\left(  Q,\partial I\right)
\geq\ell\left(  Q\right)  =2^{-t}\ell\left(  I\right)  \text{ and }%
Q\cap\mathcal{H}_{\frac{\eta}{2}}\left(  I\right)  \neq\emptyset\right\}
\text{ is }\left\{
\begin{array}
[c]{ccc}%
\approx\eta2^{tn} & \text{ if } & t\geq\beta\\
0 & \text{ if } & 1\leq t<\beta
\end{array}
\right.  .
\]

Our blanket assumption that $\kappa>\frac{n}{2}$ shows that all of the
geometric series appearing below are convergent. Then we have%
\begin{align*}
\sum_{Q\in\mathcal{D}_{\operatorname{good}}}\frac{1}{\left\vert Q\right\vert
}B\left(  Q\right)  ^{2}\mathbf{1}_{Q}\left(  x\right)   & \lesssim\frac
{1}{\eta^{2\kappa}}\sum_{I,I^{\prime}\in\mathcal{D}\text{ and }I\subset
I^{\prime}}\frac{\left\vert \left\langle f,h_{I;\kappa}\right\rangle
\right\vert \left\vert \left\langle f,h_{I^{\prime};\kappa}\right\rangle
\right\vert }{\ell\left(  I\right)  ^{\frac{n}{2}}\ell\left(  I^{\prime
}\right)  ^{\frac{n}{2}}}\left(  \frac{\ell\left(  I\right)  }{\ell\left(
I^{\prime}\right)  }\right)  ^{\kappa}\sum_{t=\beta}^{\infty}2^{-t2\kappa
}\mathbf{1}_{\mathcal{H}_{\frac{\eta}{2}}\left(  I\right)  }\left(  x\right)
\\
& \lesssim\frac{1}{\eta^{2\kappa}}\sum_{I,I^{\prime}\in\mathcal{D}\text{ and
}I\subset I^{\prime}}\frac{\left\vert \left\langle f,h_{I;\kappa}\right\rangle
\right\vert \left\vert \left\langle f,h_{I^{\prime};\kappa}\right\rangle
\right\vert }{\ell\left(  I\right)  ^{\frac{n}{2}}\ell\left(  I^{\prime
}\right)  ^{\frac{n}{2}}}\left(  \frac{\ell\left(  I\right)  }{\ell\left(
I^{\prime}\right)  }\right)  ^{\kappa}\frac{2^{-\beta2\kappa}}{1-2^{-2\kappa}%
}\mathbf{1}_{\mathcal{H}_{\frac{\eta}{2}}\left(  I\right)  }\left(  x\right)
\\
& \lesssim\sum_{I,I^{\prime}\in\mathcal{D}\text{ and }I\subset I^{\prime}%
}\frac{\left\vert \left\langle f,h_{I;\kappa}\right\rangle \right\vert
\left\vert \left\langle f,h_{I^{\prime};\kappa}\right\rangle \right\vert
}{\ell\left(  I\right)  ^{\frac{n}{2}}\ell\left(  I^{\prime}\right)
^{\frac{n}{2}}}\left(  \frac{\ell\left(  I\right)  }{\ell\left(  I^{\prime
}\right)  }\right)  ^{\kappa}\mathbf{1}_{\mathcal{H}_{\frac{\eta}{2}}\left(
I\right)  }\left(  x\right)  ,
\end{align*}
which in turn equals,%
\begin{align*}
& \sum_{I\in\mathcal{D}}\sum_{s=1}^{\infty}\frac{\left\vert \left\langle
f,h_{I;\kappa}\right\rangle \right\vert }{\sqrt{\left\vert I\right\vert }\ell
}\frac{\left\vert \left\langle f,h_{\left(  \pi^{\left(  s\right)  }I\right)
;\kappa}\right\rangle \right\vert }{\sqrt{\left\vert \pi^{\left(  s\right)
}I\right\vert }}\left(  \frac{\ell\left(  I\right)  }{\ell\left(  \pi^{\left(
s\right)  }I\right)  }\right)  ^{\kappa}\mathbf{1}_{\mathcal{H}_{\eta}\left(
I\right)  }\left(  x\right) \\
& =\sum_{I\in\mathcal{D}}\sum_{s=1}^{\infty}\frac{\left\vert \left\langle
f,h_{I;\kappa}\right\rangle \right\vert }{\left\vert I\right\vert ^{\frac
{1}{2}}}\frac{\left\vert \left\langle f,h_{\left(  \pi^{\left(  s\right)
}I\right)  ;\kappa}\right\rangle \right\vert }{\left\vert \pi^{\left(
s\right)  }I\right\vert ^{\frac{1}{2}}}2^{-s\kappa}\mathbf{1}_{\mathcal{H}%
_{\eta}\left(  I\right)  }\left(  x\right) \\
& =\left(  \sum_{s=1}^{\infty}2^{-s\kappa}\right)  \sum_{I\in\mathcal{D}}%
\frac{\left\vert \left\langle f,h_{I;\kappa}\right\rangle \right\vert
}{\left\vert I\right\vert ^{\frac{1}{2}}}\frac{\left\vert \left\langle
f,h_{\left(  \pi^{\left(  s\right)  }I\right)  ;\kappa}\right\rangle
\right\vert }{\left\vert \pi^{\left(  s\right)  }I\right\vert ^{\frac{1}{2}}%
}\mathbf{1}_{\mathcal{H}_{\eta}\left(  I\right)  }\left(  x\right)  ,
\end{align*}
which is at most%
\[
\left(  \sum_{s=1}^{\infty}2^{-s\kappa}\right)  \sqrt{\sum_{I\in\mathcal{D}%
}\left(  \frac{\left\vert \left\langle f,h_{I;\kappa}\right\rangle \right\vert
}{\left\vert I\right\vert ^{\frac{1}{2}}}\right)  ^{2}\mathbf{1}%
_{\mathcal{H}_{\eta}\left(  I\right)  }\left(  x\right)  }\sqrt{\sum
_{I\in\mathcal{D}}\left(  \frac{\left\vert \left\langle f,h_{\left(
\pi^{\left(  s\right)  }I\right)  ;\kappa}\right\rangle \right\vert
}{\left\vert \pi^{\left(  s\right)  }I\right\vert ^{\frac{1}{2}}}\right)
^{2}\mathbf{1}_{\mathcal{H}_{\eta}\left(  \pi^{\left(  t\right)  }I\right)
}\left(  x\right)  }\approx\sum_{I\in\mathcal{D}}\left(  \frac{\left\vert
\left\langle f,h_{I;\kappa}\right\rangle \right\vert }{\left\vert I\right\vert
^{\frac{1}{2}}}\right)  ^{2}\mathbf{1}_{\mathcal{H}_{\eta}\left(  I\right)
}\left(  x\right)  .
\]
By Lemma \ref{Marcin} we thus have%
\begin{equation}
\left\Vert \left(  \sum_{Q\in\mathcal{D}}\frac{1}{\left\vert Q\right\vert
}B\left(  Q\right)  ^{2}\mathbf{1}_{Q}\right)  ^{\frac{1}{2}}\right\Vert
_{L^{p}}\lesssim\left\Vert \left(  \sum_{I\in\mathcal{D}}\left(
\frac{\left\vert \left\langle f,h_{I;\kappa}\right\rangle \right\vert
}{\left\vert I\right\vert ^{\frac{1}{2}}}\mathbf{1}_{\mathcal{H}_{\eta}\left(
I\right)  }\left(  x\right)  \right)  ^{2}\right)  ^{\frac{1}{2}}\right\Vert
_{L^{p}}\leq C_{p,n}\eta^{\frac{1}{2\left(  p-1\right)  }}\left\Vert
f\right\Vert _{L^{p}}\ .\label{est term B}%
\end{equation}

\textbf{Case} $C\left(  Q\right)  $: We have,
\begin{align*}
& \sum_{Q\in\mathcal{D}}\frac{1}{\left\vert Q\right\vert }C\left(  Q\right)
^{2}\mathbf{1}_{Q}=\sum_{Q\in\mathcal{D}}\frac{1}{\left\vert Q\right\vert
}\left\vert \sum_{\substack{I\in\mathcal{D}:\ \ell\left(  I\right)  \geq
\ell\left(  Q\right)  \geq\eta\ell\left(  I\right)  \\Q\in\operatorname*{Car}%
\left(  I\right)  }}\left\langle f,h_{I;\kappa}\right\rangle \left\langle
h_{I;\kappa}^{\eta},h_{Q;\kappa}\right\rangle \right\vert ^{2}\mathbf{1}%
_{Q}\left(  x\right) \\
& =\sum_{Q\in\mathcal{D}}\frac{1}{\left\vert Q\right\vert }\left(
\sum_{\substack{I\in\mathcal{D}:\ \ell\left(  I\right)  \geq\ell\left(
Q\right)  \geq\eta\ell\left(  I\right)  \\Q\in\operatorname*{Car}\left(
I\right)  }}\sum_{\substack{I\in\mathcal{D}:\ \ell\left(  I^{\prime}\right)
\geq\ell\left(  Q\right)  \geq\eta\ell\left(  I^{\prime}\right)
\\Q\in\operatorname*{Car}\left(  I^{\prime}\right)  }}\left\langle
f,h_{I;\kappa}\right\rangle \left\langle h_{I;\kappa}^{\eta},h_{Q;\kappa
}\right\rangle \left\langle f,h_{I^{\prime};\kappa}\right\rangle \left\langle
h_{I^{\prime};\kappa}^{\eta},h_{Q}\right\rangle \right)  \mathbf{1}_{Q}\left(
x\right) \\
& \approx\sum_{Q\in\mathcal{D}}\frac{1}{\left\vert Q\right\vert }\left(
\sum_{\substack{I,I^{\prime}\in\mathcal{D}:\ I\subset I^{\prime}\text{ and
}\ell\left(  I\right)  \geq\ell\left(  Q\right)  \geq\eta\ell\left(
I^{\prime}\right)  \\Q\in\operatorname*{Car}\left(  I\right)  \cap
\operatorname*{Car}\left(  I^{\prime}\right)  }}\left\langle f,h_{I;\kappa
}\right\rangle \left\langle h_{I;\kappa}^{\eta},h_{Q;\kappa}\right\rangle
\left\langle f,h_{I^{\prime};\kappa}\right\rangle \left\langle h_{I^{\prime
};\kappa}^{\eta},h_{Q;\kappa}\right\rangle \right)  \mathbf{1}_{Q}\left(
x\right) \\
& =\sum_{Q\in\mathcal{D}}\frac{1}{\left\vert Q\right\vert }\left(
\sum_{\substack{I,I^{\prime}\in\mathcal{D}:\ I\subset I^{\prime}
\\Q\in\operatorname*{Car}\left(  I\right)  \cap\operatorname*{Car}\left(
I^{\prime}\right)  \\\ell\left(  I\right)  \geq\ell\left(  Q\right)  \geq
\eta\ell\left(  I^{\prime}\right)  }}\left\langle f,h_{I}\right\rangle
\left\langle f,h_{I^{\prime}}\right\rangle \left\langle h_{I}^{\eta}%
,h_{Q}\right\rangle \left\langle h_{I^{\prime}}^{\eta},h_{Q}\right\rangle
\right)  \mathbf{1}_{Q}\left(  x\right)  .
\end{align*}
We first compute the diagonal sum restricted to $I=I^{\prime}$. Set
\[
\Gamma_{\eta,t}\left(  I\right)  \equiv\left\{  x\in I:\operatorname*{dist}%
\left(  x,\mathcal{H}_{\eta}\left(  I\right)  \right)  \approx2^{t}\eta
\ell\left(  I\right)  \right\}  ,\ \ \ \ \ \text{for }0\leq t\leq\beta,
\]
where we recall that $\eta=2^{-\beta}$, and note that the diagonal portion of
the sum above equals%
\begin{align*}
& \sum_{Q\in\mathcal{D}}\frac{1}{\left\vert Q\right\vert }\left(
\sum_{\substack{I\in\mathcal{D}:\ Q\in\operatorname*{Car}\left(  I\right)
\\\ell\left(  I\right)  \geq\ell\left(  Q\right)  \geq\eta\ell\left(
I\right)  }}\left\vert \left\langle f,h_{I}\right\rangle \right\vert
^{2}\left\vert \left\langle h_{I;\kappa}^{\eta},h_{Q;\kappa}\right\rangle
\right\vert ^{2}\right)  \mathbf{1}_{Q}\left(  x\right)  =\sum_{I\in
\mathcal{D}}\left\vert \left\langle f,h_{I;\kappa}\right\rangle \right\vert
^{2}\sum_{\substack{Q\in\mathcal{D}:\ Q\in\operatorname*{Car}\left(  I\right)
\\\ell\left(  I\right)  \geq\ell\left(  Q\right)  \geq\eta\ell\left(
I\right)  }}\frac{\left\vert \left\langle h_{I;\kappa}^{\eta},h_{Q;\kappa
}\right\rangle \right\vert ^{2}}{\left\vert Q\right\vert }\mathbf{1}%
_{Q}\left(  x\right) \\
& \lesssim\sum_{I\in\mathcal{D}}\left\vert \left\langle f,h_{I;\kappa
}\right\rangle \right\vert ^{2}\sum_{\substack{Q\in\mathcal{D}:\ Q\in
\operatorname*{Car}\left(  I\right)  \\\ell\left(  I\right)  \geq\ell\left(
Q\right)  \geq\eta\ell\left(  I\right)  }}\frac{\eta^{2}\left(  \frac
{\ell\left(  Q\right)  }{\ell\left(  I\right)  }\right)  ^{n-2}}{\ell\left(
Q\right)  ^{n}}\mathbf{1}_{Q}\left(  x\right)  =\eta^{2}\sum_{I\in\mathcal{D}%
}\left\vert \left\langle f,h_{I;\kappa}\right\rangle \right\vert ^{2}%
\sum_{\substack{Q\in\mathcal{D}:\ Q\in\operatorname*{Car}\left(  I\right)
\\\ell\left(  I\right)  \geq\ell\left(  Q\right)  \geq\eta\ell\left(
I\right)  }}\frac{1}{\ell\left(  I\right)  ^{n-2}\ell\left(  Q\right)  ^{2}%
}\mathbf{1}_{Q}\left(  x\right) \\
& \approx\eta^{2}\sum_{I\in\mathcal{D}}\left\vert \left\langle f,h_{I;\kappa
}\right\rangle \right\vert ^{2}\frac{1}{\ell\left(  I\right)  ^{n-2}\left[
\eta\ell\left(  I\right)  +\operatorname*{dist}\left(  x,\mathcal{H}_{\eta
}\left(  I\right)  \right)  \right]  ^{2}}\mathbf{1}_{I}\left(  x\right)
=\sum_{I\in\mathcal{D}}\frac{\left\vert \left\langle f,h_{I;\kappa
}\right\rangle \right\vert ^{2}}{\left\vert I\right\vert }\left(  \frac
{1}{1+\frac{\operatorname*{dist}\left(  x,\mathcal{H}_{\eta}\left(  I\right)
\right)  }{\eta\ell\left(  I\right)  }}\right)  ^{2}\mathbf{1}_{I}\left(
x\right)  ,
\end{align*}
which can be written as%
\[
\sum_{I\in\mathcal{D}}\frac{\left\vert \left\langle f,h_{I;\kappa
}\right\rangle \right\vert ^{2}}{\left\vert I\right\vert }\sum_{t=0}^{\beta
}\mathbf{1}_{\Gamma_{\eta,t}}\left(  x\right)  \left(  \frac{1}{1+\frac
{\operatorname*{dist}\left(  x,\mathcal{H}_{\eta}\left(  I\right)  \right)
}{\eta\ell\left(  I\right)  }}\right)  ^{2}\approx\sum_{I\in\mathcal{D}}%
\frac{\left\vert \left\langle f,h_{I;\kappa}\right\rangle \right\vert ^{2}%
}{\left\vert I\right\vert }\sum_{t=0}^{\beta}2^{-2t}\mathbf{1}_{\Gamma
_{\eta,t}\left(  I\right)  }\left(  x\right)  .
\]

Thus%
\[
\left\Vert \sum_{Q\in\mathcal{D}}\frac{1}{\left\vert Q\right\vert }\left(
\sum_{\substack{I\in\mathcal{D}:\ Q\in\operatorname*{Car}\left(  I\right)
\\\ell\left(  I\right)  \geq\ell\left(  Q\right)  \geq\eta\ell\left(
I\right)  }}\left\vert \left\langle f,h_{I;\kappa}\right\rangle \right\vert
^{2}\left\vert \left\langle h_{I;\kappa}^{\eta},h_{Q;\kappa}\right\rangle
\right\vert ^{2}\right)  \mathbf{1}_{Q}\left(  x\right)  \right\Vert _{L^{p}%
}\lesssim\sum_{t=0}^{\beta}2^{-2t}\left\Vert \sum_{I\in\mathcal{D}}%
\frac{\left\vert \left\langle f,h_{I;\kappa}\right\rangle \right\vert ^{2}%
}{\left\vert I\right\vert }\mathbf{1}_{\Gamma_{\eta,t}\left(  I\right)
}\left(  x\right)  \right\Vert _{L^{p}}.
\]
From the estimate for term $B$ in (\ref{est term B}), with $\eta$ replaced by
$2^{t}\eta$, we obtain%
\[
\left\Vert \sum_{I\in\mathcal{D}}\frac{\left\vert \left\langle f,h_{I;\kappa
}\right\rangle \right\vert ^{2}}{\left\vert I\right\vert }\mathbf{1}%
_{\Gamma_{\eta,t}\left(  I\right)  }\left(  x\right)  \right\Vert _{L^{p}%
}\lesssim C_{p,n}\left(  2^{t}\eta\right)  ^{\frac{1}{2\left(  p-1\right)  }%
}\left\Vert f\right\Vert _{L^{p}}\ ,
\]
and so altogether, the diagonal portion of $\left\Vert \sum_{Q\in\mathcal{D}%
}\frac{1}{\left\vert Q\right\vert }C\left(  Q\right)  ^{2}\mathbf{1}%
_{Q}\left(  x\right)  \right\Vert _{L^{p}}$ is at most
\begin{align*}
& \sum_{t=0}^{\beta}2^{-2t}\left\Vert \sum_{I\in\mathcal{D}}\frac{\left\vert
\left\langle f,h_{I;\kappa}\right\rangle \right\vert ^{2}}{\left\vert
I\right\vert }\mathbf{1}_{\Gamma_{\eta,t}\left(  I\right)  }\left(  x\right)
\right\Vert _{L^{p}}\lesssim\sum_{t=0}^{\beta}C_{p,n}2^{-2t}\left(  2^{t}%
\eta\right)  ^{\frac{1}{2\left(  p-1\right)  }}\left\Vert f\right\Vert
_{L^{p}}\\
& =\eta^{\frac{1}{2\left(  p-1\right)  }}\sum_{t=0}^{\beta}C_{p,n}2^{-t\left(
2-\frac{1}{2\left(  p-1\right)  }\right)  }\left\Vert f\right\Vert _{L^{p}%
}=\eta^{\frac{1}{2\left(  p-1\right)  }}\sum_{t=0}^{\beta}C_{p,n}%
2^{-t\frac{4\left(  p-1\right)  -1}{2\left(  p-1\right)  }}\left\Vert
f\right\Vert _{L^{p}}\\
& =\eta^{\frac{1}{2\left(  p-1\right)  }}\sum_{t=0}^{\beta}C_{p,n}%
2^{-t\frac{4p-5}{2p-2}}\left\Vert f\right\Vert _{L^{p}}\approx C_{p,n}\left\{
\begin{array}
[c]{ccc}%
\eta^{\frac{1}{2\left(  p-1\right)  }}\left\Vert f\right\Vert _{L^{p}} &
\text{ if } & p>\frac{5}{4}\\
\eta^{2}\left(  \log_{2}\frac{1}{\eta}\right)  \left\Vert f\right\Vert
_{L^{p}} & \text{ if } & p=\frac{5}{4}\\
\eta^{2}\left\Vert f\right\Vert _{L^{p}} & \text{ if } & 1<p<\frac{5}{4}%
\end{array}
\right.  \ .
\end{align*}

Now we use the estimate $\left\vert \left\langle h_{I;\kappa}^{\eta
},h_{Q;\kappa}\right\rangle \right\vert \lesssim\eta\left(  \frac{\ell\left(
Q\right)  }{\ell\left(  I\right)  }\right)  ^{\frac{n}{2}-1}\ $for
$Q\in\operatorname*{Car}\left(  I\right)  $ and $\ell\left(  Q\right)
\geq\eta\ell\left(  I\right)  $, see the third line of Lemma \ref{inner est},
to obtain%
\begin{align*}
& \sum_{Q\in\mathcal{D}}\frac{1}{\left\vert Q\right\vert }\left\vert
\sum_{\substack{I\in\mathcal{D}:\ \ell\left(  I\right)  \geq\ell\left(
Q\right)  \geq\eta\ell\left(  I\right)  \\Q\in\operatorname*{Car}\left(
I\right)  }}\left\langle f,h_{I;\kappa}\right\rangle \left\langle h_{I}^{\eta
},h_{Q}\right\rangle \right\vert ^{2}\mathbf{1}_{Q}\left(  x\right) \\
& \lesssim\sum_{I,I^{\prime}\in\mathcal{D}:\ I\subset I^{\prime}}\left\vert
\left\langle f,h_{I;\kappa}\right\rangle \right\vert \left\vert \left\langle
f,h_{I^{\prime};\kappa}\right\rangle \right\vert \sum_{\substack{Q\in
\mathcal{D}:\ Q\in\operatorname*{Car}\left(  I\right)  \cap\operatorname*{Car}%
\left(  I^{\prime}\right)  \\\ell\left(  I\right)  \geq\ell\left(  Q\right)
\geq\eta\ell\left(  I^{\prime}\right)  }}\left\vert \left\langle h_{I;\kappa
}^{\eta},h_{Q;\kappa}\right\rangle \right\vert \left\vert \left\langle
h_{I^{\prime};\kappa}^{\eta},h_{Q;\kappa}\right\rangle \right\vert \frac
{1}{\left\vert Q\right\vert }\mathbf{1}_{Q}\left(  x\right) \\
& \lesssim\eta^{2}\sum_{I,I^{\prime}\in\mathcal{D}:\ I\subset I^{\prime}%
}\left\vert \left\langle f,h_{I;\kappa}\right\rangle \right\vert \left\vert
\left\langle f,h_{I^{\prime};\kappa}\right\rangle \right\vert \sum
_{\substack{Q\in\mathcal{D}:\ Q\in\operatorname*{Car}\left(  I\right)
\cap\operatorname*{Car}\left(  I^{\prime}\right)  \\\ell\left(  I\right)
\geq\ell\left(  Q\right)  \geq\eta\ell\left(  I^{\prime}\right)  }}\left(
\frac{\ell\left(  Q\right)  }{\ell\left(  I\right)  }\right)  ^{\frac{n}{2}%
-1}\left(  \frac{\ell\left(  Q\right)  }{\ell\left(  I^{\prime}\right)
}\right)  ^{\frac{n}{2}-1}\frac{1}{\left\vert Q\right\vert }\mathbf{1}%
_{Q}\left(  x\right) \\
& =\eta^{2}\sum_{I,I^{\prime}\in\mathcal{D}:\ I\subset I^{\prime}}%
\frac{\left\vert \left\langle f,h_{I;\kappa}\right\rangle \right\vert
\left\vert \left\langle f,h_{I^{\prime};\kappa}\right\rangle \right\vert
}{\sqrt{\left\vert I\right\vert }\sqrt{\left\vert I^{\prime}\right\vert }}%
\sum_{\substack{Q\in\operatorname*{Car}\left(  I\right)  \cap
\operatorname*{Car}\left(  I^{\prime}\right)  \\\ell\left(  I\right)  \geq
\ell\left(  Q\right)  \geq\eta\ell\left(  I^{\prime}\right)  }}\frac
{\ell\left(  I\right)  }{\ell\left(  Q\right)  }\frac{\ell\left(  I^{\prime
}\right)  }{\ell\left(  Q\right)  }\mathbf{1}_{Q}\left(  x\right)  .
\end{align*}

At this point we observe that the conditions imposed on the cubes $I$ and
$I^{\prime}$ in the sum above are that there exists a cube $Q$ such that
$Q\subset I\subset I^{\prime}$, $Q\in\operatorname*{Car}\left(  I\right)
\cap\operatorname*{Car}\left(  I^{\prime}\right)  $, and $\ell\left(
I\right)  \geq\ell\left(  Q\right)  \geq\eta\ell\left(  I^{\prime}\right)  $.
It follows from these conditions that
\[
I\in\operatorname*{Car}\left(  I^{\prime}\right)  \text{ and }\ell\left(
I\right)  \leq\ell\left(  I^{\prime}\right)  \leq\frac{1}{\eta}\ell\left(
I\right)  =2^{\beta}\ell\left(  I\right)  .
\]
Thus we can now pigeonhole the ratio of the lengths of $I$ and $I^{\prime}$
by
\[
\frac{\ell\left(  I^{\prime}\right)  }{\ell\left(  I\right)  }=2^{s}%
,\ \ \ \ \text{for }0\leq s\leq\beta.
\]
With $s$ fixed we have $I^{\prime}=\pi^{\left(  s\right)  }I$ and%
\begin{align*}
& \eta^{2}\sum_{I\in\mathcal{D}}\frac{\left\vert \left\langle f,h_{I;\kappa
}\right\rangle \right\vert \left\vert \left\langle f,h_{\left(  \pi^{\left(
s\right)  }I\right)  ;\kappa}\right\rangle \right\vert }{\sqrt{\left\vert
I\right\vert }\sqrt{\left\vert \pi^{\left(  s\right)  }I\right\vert }}%
\sum_{\substack{Q\in\operatorname*{Car}\left(  I\right)  \cap
\operatorname*{Car}\left(  \pi^{\left(  s\right)  }I\right)  \\\ell\left(
I\right)  \geq\ell\left(  Q\right)  \geq\eta\ell\left(  \pi^{\left(  s\right)
}I\right)  }}\frac{\ell\left(  I\right)  }{\ell\left(  Q\right)  }\frac
{\ell\left(  \pi^{\left(  s\right)  }I\right)  }{\ell\left(  Q\right)
}\mathbf{1}_{Q}\left(  x\right) \\
& =\eta^{2}\sum_{I\in\mathcal{D}}\frac{\left\vert \left\langle f,h_{I;\kappa
}\right\rangle \right\vert \left\vert \left\langle f,h_{\left(  \pi^{\left(
s\right)  }I\right)  ;\kappa}\right\rangle \right\vert }{\sqrt{\left\vert
I\right\vert }\sqrt{\left\vert \pi^{\left(  s\right)  }I\right\vert }}%
\sum_{\substack{Q\in\operatorname*{Car}\left(  I\right)  \cap
\operatorname*{Car}\left(  \pi^{\left(  s\right)  }I\right)  \\\ell\left(
I\right)  \geq\ell\left(  Q\right)  \geq2^{s}\eta\ell\left(  I\right)  }%
}2^{s}\left(  \frac{\ell\left(  I\right)  }{\ell\left(  Q\right)  }\right)
^{2}\mathbf{1}_{Q}\left(  x\right) \\
& \approx2^{s}\eta^{2}\sum_{I\in\mathcal{D}}\frac{\left\vert \left\langle
f,h_{I;\kappa}\right\rangle \right\vert \left\vert \left\langle f,h_{\left(
\pi^{\left(  s\right)  }I\right)  ;\kappa}\right\rangle \right\vert }%
{\sqrt{\left\vert I\right\vert }\sqrt{\left\vert \pi^{\left(  s\right)
}I\right\vert }}2^{s}\left(  \frac{\ell\left(  I\right)  }{2^{s}\eta
\ell\left(  I\right)  +\operatorname*{dist}\left(  x,\mathcal{H}_{2^{s}\eta
}\left(  I\right)  \right)  }\right)  ^{2}\mathbf{1}_{I}\left(  x\right) \\
& =2^{s}\eta^{2}\sum_{I\in\mathcal{D}}\frac{\left\vert \left\langle
f,h_{I;\kappa}\right\rangle \right\vert \left\vert \left\langle f,h_{\left(
\pi^{\left(  s\right)  }I\right)  ;\kappa}\right\rangle \right\vert }%
{\sqrt{\left\vert I\right\vert }\sqrt{\left\vert \pi^{\left(  s\right)
}I\right\vert }}2^{s}\left(  \frac{1}{2^{s}\eta+\frac{\operatorname*{dist}%
\left(  x,\mathcal{H}_{2^{s}\eta}\left(  I\right)  \right)  }{\ell\left(
I\right)  }}\right)  ^{2}\mathbf{1}_{I}\left(  x\right) \\
& =\sum_{I\in\mathcal{D}}\frac{\left\vert \left\langle f,h_{I;\kappa
}\right\rangle \right\vert }{\sqrt{\left\vert I\right\vert }}\frac{\left\vert
\left\langle f,h_{\left(  \pi^{\left(  s\right)  }I\right)  ;\kappa
}\right\rangle \right\vert }{\sqrt{\left\vert \pi^{\left(  s\right)
}I\right\vert }}\left(  \frac{1}{1+\frac{\operatorname*{dist}\left(
x,\mathcal{H}_{2^{s}\eta}\left(  I\right)  \right)  }{2^{s}\eta\ell\left(
I\right)  }}\right)  ^{2}\mathbf{1}_{I}\left(  x\right)  ,
\end{align*}
where our sum is exactly like the diagonal portion with two exceptions, namely
that $I$\ has been replaced by $\pi^{\left(  s\right)  }I$ in the second
factor, and $\eta$ has been replaced by $2^{s}\eta$ in the third factor. Thus
we continue with,%
\begin{align*}
& \sum_{I\in\mathcal{D}}\frac{\left\vert \left\langle f,h_{I;\kappa
}\right\rangle \right\vert }{\sqrt{\left\vert I\right\vert }}\frac{\left\vert
\left\langle f,h_{\left(  \pi^{\left(  s\right)  }I\right)  ;\kappa
}\right\rangle \right\vert }{\sqrt{\left\vert \pi^{\left(  s\right)
}I\right\vert }}\left(  \frac{1}{1+\frac{\operatorname*{dist}\left(
x,\mathcal{H}_{2^{s}\eta}\left(  I\right)  \right)  }{2^{s}\eta\ell\left(
I\right)  }}\right)  ^{2}\mathbf{1}_{I}\left(  x\right) \\
& =\sum_{I\in\mathcal{D}}\frac{\left\vert \left\langle f,h_{I;\kappa
}\right\rangle \right\vert }{\sqrt{\left\vert I\right\vert }}\frac{\left\vert
\left\langle f,h_{\left(  \pi^{\left(  s\right)  }I\right)  ;\kappa
}\right\rangle \right\vert }{\sqrt{\left\vert \pi^{\left(  s\right)
}I\right\vert }}\sum_{t=0}^{\beta-s}\mathbf{1}_{\Gamma_{2^{s}\eta,t}\left(
I\right)  }\left(  x\right)  \left(  \frac{1}{1+\frac{\operatorname*{dist}%
\left(  x,\mathcal{H}_{2^{s}\eta}\left(  I\right)  \right)  }{2^{s}\eta
\ell\left(  I\right)  }}\right)  ^{2}\\
& \approx\sum_{I\in\mathcal{D}}\frac{\left\vert \left\langle f,h_{I;\kappa
}\right\rangle \right\vert }{\sqrt{\left\vert I\right\vert }}\frac{\left\vert
\left\langle f,h_{\left(  \pi^{\left(  s\right)  }I\right)  ;\kappa
}\right\rangle \right\vert }{\sqrt{\left\vert \pi^{\left(  s\right)
}I\right\vert }}\sum_{t=0}^{\beta-s}2^{-2t}\mathbf{1}_{\Gamma_{2^{s}\eta
,t}\left(  I\right)  }\left(  x\right)  ,
\end{align*}
since $\Gamma_{2^{s}\eta,t}\left(  I\right)  =\left\{  x\in
I:\operatorname*{dist}\left(  x,\mathcal{H}_{2^{s}\eta}\left(  I\right)
\right)  \approx2^{t}2^{s}\eta\ell\left(  I\right)  \right\}  $ and
$\operatorname*{dist}\left(  x,\mathcal{H}_{2^{s}\eta}\left(  I\right)
\right)  \leq\ell\left(  I\right)  $.

Now we continue to proceed as in the diagonal case to obtain,%
\begin{align*}
& \left\Vert \sum_{I\in\mathcal{D}}\frac{\left\vert \left\langle
f,h_{I;\kappa}\right\rangle \right\vert }{\sqrt{\left\vert I\right\vert }%
}\frac{\left\vert \left\langle f,h_{\left(  \pi^{\left(  s\right)  }I\right)
;\kappa}\right\rangle \right\vert }{\sqrt{\left\vert \pi^{\left(  s\right)
}I\right\vert }}\left(  \frac{1}{1+\frac{\operatorname*{dist}\left(
x,\mathcal{H}_{2^{s}\eta}\left(  I\right)  \right)  }{2^{s}\eta\ell\left(
I\right)  }}\right)  ^{2}\mathbf{1}_{I}\right\Vert _{L^{p}}\\
& \lesssim\sum_{t=0}^{\beta-s}2^{-2t}\left\Vert \sum_{I\in\mathcal{D}}%
\frac{\left\vert \left\langle f,h_{I;\kappa}\right\rangle \right\vert }%
{\sqrt{\left\vert I\right\vert }}\frac{\left\vert \left\langle f,h_{\left(
\pi^{\left(  s\right)  }I\right)  ;\kappa}\right\rangle \right\vert }%
{\sqrt{\left\vert \pi^{\left(  s\right)  }I\right\vert }}\mathbf{1}%
_{\Gamma_{2^{s}\eta,t}\left(  I\right)  }\right\Vert _{L^{p}}\\
& \lesssim\sum_{t=0}^{\beta-s}2^{-2t}\left\Vert \sqrt{\sum_{I\in\mathcal{D}%
}\frac{\left\vert \left\langle f,h_{I;\kappa}\right\rangle \right\vert ^{2}%
}{\left\vert I\right\vert }\mathbf{1}_{\Gamma_{2^{s}\eta,t}\left(  I\right)
}}\sqrt{\sum_{I\in\mathcal{D}}\frac{\left\vert \left\langle f,h_{\left(
\pi^{\left(  s\right)  }I\right)  ;\kappa}\right\rangle \right\vert ^{2}%
}{\left\vert \pi^{\left(  s\right)  }I\right\vert }\mathbf{1}_{\Gamma
_{2^{s}\eta,t}\left(  I\right)  }}\right\Vert _{L^{p}}\\
& \lesssim\sum_{t=0}^{\beta-s}2^{-2t}\left\Vert \delta\sum_{I\in\mathcal{D}%
}\frac{\left\vert \left\langle f,h_{I;\kappa}\right\rangle \right\vert ^{2}%
}{\left\vert I\right\vert }\mathbf{1}_{\Gamma_{2^{s}\eta,t}\left(  I\right)
}+\frac{1}{\delta}\sum_{I\in\mathcal{D}}\frac{\left\vert \left\langle
f,h_{\left(  \pi^{\left(  s\right)  }I\right)  ;\kappa}\right\rangle
\right\vert ^{2}}{\left\vert \pi^{\left(  s\right)  }I\right\vert }%
\mathbf{1}_{\Gamma_{2^{s}\eta,t}\left(  I\right)  }\right\Vert _{L^{p}},
\end{align*}
for every choice of $\delta\in\left(  0,1\right)  $. Thus it remains to
estimate each of the terms%
\[
\delta\sum_{t=0}^{\beta-s}2^{-2t}\left\Vert \sum_{I\in\mathcal{D}}%
\frac{\left\vert \left\langle f,h_{I;\kappa}\right\rangle \right\vert ^{2}%
}{\left\vert I\right\vert }\mathbf{1}_{\Gamma_{2^{s}\eta,t}\left(  I\right)
}\right\Vert _{L^{p}}\text{ and }\frac{1}{\delta}\sum_{t=0}^{\beta-s}%
2^{-2t}\left\Vert \sum_{I\in\mathcal{D}}\frac{\left\vert \left\langle
f,h_{\left(  \pi^{\left(  s\right)  }I\right)  ;\kappa}\right\rangle
\right\vert ^{2}}{\left\vert \pi^{\left(  s\right)  }I\right\vert }%
\mathbf{1}_{\Gamma_{2^{s}\eta,t}\left(  I\right)  }\right\Vert _{L^{p}},
\]
and then minimize the sum over $0<\delta<1$. But from (\ref{est term B}), we
have%
\begin{align*}
\sum_{t=0}^{\beta-s}2^{-2t}\left\Vert \sum_{I\in\mathcal{D}}\frac{\left\vert
\left\langle f,h_{I;\kappa}\right\rangle \right\vert ^{2}}{\left\vert
I\right\vert }\mathbf{1}_{\Gamma_{2^{s}\eta,t}\left(  I\right)  }\right\Vert
_{L^{p}}  & \lesssim C_{p,n}\left(  2^{s}\eta\right)  ^{\frac{1}{2\left(
p-1\right)  }}\left\Vert f\right\Vert _{L^{p}}\ ,\\
\sum_{t=0}^{\beta-s}2^{-2t}\left\Vert \sum_{I\in\mathcal{D}}\frac{\left\vert
\left\langle f,h_{\left(  \pi^{\left(  s\right)  }I\right)  ;\kappa
}\right\rangle \right\vert ^{2}}{\left\vert \pi^{\left(  s\right)
}I\right\vert }\mathbf{1}_{\Gamma_{2^{s}\eta,t}\left(  I\right)  }\right\Vert
_{L^{p}}  & \lesssim\sum_{t=0}^{\beta-s}2^{-2t}\left\Vert \sum_{I^{\prime}%
\in\mathcal{D}}\frac{\left\vert \left\langle f,h_{I^{\prime};\kappa
}\right\rangle \right\vert ^{2}}{\left\vert I^{\prime}\right\vert }%
\mathbf{1}_{\Gamma_{\eta,t}\left(  I^{\prime}\right)  }\right\Vert _{L^{p}%
}\lesssim C_{p,n}\eta^{\frac{1}{2\left(  p-1\right)  }}\left\Vert f\right\Vert
_{L^{p}}\ ,
\end{align*}
since
\begin{align*}
\Gamma_{\eta,t}\left(  I^{\prime}\right)  \Gamma_{2^{s}\eta,t}\left(
I\right)   & =\left\{  x\in I:\operatorname*{dist}\left(  x,\mathcal{H}%
_{2^{s}\eta}\left(  I\right)  \right)  \approx2^{t}2^{s}\eta\ell\left(
I\right)  \right\} \\
& \subset\left\{  x\in I^{\prime}:\operatorname*{dist}\left(  x,\mathcal{H}%
_{\eta}\left(  I^{\prime}\right)  \right)  \approx2^{t}\eta\ell\left(
I^{\prime}\right)  \right\}  =\Gamma_{\eta,t}\left(  I^{\prime}\right)  .
\end{align*}
Thus with $\delta=2^{-\frac{s}{4\left(  p-1\right)  }}$, we obtain%
\begin{align*}
& \left\Vert \sum_{I\in\mathcal{D}}\frac{\left\vert \left\langle
f,h_{I;\kappa}\right\rangle \right\vert }{\sqrt{\left\vert I\right\vert }%
}\frac{\left\vert \left\langle f,h_{\left(  \pi^{\left(  s\right)  }I\right)
;\kappa}\right\rangle \right\vert }{\sqrt{\left\vert \pi^{\left(  s\right)
}I\right\vert }}\left(  \frac{1}{1+\frac{\operatorname*{dist}\left(
x,\mathcal{H}_{2^{s}\eta}\left(  I\right)  \right)  }{2^{s}\eta\ell\left(
I\right)  }}\right)  ^{2}\mathbf{1}_{I}\right\Vert _{L^{p}}\\
& \lesssim\delta C_{p,n}\left(  2^{s}\eta\right)  ^{\frac{1}{2\left(
p-1\right)  }}\left\Vert f\right\Vert _{L^{p}}+\frac{1}{\delta}C_{p,n}%
\eta^{\frac{1}{2\left(  p-1\right)  }}\left\Vert f\right\Vert _{L^{p}}\\
& =\left[  \delta2^{\frac{s}{2\left(  p-1\right)  }}+\frac{1}{\delta}\right]
C_{p,n}\eta^{\frac{1}{2\left(  p-1\right)  }}\left\Vert f\right\Vert _{L^{p}%
}=2C_{p,n}2^{\frac{s}{4\left(  p-1\right)  }}2^{-\frac{\beta}{2\left(
p-1\right)  }}\left\Vert f\right\Vert _{L^{p}}\\
& \leq2C_{p,n}2^{-\frac{\beta}{4\left(  p-1\right)  }}\left\Vert f\right\Vert
_{L^{p}}=2C_{p,n}\eta^{\frac{1}{4\left(  p-1\right)  }}\left\Vert f\right\Vert
_{L^{p}}\ ,
\end{align*}
since $0\leq s\leq\beta$. Finally we sum in $s$ from $0$ to $\beta=\log
_{2}\frac{1}{\eta}$ to conclude that,%
\[
\left\Vert \left(  \sum_{Q\in\mathcal{D}}\frac{1}{\left\vert Q\right\vert
}C\left(  Q\right)  ^{2}\mathbf{1}_{Q}\right)  ^{\frac{1}{2}}\right\Vert
_{L^{p}}\lesssim\eta^{\frac{1}{4\left(  p-1\right)  }}\log_{2}\frac{1}{\eta
}\left\Vert f\right\Vert _{L^{p}}\ .
\]

This finishes the proof of (\ref{claim that}) and hence the proof of
Proposition \ref{invert}.
\end{proof}

\subsubsection{Surjectivity}

The proof of Proposition \ref{invert dual} is very similar to that of the
previous proposition in light of\ the following equivalences. Using
$\left\vert \bigtriangleup_{I;\kappa}^{\eta}f\right\vert \leq
M^{\operatorname{dy}}\left(  \bigtriangleup_{I;\kappa}^{\eta}f\right)  $,
together with the Fefferman-Stein vector-valued maximal inequalities
\cite{FeSt}\ and the Alpert square function equivalence (\ref{square}), shows
that%
\[
\left\Vert \left(  \sum_{I\in\mathcal{D}}\left\vert \bigtriangleup_{I;\kappa
}^{\eta}f\right\vert ^{2}\right)  ^{\frac{1}{2}}\right\Vert _{L^{p}}%
\approx\left\Vert \left(  \sum_{I\in\mathcal{D}}\left\vert \bigtriangleup
_{I;\kappa}f\right\vert ^{2}\right)  ^{\frac{1}{2}}\right\Vert _{L^{p}}%
\approx\left\Vert \sum_{I\in\mathcal{D}}\bigtriangleup_{I;\kappa}f\right\Vert
_{L^{p}}=\left\Vert f\right\Vert _{L^{p}}\ .
\]
We also have from the Alpert square function equivalence that
\begin{equation}
\left\Vert \left(  \sum_{I\in\mathcal{D}}\left\vert \left(  \bigtriangleup
_{I;\kappa}^{\eta}\right)  ^{\operatorname*{tr}}f\right\vert ^{2}\right)
^{\frac{1}{2}}\right\Vert _{L^{p}}=\left\Vert \left(  \sum_{I\in\mathcal{D}%
}\left\vert \left\langle f,h_{I;\kappa}^{\eta}\right\rangle h_{I;\kappa
}\right\vert ^{2}\right)  ^{\frac{1}{2}}\right\Vert _{L^{p}}\approx\left\Vert
\sum_{I\in\mathcal{D}}\left\langle f,h_{I;\kappa}^{\eta}\right\rangle
h_{I;\kappa}\right\Vert _{L^{p}}=\left\Vert \sum_{I\in\mathcal{D}}\left(
\bigtriangleup_{I;\kappa}^{\eta}\right)  ^{\operatorname*{tr}}f\right\Vert
_{L^{p}}\ .\label{eta equiv}%
\end{equation}
Furthermore, from the definition $\left(  S_{\eta}^{\mathcal{D}}\right)
^{\operatorname*{tr}}f=\sum_{I\in\mathcal{D}}\left\langle f,h_{I;\kappa}%
^{\eta}\right\rangle h_{I;\kappa}$, we then obtain%
\begin{align}
& \left\Vert \left(  S_{\eta}^{\mathcal{D}}\right)  ^{\operatorname*{tr}%
}f\right\Vert _{L^{p}}\approx\left\Vert \left(  \sum_{Q\in\mathcal{D}%
}\left\vert \bigtriangleup_{Q;\kappa}\left(  S_{\eta}^{\mathcal{D}}\right)
^{\operatorname*{tr}}f\right\vert ^{2}\right)  ^{\frac{1}{2}}\right\Vert
_{L^{p}}=\left\Vert \left(  \sum_{Q\in\mathcal{D}}\left\vert \left\langle
\left(  S_{\eta}^{\mathcal{D}}\right)  ^{\ast}f,h_{Q;\kappa}\right\rangle
h_{Q;\kappa}\right\vert ^{2}\right)  ^{\frac{1}{2}}\right\Vert _{L^{p}%
}\label{then obtain}\\
& =\left\Vert \left(  \sum_{Q\in\mathcal{D}}\frac{1}{\left\vert Q\right\vert
}\left\vert \left\langle \sum_{I\in\mathcal{D}}\left\langle f,h_{I;\kappa
}^{\eta}\right\rangle h_{I;\kappa},h_{Q;\kappa}\right\rangle \right\vert
^{2}\right)  ^{\frac{1}{2}}\right\Vert _{L^{p}}=\left\Vert \left(  \sum
_{Q\in\mathcal{D}}\frac{1}{\left\vert Q\right\vert }\left\vert \left\langle
f,h_{Q;\kappa}^{\eta}\right\rangle \right\vert ^{2}\right)  ^{\frac{1}{2}%
}\right\Vert _{L^{p}}.\nonumber
\end{align}

\begin{proof}
[Proof of Proposition \ref{invert dual}]From (\ref{then obtain}) we have,%
\[
\left\Vert \left(  S_{\eta}^{\mathcal{D}}\right)  ^{\operatorname*{tr}%
}f\right\Vert _{L^{p}}\approx\left\Vert \left(  \sum_{Q\in\mathcal{D}}\frac
{1}{\left\vert Q\right\vert }\left\vert \left\langle f,h_{Q;\kappa}^{\eta
}\right\rangle \right\vert ^{2}\right)  ^{\frac{1}{2}}\right\Vert _{L^{p}%
}=\left\Vert \left(  \sum_{Q\in\mathcal{D}}\frac{1}{\left\vert Q\right\vert
}\left\vert \sum_{I\in\mathcal{D}}\left\langle f,h_{I;\kappa}\right\rangle
\left\langle h_{I;\kappa},h_{Q;\kappa}^{\eta}\right\rangle \right\vert
^{2}\right)  ^{\frac{1}{2}}\right\Vert _{L^{p}},
\]
which we now compare to
\[
\left\Vert S_{\eta}^{\mathcal{D}}f\right\Vert _{L^{p}}\approx\left\Vert
\left(  \sum_{Q\in\mathcal{D}}\left\vert \left\langle S_{\eta}f,h_{Q;\kappa
}\right\rangle h_{Q;\kappa}\right\vert ^{2}\right)  ^{\frac{1}{2}}\right\Vert
_{L^{p}}=\left\Vert \left(  \sum_{Q\in\mathcal{D}}\frac{1}{\left\vert
Q\right\vert }\left\vert \sum_{I\in\mathcal{D}}\left\langle f,h_{I;\kappa
}\right\rangle \left\langle h_{I;\kappa}^{\eta},h_{Q;\kappa}\right\rangle
\right\vert ^{2}\right)  ^{\frac{1}{2}}\right\Vert _{L^{p}},
\]
that was shown to be comparable to $\left\Vert f\right\Vert _{L^{p}}$ in
Proposition \ref{invert} above. The only difference between the two right hand
sides is that the convolution appears with $h_{Q;\kappa}^{\eta}$ in the first
norm, and with $h_{I;\kappa}^{\eta}$ in the second norm. We now use the
estimates in Lemma \ref{inner est} just as in the proof of Proposition
\ref{invert} above. Here is a sketch of the details that is virtually verbatim
that of those in the proof of Proposition \ref{invert}. Recall that
$\mathcal{H}_{\eta}\left(  I\right)  $ is defined in (\ref{def halo}).

For convenience we first rewrite the estimates in Lemma \ref{inner est} so as
to apply directly to the inner product $\left\langle h_{I;\kappa},h_{Q;\kappa
}^{\eta}\right\rangle $ instead of $\left\langle h_{I;\kappa}^{\eta
},h_{Q;\kappa}\right\rangle $. This is accomplished by simply interchanging
$Q$ and $I$ throughout:%
\begin{align}
\left\vert \left\langle h_{Q;\kappa}^{\eta},h_{Q;\kappa}\right\rangle
\right\vert  & \approx1\text{ and }\left\vert \left\langle h_{Q;\kappa}^{\eta
},h_{Q^{\prime};\kappa}\right\rangle \right\vert \lesssim\eta
,\ \ \ \ \ \text{for }Q\text{ and }Q^{\prime}\text{ siblings},\label{cases}\\
\left\vert \left\langle h_{Q;\kappa}^{\eta},h_{I;\kappa}\right\rangle
\right\vert  & \lesssim\eta\left(  \frac{\ell\left(  Q\right)  }{\ell\left(
I\right)  }\right)  ^{\frac{n}{2}},\ \ \ \ \ \text{for }Q\in
\operatorname*{Car}\left(  I\right)  ,\nonumber\\
\left\vert \left\langle h_{Q;\kappa}^{\eta},h_{I;\kappa}\right\rangle
\right\vert  & \lesssim\eta\left(  \frac{\ell\left(  I\right)  }{\ell\left(
Q\right)  }\right)  ^{\frac{n}{2}-1},\ \ \ \ \ \text{for }I\in
\operatorname*{Car}\left(  Q\right)  \text{ and }\ell\left(  I\right)
\geq\eta\ell\left(  Q\right)  ,\nonumber\\
\left\vert \left\langle h_{Q;\kappa}^{\eta},h_{I;\kappa}\right\rangle
\right\vert  & \lesssim\frac{1}{\eta^{\kappa}}\left(  \frac{\ell\left(
I\right)  }{\ell\left(  Q\right)  }\right)  ^{\kappa+\frac{n}{2}%
},\ \ \ \ \ \text{for }\ell\left(  I\right)  \leq\eta\ell\left(  Q\right)
\text{ and }I\cap\mathcal{H}_{\frac{\eta}{2}}\left(  I\right)  \neq
\emptyset,\nonumber\\
\left\langle h_{Q;\kappa}^{\eta},h_{I;\kappa}\right\rangle  &
=0,\ \ \ \ \ \text{in all other cases}.\nonumber
\end{align}

Now we have by the Alpert square function estimate (\ref{square}),%
\begin{align*}
& \left\Vert \left(  S_{\eta}^{\mathcal{D}}\right)  ^{\operatorname*{tr}%
}f\right\Vert _{L^{p}}\approx\left\Vert \left(  \sum_{Q\in\mathcal{D}%
}\left\vert \sum_{I\in\mathcal{D}}\left\langle f,h_{I;\kappa}\right\rangle
\left\langle h_{I;\kappa},h_{Q;\kappa}^{\eta}\right\rangle h_{Q;\kappa
}\right\vert ^{2}\right)  ^{\frac{1}{2}}\right\Vert _{L^{p}}\\
& \approx\left\Vert \left(  \sum_{Q\in\mathcal{D}}\left\vert \left\langle
f,h_{Q;\kappa}\right\rangle \left\langle h_{Q;\kappa},h_{Q;\kappa}^{\eta
}\right\rangle \right\vert ^{2}\left\vert h_{Q;\kappa}\right\vert ^{2}\right)
^{\frac{1}{2}}\right\Vert _{L^{p}}+O\left(  \left\Vert \left(  \sum
_{Q\in\mathcal{D}}\left\vert \sum_{I\in\mathcal{D}:\ I\neq Q}\left\langle
f,h_{I;\kappa}\right\rangle \left\langle h_{I;\kappa},h_{Q;\kappa}^{\eta
}\right\rangle \right\vert ^{2}\left\vert h_{Q}\right\vert ^{2}\right)
^{\frac{1}{2}}\right\Vert _{L^{p}}\right) \\
& \approx\left\Vert \left(  \sum_{Q\in\mathcal{D}}\left\vert \left\langle
f,h_{Q;\kappa}\right\rangle \right\vert ^{2}\frac{1}{\left\vert Q\right\vert
}\mathbf{1}_{Q}\right)  ^{\frac{1}{2}}\right\Vert _{L^{p}}^{p}+O\left(
\left\Vert \left(  \sum_{Q\in\mathcal{D}}\frac{1}{\left\vert Q\right\vert
}\left\vert \sum_{I\in\mathcal{D}:\ I\neq Q}\left\langle f,h_{I;\kappa
}\right\rangle \left\langle h_{I;\kappa},h_{Q;\kappa}^{\eta}\right\rangle
\right\vert ^{2}\mathbf{1}_{Q}\right)  ^{\frac{1}{2}}\right\Vert _{L^{p}%
}\right)  ,
\end{align*}
where for some $C_{p},c_{p}>0$,
\[
C_{p}\left\Vert f\right\Vert _{L^{p}}^{p}\geq\left\Vert \left(  \sum
_{Q\in\mathcal{D}}\left\vert \left\langle f,h_{Q;\kappa}\right\rangle
\right\vert ^{2}\frac{1}{\left\vert Q\right\vert }\mathbf{1}_{Q}\right)
^{\frac{1}{2}}\right\Vert _{L^{p}}^{p}=\left\Vert \left(  \sum_{Q\in
\mathcal{D}}\left\vert \bigtriangleup_{Q;\kappa}f\right\vert ^{2}\right)
^{\frac{1}{2}}\right\Vert _{L^{p}}^{p}\geq c_{p}\left\Vert f\right\Vert
_{L^{p}}^{p}\ .
\]

Thus we have for each $Q\in\mathcal{D}$,
\begin{align*}
& \sum_{I\in\mathcal{D}:\ I\neq Q}\left\langle f,h_{I;\kappa}\right\rangle
\left\langle h_{I;\kappa},h_{Q;\kappa}^{\eta}\right\rangle =\sum
_{\substack{I\in\mathcal{D}:\ \ell\left(  I\right)  <\ell\left(  Q\right)
\\I\in\operatorname*{Car}\left(  Q\right)  }}\left\langle f,h_{I;\kappa
}\right\rangle \left\langle h_{I;\kappa},h_{Q;\kappa}^{\eta}\right\rangle
+\sum_{\substack{I\in\mathcal{D}:\ \ell\left(  I\right)  >\ell\left(
Q\right)  \\Q\cap\mathcal{H}_{\frac{\eta}{2}}\left(  I\right)  \neq\emptyset
}}\left\langle f,h_{I;\kappa}\right\rangle \left\langle h_{I;\kappa
},h_{Q;\kappa}^{\eta}\right\rangle \\
&
\ \ \ \ \ \ \ \ \ \ \ \ \ \ \ \ \ \ \ \ \ \ \ \ \ \ \ \ \ \ \ \ \ \ \ \ \ \ \ \ \ \ \ \ \ \ \ +\sum
_{\substack{I\in\mathcal{D}:\ \ell\left(  I\right)  \geq\ell\left(  Q\right)
\geq\eta\ell\left(  I\right)  \\Q\in\operatorname*{Car}\left(  I\right)
}}\left\langle f,h_{I;\kappa}\right\rangle \left\langle h_{I;\kappa
},h_{Q;\kappa}^{\eta}\right\rangle .
\end{align*}
As a consequence of the estimates in (\ref{cases}), we have for each
$Q\in\mathcal{D}$,%
\begin{align*}
\left\vert \sum_{I\in\mathcal{D}:\ I\neq Q}\left\langle f,h_{I;\kappa
}\right\rangle \left\langle h_{I;\kappa},h_{Q;\kappa}^{\eta}\right\rangle
\right\vert  & \lesssim\left\vert \sum_{\substack{I\in\mathcal{D}%
:\ \ell\left(  I\right)  <\ell\left(  Q\right)  \\I\in\operatorname*{Car}%
\left(  Q\right)  }}\left\langle f,h_{I;\kappa}\right\rangle \left\langle
h_{I;\kappa},h_{Q;\kappa}^{\eta}\right\rangle \right\vert +\sum
_{\substack{I\in\mathcal{D}:\ \ell\left(  Q\right)  \leq\eta\ell\left(
I\right)  \\Q\cap\mathcal{H}_{\frac{\eta}{2}}\left(  I\right)  \neq\emptyset
}}\left\vert \left\langle f,h_{I;\kappa}\right\rangle \right\vert \frac
{1}{\eta^{\kappa}}\left(  \frac{\ell\left(  Q\right)  }{\ell\left(  I\right)
}\right)  ^{\kappa+\frac{n}{2}}\\
& \ \ \ \ \ \ \ \ \ \ \ \ \ \ \ \ \ \ \ \ +\eta\sum_{\substack{I\in
\mathcal{D}:\ \ell\left(  I\right)  \geq\ell\left(  Q\right)  \geq\eta
\ell\left(  I\right)  \\Q\in\operatorname*{Car}\left(  I\right)  }}\left\vert
\left\langle f,h_{I;\kappa}\right\rangle \right\vert \left(  \frac{\ell\left(
Q\right)  }{\ell\left(  I\right)  }\right)  ^{\frac{n}{2}}\\
& \equiv A\left(  Q\right)  +B\left(  Q\right)  +C\left(  Q\right)  .
\end{align*}

Altogether we have%
\begin{align}
& \left\Vert \left(  \sum_{Q\in\mathcal{D}}\frac{1}{\left\vert Q\right\vert
}\left\vert \sum_{I\in\mathcal{D}:\ I\neq Q}\left\langle f,h_{I;\kappa
}\right\rangle \left\langle h_{I;\kappa},h_{Q;\kappa}^{\eta}\right\rangle
\right\vert ^{2}\mathbf{1}_{Q}\right)  ^{\frac{1}{2}}\right\Vert _{L^{p}%
}\lesssim\left\Vert \left(  \sum_{Q\in\mathcal{D}}\frac{1}{\left\vert
Q\right\vert }A\left(  Q\right)  ^{2}\mathbf{1}_{Q}\right)  ^{\frac{1}{2}%
}\right\Vert _{L^{p}}\label{two norms dual}\\
& \ \ \ \ \ \ \ \ \ \ \ \ \ \ \ +\left\Vert \left(  \sum_{Q\in\mathcal{D}%
}\frac{1}{\left\vert Q\right\vert }B\left(  Q\right)  ^{2}\mathbf{1}%
_{Q}\right)  ^{\frac{1}{2}}\right\Vert _{L^{p}}+\left\Vert \left(  \sum
_{Q\in\mathcal{D}}\frac{1}{\left\vert Q\right\vert }C\left(  Q\right)
^{2}\mathbf{1}_{Q}\right)  ^{\frac{1}{2}}\right\Vert _{L^{p}}.\nonumber
\end{align}

We now claim that
\begin{equation}
\left\Vert \left(  \sum_{Q\in\mathcal{D}}\frac{1}{\left\vert Q\right\vert
}\left\vert \sum_{I\in\mathcal{D}:\ I\neq Q}\left\langle f,h_{I;\kappa
}\right\rangle \left\langle h_{I;\kappa},h_{Q;\kappa}^{\eta}\right\rangle
\right\vert ^{2}\mathbf{1}_{Q}\right)  ^{\frac{1}{2}}\right\Vert _{L^{p}%
}\lesssim\eta^{\frac{1}{2}\gamma_{p}}\left(  \log_{2}\frac{1}{\eta}\right)
\left\Vert f\right\Vert _{L^{p}}.\label{claim that dual}%
\end{equation}
With this established, and taking $\kappa>\frac{n}{2}$, we obtain just as in
the proof of Proposition \ref{invert},$\ $%
\[
\left\Vert \left(  \sum_{Q\in\mathcal{D}}\frac{1}{\left\vert Q\right\vert
}\left\vert \sum_{I\in\mathcal{D}:\ I\neq Q}\left\langle f,h_{I;\kappa
}\right\rangle \left\langle h_{I;\kappa},h_{Q;\kappa}^{\eta}\right\rangle
\right\vert ^{2}\right)  ^{\frac{1}{2}}\right\Vert _{L^{p}}\leq C\eta
^{\frac{1}{2}\gamma_{p}}\left(  \log_{2}\frac{1}{\eta}\right)  \left\Vert
f\right\Vert _{L^{p}}<\frac{c_{p}}{2}\left\Vert f\right\Vert _{L^{p}}\ ,
\]
with$\ \eta>0$ sufficiently small. This then gives%
\[
C_{p}\left\Vert f\right\Vert _{L^{p}}\geq\left\Vert \left(  S_{\eta
}^{\mathcal{D}}\right)  ^{\operatorname*{tr}}f\right\Vert _{L^{p}}\geq
c_{p}\left\Vert f\right\Vert _{L^{p}}-\frac{c_{p}}{2}\left\Vert f\right\Vert
_{L^{p}}=\frac{c_{p}}{2}\left\Vert f\right\Vert _{L^{p}}\ ,
\]
which completes the proof of Proposition \ref{invert dual} modulo
(\ref{claim that dual}).

We prove (\ref{claim that dual}) by estimating each of the three terms on the
right hand side of (\ref{two norms dual}) separately. These three terms are
handled exactly as in Proposition \ref{invert} except that the arguments for
handling terms $A$ and $C$ are switched, with term $B$ handled the same as
before. We leave the routine verifications to the reader, and this finishes
our proof of Proposition \ref{invert dual}.
\end{proof}

\subsubsection{Representation}

Combining the two propositions above immediately gives the proof of Theorem
\ref{reproducing}, as we now show.

\begin{proof}
[Proof of Theorem \ref{reproducing}]Fix a grid $\mathcal{D}$ in $\mathbb{R}%
^{n}$. Combining the two propositions shows that $S_{\eta}^{\mathcal{D}}$ is a
bounded invertible linear map on $L^{p}$. Indeed, Proposition \ref{invert}
shows that $S_{\eta}^{\mathcal{D}}$ is one-to-one and Proposition
\ref{invert dual} shows that $S_{\eta}^{\mathcal{D}}$ is onto. The boundedness
of $S_{\eta}^{\mathcal{D}}$ is immediate from Proposition \ref{invert} and the
boundedness of $\left(  S_{\eta}^{\mathcal{D}}\right)  ^{-1}$ now follows from
the Open Mapping Theorem.

Thus dropping the superscript $\mathcal{D}$\ we have%
\[
f=S_{\eta}\left(  S_{\eta}\right)  ^{-1}f=\sum_{I\in\mathcal{D}}\left\langle
\left(  S_{\eta}\right)  ^{-1}f,h_{I;\kappa}\right\rangle h_{I;\kappa}^{\eta
}\ .
\]
If we set
\[
\widetilde{\bigtriangleup}_{I}^{\eta}f\equiv\left\langle S_{\eta}%
^{-1}f,h_{I;\kappa}\right\rangle h_{I;\kappa}^{\eta}=\bigtriangleup_{I}^{\eta
}\left(  S_{\eta}^{-1}f\right)  =\left\langle S_{\eta}^{-1}f,h_{I;\kappa
}\right\rangle \left(  \phi_{\eta\ell\left(  I\right)  }\ast h_{I;\kappa
}\right)  ,
\]
then we have%
\begin{align*}
f  & =\sum_{I\in\mathcal{D}}\widetilde{\bigtriangleup}_{I}^{\eta}f=\sum
_{I\in\mathcal{D}}\left\langle S_{\eta}^{-1}f,h_{I;\kappa}\right\rangle
h_{I;\kappa}^{\eta},\ \ \ \ \ \text{for }f\in L^{p},\\
\left\Vert \left(  \sum_{I\in\mathcal{D}}\left\vert \widetilde{\bigtriangleup
}_{I}^{\eta}f\right\vert ^{2}\right)  ^{\frac{1}{2}}\right\Vert _{L^{p}\left(
\sigma\right)  }  & \approx\left\Vert \left(  \sum_{I\in\mathcal{D}}\left\vert
\left\langle S_{\eta}^{-1}f,h_{I;\kappa}\right\rangle \right\vert ^{2}\frac
{1}{\left\vert I\right\vert _{\sigma}}\mathbf{1}_{I}\right)  ^{\frac{1}{2}%
}\right\Vert _{L^{p}\left(  \sigma\right)  }\approx\left\Vert S_{\eta}%
^{-1}f\right\Vert _{L^{p}\left(  \sigma\right)  }\approx\left\Vert
f\right\Vert _{L^{p}\left(  \sigma\right)  }\ ,\\
\left\Vert \left(  \sum_{I\in\mathcal{D}}\left\vert \bigtriangleup_{I}^{\eta
}f\right\vert ^{2}\right)  ^{\frac{1}{2}}\right\Vert _{L^{p}\left(
\sigma\right)  }  & \approx\left\Vert \left(  \sum_{I\in\mathcal{D}}\left\vert
\left\langle f,h_{I;\kappa}\right\rangle \right\vert ^{2}\frac{1}{\left\vert
I\right\vert _{\sigma}}\mathbf{1}_{I}\right)  ^{\frac{1}{2}}\right\Vert
_{L^{p}\left(  \sigma\right)  }\approx\left\Vert f\right\Vert _{L^{p}\left(
\sigma\right)  }\ ,
\end{align*}
which shows in particular that $\left\{  \widetilde{\bigtriangleup}_{I;\kappa
}^{\eta}\right\}  _{I\in\mathcal{D}}$ is a frame for $L^{p}$.
\end{proof}

\begin{notation}
Since the frame $\left\{  \widetilde{\bigtriangleup}_{I;\kappa}^{\eta
}\right\}  _{I\in\mathcal{D}}$ will be used extensively in what follows, we
drop the tilde and write $\bigtriangleup_{I;\kappa}^{\eta}$ instead of
$\widetilde{\bigtriangleup}_{I;\kappa}^{\eta}$, i.e. we \emph{redefine}
$\bigtriangleup_{I;\kappa}^{\eta}f$ to be
\[
\bigtriangleup_{I}^{\eta}f\equiv\sum_{I\in\mathcal{D}}\left\langle S_{\eta
}^{-1}f,h_{I;\kappa}\right\rangle h_{I;\kappa}^{\eta},
\]
as was done in the Introduction. Thus we have inserted the bounded invertible
operator $S_{\eta}^{-1}$ into the inner product above.
\end{notation}

\subsubsection{The smoothed pseudoprojections}

The smoothed operators $\bigtriangleup_{I;\kappa}^{\eta}$ are neither
self-adjoint, projections nor orthogonal, but come close as we now show.
Recall that%
\[
\bigtriangleup_{I;\kappa}^{\eta}f=\left\langle \left(  S_{\kappa,\eta}\right)
^{-1}f,h_{I;\kappa}\right\rangle h_{I;\kappa}^{\eta}\ ,\ \ \ \ \ \text{where
}h_{I;\kappa}^{\eta}=\phi_{\eta}\ast h_{I;\kappa}\ .
\]

\begin{lemma}
\label{smoothed proj}With notation as above and $\phi=\phi_{0}\ast\phi_{0}$,
we have%
\[
\left(  \bigtriangleup_{I;\kappa}^{\eta}\right)  ^{\operatorname*{tr}%
}g=\left\langle g,h_{I;\kappa}^{\eta}\right\rangle \left(  \left(
S_{\kappa,\eta}\right)  ^{-1}\right)  ^{\operatorname*{tr}}h_{I;\kappa}\ ,
\]
and%
\begin{align*}
& \left(  \bigtriangleup_{I;\kappa}^{\eta}\right)  ^{2}=a_{I;\kappa}^{\eta
}\bigtriangleup_{I;\kappa}^{\eta}\text{ and }\left[  \left(  \bigtriangleup
_{I;\kappa}^{\eta}\right)  ^{\operatorname*{tr}}\right]  ^{2}=a_{I;\kappa
}^{\eta}\left(  \bigtriangleup_{I;\kappa}^{\eta}\right)  ^{\operatorname*{tr}%
}\text{ and }\left(  \bigtriangleup_{I;\kappa}^{\eta}\right)  \left(
\bigtriangleup_{I;\kappa}^{\eta}\right)  ^{\operatorname*{tr}}=b_{I;\kappa
}^{\eta}\widetilde{\bigtriangleup}_{I;\kappa}^{\eta}=b_{I;\kappa}^{\eta
}\widetilde{\bigtriangleup}_{I;\kappa}^{\eta},\\
& \text{ where }\widetilde{\bigtriangleup}_{I;\kappa}^{\eta}f=\left\langle
f,h_{I;\kappa}^{\eta}\right\rangle h_{I;\kappa}^{\eta}\ ,\text{ and }\\
& \text{where }a_{I;\kappa}^{\eta}\equiv\left\langle \left(  S_{\kappa,\eta
}\right)  ^{-1}h_{I;\kappa}^{\eta},h_{I;\kappa}\right\rangle \approx1\text{
and }b_{I;\kappa}^{\eta}\equiv\left\langle \left(  S_{\kappa,\eta}\right)
^{-2}h_{I;\kappa},h_{I;\kappa}\right\rangle \approx1.
\end{align*}
In particular we have%
\[
\left\Vert f\right\Vert _{L^{p}}\approx\left\Vert \left(  \sum_{I\in
\mathcal{D}}\frac{\left\vert \left\langle f,h_{I;\kappa}^{\eta}\right\rangle
\right\vert ^{2}}{\left\vert I\right\vert }\mathbf{1}_{I}\right)  ^{\frac
{1}{2}}\right\Vert _{L^{p}}.
\]

\end{lemma}

\begin{proof}
The adjoint property follows from%
\begin{align*}
\left\langle \bigtriangleup_{I;\kappa}^{\eta}f,g\right\rangle  & =\left\langle
\left\langle \left(  S_{\kappa,\eta}\right)  ^{-1}f,h_{I;\kappa}\right\rangle
h_{I;\kappa}^{\eta},g\right\rangle =\left\langle h_{I;\kappa}^{\eta
},g\right\rangle \int\left(  S_{\kappa,\eta}\right)  ^{-1}f\left(  x\right)
h_{I;\kappa}\left(  x\right)  dx\\
& =\left\langle h_{I;\kappa}^{\eta},g\right\rangle \int f\left(  x\right)
\left(  \left(  S_{\kappa,\eta}\right)  ^{-1}\right)  ^{\operatorname*{tr}%
}h_{I;\kappa}\left(  x\right)  dx\\
& =\int f\left(  x\right)  \left\{  \left(  \left(  S_{\kappa,\eta}\right)
^{-1}\right)  ^{\operatorname*{tr}}h_{I;\kappa}\left(  x\right)  \left\langle
h_{I;\kappa}^{\eta},g\right\rangle \right\}  dx=\left\langle f,\left(
\bigtriangleup_{I;\kappa}^{\eta}\right)  ^{\operatorname*{tr}}g\right\rangle .
\end{align*}
The pseudoprojection property follows from%
\begin{align*}
& \left(  \bigtriangleup_{I;\kappa}^{\eta}\right)  ^{2}f=\bigtriangleup
_{I;\kappa}^{\eta}\left(  \bigtriangleup_{I;\kappa}^{\eta}f\right)
=\left\langle \left(  S_{\kappa,\eta}\right)  ^{-1}\left(  \bigtriangleup
_{I;\kappa}^{\eta}f\right)  ,h_{I;\kappa}\right\rangle h_{I;\kappa}^{\eta}\\
& =\left\langle \left(  S_{\kappa,\eta}\right)  ^{-1}\left\{  \left\langle
\left(  S_{\kappa,\eta}\right)  ^{-1}f,h_{I;\kappa}\right\rangle h_{I;\kappa
}^{\eta}\right\}  ,h_{I;\kappa}\right\rangle h_{I;\kappa}^{\eta}=\left\langle
\left(  S_{\kappa,\eta}\right)  ^{-1}f,h_{I;\kappa}\right\rangle \left\langle
\left(  S_{\kappa,\eta}\right)  ^{-1}h_{I;\kappa}^{\eta},h_{I;\kappa
}\right\rangle h_{I;\kappa}^{\eta}\\
& =\left\langle \left(  S_{\kappa,\eta}\right)  ^{-1}h_{I;\kappa}^{\eta
},h_{I;\kappa}\right\rangle \left\langle \left(  S_{\kappa,\eta}\right)
^{-1}f,h_{I;\kappa}\right\rangle h_{I;\kappa}^{\eta}=\left\langle \left(
S_{\kappa,\eta}\right)  ^{-1}h_{I;\kappa}^{\eta},h_{I;\kappa}\right\rangle
_{\mu}\bigtriangleup_{I;\kappa}^{\eta}f=a_{I;\kappa}^{\eta}\bigtriangleup
_{I;\kappa}^{\eta}f.
\end{align*}
However, $\left(  S_{\kappa,\eta}\right)  ^{-1}$ is close to the identity map
by (\ref{little oh}), so that using $\phi_{\eta}=\phi_{\eta_{0}}\ast\phi
_{\eta_{0}}$, we obtain%
\begin{align*}
a_{I;\kappa}^{\eta}  & =\left\langle \left(  S_{\kappa,\eta}\right)
^{-1}h_{I;\kappa}^{\eta},h_{I;\kappa}\right\rangle \approx\left\langle
h_{I;\kappa}^{\eta},h_{I;\kappa}\right\rangle +o\left(  1\right)
=\left\langle \phi_{\eta\ell\left(  I\right)  }\ast h_{I;\kappa},h_{I;\kappa
}\right\rangle +o\left(  1\right) \\
& =\left\langle \phi_{\eta_{0}\ell\left(  I\right)  }\ast h_{I;\kappa}%
,\phi_{\eta_{0}\ell\left(  I\right)  }\ast h_{I;\kappa}\right\rangle +o\left(
1\right)  =\left\Vert h_{I;\kappa}^{\eta_{0}}\right\Vert _{L^{2}}^{2}+o\left(
1\right)  \approx\left\Vert h_{I;\kappa}\right\Vert _{L^{2}}^{2}+o\left(
1\right)  \approx1.
\end{align*}
We also compute%
\begin{align*}
& \left(  \bigtriangleup_{I;\kappa}^{\eta}\right)  \left(  \bigtriangleup
_{I;\kappa}^{\eta}\right)  ^{\operatorname*{tr}}f=\left\langle \left(
S_{\kappa,\eta}\right)  ^{-1}\left(  \bigtriangleup_{I;\kappa}^{\eta}\right)
^{\operatorname*{tr}}f,h_{I;\kappa}\right\rangle h_{I;\kappa}^{\eta}\\
& =\left\langle \left(  S_{\kappa,\eta}\right)  ^{-1}\left\{  \left\langle
f,h_{I;\kappa}^{\eta}\right\rangle \left(  S_{\kappa,\eta}\right)
^{-1}h_{I;\kappa}\right\}  ,h_{I;\kappa}\right\rangle h_{I;\kappa}^{\eta
}=\left\langle f,h_{I;\kappa}^{\eta}\right\rangle \left\langle \left(
S_{\kappa,\eta}\right)  ^{-2}h_{I;\kappa},h_{I;\kappa}\right\rangle
h_{I;\kappa}^{\eta}\\
& =\left\langle \left(  S_{\kappa,\eta}\right)  ^{-2}h_{I;\kappa},h_{I;\kappa
}\right\rangle \left\langle f,h_{I;\kappa}^{\eta}\right\rangle h_{I;\kappa
}^{\eta}=\left\langle \left(  S_{\kappa,\eta}\right)  ^{-2}h_{I;\kappa
},h_{I;\kappa}\right\rangle \widetilde{\bigtriangleup}_{I;\kappa}^{\eta}f.
\end{align*}
Finally,%
\[
f=\sum_{I\in\mathcal{D}}\left(  \bigtriangleup_{I;\kappa}^{\eta}\right)
^{\operatorname*{tr}}f=\sum_{I\in\mathcal{D}}\left\langle f,h_{I;\kappa}%
^{\eta}\right\rangle \left[  \left(  S_{\kappa,\eta}^{\operatorname*{tr}%
}\right)  ^{-1}\right]  ^{\operatorname*{tr}}h_{I;\kappa}=\left[  \left(
S_{\kappa,\eta}^{\operatorname*{tr}}\right)  ^{-1}\right]
^{\operatorname*{tr}}\sum_{I\in\mathcal{D}}\left\langle f,h_{I;\kappa}^{\eta
}\right\rangle h_{I;\kappa}%
\]
shows that%
\[
\left\Vert f\right\Vert _{L^{p}}=\left\Vert \left[  \left(  S_{\kappa,\eta
}^{\operatorname*{tr}}\right)  ^{-1}\right]  ^{\operatorname*{tr}}\sum
_{I\in\mathcal{D}}\left\langle f,h_{I;\kappa}^{\eta}\right\rangle h_{I;\kappa
}\right\Vert _{L^{p}}\approx\left\Vert \sum_{I\in\mathcal{D}}\left\langle
f,h_{I;\kappa}^{\eta}\right\rangle h_{I;\kappa}\right\Vert _{L^{p}}%
\approx\left\Vert \left(  \sum_{I\in\mathcal{D}}\frac{\left\vert \left\langle
f,h_{I;\kappa}^{\eta}\right\rangle \right\vert ^{2}}{\left\vert I\right\vert
}\mathbf{1}_{I}\right)  ^{\frac{1}{2}}\right\Vert _{L^{p}}%
\]

\end{proof}

\section{The extension operator and oscillatory inner products}

Given $f\in L^{p}\left(  \sigma_{n-1}\right)  $, we define the extension
operator $E_{\chi}$ localized to a cutoff function $\chi\left(  x\right)  $
by
\[
E_{\chi}f\left(  \xi\right)  =\mathcal{F}\left(  f\sigma_{n-1}\right)  \left(
\xi\right)  =\int_{\mathbb{S}^{n-1}}f\left(  z\right)  e^{-iz\cdot\xi}%
\chi\left(  z\right)  d\sigma_{n-1}\left(  z\right)  .
\]
If we use a one-to-one onto coordinate patch $\Phi:U\rightarrow\mathbb{P}$
such that $\operatorname*{Supp}\chi\subset\mathbb{P}$ and $U$ is a cube
centered at the origin in $\mathbb{R}^{n-1}$ with dyadic side length, then we
can write%
\begin{align*}
E_{\chi}f\left(  \xi\right)   & =\int_{\mathbb{P}}f\left(  y\right)
e^{-iy\cdot\xi}\chi\left(  y\right)  d\sigma_{n-1}\left(  y\right)  =\int
_{U}f\left(  \Phi\left(  x\right)  \right)  e^{-i\Phi\left(  x\right)
\cdot\xi}\chi\left(  \Phi\left(  x\right)  \right)  \frac{dx}{\left\vert
\det\nabla\Phi\left(  x\right)  \right\vert }\\
& =\int_{U}h\left(  x\right)  e^{-i\Phi\left(  x\right)  \cdot\xi}\zeta\left(
x\right)  dx
\end{align*}
where%
\[
h\left(  x\right)  =f\left(  \Phi\left(  \mathbb{P}x\right)  \right)  \text{
and }\zeta\left(  x\right)  \equiv\frac{\chi\left(  \Phi\left(  x\right)
\right)  }{\left\vert \det\nabla\Phi\left(  x\right)  \right\vert }.
\]
Since the map $\Phi:U\rightarrow\mathbb{P}$ is a diffeomorphism, we have%
\[
\left\Vert h\right\Vert _{L^{p}\left(  U\right)  }\approx\left\Vert
f\right\Vert _{L^{p}\left(  \mathbb{P}\right)  },
\]
and thus the extension operator $E_{\chi}:L^{p}\left(  \sigma_{n-1}\right)
\rightarrow L^{p}\left(  \mathbb{R}^{n}\right)  $ is bounded if and only if
the linear map $T:L^{p}\left(  U\right)  \rightarrow L^{p}\left(
\mathbb{R}^{n}\right)  $ is bounded, where $T$ is defined by%
\begin{align*}
& Tf\left(  \xi\right)  \equiv\int_{B_{n-1}\left(  0,\frac{1}{2}\right)
}K_{\Phi,\zeta}\left(  x,\xi\right)  f\left(  x\right)  dx=\int_{B_{n-1}%
\left(  0,\frac{1}{2}\right)  }f\left(  x\right)  e^{-i\Phi\left(  x\right)
\cdot\xi}dx,\ \ \ \ \ \text{\ for }f\in L^{p}\left(  B_{n-1}\left(  0,\frac
{1}{2}\right)  \right)  ,\\
& \ \ \ \ \ \ \ \ \ \ \ \ \ \ \ \text{where }K_{\Phi,\zeta}\left(
x,\xi\right)  \equiv e^{-i\Phi\left(  x\right)  \cdot\xi}.
\end{align*}

Now recall the $\left(  n-1\right)  $-dimensional Alpert wavelets $\left\{
h_{I;\kappa}^{n-1}\right\}  _{I\in\mathcal{G}}$ on $\mathbb{R}^{n-1}$ where
$\mathcal{G}$ is a translation of the standard dyadic grid on $\mathbb{R}%
^{n-1}$ so that $S\in\mathcal{G}$ and the origin is a vertex of $\pi
_{\mathcal{G}}^{\left(  2\right)  }S$ (see also Notation \ref{Notation Alpert}%
), and recall the smooth analogues $h_{I;\kappa}^{n-1,\eta}$ of these wavelets
as constructed in Theorem \ref{frame}\ above. Then expand $f$ by the smooth
Alpert reproducing formula $f=S_{\kappa,\eta}S_{\kappa,\eta}^{-1}f=\sum
_{I\in\mathcal{G}}\left\langle S_{\kappa,\eta}^{-1}f,h_{I;\kappa}%
^{n-1}\right\rangle h_{I;\kappa}^{n-1,\eta}$. In addition recall the
$n$-dimensional Alpert wavelets $\left\{  h_{J;\kappa}^{n}\right\}
_{J\in\mathcal{D}}$ on $\mathbb{R}^{n}$, where $\mathcal{D}$ is the standard
grid on $\mathbb{R}^{n}$, together with their smooth analogues $h_{J;\kappa
}^{n,\eta}$. It will be important, at least in a technical sense when
estimating part of the above form in Section \ref{Sec above}, to use the
standard grid $\mathcal{D}$ on $\mathbb{R}^{n}$ which enjoys the property that
the distance from the origin to a cube $J\in\mathcal{D}$ is at least the side
length of $J$, if the origin is not a vertex of $J$.

To estimate the left hand side $\left\Vert T\sum_{I\in\mathcal{G}\left[
U\right]  }\bigtriangleup_{I;\kappa}^{\eta}f\right\Vert _{L^{p}\left(
\lambda_{n}\right)  }$ of the truncated extension inequality (\ref{T_S trunc})
when $p=q$, we will use in particular the vanishing moments up to order
$\kappa-1$ of the wavelets $h_{I;\kappa}^{n-1,\eta}$ and $h_{J;\kappa}%
^{n,\eta}$,%
\begin{align*}
\int_{\mathbb{R}^{n-1}}h_{I;\kappa}^{n-1,\eta}\left(  x\right)  x^{\alpha}dx
& =0,\ \ \ \ \ \text{for }0\leq\left\vert \alpha\right\vert <\kappa,\\
\int_{\mathbb{R}^{n}}h_{J;\kappa}^{n,\eta}\left(  \xi\right)  \xi^{\alpha}d\xi
& =0,\ \ \ \ \ \text{for }0\leq\left\vert \alpha\right\vert <\kappa,
\end{align*}
along with estimates for oscillatory integrals in which the amplitudes involve
smooth Alpert wavelets.

We will now estimate the oscillatory inner product
\begin{equation}
\left\langle Th_{I;\kappa}^{n-1,\eta},h_{J;\kappa}^{n,\eta}\right\rangle
=\int_{\mathbb{R}^{n}}\left(  \int_{\mathbb{R}^{n-1}}e^{-i\Phi\left(
x\right)  \cdot\xi}h_{I;\kappa}^{n-1,\eta}\left(  x\right)  dx\right)
h_{J;\kappa}^{n,\eta}\left(  \xi\right)  d\xi,\label{osc inn pdt}%
\end{equation}
for $\left(  I,J\right)  \in\mathcal{G}\left[  U\right]  \times\mathcal{D}$
and plug the\ resulting estimates into the decomposition of the pairs $\left(
I,J\right)  $ of dyadic cubes in $\mathcal{P}$ given in (\ref{decomp}) of the
introduction, namely%
\[
\mathcal{P}=\mathcal{P}_{0}\ \cup\ \bigcup_{m=0}^{\infty}\mathcal{P}_{m}%
\ \cup\ \mathcal{R}\ \cup\ \mathcal{X}\ ,
\]
where for the convenience of the reader we recall the definitions,%
\begin{align*}
\mathcal{P}_{0}  & \equiv\left\{  \left(  I,J\right)  \in\mathcal{G}\left[
U\right]  \times\mathcal{D}:\pi_{\tan}\left(  J\right)  \subset\Phi\left(
C_{\operatorname{pseudo}}I\right)  \right\}  \ ,\\
\mathcal{P}_{m}  & \equiv\left\{  \left(  I,J\right)  \in\mathcal{G}\left[
U\right]  \times\mathcal{D}:2^{m+1}I\subset2U\text{, }\pi_{\tan}\left(
J\right)  \subset\Phi\left(  4U\cap2^{m+1}C_{\operatorname{pseudo}}I\right)
\setminus\Phi\left(  2^{m}\frac{1}{C_{\operatorname{pseudo}}}I\right)
\right\}  ,\ \ \ \ \ m\in\mathbb{N}\ ,\\
\mathcal{R}  & \equiv\left\{  \left(  I,J\right)  \in\mathcal{G}\left[
U\right]  \times\mathcal{D}:\Phi\left(  I\right)  \subset\pi_{\tan}\left(
C_{\operatorname{pseudo}}J\right)  \right\}  \ ,\\
\mathcal{X}  & \equiv\left\{  \left(  I,J\right)  \in\mathcal{G}\left[
U\right]  \times\mathcal{D}:J\subset\mathbb{R}_{+}^{n}\text{ and }\pi_{\tan
}\left(  C_{\operatorname{pseudo}}J\right)  \cap\Phi\left(  2U\right)
=\emptyset\right\}  \ .
\end{align*}
Thus $\mathcal{P}_{0}$ consists of pairs that are aligned radially away from
the origin, $\mathcal{P}_{m}$ consists of pairs that are radially staggered at
angle roughly $2^{-m}$, $\mathcal{R}$ consists of pairs where $I$ is `close'
to the larger $J$, and $\mathcal{X}$ consists of pairs in which the spherical
projection of $J$ is disjoint from $\Phi\left(  2U\right)  $.

Regarding $\mathcal{P}_{0}$, intuition tells us that when the approximate
wavelength $\frac{1}{\left\vert \xi\right\vert }$ of the exponential
$e^{-ix\cdot\xi}$ does not exceed the depth $\frac{1}{\ell\left(  I\right)
^{2}}$ of the spherical `cap' $\Phi\left(  I\right)  $, and the side length
$\ell\left(  J\right)  $ of the cube $J$ supporting $h_{J;\kappa}^{n,\eta}$ is
approximately the distance of the sphere from the origin, namely $1$, then we
should \emph{not} expect to derive any cancellation from the presence of the
exponential $e^{-i\Phi\left(  x\right)  \cdot\xi}$. Thus the only estimate on
the inner product in this case should be the trivial one, in which the
oscillatory factor $e^{-i\Phi\left(  x\right)  \cdot\xi}$ is discarded,%
\begin{equation}
\left\vert \left\langle Th_{I;\kappa}^{n-1,\eta},h_{J;\kappa}^{n,\eta
}\right\rangle \right\vert \leq\left\Vert h_{I;\kappa}^{n-1,\eta}\right\Vert
_{L^{1}}\left\Vert h_{J;\kappa}^{n,\eta}\right\Vert _{L^{1}}.\label{crude'}%
\end{equation}
While this crude estimate will ultimately prove adequate in the case when
$\ell\left(  J\right)  \approx1$, $\frac{1}{\ell\left(  I\right)  }%
\lesssim\frac{1}{\operatorname*{dist}\left(  0,J\right)  }\approx\frac
{1}{\left\vert \xi\right\vert }\lesssim\frac{1}{\ell\left(  I\right)  ^{2}}$
and $I$ and $J$ are suitably aligned in the same direction, we must obtain
improvements with geometric decay in parameters $\left\vert k\right\vert $ and
$d\geq0$ when
\[
\ell\left(  J\right)  =2^{k}\text{ and }\frac{2^{d-1}}{\ell\left(  I\right)
^{2}}\leq\operatorname*{dist}\left(  0,J\right)  \leq\frac{2^{d+1}}%
{\ell\left(  I\right)  ^{2}}\ell\left(  J\right)  \lesssim1.
\]
Moreover, when $I$ and $J$ are not suitably aligned, and there is insufficient
oscillation within the inner product, we will need to invoke interpolation
arguments with $L^{2}$ and average $L^{4}$ estimates when acting on certain
Alpert pseudoprojections.

When $k>0$, we will gain geometically if we integrate by parts radially in
$\xi$ using the smoothness of the wavelets $h_{J;\kappa}^{n,\eta}$, and when
$k<0$, we will gain geometrically in $\left\vert k\right\vert $ using the
large number of vanishing moments of $h_{J;\kappa}^{n,\eta}$. When $d>0 $, we
will use the classical asymptotic formula for the smooth surface carried
measure $h_{I;\kappa}^{n-1,\eta}$ with sharp bounds on the derivatives of
$h_{I;\kappa}^{n-1,\eta}$ to derive gain. Regarding $\mathcal{P}_{m}$, we will
use in addition a tangential integration by parts decay principle since the
critical point of the phase\ no longer lies in the support of the amplitude
(hence stationary phase is not needed here). This suggests that we further
decompose the index set $\mathcal{P}_{0}$ as%
\begin{align}
\mathcal{P}_{0}  & =\bigcup_{k\in Z}\bigcup_{d=1}^{\infty}\mathcal{P}%
_{0}^{k,d},\text{ where}\label{def PKD}\\
\mathcal{P}_{0}^{k,d}  & \equiv\left\{  \left(  I,J\right)  \in\mathcal{P}%
:J\subset\mathcal{K}\left(  I\right)  \text{, }\ell\left(  J\right)
=2^{k}\text{, and }\frac{2^{d-1}}{\ell\left(  I\right)  ^{2}}\leq
\operatorname*{dist}\left(  0,J\right)  =\frac{2^{d+1}}{\ell\left(  I\right)
^{2}}\right\}  ,\nonumber
\end{align}
for $k,d\in\mathbb{Z}$, and the index set $\mathcal{P}_{m}$ of pairs as%
\begin{align}
& \mathcal{P}_{m}=\bigcup_{k\in\mathbb{Z}}\bigcup_{d\in\mathbb{Z}}^{\infty
}\mathcal{P}_{m}^{k,d},\text{ where }\label{def PMKD}\\
& \mathcal{P}_{m}^{k,d}\equiv\left\{  \left(  I,J\right)  \in\mathcal{P}%
_{m}:2^{m+1}I\subset U\text{, }\ell\left(  J\right)  =2^{k}\text{, and }%
2^{d}\leq\ell\left(  I\right)  ^{2}\operatorname*{dist}\left(  0,J\right)
\leq2^{d+1}\right\}  ,\nonumber
\end{align}
for $k,d\in\mathbb{Z}$ and $m\in\mathbb{N}$.

Next we introduce a standard change of variable that simplifies calculations,
and then derive the well-known asymptotic formula we will use with estimates
on the remainder term\footnote{These estimates are undoubtedly in the
literature, but since the author was unable to find the precise form used
here, we include the classical arguments below.}.

\subsection{A change of variables}

Write $z=\left(  z^{\prime},z_{n}\right)  $ for $z\in\mathbb{R}^{n}$, and set
\begin{equation}
\phi\left(  x,y\right)  =\Phi\left(  x\right)  \cdot\Phi\left(  y\right)
,\ \ \ \ \ \text{where }\Phi\left(  x\right)  =\left(  x,\sqrt{1-\left\vert
x\right\vert ^{2}}\right)  \text{ and }x\in\mathbb{R}^{n-1},\label{choice}%
\end{equation}
and define the variables $\left(  y,\lambda\right)  $ by%
\begin{equation}
y=\Phi^{-1}\left(  \frac{\xi}{\left\vert \xi\right\vert }\right)  =\frac
{\xi^{\prime}}{\left\vert \xi\right\vert }\text{ and }\lambda=\left\vert
\xi\right\vert ,\ \ \ \ \ \text{i.e. }\left(  \xi^{\prime},\xi_{n}\right)
=\xi=\lambda\Phi\left(  y\right)  =\left(  \lambda y,\lambda\sqrt{1-\left\vert
y\right\vert ^{2}}\right)  ,\label{para}%
\end{equation}
since then
\[
\lambda\phi\left(  x,y\right)  =\left\vert \xi\right\vert \Phi\left(
x\right)  \cdot\Phi\left(  y\right)  =\left\vert \xi\right\vert \Phi\left(
x\right)  \cdot\frac{\xi}{\left\vert \xi\right\vert }=\Phi\left(  x\right)
\cdot\xi\ .
\]

We claim that%
\[
\det\frac{\partial\left(  \xi^{\prime},\xi_{n}\right)  }{\partial\left(
y,\lambda\right)  }=\frac{\left\vert \xi\right\vert ^{n}}{\xi_{n}}.
\]
Indeed, we have$\ \left(  y,\lambda\right)  =\left(  \frac{\xi^{\prime}%
}{\left\vert \xi\right\vert },\left\vert \xi\right\vert \right)  $ and
$\xi=\lambda\left(  y,\sqrt{1-\left\vert y\right\vert ^{2}}\right)  $, and so%
\begin{align*}
& \frac{\partial\left(  y_{1},...,y_{n-1},\lambda\right)  }{\partial\left(
\xi_{1},...,\xi_{n-1},\xi_{n}\right)  }=\left[
\begin{array}
[c]{cccc}%
\frac{\partial}{\partial\xi_{1}}\frac{\xi_{1}}{\left\vert \xi\right\vert } &
\cdots & \frac{\partial}{\partial\xi_{n-1}}\frac{\xi_{1}}{\left\vert
\xi\right\vert } & \frac{\partial}{\partial\xi_{n}}\frac{\xi_{1}}{\left\vert
\xi\right\vert }\\
\vdots & \ddots & \vdots & \vdots\\
\frac{\partial}{\partial\xi_{1}}\frac{\xi_{n-1}}{\left\vert \xi\right\vert } &
\cdots & \frac{\partial}{\partial\xi_{n-1}}\frac{\xi_{n-1}}{\left\vert
\xi\right\vert } & \frac{\partial}{\partial\xi_{n}}\frac{\xi_{n-1}}{\left\vert
\xi\right\vert }\\
\frac{\partial}{\partial\xi_{1}}\left\vert \xi\right\vert  & \cdots &
\frac{\partial}{\partial\xi_{n-1}}\left\vert \xi\right\vert  & \frac{\partial
}{\partial\xi_{n}}\left\vert \xi\right\vert
\end{array}
\right] \\
& =\left[
\begin{array}
[c]{cccc}%
\frac{1}{\left\vert \xi\right\vert }-\frac{\xi_{1}^{2}}{\left\vert
\xi\right\vert ^{3}} & \cdots & -\frac{\xi_{1}\xi_{n-1}}{\left\vert
\xi\right\vert ^{3}} & -\frac{\xi_{1}\xi_{n}}{\left\vert \xi\right\vert ^{3}%
}\\
\vdots & \ddots & \vdots & \vdots\\
-\frac{\xi_{1}\xi_{n-1}}{\left\vert \xi\right\vert ^{3}} & \cdots & \frac
{1}{\left\vert \xi\right\vert }-\frac{\xi_{n-1}^{2}}{\left\vert \xi\right\vert
^{3}} & -\frac{\xi_{n-1}\xi_{n}}{\left\vert \xi\right\vert ^{3}}\\
\frac{\xi_{1}}{\left\vert \xi\right\vert } & \cdots & \frac{\xi_{n-1}%
}{\left\vert \xi\right\vert } & \frac{\xi_{n}}{\left\vert \xi\right\vert }%
\end{array}
\right]  =\frac{1}{\left\vert \xi\right\vert ^{3}}\left[
\begin{array}
[c]{cccc}%
\left\vert \xi\right\vert ^{2}-\xi_{1}^{2} & \cdots & -\xi_{1}\xi_{n-1} &
-\xi_{1}\xi_{n}\\
\vdots & \ddots & \vdots & \vdots\\
-\xi_{1}\xi_{n-1} & \cdots & \left\vert \xi\right\vert ^{2}-\xi_{n-1}^{2} &
-\xi_{n-1}\xi_{n}\\
\xi_{1}\left\vert \xi\right\vert ^{2} & \cdots & \xi_{n-1}\left\vert
\xi\right\vert ^{2} & \xi_{n}\left\vert \xi\right\vert ^{2}%
\end{array}
\right]
\end{align*}
where%
\begin{align*}
& \det\left[
\begin{array}
[c]{cccc}%
\left\vert \xi\right\vert ^{2}-\xi_{1}^{2} & \cdots & -\xi_{1}\xi_{n-1} &
-\xi_{1}\xi_{n}\\
\vdots & \ddots & \vdots & \vdots\\
-\xi_{1}\xi_{n-1} & \cdots & \left\vert \xi\right\vert ^{2}-\xi_{n-1}^{2} &
-\xi_{n-1}\xi_{n}\\
\xi_{1}\left\vert \xi\right\vert ^{2} & \cdots & \xi_{n-1}\left\vert
\xi\right\vert ^{2} & \xi_{n}\left\vert \xi\right\vert ^{2}%
\end{array}
\right] \\
& =\left\vert \xi\right\vert ^{2}\det\left[
\begin{array}
[c]{cccc}%
\left\vert \xi\right\vert ^{2}-\xi_{1}^{2} & \cdots & -\xi_{1}\xi_{n-1} &
-\xi_{1}\xi_{n}\\
\vdots & \ddots & \vdots & \vdots\\
-\xi_{1}\xi_{n-1} & \cdots & \left\vert \xi\right\vert ^{2}-\xi_{n-1}^{2} &
-\xi_{n-1}\xi_{n}\\
\xi_{1} & \cdots & \xi_{n-1} & \xi_{n}%
\end{array}
\right]  =\left\vert \xi\right\vert ^{2}\xi_{n}\left\vert \xi\right\vert
^{2\left(  n-1\right)  }=\xi_{n}\left\vert \xi\right\vert ^{2n},
\end{align*}
by an induction on $n\in\mathbb{N}$.

Thus we have%
\begin{align*}
\det\frac{\partial\left(  y_{1},...,y_{n-1},\lambda\right)  }{\partial\left(
\xi_{1},...,\xi_{n-1},\xi_{n}\right)  }  & =\frac{1}{\left\vert \xi\right\vert
^{3n}}\det\left[
\begin{array}
[c]{cccc}%
\left\vert \xi\right\vert ^{2}-\xi_{1}^{2} & \cdots & -\xi_{1}\xi_{n-1} &
-\xi_{1}\xi_{n}\\
\vdots & \ddots & \vdots & \vdots\\
-\xi_{1}\xi_{n-1} & \cdots & \left\vert \xi\right\vert ^{2}-\xi_{n-1}^{2} &
-\xi_{n-1}\xi_{n}\\
\xi_{1}\left\vert \xi\right\vert ^{2} & \cdots & \xi_{n-1}\left\vert
\xi\right\vert ^{2} & \xi_{n}\left\vert \xi\right\vert ^{2}%
\end{array}
\right] \\
& =\frac{1}{\left\vert \xi\right\vert ^{3n}}\xi_{n}\left\vert \xi\right\vert
^{2n}=\frac{\xi_{n}}{\left\vert \xi\right\vert ^{n}},
\end{align*}
as claimed. Hence%
\[
\det\frac{\partial\left(  \xi_{1},...,\xi_{n-1},\xi_{n}\right)  }%
{\partial\left(  y_{1},...,y_{n-1},\lambda\right)  }=\frac{\left\vert
\xi\right\vert ^{n}}{\xi_{n}}=\frac{\lambda^{n}}{\lambda\sqrt{1-\left\vert
y\right\vert ^{2}}}=\frac{\lambda^{n-1}}{\sqrt{1-\left\vert y\right\vert ^{2}%
}},
\]
and the change of variable $\xi\rightarrow\left(  y,\lambda\right)  $ gives,%
\begin{align*}
& \left\langle Th_{I;\kappa}^{n-1,\eta},h_{J;\kappa}^{n,\eta}\right\rangle
=\int_{\mathbb{R}^{n}}\int_{B_{n-1}\left(  0,\frac{1}{2}\right)  }%
e^{i\Phi\left(  x\right)  \cdot\xi}h_{I;\kappa}^{n-1,\eta}\left(  x\right)
h_{J;\kappa}^{n,\eta}\left(  \xi\right)  dxd\xi\\
& =\int_{\mathbb{R}^{n}}\int_{B_{n-1}\left(  0,\frac{1}{2}\right)  }%
e^{i\Phi\left(  x\right)  \cdot\lambda\left(  y,\sqrt{1-\left\vert
y\right\vert ^{2}}\right)  }h_{I;\kappa}^{n-1,\eta}\left(  x\right)
h_{J;\kappa}^{n,\eta}\left(  \lambda\left(  y,\sqrt{1-\left\vert y\right\vert
^{2}}\right)  \right)  \det\frac{\partial\left(  \xi_{1},...,\xi_{n-1},\xi
_{n}\right)  }{\partial\left(  y_{1},...,y_{n-1},\lambda\right)  }%
dxdyd\lambda\\
& =\int_{\mathbb{R}}\int_{B_{n-1}\left(  0,\frac{1}{2}\right)  }\int
_{B_{n-1}\left(  0,\frac{1}{2}\right)  }e^{i\lambda\Phi\left(  x\right)
\cdot\Phi\left(  y\right)  }h_{I;\kappa}^{n-1,\eta}\left(  x\right)
h_{J;\kappa}^{n,\eta}\left(  \lambda y,\lambda\sqrt{1-\left\vert y\right\vert
^{2}}\right)  \frac{\lambda^{n}}{\lambda\sqrt{1-\left\vert y\right\vert ^{2}}%
}dxdyd\lambda\\
& =\int_{\mathbb{R}}\int_{B_{n-1}\left(  0,\frac{1}{2}\right)  }\int
_{B_{n-1}\left(  0,\frac{1}{2}\right)  }e^{i\lambda\phi\left(  x,y\right)
}\varphi_{I}^{\eta}\left(  x\right)  \widetilde{\psi}_{J}^{\eta}\left(
y,\lambda\right)  dxdyd\lambda,
\end{align*}
where we are now using the convenient notation,%
\begin{align}
\phi\left(  x,y\right)   & \equiv\Phi\left(  x\right)  \cdot\Phi\left(
y\right)  ,\label{phi and psi notation}\\
\varphi_{I}^{\eta}\left(  x\right)   & \equiv h_{I;\kappa}^{n-1,\eta}\left(
x\right)  \text{ and }\psi_{J}^{\eta}\left(  \xi\right)  =h_{J;\kappa}%
^{n,\eta}\left(  \xi\right)  ,\nonumber\\
\widetilde{\psi}_{J}^{\eta}\left(  y,\lambda\right)   & \equiv h_{J;\kappa
}^{n,\eta}\left(  \lambda y,\lambda\sqrt{1-\left\vert y\right\vert ^{2}%
}\right)  \frac{\lambda^{n-1}}{\sqrt{1-\left\vert y\right\vert ^{2}}%
}.\nonumber
\end{align}
Note that if $\xi\in J$, then $\left(  y,\lambda\right)  \in\pi_{\tan}%
J\times\pi_{\operatorname{rad}}J$.

\subsection{Bounds for oscillatory integrals}

Here we review the well known asymptotics for oscillatory integrals, see e.g.
\cite[Chapter VIII]{Ste2}, paying close attention to the constants involved.
We emphasize that the results in this subsection are well known, but as we
could not find in the literature the exact form of the estimate for the
remainder term that we use here, we reproduce many familiar arguments below.

We consider the oscillatory function $\mathcal{I}_{a_{\lambda},\phi
}:\mathbb{R}^{d}\times\left(  0,\infty\right)  \rightarrow\mathbb{C}$ given by%
\[
\mathcal{I}_{a_{\lambda},\phi}\left(  y,\lambda\right)  \equiv\int
_{\mathbb{R}^{n}}e^{i\lambda\phi\left(  x,y\right)  }a_{\lambda}\left(
x,y\right)  dx,
\]
defined for $\lambda>0$ and $y\in U$ where $U$ is an open subset of
$\mathbb{R}^{d}$, and we call $\phi\left(  x,y\right)  $ the phase and
$a_{\lambda}\left(  x,y\right)  $ the amplitude of $\mathcal{I}_{a_{\lambda
},\phi}$. We will follow a treatment of asymptotics for such oscillatory
integrals given in a Rice University blog \cite{blogs.rice.edu}, but we will
obtain a sharp estimate for amplitudes of the type that arise in the smooth
Alpert expansions.

We use three familiar preparatory lemmas. The first of these is the Morse
Lemma, which will be applied to the phase function $\phi\left(  x,y\right)  $,
in order to transform $\phi$ into a nonsingular quadratic form in $x$ at a
nondegenerate critical point in $x$. The second lemma gives high order decay
bounds in the special case when there are no critical points in $x$ of the
phase function that lie in the support of the amplitude, and the third
calculates the oscillatory integral for a quadratic form.

\begin{lemma}
[Morse Lemma]Suppose $y_{0}\in U\subset\mathbb{R}^{d}$ and $x_{0}$ is a
nondegenerate stationary point for $\phi\left(  \cdot_{x},y_{0}\right)  $.
Then there exists a neighbourhood $V\subset U$ of $y_{0}$, a neighbourhood $W$
of $x_{0} $, a smooth function%
\[
X:V\rightarrow W,
\]
and a smooth function
\[
\Psi:V\rightarrow W\times V\rightarrow\mathbb{R}^{n},
\]
such that

\begin{enumerate}
\item For every $y\in V$, $X\left(  y\right)  $ is the unique stationary
point, which is also nondegenerate, for $\phi\left(  \cdot_{x},y_{0}\right)  $
in $W$.

\item For every $y\in V$, the map $W\rightarrow\mathbb{R}^{n}$ defined by
$x\rightarrow\Psi\left(  x,y\right)  $ is a diffeomorphism onto its image and%
\begin{equation}
\phi\left(  x,y\right)  =\phi\left(  X\left(  y\right)  ,y\right)  +\frac
{1}{2}\Psi\left(  x,y\right)  ^{\operatorname*{tr}}\ \left[  \partial_{x}%
^{2}\phi\left(  X\left(  y\right)  ,y\right)  \right]  \ \Psi\left(
x,y\right)  .\label{diff exp}%
\end{equation}
Furthermore,
\begin{equation}
\Psi\left(  X\left(  y\right)  ,y\right)  =0\text{ and }\partial_{x}%
\Psi\left(  X\left(  y\right)  ,y\right)  =\operatorname{Id}_{n}%
.\label{Psi and d Psi}%
\end{equation}

\item Finally, we may take $W=B\left(  x_{0},a\gamma\right)  $ for some small
positive constant
\[
a=\frac{c_{n}}{\max_{\left\vert \alpha\right\vert \leq3}\sup_{\left(
x,y\right)  \in\left(  \operatorname*{Supp}a\right)  \times U}\left\vert
\partial_{x}^{\alpha}\phi\left(  x,y\right)  \right\vert },
\]
where $\gamma>0$ satisfies $\inf_{y}\left[  \partial_{x}^{2}\phi\left(
X\left(  y\right)  ,y\right)  \right]  \succcurlyeq\gamma\operatorname{Id}%
_{n}$.
\end{enumerate}
\end{lemma}

\begin{proof}
For any $y$, the stationary points are the solutions of the equation
$0=\partial_{x}\phi\left(  x,y\right)  $, and by the nondegeneracy of the
critical point, and the Implicit Function Theorem, this equation uniquely
defines $x$ as a function of $y$ in some neighbourhood $\mathcal{N}$ of
$\left(  x_{0},y_{0}\right)  $. Since in our application, $\phi\left(
x,y\right)  $ is homogeneous of degree zero in $y$, we may assume this here as
well. Then $\left[  \partial_{x}^{2}\phi\left(  X\left(  y\right)  ,y\right)
\right]  \succcurlyeq\gamma\operatorname{Id}_{n-1}$ for some $\gamma>0$
depending only on $\phi$, and so we may take $\mathcal{N}=B\left(  \left(
x_{0},y_{0}\right)  ,a^{\prime}\gamma\right)  $ where $a^{\prime}=\frac
{c_{n}^{\prime}}{\max_{\left\vert \alpha\right\vert \leq3}\sup_{\left(
x,y\right)  }\left\vert \partial_{x}^{\alpha}\phi\left(  x,y\right)
\right\vert }$ for some small positive constant $c_{n}^{\prime}$, depending
only on the dimension $n$.

Now we take the Taylor expansion of $\phi\left(  x,y\right)  $ in $x$\ about
$X\left(  y\right)  $ to obtain, upon noting that the first derivatives in the
Taylor expansion vanish at the critical point $X\left(  y\right)  $,%
\begin{align*}
& \phi\left(  x,y\right)  =\phi\left(  X\left(  y\right)  ,y\right)  +\frac
{1}{2}\left(  x-X\left(  y\right)  \right)  ^{\operatorname*{tr}}B\left(
x,y\right)  \left(  x-X\left(  y\right)  \right)  ,\\
& \text{where }B\left(  x,y\right)  \equiv\int_{0}^{1}\left(  1-s\right)
\partial_{x}^{2}\phi\left(  sx+\left(  1-s\right)  X\left(  y\right)
,y\right)  ds.
\end{align*}
We now construct a matrix-valued function $R\left(  x,y\right)  $ such that
\[
\Psi\left(  x,y\right)  \equiv R\left(  x,y\right)  \left(  x-X\left(
y\right)  \right)
\]
has the properties listed in (2) above. Indeed, this $\Psi$ will satisfy
(\ref{diff exp}) provided%
\begin{equation}
R\left(  x,y\right)  ^{\operatorname*{tr}}\partial_{x}^{2}\phi\left(  X\left(
y\right)  ,y\right)  R\left(  x,y\right)  -B\left(  x,y\right)
=0,\ \ \ \ \ \text{for }\left(  x,y\right)  \in\mathcal{N}.\label{def R(x,y)}%
\end{equation}
We interpret the left hand side of (\ref{def R(x,y)}) as a mapping from
$\mathcal{M}_{n}\left(  \mathbb{R}\right)  _{R}\times\mathbb{R}_{x}^{n}\times
V_{y}$ to $\mathcal{S}_{n}\left(  \mathbb{R}\right)  $, where $\mathcal{M}%
_{n}\left(  \mathbb{R}\right)  $ is the set of $n\times n$ matrices and
$\mathcal{S}_{n}\left(  \mathbb{R}\right)  $ is the subset of symmetric
matrices. Taking the differential of the left hand side of (\ref{def R(x,y)})
with respect to the variable $R$ and evaluated at the identity matrix
$\operatorname{Id}_{n}$, we obtain that the derivative map,%
\[
dR\rightarrow\left(  dR\right)  ^{\operatorname*{tr}}\partial_{x}^{2}%
\phi\left(  X\left(  y\right)  ,y\right)  +\partial_{x}^{2}\phi\left(
X\left(  y\right)  ,y\right)  \left(  dR\right)  ,
\]
is surjective, since whenever $C\in\mathcal{S}_{n}\left(  \mathbb{R}\right)  $
is symmetric,%
\begin{align*}
& \left(  \frac{1}{2}\left[  \partial_{x}^{2}\phi\left(  X\left(  y\right)
,y\right)  \right]  ^{-1}C\right)  ^{\operatorname*{tr}}\partial_{x}^{2}%
\phi\left(  X\left(  y\right)  ,y\right)  +\partial_{x}^{2}\phi\left(
X\left(  y\right)  ,y\right)  \left(  \frac{1}{2}\left[  \partial_{x}^{2}%
\phi\left(  X\left(  y\right)  ,y\right)  \right]  ^{-1}C\right) \\
& \ \ \ \ \ \ \ \ \ \ \ \ \ \ \ =\frac{1}{2}C+\frac{1}{2}C=C.
\end{align*}
Thus by the Implicit Function Theorem again, there exists a smooth
$\mathcal{M}_{n}\left(  \mathbb{R}\right)  $-valued function $R\left(
x,y\right)  $ defined on some neighbourhood $\mathcal{N}_{0}\subset
\mathcal{N}$ of $\left(  x_{0},y_{0}\right)  $ that satisfies
(\ref{def R(x,y)}) everywhere that it is defined. Note that we may take
$\mathcal{N}_{0}=B\left(  \left(  x_{0},y_{0}\right)  ,a^{\prime\prime}%
\gamma\right)  $ where where $a^{\prime\prime}=\frac{c_{n}^{\prime\prime}%
}{\max_{\left\vert \alpha\right\vert \leq3}\sup_{\left(  x,y\right)
}\left\vert \partial_{x}^{\alpha}\phi\left(  x,y\right)  \right\vert }$.
Possibly shrinking\ even more the neighbourhood $\mathcal{N}_{0}$ to
$\mathcal{N}_{1}$, completes the proof that there is a neighbourhood $W$ of
$x_{0}$ such that $x\rightarrow\Psi\left(  x,y\right)  $ is a diffeomorphism
from $W$ onto its image, and that (\ref{diff exp}) holds, and that
$\Psi\left(  X\left(  y\right)  ,y\right)  =0$. Note that we may take
$W=B\left(  x_{0},a\gamma\right)  $ where $a=\frac{c_{n}}{\max_{\left\vert
\alpha\right\vert \leq3}\sup_{\left(  x,y\right)  }\left\vert \partial
_{x}^{\alpha}\phi\left(  x,y\right)  \right\vert }$. The remaining assertion
$\partial_{x}\Psi\left(  X\left(  y\right)  ,y\right)  =\operatorname{Id}_{n}$
is straightforward since,%
\[
\partial_{x}\mid_{x=X\left(  y\right)  }\Psi\left(  X\left(  y\right)
,y\right)  =\left[  \partial_{x}R\left(  x,y\right)  \left(  x-X\left(
y\right)  \right)  +R\left(  x,y\right)  \right]  \mid_{x=X\left(  y\right)
}=R\left(  X\left(  y\right)  ,y\right)  =\operatorname{Id}_{n}\ ,
\]
because we evaluated the differential in $R$ of the left hand side of
(\ref{def R(x,y)}) at the identity matrix $\operatorname{Id}_{n}$.
\end{proof}

Recall that%
\[
\mathcal{I}_{a_{\lambda},\phi}\left(  y,\lambda\right)  \equiv\int
_{\mathbb{R}^{n}}e^{i\lambda\phi\left(  x,y\right)  }a_{\lambda}\left(
x,y\right)  dx,
\]
where $\phi\in C^{\infty}\left(  \mathbb{R}_{x}^{n}\times U_{y}\right)  $ and
$a_{\lambda}\in C^{\infty}\left(  \mathbb{R}_{x}^{n}\times U_{y}\right)  $. We
will need the following estimate in the absence of critical points for
$x\rightarrow\phi\left(  x,y\right)  $.

\begin{lemma}
\label{no crit}Suppose that the $\mathbb{R}^{n}$-valued function $\partial
_{x}\phi\left(  x,y\right)  $ is nonvanishing on $\left(  \operatorname*{Supp}%
a\right)  \times U$. Then for every $N\in\mathbb{N}$ and compact $K\Subset U$
we have%
\[
\sup_{y\in K}\left\vert \mathcal{I}_{a,\phi}\left(  y,\lambda\right)
\right\vert \leq C_{N,K}\frac{1}{\lambda^{N}}\sum_{\left\vert \alpha
\right\vert \leq N}\sup_{y\in K}\left\Vert \partial_{x}^{\alpha}a_{\lambda
}\right\Vert _{L^{1}\left(  \mathbb{R}^{n}\right)  },\text{\ \ \ \ \ for
}\left(  y,\lambda\right)  \in\left(  \operatorname*{Supp}a\right)  \times U.
\]

\end{lemma}

\begin{proof}
For any $M\in\mathbb{N}$ we have%
\[
\mathcal{I}_{a_{\lambda},\phi}\left(  y,\lambda\right)  =\int_{\mathbb{R}^{n}%
}\frac{\left\langle \partial_{x}\phi\left(  x,y\right)  ,\partial
_{x}\right\rangle ^{M}e^{i\lambda\phi\left(  x,y\right)  }}{\left(
i\lambda\left\vert \partial_{x}\phi\left(  x,y\right)  \right\vert
^{2}\right)  ^{M}}a_{\lambda}\left(  x,y\right)  dx,
\]
and integrating by parts gives%
\begin{align*}
\sup_{y\in K}\left\vert \mathcal{I}_{a_{\lambda},\phi}\left(  y,\lambda
\right)  \right\vert  & \leq\sup_{y\in K}\frac{1}{\lambda^{N}}\int
_{\mathbb{R}^{n}}\left\vert \left\langle \partial_{x},\frac{\partial_{x}%
\phi\left(  x,y\right)  }{\left\vert \partial_{x}\phi\left(  x,y\right)
\right\vert ^{2}}\right\rangle ^{N}a_{\lambda}\left(  x,y\right)  \right\vert
dx\\
& \leq C_{N,K}\frac{1}{\lambda^{N}}\sum_{\left\vert \alpha\right\vert \leq
N}\sup_{y\in K}\int_{\mathbb{R}^{n}}\left\vert \partial_{x}^{\alpha}%
a_{\lambda}\left(  x,y\right)  \right\vert dx\\
& =C_{N,K}\frac{1}{\lambda^{N}}\sum_{\left\vert \alpha\right\vert \leq N}%
\sup_{y\in K}\left\Vert \partial_{x}^{\alpha}a_{\lambda}\right\Vert
_{L^{1}\left(  \mathbb{R}^{n}\right)  \times L^{\infty}\left(  \mathbb{R}%
^{n}\right)  }.
\end{align*}

\end{proof}

The final preparatory lemma is the calculation of an oscillatory integral for
a quadratic form.

\begin{definition}
For a tempered distribution $u\in\mathcal{S}\left(  \mathbb{R}^{n}\right)  $,
we have%
\[
\widehat{u}\left(  \xi\right)  =\mathcal{F}\left(  u\right)  \left(
\xi\right)  =\int_{\mathbb{R}^{n}}e^{-ix\cdot\xi}u\left(  x\right)  d\left(
x\right)  .
\]

\end{definition}

\begin{lemma}
\label{quad}Let $A\in\mathcal{M}_{n}\left(  \mathbb{R}^{n}\right)  $ be
symmetric and nondegenerate with signature $\operatorname{sgn}\left(
A\right)  $. Then the tempered distribution $e^{ix^{\operatorname*{tr}}Ax}$
has Fourier transform given by,
\begin{equation}
\mathcal{F}\left(  e^{ix^{\operatorname*{tr}}Ax}\right)  \left(  \xi\right)
=\pi^{\frac{n}{2}}e^{i\operatorname{sgn}\left(  A\right)  \frac{\pi}{4}}%
\frac{e^{-i\frac{\xi^{\operatorname*{tr}}A^{-1}\xi}{4}}}{\sqrt{\det\left(
A\right)  }}.\label{temp}%
\end{equation}

\begin{proof}
The Fourier transform of a Gaussian function $e^{-t\left\vert x\right\vert
^{2}}$ is given by%
\[
\mathcal{F}\left(  e^{-t\left\vert x\right\vert ^{2}}\right)  \left(
\xi\right)  =\pi^{\frac{n}{2}}\frac{e^{-\frac{\left\vert \xi\right\vert ^{2}%
}{4t}}}{t^{\frac{n}{2}}},\ \ \ \ \ \text{for all }t>0\text{.}%
\]
Now note that both sides of the above identity extend to analytic functions of
$t$ in the right half plane $\left\{  t\in\mathbb{C}:\operatorname{Re}%
t>0\right\}  $. A standard limiting argument and orthogonal change of
variables gives the formula (\ref{temp}).
\end{proof}
\end{lemma}

\subsection{The main oscillatory integral bound}

Here is the main oscillatory integral bound.

\begin{remark}
In the application of stationary phase to bound the below form in Section 6,
we won't actually use the oscillatory term $e^{i\lambda\phi\left(  X\left(
y\right)  ,y\right)  }$ in the asymptotic formula below, and instead we only
need the estimates of the modulus of $\mathcal{I}_{a_{\lambda},\phi}\left(
y,\lambda\right)  $ that follow from the asymptotic formula using $\left\vert
e^{i\lambda\phi\left(  X\left(  y\right)  ,y\right)  }\right\vert =1 $. The
reason for this is that when dealing with the below subform $\mathsf{B}%
_{\operatorname{below}}^{k,d}\left(  f,g\right)  $ with $k,d\geq0$ large, we
can \emph{first} apply\ radial integration by parts in the inner product, and
\emph{second} apply stationary phase to the resulting inner product with a new
amplitude. This way the geometric gain in $k$ has been achieved without using
the oscillatory term $e^{i\lambda\phi\left(  X\left(  y\right)  ,y\right)  }$.
If we were to instead apply stationary phase first, then we would need
$e^{i\lambda\phi\left(  X\left(  y\right)  ,y\right)  }$ for integration by
parts afterward.
\end{remark}

\begin{remark}
We will only use the case $M=0$ of Theorem \ref{osc int} in the proof of the
probabilistic extension conjecture in Theorem \ref{FEC}, which corresponds to
the classical asymptotic formula with just the principal term and remainder,
but with a sharp estimate here on the remainder term when the amplitude is a
smooth Alpert wavelet.
\end{remark}

We now give a more general treatment of stationary phase than we need, which
might be of use elsewhere.

\begin{theorem}
\label{osc int}Suppose that $a_{\lambda}\left(  x,y\right)  \in C_{c}^{\infty
}\left(  \mathbb{R}_{x}^{n}\times\mathbb{R}_{y}^{d}\right)  $, $y_{0}\in
U\subset\mathbb{R}^{d}$, and that $\phi\left(  \cdot_{x},y_{0}\right)  $ has
exactly one nondegenerate stationary point on the support of $a$ at $x_{0}$.
Take $V$, $W$, $X$ and $\Psi$ as in the Morse Lemma. Then for every
$M\in\mathbb{N}$, there is a positive constant $C_{M}$ depending on $M$ and
$\phi$ such that,%
\[
\mathcal{I}_{a_{\lambda},\phi}\left(  y,\lambda\right)  =\mathfrak{P}%
_{a_{\lambda},\phi}\left(  y,\lambda\right)  +\sum_{\ell=1}^{M}\mathfrak{P}%
_{a_{\lambda},\phi}^{\left(  \ell\right)  }\left(  y,\lambda\right)
+\mathfrak{R}_{a_{\lambda},\phi}^{\left(  M+1\right)  }\left(  y,\lambda
\right)  ,
\]
where%
\begin{align*}
& \mathfrak{P}_{a_{\lambda},\phi}\left(  y,\lambda\right)  =\left(  \frac
{2\pi}{\lambda}\right)  ^{\frac{n}{2}}\frac{e^{i\left[  \operatorname{sgn}%
\left[  \partial_{x}^{2}\phi\left(  X\left(  y\right)  ,y\right)  \right]
\frac{\pi}{4}+\lambda\phi\left(  X\left(  y\right)  ,y\right)  \right]  }%
}{\sqrt{\left\vert \partial_{x}^{2}\phi\left(  X\left(  y\right)  ,y\right)
\right\vert }}a_{\lambda}\left(  X\left(  y\right)  ,y\right)  ,\\
& \mathfrak{P}_{a_{\lambda},\phi}^{\left(  \ell\right)  }\left(
y,\lambda\right)  =\frac{i^{\ell}}{\left(  2\lambda\right)  ^{\ell}\ell
!}\left(  \frac{2\pi}{\lambda}\right)  ^{\frac{n}{2}}\frac{e^{i\left[
\operatorname{sgn}B\left(  y\right)  \frac{\pi}{4}+\lambda\phi\left(  X\left(
y\right)  ,y\right)  \right]  }}{\sqrt{\det B\left(  y\right)  }}\\
& \ \ \ \ \ \ \ \ \ \ \times\left\{  \left[  \partial_{x}\frac{1}{\det
\partial_{x}\Psi\left(  x,y\right)  }\right]  B\left(  y\right)  ^{-1}\frac
{1}{\det\partial_{x}\Psi\left(  x,y\right)  }\partial_{x}\right\}  ^{\ell
}\frac{a_{\lambda}\left(  x,y\right)  }{\det\left[  \partial_{x}\Psi\left(
x,y\right)  \right]  }\mid_{x=X\left(  y\right)  },
\end{align*}
and%
\begin{align}
\mathfrak{R}_{a_{\lambda},\phi}^{\left(  M+1\right)  }\left(  y,\lambda
\right)   & =\left(  \frac{2\pi}{\lambda}\right)  ^{\frac{n}{2}}%
\frac{e^{i\left[  \operatorname{sgn}B\left(  y\right)  \frac{\pi}{4}%
+\lambda\phi\left(  X\left(  y\right)  ,y\right)  \right]  }}{\sqrt{\det
B\left(  y\right)  }}\nonumber\\
& \times\int\mathcal{F}_{z}^{-1}\left(  \left[  \frac{\left\langle
i\partial_{z},B\left(  y\right)  ^{-1}\partial_{z}\right\rangle }{2\lambda
}\right]  ^{M+1}f\right)  \left(  \zeta\right)  R_{M+1}\left(  -i\frac
{\zeta^{\operatorname*{tr}}B\left(  y\right)  ^{-1}\zeta}{2\lambda}\right)
d\zeta,\nonumber
\end{align}
where%
\[
f\left(  z,y,\lambda\right)  \equiv\frac{a_{\lambda}\left(  \Psi_{y}%
^{-1}\left(  z\right)  ,y\right)  }{\det\left[  \left(  \partial_{x}%
\Psi\right)  \left(  \Psi_{y}^{-1}\left(  z\right)  \right)  \right]  },
\]
and $B\left(  y\right)  =\partial_{x}^{2}\phi\left(  X\left(  y\right)
,y\right)  $, and $X\left(  y\right)  $ is the unique stationary point of
$\phi\left(  \cdot_{x},y\right)  $ in the support of $a$,\ as given in the
Morse Lemma, and finally,
\[
R_{M+1}\left(  ib\right)  =\int_{0}^{1}e^{itb}\left(  ib\right)  ^{M+1}%
\frac{\left(  1-t\right)  ^{M+1}}{\left(  M+1\right)  !}dt,\ \ \ \ \ \text{
for }b\in\mathbb{R}.
\]
The remainder term satisfies the estimate,%
\begin{equation}
\sup_{y\in V}\left\vert \mathfrak{R}_{a_{\lambda},\phi}^{\left(  M+1\right)
}\left(  y,\lambda\right)  \right\vert \leq C_{M}\lambda^{-\frac{n}{2}-\left(
M+1\right)  }\sum_{\left\vert \alpha\right\vert \leq\rho+2\left(  M+1\right)
}\left\Vert \partial_{x}^{\alpha}a_{\lambda}\right\Vert _{L^{2}\left(
\mathbb{R}_{x}^{n}\right)  \times L^{\infty}\left(  \mathbb{R}_{y,\lambda
}^{d+1}\right)  },\label{main bound}%
\end{equation}
where $\rho=\left\lceil \frac{n}{2}\right\rceil $ is the smallest integer
greater than $\frac{n}{2}$, and if $N>M+1+\frac{n}{2}$, then we also have the
alternate bound,%
\begin{equation}
\sup_{y\in V}\left\vert \mathfrak{R}_{a_{\lambda},\phi}^{\left(  M+1\right)
}\left(  y,\lambda\right)  \right\vert \leq C_{M}\lambda^{-\frac{n}{2}%
-M-1}\left\Vert \left(  \operatorname{Id}-\bigtriangleup_{x}\right)
^{N}a_{\lambda}\right\Vert _{L^{1}\left(  \mathbb{R}_{x}^{n}\right)  \times
L^{\infty}\left(  \mathbb{R}_{y}^{n}\right)  }.\label{special bound}%
\end{equation}

\end{theorem}

\begin{proof}
Take $V$, $W$, $X$ and $\Psi$ as in the Morse Lemma, so that%
\[
\phi\left(  x,y\right)  =\phi\left(  X\left(  y\right)  ,y\right)  +\frac
{1}{2}\Psi\left(  x,y\right)  ^{\operatorname*{tr}}\ \left[  \partial_{x}%
^{2}\phi\left(  X\left(  y\right)  ,y\right)  \right]  \ \Psi\left(
x,y\right)  ,\ \ \ \ \ y\in V.
\]
Using Lemma \ref{no crit} together with a partition of unity shows that we may
assume $a_{\lambda}\left(  x,y\right)  $ is supported in $W$ for all $y\in V$.
Thus a change of variables
\[
z=\Psi\left(  x,y\right)  =\Psi_{y}\left(  x\right)  ,
\]
gives,%
\begin{align*}
\mathcal{I}_{a_{\lambda},\phi}\left(  y,\lambda\right)   & =\int
_{\mathbb{R}^{n}}e^{i\lambda\phi\left(  x,y\right)  }a_{\lambda}\left(
x,y\right)  dx=\int_{\mathbb{R}^{n}}e^{i\lambda\phi\left(  \Psi_{y}%
^{-1}z,y\right)  }\frac{a_{\lambda}\left(  \Psi_{y}^{-1}z,y\right)  }%
{\det\left[  \left(  \partial_{x}\Psi\right)  \left(  \Psi_{y}^{-1}\left(
z\right)  ,y\right)  \right]  }dz\\
& =\int_{\mathbb{R}^{n}}e^{i\lambda\left[  \phi\left(  x_{0},y_{0}\right)
+\Psi\left(  \Psi_{y}^{-1}z,y\right)  ^{\operatorname*{tr}}\frac{\partial
_{x}^{2}\phi\left(  X\left(  y\right)  ,y\right)  }{2}\Psi\left(  \Psi
_{y}^{-1}z,y\right)  \right]  }\frac{a_{\lambda}\left(  \Psi_{y}^{-1}\left(
z\right)  ,y\right)  }{\det\left[  \left(  \partial_{x}\Psi\right)  \left(
\Psi_{y}^{-1}\left(  z\right)  ,y\right)  \right]  }dz\\
& =\int_{\mathbb{R}^{n}}e^{i\lambda\left[  \phi\left(  x_{0},y_{0}\right)
+z^{\operatorname*{tr}}\frac{\partial_{x}^{2}\phi\left(  X\left(  y\right)
,y\right)  }{2}z\right]  }\frac{a_{\lambda}\left(  \Psi_{y}^{-1}\left(
z\right)  ,y\right)  }{\det\left[  \left(  \partial_{x}\Psi\right)  \left(
\Psi_{y}^{-1}\left(  z\right)  ,y\right)  \right]  }dz\\
& =e^{i\lambda\phi\left(  x_{0},y_{0}\right)  }\int_{\mathbb{R}^{n}%
}e^{i\lambda z^{\operatorname*{tr}}\frac{\partial_{x}^{2}\phi\left(  X\left(
y\right)  ,y\right)  }{2}z}f\left(  z,y,\lambda\right)  dz,
\end{align*}
where%
\[
f\left(  z,y,\lambda\right)  \equiv\frac{a_{\lambda}\left(  \Psi_{y}%
^{-1}\left(  z\right)  ,y\right)  }{\det\left[  \left(  \partial_{x}%
\Psi\right)  \left(  \Psi_{y}^{-1}\left(  z\right)  \right)  \right]  }.
\]
Now write%
\begin{equation}
B\left(  y\right)  =\left(  \partial_{x}^{2}\phi\right)  \left(  X\left(
y\right)  ,y\right)  ,\label{def B(y)}%
\end{equation}
and apply the Fourier transform $\mathcal{F}$ and its inverse $\mathcal{F}%
^{-1}$ in the variable $z$ and its dual variable $\zeta$ to obtain%
\[
\mathcal{I}_{a_{\lambda},\phi}\left(  y,\lambda\right)  =e^{i\lambda
\phi\left(  x_{0},y_{0}\right)  }\int_{\mathbb{R}^{n}}\mathcal{F}_{z}\left(
e^{i\lambda z^{\operatorname*{tr}}\frac{B\left(  y\right)  }{2}z}\right)
\left(  \zeta\right)  \ \mathcal{F}_{z}^{-1}\left(  f\left(  z,y\right)
\right)  \left(  \zeta\right)  d\zeta.
\]
Using Lemma \ref{quad} with $A=\frac{\lambda}{2}B\left(  y\right)  $, we have,%
\begin{align*}
\mathcal{I}_{a_{\lambda},\phi}\left(  y,\lambda\right)   & =e^{i\lambda
\phi\left(  x_{0},y_{0}\right)  }\frac{\pi^{\frac{n}{2}}e^{i\operatorname{sgn}%
B\left(  y\right)  \frac{\pi}{4}}}{\sqrt{\det\frac{\lambda}{2}B\left(
y\right)  }}\int_{\mathbb{R}^{n}}e^{-i\frac{\zeta^{\operatorname*{tr}}B\left(
y\right)  ^{-1}\zeta}{2\lambda}}\mathcal{F}_{z}^{-1}\left(  f\left(
z,y\right)  \right)  \left(  \zeta\right)  d\zeta\\
& =\left(  \frac{2\pi}{\lambda}\right)  ^{\frac{n}{2}}\frac
{e^{i\operatorname{sgn}B\left(  y\right)  \frac{\pi}{4}}e^{i\lambda\phi\left(
x_{0},y_{0}\right)  }}{\sqrt{\det B\left(  y\right)  }}\int_{\mathbb{R}^{n}%
}e^{-i\frac{\zeta^{\operatorname*{tr}}B\left(  y\right)  ^{-1}\zeta}{2\lambda
}}\mathcal{F}_{z}^{-1}\left(  f\left(  z,y\right)  \right)  \left(
\zeta\right)  d\zeta.
\end{align*}

Next we use Taylor's formula\ with integral remainder to obtain that for
any$\,M>0$,%
\[
e^{ib}=\sum_{\ell=0}^{M}\frac{\left(  ib\right)  ^{\ell}}{\ell!}%
+R_{M+1}\left(  ib\right)  ,
\]
where%
\[
R_{M+1}\left(  ib\right)  =\int_{0}^{1}e^{itb}\left(  ib\right)  ^{M+1}%
\frac{\left(  1-t\right)  ^{M+1}}{\left(  M+1\right)  !}dt\text{ and
}\left\vert R_{M+1}\left(  ib\right)  \right\vert \leq\frac{\left\vert
b\right\vert ^{M+1}}{\left(  M+2\right)  !}%
\]
and so with
\[
b=-\frac{\zeta^{\operatorname*{tr}}B\left(  y\right)  ^{-1}\zeta}{2\lambda},
\]
we have%
\begin{align}
& \mathcal{I}_{a_{\lambda},\phi}\left(  y,\lambda\right)  -\left(  \frac{2\pi
}{\lambda}\right)  ^{\frac{d}{2}}\frac{e^{i\left[  \operatorname{sgn}B\left(
y\right)  \frac{\pi}{4}+\lambda\phi\left(  X\left(  y\right)  ,y\right)
\right]  }}{\sqrt{\det B\left(  y\right)  }}\int_{\mathbb{R}^{n}}\sum_{\ell
=0}^{M}\frac{i^{\ell}}{\left(  2\lambda\right)  ^{\ell}\ell!}\mathcal{F}%
_{z}^{-1}\left(  \left\langle \partial_{z}^{\operatorname*{tr}}B\left(
y\right)  ^{-1}\partial_{z}\right\rangle ^{\ell}f\right)  \left(
\zeta\right)  d\zeta\label{ident with b}\\
& =\left(  \frac{2\pi}{\lambda}\right)  ^{\frac{n}{2}}\frac{e^{i\left[
\operatorname{sgn}B\left(  y\right)  \frac{\pi}{4}+\lambda\phi\left(  X\left(
y\right)  ,y\right)  \right]  }}{\sqrt{\det B\left(  y\right)  }}\nonumber\\
& \ \ \ \ \ \ \ \ \ \ \ \ \ \ \ \ \ \ \ \ \ \ \ \ \ \times\int_{\mathbb{R}%
^{n}}\mathcal{F}_{z}^{-1}\left(  \left[  \frac{\left\langle i\partial
_{z}^{\operatorname*{tr}}B\left(  y\right)  ^{-1}\partial_{z}\right\rangle
}{2\lambda}\right]  ^{M+1}f\right)  \left(  \zeta\right)  R_{M+1}\left(
-i\frac{\zeta^{\operatorname*{tr}}B\left(  y\right)  ^{-1}\zeta}{2\lambda
}\right)  d\zeta.\nonumber
\end{align}
Finally, using the Fourier inversion formula $\int_{\mathbb{R}^{n}}%
\mathcal{F}^{-1}\left(  g\right)  \left(  z\right)  dz=g\left(  0\right)  $,
together with the identities%
\begin{align*}
\Psi_{y}\left(  X\left(  y\right)  \right)   & =\Psi\left(  X\left(  y\right)
,y\right)  =0,\\
\Psi_{y}^{-1}\left(  0\right)   & =X\left(  y\right)  ,\\
\det\partial_{x}\Psi\left(  X\left(  y\right)  ,y\right)   & =\det
\operatorname{Id}_{n}=1,
\end{align*}
from part (2) of the Morse Lemma, we obtain%
\[
\int_{\mathbb{R}^{n}}\mathcal{F}_{z}^{-1}\left(  \left\langle \partial
_{z}^{\operatorname*{tr}}B\left(  y\right)  ^{-1}\partial_{z}\right\rangle
^{\ell}f\right)  \left(  \zeta\right)  d\zeta=\left\langle \partial
_{z}^{\operatorname*{tr}}B\left(  y\right)  ^{-1}\partial_{z}\right\rangle
^{\ell}f\left(  0\right)  ,\ \ \ \ \ 0\leq\ell\leq M.
\]
Now when $\ell=0$ we have%
\[
f\left(  0\right)  =\frac{a_{\lambda}\left(  \Psi_{y}^{-1}\left(  0\right)
,y\right)  }{\det\left[  \partial_{x}\Psi\left(  \Psi_{y}^{-1}\left(
0\right)  ,y\right)  \right]  }=\frac{a_{\lambda}\left(  X\left(  y\right)
,y\right)  }{\det\left[  \partial_{x}\Psi\left(  X\left(  y\right)  ,y\right)
\right]  }=a_{\lambda}\left(  X\left(  y\right)  ,y\right)  .
\]
From the change of variable $\left(  x,y\right)  \rightarrow\left(
z,w\right)  $ where $z=\Psi\left(  x,y\right)  $ and $w=y$, the Jacobian
matrix in block form is,%
\[
\frac{\partial\left(  z,w\right)  }{\partial\left(  x,y\right)  }=\left[
\begin{array}
[c]{cc}%
\partial_{x}z & \partial_{y}z\\
\partial_{x}w & \partial_{y}w
\end{array}
\right]  =\left[
\begin{array}
[c]{cc}%
\partial_{x}\Psi\left(  x,y\right)  & \partial_{y}\Psi\left(  x,y\right) \\
\operatorname{0}_{n} & \operatorname{Id}_{n}%
\end{array}
\right]  ,
\]
and so%
\[
\left[
\begin{array}
[c]{cc}%
\partial_{z}x & \partial_{w}x\\
\partial_{z}y & \partial_{w}y
\end{array}
\right]  =\frac{\partial\left(  x,y\right)  }{\partial\left(  z,w\right)
}=\left[
\begin{array}
[c]{cc}%
\partial_{x}\Psi\left(  x,y\right)  & \partial_{y}\Psi\left(  x,y\right) \\
\operatorname{0}_{n} & \operatorname{Id}_{n}%
\end{array}
\right]  ^{-1}=\frac{1}{\det\partial_{x}\Psi\left(  x,y\right)  }\left[
\begin{array}
[c]{cc}%
\operatorname{Id}_{n} & -\partial_{y}\Psi\left(  x,y\right) \\
\operatorname{0}_{n} & \partial_{x}\Psi\left(  x,y\right)
\end{array}
\right]  .
\]
Thus we have by the chain rule,%
\begin{align*}
\left(
\begin{array}
[c]{c}%
\partial_{z}\\
\partial_{w}%
\end{array}
\right)   & =\left[
\begin{array}
[c]{cc}%
\partial_{z}x & \partial_{z}y\\
\partial_{w}x & \partial_{w}y
\end{array}
\right]  \left(
\begin{array}
[c]{c}%
\partial_{x}\\
\partial_{y}%
\end{array}
\right)  =\frac{1}{\det\partial_{x}\Psi\left(  x,y\right)  }\left[
\begin{array}
[c]{cc}%
\operatorname{Id}_{n} & -\partial_{y}\Psi\left(  x,y\right) \\
\operatorname{0}_{n} & \partial_{x}\Psi\left(  x,y\right)
\end{array}
\right]  ^{\operatorname*{tr}}\left(
\begin{array}
[c]{c}%
\partial_{x}\\
\partial_{y}%
\end{array}
\right) \\
& =\frac{1}{\det\partial_{x}\Psi\left(  x,y\right)  }\left[
\begin{array}
[c]{cc}%
\operatorname{Id}_{n} & \operatorname{0}_{n}\\
-\partial_{y}\Psi\left(  x,y\right)  & \partial_{x}\Psi\left(  x,y\right)
\end{array}
\right]  \left(
\begin{array}
[c]{c}%
\partial_{x}\\
\partial_{y}%
\end{array}
\right) \\
& =\frac{1}{\det\partial_{x}\Psi\left(  x,y\right)  }\left(
\begin{array}
[c]{c}%
\partial_{x}\\
-\partial_{y}\Psi\left(  x,y\right)  \partial_{x}+\partial_{x}\Psi\left(
x,y\right)  \partial_{y}%
\end{array}
\right)  ,
\end{align*}
i.e.,%
\begin{equation}
\partial_{z}=\frac{1}{\det\partial_{x}\Psi\left(  x,y\right)  }\partial
_{x}.\label{d tilda}%
\end{equation}
Thus when $\ell=1$ we have
\begin{align*}
\left\langle \partial_{z}^{\operatorname*{tr}}B\left(  y\right)  ^{-1}%
\partial_{z}\right\rangle f\left(  0\right)   & =\left(  \partial
_{z}^{\operatorname*{tr}}B\left(  y\right)  ^{-1}\partial_{z}\frac{a_{\lambda
}\left(  \Psi_{y}^{-1}\left(  z\right)  ,y\right)  }{\det\left[  \left(
\partial_{x}\Psi\right)  \left(  \Psi_{y}^{-1}\left(  z\right)  ,y\right)
\right]  }\right)  \left(  0\right) \\
& =\left(  \left\{  \left[  \partial_{x}\frac{1}{\det\partial_{x}\Psi\left(
x,y\right)  }\right]  ^{\operatorname*{tr}}B\left(  y\right)  ^{-1}\frac
{1}{\det\partial_{x}\Psi\left(  x,y\right)  }\partial_{x}\right\}
\frac{a_{\lambda}\left(  x,y\right)  }{\det\left[  \partial_{x}\Psi\left(
x,y\right)  \right]  }\right)  \mid_{x=X\left(  y\right)  }\\
& =L\left(  y,\partial_{x}\right)  \frac{a_{\lambda}\left(  x,y\right)  }%
{\det\left[  \partial_{x}\Psi\left(  x,y\right)  \right]  }\mid_{x=X\left(
y\right)  },
\end{align*}
where
\[
L\left(  y,\partial_{x}\right)  \equiv\left[  \partial_{x}\frac{1}%
{\det\partial_{x}\Psi\left(  x,y\right)  }\right]  ^{\operatorname*{tr}%
}B\left(  y\right)  ^{-1}\frac{1}{\det\partial_{x}\Psi\left(  x,y\right)
}\partial_{x}%
\]
is a second order differential operator in $x$ with coefficients depending on
both $x$ and $y$. More generally, the same calculation shows that for
$0\leq\ell\leq M$, we have,%
\begin{align*}
\left\langle \partial_{z},B\left(  y\right)  ^{-1}\partial_{z}\right\rangle
^{\ell}f\left(  0\right)   & =\left(  \left\{  \left[  \partial_{x}\frac
{1}{\det\partial_{x}\Psi\left(  x,y\right)  }\right]  ^{\operatorname*{tr}%
}B\left(  y\right)  ^{-1}\frac{1}{\det\partial_{x}\Psi\left(  x,y\right)
}\partial_{x}\right\}  ^{\ell}\frac{a_{\lambda}\left(  x,y\right)  }%
{\det\left[  \partial_{x}\Psi\left(  x,y\right)  \right]  }\right)
\mid_{x=X\left(  y\right)  }\\
& =L\left(  y,\partial_{x}\right)  ^{\ell}\frac{a_{\lambda}\left(  x,y\right)
}{\det\left[  \partial_{x}\Psi\left(  x,y\right)  \right]  }\mid_{x=X\left(
y\right)  }.
\end{align*}

Thus the identity (\ref{ident with b}), together with the bound $\left\vert
g_{M+1}\left(  -i\frac{\xi^{\operatorname*{tr}}B\left(  y\right)  ^{-1}\xi
}{2\lambda}\right)  \right\vert \leq\frac{1}{\left(  M+1\right)  !}$, implies
that,%
\begin{align}
& \left\vert \mathcal{R}_{a_{\lambda},\phi}^{\left(  M+1\right)  }\left(
y,\lambda\right)  \right\vert \leq C_{M}\lambda^{-\frac{n}{2}-\left(
M+1\right)  }\left\Vert \mathcal{F}_{z}^{-1}\left(  \left\langle \partial
_{z},B\left(  y\right)  ^{-1}\partial_{z}\right\rangle ^{M+1}f\right)
R_{M+1}\right\Vert _{L^{1}\left(  \mathbb{R}_{\zeta}^{n}\right)
}\label{fin est}\\
& \leq C_{M,n}\lambda^{-\frac{n}{2}-\left(  M+1\right)  }\sum_{\left\vert
\alpha\right\vert \leq\rho+2\left(  M+1\right)  }\left\Vert \partial
_{x}^{\alpha}a_{\lambda}\right\Vert _{L^{2}\left(  \mathbb{R}_{x}^{n}\right)
\times L^{\infty}\left(  \mathbb{R}_{y}^{n}\right)  }\ ,\nonumber
\end{align}
where in the last line we have used Cauchy-Schwarz, the derivative identities
for $\mathcal{F}$, and Plancherel's theorem with the smallest integer
$\rho=\left\lceil \frac{n}{2}\right\rceil $ greater than $\frac{n}{2}$.
Indeed,%
\begin{align*}
\int_{\mathbb{R}^{n}}\left\vert \widehat{h}\left(  \xi\right)  \right\vert
d\xi & =\int_{\mathbb{R}^{n}}\left\vert \widehat{h}\left(  \xi\right)
\right\vert \left(  1+\left\vert \xi\right\vert ^{2}\right)  ^{\rho}\left(
1+\left\vert \xi\right\vert ^{2}\right)  ^{-\rho}d\xi\\
& \leq\left(  \int_{\mathbb{R}^{n}}\left\vert \left(  1+\left\vert
\xi\right\vert ^{2}\right)  ^{\rho}\widehat{h}\left(  \xi\right)  \right\vert
^{2}d\xi\right)  ^{\frac{1}{2}}\left(  \int_{\mathbb{R}^{n}}\left(
1+\left\vert \xi\right\vert ^{2}\right)  ^{-2\rho}d\xi\right)  ^{\frac{1}{2}%
}\\
& \leq C_{m}\left(  \int_{\mathbb{R}^{n}}\left\vert \left(  \operatorname{Id}%
_{n}-\bigtriangleup_{x}\right)  ^{\rho}h\left(  x\right)  \right\vert
^{2}dx\right)  ^{\frac{1}{2}},
\end{align*}
for the function
\begin{align*}
h\left(  x\right)   & =\left\langle \partial_{z},B\left(  y\right)
^{-1}\partial_{z}\right\rangle ^{M+1}f\\
& =\left\{  \left[  \partial_{x}\frac{1}{\det\partial_{x}\Psi\left(
x,y\right)  }\right]  B\left(  y\right)  ^{-1}\frac{1}{\det\partial_{x}%
\Psi\left(  x,y\right)  }\partial_{x}\right\}  ^{M+1}\frac{a_{\lambda}\left(
x,y\right)  }{\det\left[  \partial_{x}\Psi\left(  x,y\right)  \right]  }.
\end{align*}

To prove the alternate bound (\ref{special bound}), we use the estimate
$\left\vert R_{M+1}\left(  -i\frac{\zeta^{\operatorname*{tr}}B\left(
y\right)  ^{-1}\zeta}{2\lambda}\right)  \right\vert \lesssim\left\vert
\frac{\zeta^{\operatorname*{tr}}B\left(  y\right)  ^{-1}\zeta}{2\lambda
}\right\vert ^{M+1}$ to obtain,
\begin{align*}
& \left\Vert \mathcal{F}_{z}^{-1}\left(  \left\langle \partial_{z},B\left(
y\right)  ^{-1}\partial_{z}\right\rangle ^{M+1}f\right)  R_{M+1}\right\Vert
_{L^{1}\left(  \mathbb{R}_{\zeta}^{n}\right)  }\\
& \leq\frac{1}{\left(  M+1\right)  !}\left\Vert \mathcal{F}_{z}^{-1}\left(
\left\langle \partial_{z},B\left(  y\right)  ^{-1}\partial_{z}\right\rangle
^{M+1}f\right)  \right\Vert _{L^{1}\left(  \mathbb{R}_{\zeta}^{n}\right)
}\lesssim\left(  \frac{1}{\lambda}\right)  ^{M+1}\int_{\mathbb{R}^{n}%
}\left\vert \zeta\right\vert ^{2\left(  M+1\right)  }\left\vert \left(
\mathcal{F}_{z}f\right)  \left(  \zeta\right)  \right\vert d\zeta,
\end{align*}
where%
\begin{align*}
\left(  \mathcal{F}_{z}f\right)  \left(  \zeta\right)   & =\left(
\mathcal{F}_{z}\frac{a_{\lambda}\left(  \Psi_{y}^{-1}\left(  z\right)
,y\right)  }{\det\left[  \left(  \partial_{x}\Psi\right)  \left(  \Psi
_{y}^{-1}\left(  z\right)  \right)  \right]  }\right)  \left(  \zeta\right)
=\mathcal{F}_{z}\varphi_{y}\left(  \zeta\right)  ,\\
\varphi_{y}\left(  z\right)   & \equiv\frac{a_{\lambda}\left(  \Psi_{y}%
^{-1}\left(  z\right)  ,y\right)  }{\det\left[  \left(  \partial_{x}%
\Psi\right)  \left(  \Psi_{y}^{-1}\left(  z\right)  \right)  \right]  }.
\end{align*}
From the estimate%
\begin{align*}
\left\vert \mathcal{F}_{z}\varphi_{y}\left(  \zeta\right)  \right\vert  &
=\left\vert \int_{\mathbb{R}^{n}}e^{ix\cdot\zeta}\varphi_{y}\left(  x\right)
dx\right\vert =\left\vert \int_{\mathbb{R}^{n}}\left[  \left(  \frac
{\operatorname{Id}-\bigtriangleup_{x}}{1+\left\vert \zeta\right\vert ^{2}%
}\right)  ^{N}e^{ix\cdot\zeta}\right]  \varphi_{y}\left(  x\right)
dx\right\vert \\
& =\frac{1}{\left(  1+\left\vert \zeta\right\vert ^{2}\right)  ^{N}}\left\vert
\int_{\mathbb{R}^{n}}e^{ix\cdot\zeta}\left(  \operatorname{Id}-\bigtriangleup
_{x}\right)  ^{N}\varphi_{y}\left(  x\right)  dx\right\vert \leq\left\Vert
\left(  \operatorname{Id}-\bigtriangleup_{x}\right)  ^{N}\varphi
_{y}\right\Vert _{L^{1}}\frac{1}{\left(  1+\left\vert \zeta\right\vert
^{2}\right)  ^{N}},
\end{align*}
we have for $N>M+1+\frac{n}{2}$ that%
\begin{align*}
& \left(  \frac{1}{\lambda}\right)  ^{M+1}\int_{\mathbb{R}^{n}}\left\vert
\zeta\right\vert ^{2M+2}\left\vert \left(  \mathcal{F}_{z}f\right)  \left(
\zeta\right)  \right\vert d\zeta\lesssim\left(  \frac{1}{\lambda}\right)
^{M+1}\left\Vert \left(  \operatorname{Id}-\bigtriangleup_{x}\right)
^{N}\varphi_{y}\right\Vert _{L^{1}\left(  \mathbb{R}_{x}^{n}\right)  \times
L^{\infty}\left(  \mathbb{R}_{y}^{n}\right)  }\int_{\mathbb{R}^{n}}%
\frac{\left\vert \zeta\right\vert ^{2M+2}}{\left(  1+\left\vert \zeta
\right\vert ^{2}\right)  ^{N}}d\zeta\\
& \ \ \ \ \ \ \ \ \ \ \lesssim\left(  \frac{1}{\lambda}\right)  ^{M+1}%
\left\Vert \left(  \operatorname{Id}-\bigtriangleup_{x}\right)  ^{N}%
\varphi_{y}\right\Vert _{L^{1}\left(  \mathbb{R}_{x}^{n}\right)  \times
L^{\infty}\left(  \mathbb{R}_{y}^{n}\right)  }\lesssim\left(  \frac{1}%
{\lambda}\right)  ^{M+1}\left\Vert \left(  \operatorname{Id}-\bigtriangleup
_{x}\right)  ^{N}a_{\lambda}\right\Vert _{L^{1}\left(  \mathbb{R}_{x}%
^{n}\right)  \times L^{\infty}\left(  \mathbb{R}_{y}^{n}\right)  }\text{ }.
\end{align*}
We conclude that,%
\begin{align*}
\left\vert \mathcal{R}_{a_{\lambda},\phi}^{\left(  M+1\right)  }\left(
y,\lambda\right)  \right\vert  & \leq C_{M}\lambda^{-\frac{n}{2}-\left(
M+1\right)  }\left\Vert \mathcal{F}_{z}^{-1}\left(  \left\langle \partial
_{z},B\left(  y\right)  ^{-1}\partial_{z}\right\rangle ^{M+1}f\right)
g_{M+1}\right\Vert _{L^{1}\left(  \mathbb{R}_{\zeta}^{n}\right)  }\\
& \leq C_{M}\lambda^{-\left(  M+1+\frac{n}{2}\right)  }\left\Vert \left(
\operatorname{Id}-\bigtriangleup_{x}\right)  ^{N}a_{\lambda}\right\Vert
_{L^{1}\left(  \mathbb{R}_{x}^{n}\right)  \times L^{\infty}\left(
\mathbb{R}_{y}^{n}\right)  },\ \ \ \ \ \text{for }N>M+1+\frac{n}{2}.
\end{align*}

\end{proof}

\begin{remark}
\label{calc}The identity $\partial_{x}\Psi\left(  X\left(  y\right)
,y\right)  =\operatorname{Id}_{n}$ implies that $\det\left[  \partial_{x}%
\Psi\left(  X\left(  y\right)  ,y\right)  \right]  =1$. Thus for $\ell=1$ we
have%
\begin{align*}
& \partial_{x}\left\{  \frac{1}{\det\partial_{x}\Psi\left(  x,y\right)
}B\left(  y\right)  ^{-1}\frac{1}{\det\partial_{x}\Psi\left(  x,y\right)
}\partial_{x}\frac{a_{\lambda}\left(  x,y\right)  }{\det\left[  \partial
_{x}\Psi\left(  x,y\right)  \right]  }\right\} \\
& =B\left(  y\right)  ^{-1}\left\{  -2\left(  \det\partial_{x}\Psi\left(
x,y\right)  \right)  ^{-3}\partial_{x}\det\partial_{x}\Psi\left(  x,y\right)
+\partial_{x}^{2}\left[  \left(  \det\left[  \partial_{x}\Psi\left(
x,y\right)  \right]  \right)  ^{-1}a_{\lambda}\left(  x,y\right)  \right]
\right\}  ,
\end{align*}
where $\partial_{x}^{2}\left[  \left(  \det\left[  \partial_{x}\Psi\left(
x,y\right)  \right]  \right)  ^{-1}a_{\lambda}\left(  x\right)  \right]  $ is%
\begin{align*}
& 2\left(  \det\left[  \partial_{x}\Psi\left(  x,y\right)  \right]  \right)
\left\vert ^{-3}\partial_{x}\det\partial_{x}\Psi\left(  x,y\right)
\right\vert ^{2}a_{\lambda}\left(  x,y\right) \\
& -\left(  \det\left[  \partial_{x}\Psi\left(  x,y\right)  \right]  \right)
^{-2}\partial_{x}^{2}\det\partial_{x}\Psi\left(  x,y\right)  a_{\lambda
}\left(  x,y\right)  -\left(  \det\left[  \partial_{x}\Psi\left(  x\right)
\right]  \right)  ^{-2}\partial_{x}\det\partial_{x}\Psi\left(  x,y\right)
\partial_{x}a_{\lambda}\left(  x,y\right) \\
& -\left(  \det\left[  \partial_{x}\Psi\left(  x,y\right)  \right]  \right)
^{-2}\partial_{x}\det\partial_{x}\Psi\left(  x,y\right)  \partial
_{x}a_{\lambda}\left(  x,y\right)  +\left(  \det\left[  \partial_{x}%
\Psi\left(  x,y\right)  \right]  \right)  ^{-1}\partial_{x}^{2}a_{\lambda
}\left(  x,y\right)  ,
\end{align*}
and so when we evaluate at $x=X\left(  y\right)  $, we obtain that $\left(
\det\left[  \partial_{x}\Psi\left(  x,y\right)  \right]  \right)
^{-1}\partial_{x}^{2}a\left(  x,y\right)  $ equals $\partial_{x}^{2}a\left(
X\left(  y\right)  ,y\right)  $, and hence,
\[
\mathcal{P}_{a,\phi}^{\left(  1\right)  }\left(  y,\lambda\right)
=\frac{i^{\ell}}{\left(  2\lambda\right)  ^{\ell}\ell!}\left(  \frac{2\pi
}{\lambda}\right)  ^{\frac{n}{2}}\frac{e^{i\left[  \operatorname{sgn}B\left(
y\right)  \frac{\pi}{4}+\lambda\phi\left(  X\left(  y\right)  ,y\right)
\right]  }}{\sqrt{\det B\left(  y\right)  }}\left\{  \partial_{x}^{2}a\left(
X\left(  y\right)  ,y\right)  +O\left(  \left\Vert \partial_{x}a_{\lambda
}\right\Vert _{L^{\infty}\left(  \mathbb{R}_{x}^{n}\right)  }+\left\Vert
a_{\lambda}\right\Vert _{L^{\infty}\left(  \mathbb{R}_{x}^{n}\right)
}\right)  \right\}  .
\]
Thus every gain of $\frac{1}{\lambda}$\ costs \textbf{two} derivatives of
$a_{\lambda}$\ in\ $x$ (ignoring the contribution from $\left\Vert
\partial_{x}a_{\lambda}\right\Vert _{L^{\infty}\left(  \mathbb{R}_{x}%
^{n}\right)  }+\left\Vert a_{\lambda}\right\Vert _{L^{\infty}\left(
\mathbb{R}_{x}^{n}\right)  }$),\ which dictates our definition of the
parameter $d$ in the subform (\ref{def B^k,d}) below.
\end{remark}

Note that we can write the formula for $\mathfrak{P}_{a_{\lambda},\phi
}^{\left(  \ell\right)  }\left(  y,\lambda\right)  $ compactly as%
\begin{equation}
\mathfrak{P}_{a_{\lambda},\phi}^{\left(  \ell\right)  }\left(  y,\lambda
\right)  =\left(  \frac{2\pi}{\lambda}\right)  ^{\frac{n}{2}}\frac{i^{\ell}%
}{\left(  2\lambda\right)  ^{\ell}\ell!}\frac{e^{i\left[  \operatorname{sgn}%
B\left(  y\right)  \frac{\pi}{4}+\lambda\phi\left(  X\left(  y\right)
,y\right)  \right]  }}{\sqrt{\det B}}\left(  \left\{  L^{-1}\partial
_{x}BL^{-1}\partial_{x}\right\}  ^{\ell}\frac{a\left(  x,y\right)  }{\det
L}\right)  \mid_{x=X\left(  y\right)  },\label{def P^k}%
\end{equation}
where%
\begin{equation}
L\equiv\partial_{x}\Psi\left(  x,y\right)  \text{ and }B\equiv B\left(
y\right)  =\left(  \partial_{x}^{2}\phi\right)  \left(  X\left(  y\right)
,y\right)  .\label{def L and B}%
\end{equation}

\section{Starting the proof of the probabilistic extension
conjecture\label{Section start}}

We must prove the truncated probabilistic extension inequality
(\ref{prob ext''}),%
\[
\mathbb{E}_{2^{\mathcal{G}}}^{\mu}\left\Vert T\sum_{I\in\mathcal{G}\left[
U\right]  }a_{I}\bigtriangleup_{I;\kappa}^{\eta}f\right\Vert _{L^{p}\left(
\lambda_{n}\right)  }\leq C\left\Vert f\right\Vert _{L^{p}\left(  B\left(
0,\frac{1}{2}\right)  \right)  },\ \ \ \ \ p>\frac{2n}{n-1}.
\]
However, we will instead begin by setting out to prove the much stronger
truncated \emph{deterministic} extension inequality (\ref{T_S trunc}),%
\[
\left\Vert T\sum_{I\in\mathcal{G}\left[  U\right]  }\bigtriangleup_{I;\kappa
}^{\eta}f\right\Vert _{L^{p}\left(  \lambda_{n}\right)  }\leq C\left\Vert
f\right\Vert _{L^{p}\left(  B\left(  0,\frac{1}{2}\right)  \right)  },
\]
and only when we run into difficulty proving this, will we resort to using
expectation. Thus we begin by considering its equivalent bilinear inequality%
\[
\left\vert \left\langle T\sum_{I\in\mathcal{G}\left[  U\right]  }%
\bigtriangleup_{I;\kappa}^{\eta}f,g\right\rangle \right\vert \lesssim
\left\Vert f\right\Vert _{L^{p}}\left\Vert g\right\Vert _{L^{p^{\prime}}}\ .
\]

Our initial splitting of the above bilinear form is modeled after that in two
weight testing theory using (\ref{decomp}),%
\begin{align}
\left\langle T\sum_{I\in\mathcal{G}\left[  U\right]  }\bigtriangleup
_{I;\kappa}^{\eta}f,g\right\rangle  & =\sum_{\left(  I,J\right)
\in\mathcal{G}\left[  U\right]  \times\mathcal{D}}\left\langle T\bigtriangleup
_{I;\kappa}^{n-1,\eta}f,\bigtriangleup_{J;\kappa}^{n,\eta}g\right\rangle
\label{init split}\\
& =\left\{  \sum_{\left(  I,J\right)  \in\mathcal{P}_{0}}+\sum_{\left(
I,J\right)  \in\mathcal{R}}+\sum_{m=1}^{\infty}\sum_{\left(  I,J\right)
\in\mathcal{P}_{m}}+\sum_{\left(  I,J\right)  \in\mathcal{X}}\right\}
\left\langle T\bigtriangleup_{I;\kappa}^{n-1,\eta}f,\bigtriangleup_{J;\kappa
}^{n,\eta}g\right\rangle \nonumber\\
& \equiv\mathsf{B}_{\operatorname{below}}\left(  f,g\right)  +\mathsf{B}%
_{\operatorname{above}}\left(  f,g\right)  +\mathsf{B}%
_{\operatorname*{disjoint}}\left(  f,g\right)  +\mathsf{B}%
_{\operatorname*{distal}}\left(  f,g\right)  .\nonumber
\end{align}
We further decomposed the pairs $\mathcal{P}_{0}$ and $\mathcal{P}_{m}$ in
(\ref{def PKD}) and (\ref{def PMKD}) according to the oscillation properties
of the inner product%
\[
\left\langle T\bigtriangleup_{I;\kappa}^{n-1,\eta}f,\bigtriangleup_{J;\kappa
}^{n,\eta}g\right\rangle =\left\langle Th_{I;\kappa}^{n-1,\eta},h_{J;\kappa
}^{n,\eta}\right\rangle \left\langle \left(  S_{\kappa,\eta}\right)
^{-1}f,h_{I;\kappa}^{n-1}\right\rangle \left\langle \left(  S_{\kappa,\eta
}\right)  ^{-1}g,h_{J;\kappa}^{n}\right\rangle ,
\]
namely%
\begin{align*}
\mathcal{P}_{0}  & =\bigcup_{k\in Z}\bigcup_{d=1}^{\infty}\mathcal{P}%
_{0}^{k,d},\text{ where}\\
\mathcal{P}_{0}^{k,d}  & \equiv\left\{  \left(  I,J\right)  \in\mathcal{P}%
:J\subset\mathcal{K}\left(  I\right)  \text{, }\ell\left(  J\right)
=2^{k}\text{, and }\frac{2^{d-1}}{\ell\left(  I\right)  ^{2}}\leq
\operatorname*{dist}\left(  0,J\right)  =\frac{2^{d+1}}{\ell\left(  I\right)
^{2}}\right\}  ,\\
\mathcal{P}_{m}  & =\bigcup_{k\in\mathbb{Z}}\bigcup_{d\in\mathbb{Z}}^{\infty
}\mathcal{P}_{m}^{k,d},\text{ where }\\
\mathcal{P}_{m}^{k,d}  & \equiv\left\{  \left(  I,J\right)  \in\mathcal{P}%
_{m}:2^{m+1}I\subset U\text{, }\ell\left(  J\right)  =2^{k}\text{, and }%
2^{d}\leq\ell\left(  I\right)  ^{2}\operatorname*{dist}\left(  0,J\right)
\leq2^{d+1}\right\}  .
\end{align*}

We now decompose the disjoint form $\mathsf{B}_{\operatorname*{disjoint}%
}\left(  f,g\right)  $ into upper and lower components determined by $d$
nonnegative and negative respectively,%
\begin{align}
\mathsf{B}_{\operatorname*{disjoint}}\left(  f,g\right)   & =\mathsf{B}%
_{\operatorname*{disjoint}}^{\operatorname*{upper}}\left(  f,g\right)
+\mathsf{B}_{\operatorname*{disjoint}}^{\operatorname*{lower}}\left(
f,g\right)  ,\label{def up disj}\\
\mathsf{B}_{\operatorname*{disjoint}}^{\operatorname*{upper}}\left(
f,g\right)   & \equiv\sum_{m=1}^{\infty}\sum_{k\in\mathbb{Z}}\sum_{d\geq
0}\mathsf{B}_{\operatorname*{disjoint}}^{k,d,m}\left(  f,g\right)  \text{ and
}\mathsf{B}_{\operatorname*{disjoint}}^{\operatorname*{lower}}\left(
f,g\right)  \equiv\sum_{m=1}^{\infty}\sum_{k\in\mathbb{Z}}\sum_{d<0}%
\mathsf{B}_{\operatorname*{disjoint}}^{k,d,m}\left(  f,g\right)  .\nonumber
\end{align}
For the distal form $\mathsf{B}_{\operatorname*{distal}}\left(  f,g\right)  $
we write,%
\begin{align*}
\mathsf{B}_{\operatorname*{distal}}^{k,d}\left(  f,g\right)   & \equiv
\sum_{\left(  I,J\right)  \in\mathcal{X}^{k,d}}\left\langle T\bigtriangleup
_{I;\kappa}^{n-1,\eta}f,\bigtriangleup_{J;\kappa}^{n,\eta}g\right\rangle ,\\
\text{where }\mathcal{X}^{k,d}  & \equiv\left\{  \left(  I,J\right)
\in\mathcal{X}:\ell\left(  J\right)  =2^{k}\text{, and }2^{d}\leq\ell\left(
I\right)  ^{2}\operatorname*{dist}\left(  0,J\right)  \leq2^{d+1}\right\}  ,\\
\text{and }\mathcal{X}  & \equiv\left\{  \left(  I,J\right)  \in
\mathcal{G}\left[  U\right]  \times\mathcal{D}:2^{m+1}I\subset S\text{ and
}\pi_{\tan}\left(  J\right)  \cap\Phi\left(  2U\right)  =\emptyset\right\}  ,
\end{align*}
and decompose it into upper and lower subforms in the analogous way,%
\begin{align}
\mathsf{B}_{\operatorname*{distal}}\left(  f,g\right)   & =\mathsf{B}%
_{\operatorname*{distal}}^{\operatorname*{upper}}\left(  f,g\right)
+\mathsf{B}_{\operatorname*{distal}}^{\operatorname*{lower}}\left(
f,g\right)  ,\label{def up dist}\\
\mathsf{B}_{\operatorname*{distal}}^{\operatorname*{upper}}\left(  f,g\right)
& \equiv\sum_{k\in\mathbb{Z}}\sum_{d\geq0}\mathsf{B}_{\operatorname*{distal}%
}^{k,d}\left(  f,g\right)  \text{ and }\mathsf{B}_{\operatorname*{distal}%
}^{\operatorname*{lower}}\left(  f,g\right)  \equiv\sum_{k\in\mathbb{Z}}%
\sum_{d<0}\mathsf{B}_{\operatorname*{distal}}^{k,d}\left(  f,g\right)
.\nonumber
\end{align}

For $m\in\mathbb{N}$ and $d\leq0$, a different pigeonholing that respects
resonance is required, which we defer until needed in Section 9. Similarly, we
defer further pigeonholing of $\mathcal{R}$\ until needed in Section 7. In all
of these index sets, the cubes $I$ are restricted to $\mathcal{G}\left[
U\right]  $.

\begin{enumerate}
\item The below form $\mathsf{B}_{\operatorname{below}}\left(  f,g\right)  $
combines stationary phase with either integration by parts or moment
vanishing, and only its subform $\mathsf{B}_{\operatorname{below}}%
^{k,d}\left(  f,g\right)  $ for $k,d\geq0$ requires the strict inequality
$p>\frac{2n}{n-1}$. Moreover, the subforms with $d\leq0$ can be controlled by
relatively simple arguments when $p>\frac{2n}{n-1}$.

\item The above form $\mathsf{B}_{\operatorname{above}}\left(  f,g\right)  $
is less critical and easier to handle in that it doesn't use stationary phase,
and is in fact bounded for all $1<p<\infty$.

\item The disjoint form $\mathsf{B}_{\operatorname*{disjoint}}\left(
f,g\right)  $ is handled similarly in some places, and made easier in those
places due to the fact that stationary phase is not needed, because the
critical point of the phase lies outside the support of the amplitude.
However, in those difficult places where large numbers of inner products are
resonant, i.e. without either appropriate oscillation or smoothness,
\emph{probability} is used in conjunction with an interpolation argument
between $L^{2}$ and $L^{4} $ estimates.

\item The upper distal form $\mathsf{B}_{\operatorname*{distal}}%
^{\operatorname*{upper}}\left(  f,g\right)  $ is handled as an extreme case of
the upper disjoint form $\mathsf{B}_{\operatorname*{disjoint}}%
^{\operatorname*{upper}}\left(  f,g\right)  $ in Section 8, and the lower
distal form $\mathsf{B}_{\operatorname*{distal}}^{\operatorname*{lower}%
}\left(  f,g\right)  $ is bundled together with the lower disjoint form
$\mathsf{B}_{\operatorname*{disjoint}}^{\operatorname*{lower}}\left(
f,g\right)  $ and controlled using probability in Section 9.
\end{enumerate}

We have%
\begin{align}
& \left\vert \left\langle T\bigtriangleup_{I;\kappa}^{n-1,\eta}%
f,\bigtriangleup_{J;\kappa}^{n,\eta}g\right\rangle _{\omega}\right\vert
=\left\vert \left\langle Th_{I;\kappa}^{n-1,\eta},h_{J;\kappa}^{n,\eta
}\right\rangle \right\vert \left\vert \left\langle \left(  S_{\kappa,\eta
}^{\sigma}\right)  ^{-1}f,h_{I;\kappa}^{n-1}\right\rangle \right\vert
\left\vert \left\langle \left(  S_{\kappa,\eta}^{\omega}\right)
^{-1}g,h_{J;\kappa}^{n}\right\rangle \right\vert \label{inner Delta}\\
& \approx\left\{  \frac{\left\vert \left\langle Th_{I;\kappa}^{n-1,\eta
},h_{J;\kappa}^{n,\eta}\right\rangle _{\omega}\right\vert }{\sqrt{\left\vert
I\right\vert \left\vert J\right\vert }}\right\}  \left\{  \int_{\mathbb{R}%
^{n-1}}\left\vert \bigtriangleup_{I;\kappa}^{n-1,\eta}f\left(  x\right)
\right\vert d\sigma\left(  x\right)  \right\}  \left\{  \int_{\mathbb{R}^{n}%
}\left\vert \bigtriangleup_{J;\kappa}^{n,\eta}g\left(  \xi\right)  \right\vert
d\omega\left(  \xi\right)  \right\}  ,\nonumber
\end{align}
since%
\begin{align}
\int_{\mathbb{R}^{n-1}}\left\vert \bigtriangleup_{I;\kappa}^{n-1,\eta}f\left(
x\right)  \right\vert d\sigma\left(  x\right)   & =\int_{\mathbb{R}^{n-1}%
}\left\vert \left\langle \left(  S_{\kappa,\eta}\right)  ^{-1}f,h_{I;\kappa
}^{n-1}\right\rangle h_{I;\kappa}^{n-1,\eta}\right\vert d\sigma\left(
x\right) \label{inner Delta'}\\
& \approx\left\vert \left\langle \left(  S_{\kappa,\eta}^{\sigma}\right)
^{-1}f,h_{I;\kappa}^{n-1}\right\rangle \right\vert \left\Vert h_{I;\kappa
}^{n-1,\eta}\right\Vert _{L^{1}\left(  \sigma\right)  }\approx\left\vert
\left\langle \left(  S_{\kappa,\eta}\right)  ^{-1}f,h_{I;\kappa}%
^{n-1}\right\rangle \right\vert \sqrt{\left\vert I\right\vert },\nonumber\\
\int_{\mathbb{R}^{n}}\left\vert \bigtriangleup_{J;\kappa}^{n,\eta}g\left(
\xi\right)  \right\vert d\omega\left(  \xi\right)   & \approx\left\vert
\left\langle \left(  S_{\kappa,\eta}\right)  ^{-1}g,h_{J;\kappa}%
^{n}\right\rangle \right\vert \sqrt{\left\vert J\right\vert }.\nonumber
\end{align}

Our strategy is to estimate the inner product%
\[
\left\langle Th_{I;\kappa}^{n-1},h_{J;\kappa}^{n}\right\rangle =\int
_{\mathbb{R}^{n}}\left\{  \int_{\mathbb{R}^{n-1}}e^{i\Phi\left(  x\right)
\cdot\xi}h_{I;\kappa}^{n-1,\eta}\left(  x\right)  dx\right\}  h_{J;\kappa
}^{n,\eta}\left(  \xi\right)  d\xi,
\]
and then using these inner product estimates, we will bound the two bilinear
forms $\mathsf{B}_{\operatorname{below}}\left(  f,g\right)  $ and
$\mathsf{B}_{\operatorname{above}}\left(  f,g\right)  $, along with some of
the subforms of $\mathsf{B}_{\operatorname*{disjoint}}\left(  f,g\right)  $
and $\mathsf{B}_{\operatorname*{distal}}\left(  f,g\right)  $, namely those
comprising the upper disjoint and distal forms $\mathsf{B}%
_{\operatorname*{disjoint}}^{\operatorname*{upper}}\left(  f,g\right)  $ and
$\mathsf{B}_{\operatorname*{distal}}^{\operatorname*{upper}}\left(
f,g\right)  $.

In fact, if we denote by $\left\vert \mathsf{B}_{\operatorname{below}%
}\right\vert \left(  f,g\right)  $, $\left\vert \mathsf{B}%
_{\operatorname{above}}\right\vert \left(  f,g\right)  $, $\left\vert
\mathsf{B}_{\operatorname*{disjoint}}^{\operatorname*{upper}}\right\vert
\left(  f,g\right)  $ and $\left\vert \mathsf{B}_{\operatorname*{distal}%
}^{\operatorname*{upper}}\right\vert \left(  f,g\right)  $ the forms
$\mathsf{B}_{\operatorname{below}}\left(  f,g\right)  $, $\mathsf{B}%
_{\operatorname{above}}\left(  f,g\right)  $, $\mathsf{B}%
_{\operatorname*{disjoint}}^{\operatorname*{upper}}\left(  f,g\right)  $ and
$\mathsf{B}_{\operatorname*{distal}}^{\operatorname*{upper}}\left(
f,g\right)  $ with absolute values taken inside the sum of inner products,
then we will prove the following `deterministic' estimate in which probability
plays no role.

\begin{proposition}
\label{det control}For $p>\frac{2n}{n-1}$ we have%
\[
\left\vert \mathsf{B}_{\operatorname{below}}\right\vert \left(  f,g\right)
+\left\vert \mathsf{B}_{\operatorname{above}}\right\vert \left(  f,g\right)
+\left\vert \mathsf{B}_{\operatorname*{disjoint}}^{\operatorname*{upper}%
}\right\vert \left(  f,g\right)  +\left\vert \mathsf{B}%
_{\operatorname*{distal}}^{\operatorname*{upper}}\right\vert \left(
f,g\right)  \lesssim\left\Vert f\right\Vert _{L^{p}\left(  \mathbb{R}%
^{n-1}\right)  }\left\Vert g\right\Vert _{L^{p^{\prime}}\left(  \mathbb{R}%
^{n}\right)  }\ .
\]

\end{proposition}

\begin{proof}
This follows immediately from (\ref{strong below}), (\ref{strong above}),
(\ref{strong upper disjoint}) and (\ref{strong upper distal}) below.
\end{proof}

\begin{remark}
Proposition \ref{det control} shows that the Fourier extension conjecture
(\ref{extension}) with $p=q$ is equivalent to boundedness of the lower
disjoint and distal forms,%
\[
\left\vert \mathsf{B}_{\operatorname*{disjoint}}^{\operatorname*{lower}%
}\left(  f,g\right)  +\mathsf{B}_{\operatorname*{distal}}%
^{\operatorname*{lower}}\left(  f,g\right)  \right\vert \lesssim\left\Vert
f\right\Vert _{L^{p}\left(  \mathbb{R}^{n-1}\right)  }\left\Vert g\right\Vert
_{L^{p^{\prime}}\left(  \mathbb{R}^{n}\right)  }\ .
\]

\end{remark}

Note that the small positive constant $\eta$ in the construction of the smooth
Alpert wavelets is fixed throughout the estimates below, and so powers of
$\frac{1}{\eta}$ depending on $n$ and $\kappa$ will often be absorbed into the
notation of approximate inequality $\lesssim$.

\begin{notation}
In an inner product of the form $\left\langle T\varphi,\psi\right\rangle $, we
refer to $\varphi$ as the \emph{amplitude} function, and to $\psi$ as the
\emph{pairing} function.
\end{notation}

\subsection{Pigeonholing into bilinear subforms}

Recall the decomposition (with bounded overlap) of the pairs $\left(
I,J\right)  \in\mathcal{G}\left[  U\right]  \times\mathcal{D}$ of dyadic cubes
introduced in (\ref{decomp}),
\[
\mathcal{G}\left[  U\right]  \times\mathcal{D}=\mathcal{P}_{0}\ \cup
\ \bigcup_{m=0}^{\infty}\mathcal{P}_{m}\ \cup\ \mathcal{R}\ \cup
\ \mathcal{X}\ ,
\]
where%
\begin{align*}
\mathcal{P}_{0}  & \equiv\left\{  \left(  I,J\right)  \in\mathcal{G}\left[
U\right]  \times\mathcal{D}:\pi_{\tan}\left(  J\right)  \subset\Phi\left(
C_{\operatorname{pseudo}}I\right)  \right\}  \ ,\\
\mathcal{P}_{m}  & \equiv\left\{  \left(  I,J\right)  \in\mathcal{G}\left[
U\right]  \times\mathcal{D}:2^{m+1}I\subset U\text{ and }\pi_{\tan}\left(
J\right)  \subset\Phi\left(  2^{m+1}C_{\operatorname{pseudo}}I\right)
\setminus\Phi\left(  2^{m}C_{\operatorname{pseudo}}I\right)  \right\}
,\ \ \ \ \ m\in\mathbb{N}\ ,\\
\mathcal{R}  & \equiv\left\{  \left(  I,J\right)  \in\mathcal{G}\left[
U\right]  \times\mathcal{D}:\Phi\left(  I\right)  \subset\pi_{\tan}\left(
C_{\operatorname{pseudo}}J\right)  \right\}  \ .
\end{align*}

In treating the below form $\mathsf{B}_{\operatorname{below}}\left(
f,g\right)  $, we will consider the inner products%
\begin{align*}
\left\langle T_{\sigma}\bigtriangleup_{I;\kappa}^{n-1,\eta}f,\bigtriangleup
_{J;\kappa}^{n,\eta}g\right\rangle  & =\int_{\mathbb{R}^{n}}\int
_{\mathbb{R}^{n-1}}\bigtriangleup_{I;\kappa}^{n-1,\eta}f\left(  x\right)
e^{-i\Phi\left(  x\right)  \cdot\xi}dx\bigtriangleup_{J;\kappa}^{n,\eta
}g\left(  \xi\right)  d\xi=\left\langle T_{\sigma}h_{I;\kappa}^{n-1,\eta
},h_{J;\kappa}^{\eta,\omega}\right\rangle \left\langle f,h_{I;\kappa
}^{n-1,\eta}\right\rangle \left\langle g,h_{J;\kappa}^{n,\eta}\right\rangle
\ ,\\
\left\langle T_{\sigma}h_{I;\kappa}^{n-1,\eta},h_{J;\kappa}^{n,\eta
}\right\rangle  & =\int_{\mathbb{R}^{n}}\int_{\mathbb{R}^{n-1}}h_{I;\kappa
}^{n-1,\eta}\left(  x\right)  e^{-i\Phi\left(  x\right)  \cdot\xi
}dxh_{J;\kappa}^{n,\eta}\left(  \xi\right)  d\xi,
\end{align*}
for $\left(  I,J\right)  \in\mathcal{P}_{0}\subset\mathcal{G}\left[  U\right]
\times\mathcal{D}$, and as in (\ref{def PKD}), we further decompose the index
set $\mathcal{P}_{0}$ of pairs by pigeonholing the side length of $J$ and its
distance from the origin relative to $\frac{1}{\ell\left(  I\right)  ^{2}}$,
the reciprocal of the `depth' of the spherical `cap' $\Phi\left(  I\right)  $:%
\begin{align*}
& \mathcal{P}_{0}=\bigcup_{k\in\mathbb{Z}}\bigcup_{d\in\mathbb{Z}}^{\infty
}\mathcal{P}_{0}^{k,d},\text{ where }\\
& \mathcal{P}_{0}^{k,d}\equiv\left\{  \left(  I,J\right)  \in\mathcal{P}%
_{0}:\ell\left(  J\right)  =2^{k}\text{, and }2^{d}\leq\ell\left(  I\right)
^{2}\operatorname*{dist}\left(  0,J\right)  \leq2^{d+1}\right\}  \text{,}\\
& \ \ \ \ \ \ \ \ \ \ \ \ \ \ \ \ \ \ \ \ \ \ \ \ \ \ \ \ \ \ \text{for
}k,d\in\mathbb{Z}.
\end{align*}
Then we define the associated subforms,%
\begin{equation}
\mathsf{B}_{\operatorname{below}}^{k,d}\left(  f,g\right)  \equiv\sum_{\left(
I,J\right)  \in\mathcal{P}_{0}^{k,d}}\left\langle T_{S}h_{I;\kappa}^{n-1,\eta
},h_{J;\kappa}^{n,\eta}\right\rangle .\label{def  B^k,d}%
\end{equation}

We decompose the disjoint form $\mathsf{B}_{\operatorname*{disjoint}}\left(
f,g\right)  $ into subforms $\mathsf{B}_{\operatorname*{disjoint}}%
^{k,d,m}\left(  f,g\right)  $ similar\ to that done for the below form
$\mathsf{B}_{\operatorname{below}}\left(  f,g\right)  $. Recall that in
(\ref{def PMKD}), for each $m\geq0$, we decomposed the index set
\[
\mathcal{P}_{m}\equiv\left\{  \left(  I,J\right)  \in\mathcal{G}\left[
U\right]  \times\mathcal{D}:2^{m+1}I\subset U\text{ and }\pi_{\tan}\left(
J\right)  \subset\Phi\left(  2^{m+1}C_{\operatorname{pseudo}}I\right)
\setminus\Phi\left(  2^{m}C_{\operatorname{pseudo}}I\right)  \right\}
,\ \ \ \ \ 1\leq m\leq cs\ ,
\]
of pairs by pigeonholing the side length of $J$ and its distance from the
origin relative to $\frac{1}{\ell\left(  I\right)  ^{2}}$, the reciprocal of
the `depth' of the spherical set $\Phi\left(  I\right)  $:%
\begin{align*}
& \mathcal{P}_{m}=\bigcup_{k\in\mathbb{Z}}\bigcup_{d\in\mathbb{Z}}^{\infty
}\mathcal{P}_{m}^{k,d},\text{ where }\\
& \mathcal{P}_{m}^{k,d}\equiv\left\{  \left(  I,J\right)  \in\mathcal{P}%
_{m}:\ell\left(  J\right)  =2^{k}\text{, and }2^{d}\leq\ell\left(  I\right)
^{2}\operatorname*{dist}\left(  0,J\right)  \leq2^{d+1}\right\}  \text{,}\\
& \ \ \ \ \ \ \ \ \ \ \ \ \ \ \ \ \ \ \ \ \ \ \ \ \ \ \ \ \ \ \text{for
}k,d\in\mathbb{Z},
\end{align*}
and now we define the disjoint subforms,%
\begin{equation}
\mathsf{B}_{\operatorname*{disjoint}}^{k,d,m}\left(  f,g\right)  \equiv
\sum_{\left(  I,J\right)  \in\mathcal{P}_{m}^{k,d}}\left\langle
T\bigtriangleup_{I;\kappa}^{n-1,\eta}f,\bigtriangleup_{J;\kappa}^{n,\eta
}g\right\rangle .\label{def A^k,d_m}%
\end{equation}
We point out that in those inner products in the disjoint form with resonance,
such as when $k=0$ and $m=-d$, we need analogues for smooth Alpert wavelets of
the traditional $L^{2}$ and $L^{4}$ estimates averaged over involutive smooth
Alpert multipliers. We then write%
\[
\mathsf{B}_{\operatorname*{disjoint}}^{\operatorname*{upper}}\left(
f,g\right)  \equiv\sum_{k\in\mathbb{Z}}\sum_{d\geq0}\sum_{m\in\mathbb{N}%
}\mathsf{B}_{\operatorname*{disjoint}}^{k,d,m}\left(  f,g\right)  \text{ and
}\mathsf{B}_{\operatorname*{disjoint}}^{\operatorname*{lower}}\left(
f,g\right)  \equiv\sum_{k\in\mathbb{Z}}\sum_{d<0}\sum_{m\in\mathbb{N}%
}\mathsf{B}_{\operatorname*{disjoint}}^{k,d,m}\left(  f,g\right)  .
\]

We defer the analogous pigeonholed decompositions for the above form
$\mathsf{B}_{\operatorname{above}}\left(  f,g\right)  $ and the distal form
$\mathsf{B}_{\operatorname*{distal}}\left(  f,g\right)  $ until needed. Now we
turn to the four \emph{principles of decay} used on the smooth Alpert inner
products $\left\langle Th_{I;\kappa}^{n-1,\eta},h_{J;\kappa}^{n,\eta
}\right\rangle $, followed in the next subsection with the interpolation estimates.

\subsection{Decay principles}

We introduce four different principles of decay in the oscillatory kernel of
the Fourier transform, namely

\begin{enumerate}
\item radial integration by parts,

\item moment vanishing of smooth Alpert wavelets (for both $h_{I;\kappa
}^{n-1,\eta}$\ and $h_{J;\kappa}^{n,\eta}$),

\item stationary phase of oscillatory integrals,

\item and tangential integration by parts.
\end{enumerate}

These four principles of decay will be used as building blocks for compound
principles of decay, which are obtained by iterating the exact formulas for
each principle, \emph{before} taking absolute values inside the resulting
integrals, in order to obtain estimates. These estimates are then used with
Alpert square function techniques as in \cite{SaWi} to bound the three forms
$\mathsf{B}_{\operatorname{below}}\left(  f,g\right)  $, $\mathsf{B}%
_{\operatorname*{disjoint}}\left(  f,g\right)  $ and $\mathsf{B}%
_{\operatorname{above}}\left(  f,g\right)  $. However, in order to handle
\emph{resonant} subforms of $\mathsf{B}_{\operatorname*{disjoint}}\left(
f,g\right)  $, we require an additional decay principle involving
interpolation of $L^{2}$ and $L^{4}$ estimates for smooth Alpert
pseudoprojections, that is described in the next subsection.

Our baseline is the following rather trivial $L^{1}$ estimate, which we refer
to as the \emph{crude} estimate,%
\begin{align}
\left\vert \left\langle Th_{I;\kappa}^{n-1,\eta},h_{J;\kappa}^{n,\eta
}\right\rangle \right\vert  & \leq\left\Vert h_{I;\kappa}^{n-1,\eta
}\right\Vert _{L^{1}\left(  \sigma\right)  }\left\Vert h_{J;\kappa}^{n,\eta
}\right\Vert _{L^{1}}\approx\sqrt{\left\vert I\right\vert \left\vert
J\right\vert }\ ,\label{crude''}\\
\left\vert \left\langle T\bigtriangleup_{I;\kappa}^{n-1,\eta}f,\bigtriangleup
_{J;\kappa}^{n,\eta}g\right\rangle _{\omega}\right\vert  & \leq\left\Vert
\bigtriangleup_{I;\kappa}^{n-1,\eta}f\right\Vert _{L^{1}}\left\Vert
\bigtriangleup_{J;\kappa}^{n,\eta}g\right\Vert _{L^{1}}\approx\sqrt{\left\vert
I\right\vert \left\vert J\right\vert }\left\vert \left\langle f,h_{I;\kappa
}^{n-1,\eta}\right\rangle \left\langle g,h_{J;\kappa}^{n,\eta}\right\rangle
\right\vert \ ,\nonumber
\end{align}
where we have used (\ref{inner Delta'})\ at the end of the second line.

\subsubsection{Radial integration by parts\label{Sub int}}

First we improve upon the crude estimate (\ref{crude''}) when $\left(
I,J\right)  \in P_{0}^{k,0}$ with $k>0$, i.e. $\ell\left(  J\right)  =2^{k}$,
namely we show that%
\begin{align}
& \ \ \ \ \ \ \ \ \ \ \ \ \ \ \ \left\vert \left\langle Th_{I;\kappa
}^{n-1,\eta},h_{J;\kappa}^{n,\eta}\right\rangle \right\vert \leq C_{N}%
2^{-kN}\left\Vert h_{I;\kappa}^{n-1,\eta}\right\Vert _{L^{1}}\left\Vert
h_{J;\kappa}^{n,\eta}\right\Vert _{L^{1}}\approx2^{-kN}\sqrt{\left\vert
I\right\vert \left\vert J\right\vert }\ ,\label{crude k}\\
& \left\vert \left\langle T\bigtriangleup_{I;\kappa}^{n-1,\eta}%
f,\bigtriangleup_{J;\kappa}^{n,\eta}g\right\rangle \right\vert \leq
C_{N}2^{-kN}\left\Vert \bigtriangleup_{I;\kappa}^{n-1,\eta}f\right\Vert
_{L^{1}}\left\Vert \bigtriangleup_{J;\kappa}^{n,\eta}g\right\Vert _{L^{1}%
}\approx2^{-kN}\sqrt{\left\vert I\right\vert \left\vert J\right\vert
}\left\vert \left\langle f,h_{I;\kappa}^{n-1,\eta}\right\rangle \left\langle
g,h_{J;\kappa}^{n,\eta}\right\rangle \right\vert \ .\nonumber
\end{align}
To see this, recall the change of variables (\ref{para}) made earlier,%
\begin{align*}
& \left\langle Th_{I;\kappa}^{n-1,\eta},h_{J;\kappa}^{n,\eta}\right\rangle
=\int_{\mathbb{R}^{n}}\int_{\mathbb{R}^{n-1}}e^{i\Phi\left(  x\right)
\cdot\xi}h_{I;\kappa}^{n-1,\eta}\left(  x\right)  h_{J;\kappa}^{n,\eta}\left(
\xi\right)  dxd\xi\\
& =\int_{\mathbb{R}}\int_{\mathbb{R}^{n-1}}\int_{\mathbb{R}^{n-1}}%
e^{i\lambda\phi\left(  x,y\right)  }\varphi_{I}^{\eta}\left(  x\right)
\widetilde{\psi}_{J}^{\eta}\left(  y,\lambda\right)  dxdyd\lambda,
\end{align*}
where%
\begin{align*}
\phi\left(  x,y\right)   & \equiv\Phi\left(  x\right)  \cdot\Phi\left(
y\right)  ,\\
\varphi_{I}^{\eta}\left(  x\right)   & \equiv h_{I;\kappa}^{n-1,\eta}\left(
x\right)  \text{ and }\psi_{J}^{\eta}\left(  \xi\right)  =h_{J;\kappa}%
^{n,\eta}\left(  \xi\right)  ,\\
\widetilde{\psi}_{J}^{\eta}\left(  y,\lambda\right)   & \equiv h_{J;\kappa
}^{n,\eta}\left(  \lambda y,\lambda\sqrt{1-\left\vert y\right\vert ^{2}%
}\right)  \frac{\lambda^{n-1}}{\sqrt{1-\left\vert y\right\vert ^{2}}}.
\end{align*}
We use the formula%
\[
\left(  \frac{1}{\phi\left(  x,y\right)  }\partial_{\lambda}\right)
^{N}e^{i\lambda\phi\left(  x,y\right)  }=e^{i\lambda\phi\left(  x,y\right)  },
\]
to obtain the equality,%
\begin{equation}
\left\langle Th_{I;\kappa}^{n-1,\eta},h_{J;\kappa}^{n,\eta}\right\rangle
=\int_{\mathbb{R}}\int_{\mathbb{R}^{n-1}}\int_{\mathbb{R}^{n-1}}%
\frac{e^{i\lambda\phi\left(  x,y\right)  }}{\phi\left(  x,y\right)  ^{N}%
}\varphi_{I}^{\eta}\left(  x\right)  \partial_{\lambda}^{N}\widetilde{\psi
}_{J}^{\eta}\left(  y,\lambda\right)  dxdyd\lambda
,\label{int by parts formula}%
\end{equation}
which can then be estimated by%
\begin{align}
& \left\vert \left\langle Th_{I;\kappa}^{n-1,\eta},h_{J;\kappa}^{n,\eta
}\right\rangle \right\vert \lesssim\left\Vert \varphi_{I}^{\eta}\right\Vert
_{L^{1}}\int_{\mathbb{R}}\int_{\mathbb{R}^{n-1}}\left\vert \partial_{\lambda
}^{N}\widetilde{\psi}_{J}^{\eta}\left(  y,\lambda\right)  \right\vert
dyd\lambda\label{int est}\\
& \lesssim\left\Vert \varphi_{I}^{\eta}\right\Vert _{L^{1}}\int_{\mathbb{R}%
}\int_{\mathbb{R}^{n-1}}\left\vert \partial_{\xi}^{N}\widetilde{\psi}%
_{J}^{\eta}\left(  y,\lambda\right)  \right\vert \left(  \min\left\{  \frac
{1}{\eta\ell\left(  J\right)  },\frac{1}{\lambda}\right\}  \right)
^{N}dyd\lambda\nonumber\\
& \approx\left(  \frac{1}{\eta\ell\left(  J\right)  }\right)  ^{N}\left\Vert
\varphi_{I}^{\eta}\right\Vert _{L^{1}}\left\Vert \partial_{\xi}^{N}%
\widetilde{\psi}_{J}^{\eta}\right\Vert _{L^{1}}\approx2^{-kN}\left\Vert
\varphi_{I}^{\eta}\right\Vert _{L^{1}}\left\Vert \partial_{\xi}^{N}%
\widetilde{\psi}_{J}^{\eta}\right\Vert _{L^{1}}\approx2^{-kN}\sqrt{\left\vert
I\right\vert \left\vert J\right\vert },\nonumber
\end{align}
which gives both lines in (\ref{crude k}).

\subsubsection{Vanishing moments of smooth Alpert wavelets\label{Sub van}}

Now we improve upon the crude estimate (\ref{crude''}) when $\left(
I,J\right)  \in P_{0}^{k,0}$ with $k<0$, i.e. $\ell\left(  J\right)  =2^{k}$,
namely we show that%
\begin{align}
& \ \ \ \ \ \ \ \ \ \ \ \ \ \ \ \left\vert \left\langle Th_{I;\kappa
}^{n-1,\eta},h_{J;\kappa}^{n,\eta}\right\rangle \right\vert \leq C_{\kappa
}2^{-\left\vert k\right\vert \kappa}\left\Vert h_{I;\kappa}^{n-1,\eta
}\right\Vert _{L^{1}}\left\Vert h_{J;\kappa}^{n,\eta}\right\Vert _{L^{1}%
}\approx2^{-\left\vert k\right\vert \kappa}\sqrt{\left\vert I\right\vert
\left\vert J\right\vert }\ ,\label{crude k<0}\\
& \left\vert \left\langle T\bigtriangleup_{I;\kappa}^{n-1,\eta}%
f,\bigtriangleup_{J;\kappa}^{n,\eta}g\right\rangle _{\omega}\right\vert \leq
C_{\kappa}2^{-\left\vert k\right\vert \kappa}\left\Vert \bigtriangleup
_{I;\kappa}^{n-1,\eta}f\right\Vert _{L^{1}}\left\Vert \bigtriangleup
_{J;\kappa}^{n,\eta}g\right\Vert _{L^{1}}\approx2^{-\left\vert k\right\vert
\kappa}\sqrt{\left\vert I\right\vert \left\vert J\right\vert }\left\vert
\left\langle f,h_{I;\kappa}^{n-1,\eta}\right\rangle \left\langle
g,h_{J;\kappa}^{n,\eta}\right\rangle \right\vert \ .\nonumber
\end{align}
For any entire function $f$, Taylor's formula\ with integral remainder applied
to $t\rightarrow f\left(  tz\right)  $ gives,%
\begin{align*}
f\left(  z\right)   & =\sum_{\ell=0}^{\kappa-1}\frac{1}{\ell!}\frac{d^{\ell}%
}{dt^{\ell}}f\left(  tz\right)  \mid_{t=0}+\int_{0}^{1}\left(  \frac
{d^{\kappa}}{dt^{\kappa}}f\left(  tz\right)  \right)  \frac{\left(
1-t\right)  ^{\kappa}}{\kappa!}dt\\
& =\sum_{\ell=0}^{\kappa-1}\frac{1}{\ell!}f^{\left(  \ell\right)  }\left(
0\right)  z^{\ell}+\int_{0}^{1}f^{\left(  \kappa\right)  }\left(  tz\right)
z^{\kappa}\frac{\left(  1-t\right)  ^{\kappa}}{\kappa!}dt,
\end{align*}
which shows that for any $\kappa\in\mathbb{N}$ and $b\in\mathbb{R}$, we have%
\begin{equation}
e^{ib}=\sum_{\ell=0}^{\kappa-1}\frac{\left(  ib\right)  ^{\ell}}{\ell
!}+R_{\kappa}\left(  ib\right)  ,\label{exp Tay}%
\end{equation}
where%
\begin{equation}
R_{\kappa}\left(  ib\right)  =\int_{0}^{1}e^{itb}\left(  ib\right)  ^{\kappa
}\frac{\left(  1-t\right)  ^{\kappa}}{\kappa!}dt\text{ and }\left\vert
R_{\kappa}\left(  ib\right)  \right\vert \leq\frac{\left\vert b\right\vert
^{\kappa}}{\left(  \kappa+1\right)  !}.\label{g kappa bound}%
\end{equation}
We also have%
\begin{align}
\left\vert \partial_{b}^{\ell}R_{\kappa}\left(  ib\right)  \right\vert  &
\lesssim\frac{\left\vert b\right\vert ^{\kappa-\ell}}{\left(  \kappa+1\right)
!},\ \ \ \ \ \text{for }0\leq\ell<\kappa,\label{g kappa bound'}\\
\partial_{b}^{\ell}R_{\kappa}\left(  ib\right)   & =\partial_{b}^{\ell}%
e^{ib}=i^{\ell}e^{ib},\ \ \ \ \ \text{for }\ell\geq\kappa.\nonumber
\end{align}

Now let $c_{J}$ denote the center of the cube $J$ and write,
\[
e^{-i\Phi\left(  x\right)  \cdot\xi}=e^{-i\Phi\left(  x\right)  \cdot c_{J}%
}e^{-i\Phi\left(  x\right)  \cdot\left(  \xi-c_{J}\right)  }=e^{-i\Phi\left(
x\right)  \cdot c_{J}}\left\{  \sum_{\ell=0}^{\kappa-1}\frac{\left(
-i\Phi\left(  x\right)  \cdot\left(  \xi-c_{J}\right)  \right)  ^{\ell}}%
{\ell!}+R_{\kappa}\left(  -i\Phi\left(  x\right)  \cdot\left(  \xi
-c_{J}\right)  \right)  \right\}  .
\]
Note that%
\[
e^{-i\Phi\left(  x\right)  \cdot c_{J}}R_{\kappa}\left(  -i\Phi\left(
x\right)  \cdot\left(  \xi-c_{J}\right)  \right)  =\int_{0}^{1}e^{-i\Phi
\left(  x\right)  \cdot c_{J}}e^{-it\Phi\left(  x\right)  \cdot\left(
\xi-c_{J}\right)  }\left(  -i\Phi\left(  x\right)  \cdot\left(  \xi
-c_{J}\right)  \right)  ^{\kappa}\frac{\left(  1-t\right)  ^{\kappa}}{\kappa
!}dt
\]
Since $h_{J;\kappa}^{n,\eta}$ has vanishing moments up to order less than
$\kappa$, we obtain
\begin{align}
& \ \ \ \ \ \left\langle Th_{I;\kappa}^{n-1,\eta},h_{J;\kappa}^{n,\eta
}\right\rangle =\int_{\mathbb{R}^{n}}\int_{\mathbb{R}^{n-1}}e^{-i\Phi\left(
x\right)  \cdot\xi}h_{I;\kappa}^{n-1,\eta}\left(  x\right)  dxh_{J;\kappa
}^{n,\eta}\left(  \xi\right)  d\xi\label{van mom formula}\\
& =\int_{\mathbb{R}^{n-1}}e^{-i\Phi\left(  x\right)  \cdot c_{J}}h_{I;\kappa
}^{n-1,\eta}\left(  x\right)  \left\{  \int_{\mathbb{R}^{n}}\left[  \sum
_{\ell=0}^{\kappa-1}\frac{\left(  -i\Phi\left(  x\right)  \cdot\left(
\xi-c_{J}\right)  \right)  ^{\ell}}{\ell!}+R_{\kappa}\left(  -i\Phi\left(
x\right)  \cdot\left(  \xi-c_{J}\right)  \right)  \right]  h_{J;\kappa
}^{n,\eta}\left(  \xi\right)  d\xi\right\}  dx\nonumber\\
& =\int_{\mathbb{R}^{n-1}}e^{-i\Phi\left(  x\right)  \cdot c_{J}}h_{I;\kappa
}^{n-1,\eta}\left(  x\right)  \left\{  \int_{\mathbb{R}^{n}}R_{\kappa}\left(
-i\Phi\left(  x\right)  \cdot\left(  \xi-c_{J}\right)  \right)  h_{J;\kappa
}^{n,\eta}\left(  \xi\right)  d\xi\right\}  dx.\nonumber
\end{align}
From the bound for $R_{\kappa}$ in (\ref{g kappa bound}) with $b=-\Phi\left(
x\right)  \cdot\left(  \xi-c_{J}\right)  $, we have%
\begin{align}
\left\vert \left\langle Th_{I;\kappa}^{n-1,\eta},h_{J;\kappa}^{n,\eta
}\right\rangle \right\vert  & \leq\int\left\vert h_{I;\kappa}^{n-1,\eta
}\left(  x\right)  \right\vert \int_{\mathbb{R}^{n}}\frac{\left\vert
\Phi\left(  x\right)  \cdot\left(  \xi-c_{J}\right)  \right\vert ^{\kappa}%
}{\left(  \kappa+1\right)  !}\left\vert h_{J;\kappa}^{n,\eta}\left(
\xi\right)  \right\vert d\xi dx\label{van mom est}\\
& \lesssim\ell\left(  J\right)  ^{\kappa}\left\Vert \varphi_{I}^{\eta
}\right\Vert _{L^{1}}\left\Vert \psi_{J}^{\eta}\right\Vert _{L^{1}}%
\approx2^{-\left\vert k\right\vert \kappa}\sqrt{\left\vert I\right\vert
\left\vert J\right\vert }.\nonumber
\end{align}

\subsubsection{Stationary phase with bounds\label{Sub asym}}

Now we improve upon the crude estimate (\ref{crude''}) when $\left(
I,J\right)  \in P_{0}^{0,d}$ with $d\geq0$, i.e. $J\subset\mathcal{K}\left(
I\right)  $, $\ell\left(  J\right)  =1$, and $\ell\left(  I\right)
^{2}\operatorname*{dist}\left(  0,J\right)  \approx2^{d}$, namely we show,%
\begin{align}
& \ \ \ \ \ \ \ \ \ \ \ \ \ \ \ \left\vert \left\langle Th_{I;\kappa
}^{n-1,\eta},h_{J;\kappa}^{n,\eta}\right\rangle \right\vert \lesssim
2^{-d\frac{n-1}{2}}\left(  1+2^{-d}\left(  \frac{1}{\ell\left(  I\right)
^{2}}\right)  ^{\tau}\right)  \sqrt{\left\vert I\right\vert \left\vert
J\right\vert }\ ,\label{crude asym}\\
& \left\vert \left\langle T\bigtriangleup_{I;\kappa}^{n-1,\eta}%
f,\bigtriangleup_{J;\kappa}^{n,\eta}g\right\rangle _{\omega}\right\vert
\lesssim2^{-d\frac{n-1}{2}}\left(  1+2^{-d}\left(  \frac{1}{\ell\left(
I\right)  ^{2}}\right)  ^{\tau}\right)  \sqrt{\left\vert I\right\vert
\left\vert J\right\vert }\left\vert \left\langle f,h_{I;\kappa}^{n-1,\eta
}\right\rangle \left\langle g,h_{J;\kappa}^{n,\eta}\right\rangle \right\vert
\ ,\nonumber
\end{align}
where $0<\tau\leq1$. For this, recall the change of variables in (\ref{para})
and (\ref{phi and psi notation}),%
\begin{align*}
& \left\langle Th_{I;\kappa}^{n-1,\eta},h_{J;\kappa}^{n,\eta}\right\rangle
=\int_{\mathbb{R}^{n}}\int_{\mathbb{R}^{n-1}}e^{i\Phi\left(  x\right)
\cdot\xi}h_{I;\kappa}^{n-1,\eta}\left(  x\right)  h_{J;\kappa}^{n,\eta}\left(
\xi\right)  dxd\xi\\
& =\int_{\mathbb{R}}\int_{\mathbb{R}^{n-1}}\left\{  \int_{\mathbb{R}^{n-1}%
}e^{i\lambda\phi\left(  x,y\right)  }\varphi_{I}^{\eta}\left(  x\right)
dx\right\}  \widetilde{\psi}_{J}^{\eta}\left(  y,\lambda\right)  dyd\lambda,
\end{align*}
where%
\begin{align*}
\phi\left(  x,y\right)   & \equiv\Phi\left(  x\right)  \cdot\Phi\left(
y\right)  ,\\
\varphi_{I}^{\eta}\left(  x\right)   & \equiv h_{I;\kappa}^{n-1,\eta}\left(
x\right)  \text{ and }\psi_{J}^{\eta}\left(  \xi\right)  =h_{J;\kappa}%
^{n,\eta}\left(  \xi\right)  ,\\
\widetilde{\psi}_{J}^{\eta}\left(  y,\lambda\right)   & \equiv\psi_{J}^{\eta
}\left(  \lambda y,\lambda\sqrt{1-\left\vert y\right\vert ^{2}}\right)
\frac{\lambda^{n-1}}{\sqrt{1-\left\vert y\right\vert ^{2}}}.
\end{align*}
Applying Theorem \ref{osc int} with $n$ replaced by $n-1$ and $a_{\lambda
}\left(  x,y\right)  $ equal to $\varphi_{I}^{\eta}\left(  x\right)  $, shows
that the oscillatory integral
\[
\mathcal{I}_{\varphi_{I}^{\eta},\phi}\left(  y,\lambda\right)  \equiv
\int_{\mathbb{R}^{n-1}}e^{i\lambda\phi\left(  x,y\right)  }\varphi_{I}^{\eta
}\left(  x\right)  dx,
\]
satisfies%
\[
\mathcal{I}_{\varphi_{I}^{\eta},\phi}\left(  y,\lambda\right)  =\mathfrak{P}%
_{\varphi_{I}^{\eta},\phi}\left(  y,\lambda\right)  +\sum_{\ell=1}%
^{M}\mathfrak{P}_{\varphi_{I}^{\eta},\phi}^{\left(  \ell\right)  }\left(
y,\lambda\right)  +\mathfrak{R}_{\varphi_{I}^{\eta},\phi}^{\left(  M+1\right)
}\left(  y,\lambda\right)  ,
\]
where%
\begin{equation}
\mathfrak{P}_{\varphi_{I}^{\eta},\phi}\left(  y,\lambda\right)  =\left(
\frac{2\pi}{\lambda}\right)  ^{\frac{n-1}{2}}\frac{e^{i\left[
\operatorname{sgn}\left[  \partial_{x}^{2}\phi\left(  X\left(  y\right)
,y\right)  \right]  \frac{\pi}{4}+\lambda\phi\left(  X\left(  y\right)
,y\right)  \right]  }}{\sqrt{\left\vert \det B\left(  y\right)  \right\vert }%
}\varphi_{I}^{\eta}\left(  X\left(  y\right)  \right)  ,\label{main term}%
\end{equation}
and for $1\leq\ell\leq M$,%
\begin{align*}
& \mathfrak{P}_{\varphi_{I}^{\eta},\phi}^{\left(  \ell\right)  }\left(
y,\lambda\right)  =\frac{i^{\ell}}{\left(  2\lambda\right)  ^{\ell}\ell
!}\left(  \frac{2\pi}{\lambda}\right)  ^{\frac{n}{2}}\frac{e^{i\left[
\operatorname{sgn}B\left(  y\right)  \frac{\pi}{4}+\lambda\phi\left(  X\left(
y\right)  ,y\right)  \right]  }}{\sqrt{\det B\left(  y\right)  }}\\
& \ \ \ \ \ \ \ \ \ \ \times\left\{  \left[  \partial_{x}\frac{1}{\det
\partial_{x}\Psi\left(  X\left(  y\right)  ,y\right)  }\right]  B\left(
y\right)  ^{-1}\frac{1}{\det\partial_{x}\Psi\left(  X\left(  y\right)
,y\right)  }\partial_{x}\right\}  ^{\ell}\frac{\varphi_{I}^{\eta}\left(
X\left(  y\right)  \right)  }{\det\left[  \partial_{x}\Psi\left(  X\left(
y\right)  ,y\right)  \right]  },
\end{align*}
and%
\begin{align}
\mathfrak{R}_{\varphi_{I}^{\eta},\phi}^{\left(  M+1\right)  }\left(
y,\lambda\right)   & =\left(  \frac{2\pi}{\lambda}\right)  ^{\frac{n-1}{2}%
}\frac{e^{i\left[  \operatorname{sgn}B\left(  y\right)  \frac{\pi}{4}%
+\lambda\phi\left(  X\left(  y\right)  ,y\right)  \right]  }}{\sqrt{\left\vert
\det B\left(  y\right)  \right\vert }}\nonumber\\
& \times\int\mathcal{F}_{z}^{-1}\left(  \left[  \frac{\left\langle
i\partial_{z},B\left(  y\right)  ^{-1}\partial_{z}\right\rangle }{2\lambda
}\right]  ^{M+1}f\right)  \left(  \zeta\right)  g_{M+1}\left(  -i\frac
{\zeta^{\operatorname*{tr}}B\left(  y\right)  ^{-1}\zeta}{2\lambda}\right)
d\zeta,\nonumber
\end{align}
and where $B\left(  y\right)  =\partial_{x}^{2}\phi\left(  X\left(  y\right)
,y\right)  $, and $X\left(  y\right)  $ is the unique stationary point of
$\phi\left(  \cdot_{x},y\right)  $ in the support of $a$,\ as given in the
Morse Lemma, and $\rho=\left\lceil \frac{n}{2}\right\rceil $ is the smallest
integer greater than $\frac{n}{2}$, and finally $g_{M+1}\left(  b\right)
=\frac{1}{M!}\int_{0}^{b}e^{t}\left(  b-t\right)  ^{M}dt$ for $b\in\mathbb{C}%
$. Thus at this point we have the formula,%
\begin{align}
\left\langle Th_{I;\kappa}^{n-1,\eta},h_{J;\kappa}^{n,\eta}\right\rangle  &
=\int_{\mathbb{R}}\int_{\mathbb{R}^{n-1}}\left\{  \int_{\mathbb{R}^{n-1}%
}e^{i\lambda\phi\left(  x,y\right)  }h_{I;\kappa}^{n-1,\eta}\left(  x\right)
dx\right\}  \widehat{\psi}_{J}^{\eta}\left(  y,\lambda\right)  dyd\lambda
\label{asym formula}\\
& =\int_{\mathbb{R}}\int_{\mathbb{R}^{n-1}}\mathcal{I}_{\varphi_{I}^{\eta
},\phi}\left(  y,\lambda\right)  \widetilde{\psi}_{J}^{\eta}\left(
y,\lambda\right)  dyd\lambda\nonumber
\end{align}

In the case $\phi\left(  x,y\right)  \equiv\Phi\left(  x\right)  \cdot
\Phi\left(  y\right)  $ we have $X\left(  y\right)  =y$ and
\begin{align*}
B\left(  y\right)   & =\partial_{x}^{2}\Phi\left(  x\right)  \cdot\Phi\left(
y\right)  \mid_{x=y}=\partial_{x}^{2}\sqrt{1-\left\vert x\right\vert ^{2}}%
\mid_{x=y}\sqrt{1-\left\vert y\right\vert ^{2}}\\
& =\left(  -\frac{1}{\sqrt{1-\left\vert x\right\vert ^{2}}}\operatorname{Id}%
_{n-1}-\frac{xx^{\operatorname*{tr}}}{\left(  1-\left\vert x\right\vert
^{2}\right)  ^{\frac{3}{2}}}\mid_{x=y}\right)  \sqrt{1-\left\vert y\right\vert
^{2}}\\
& =-\operatorname{Id}_{n-1}-\frac{yy^{\operatorname*{tr}}}{1-\left\vert
y\right\vert ^{2}},
\end{align*}
so that $\operatorname{sgn}B\left(  y\right)  =-\left(  n-1\right)  $ and%

\begin{align*}
\det B\left(  y\right)   & =\det\left[
\begin{array}
[c]{cccc}%
-1-\frac{y_{1}^{2}}{1-\left\vert y\right\vert ^{2}} & -\frac{y_{1}y_{2}%
}{1-\left\vert y\right\vert ^{2}} & \cdots & -\frac{y_{1}y_{n-1}}{1-\left\vert
y\right\vert ^{2}}\\
-\frac{y_{2}y_{1}}{1-\left\vert y\right\vert ^{2}} & -1-\frac{y_{2}^{2}%
}{1-\left\vert y\right\vert ^{2}} &  & -\frac{y_{2}y_{n-1}}{1-\left\vert
y\right\vert ^{2}}\\
\vdots &  & \ddots & \vdots\\
-\frac{y_{n-1}y_{1}}{1-\left\vert y\right\vert ^{2}} & -\frac{y_{n-1}y_{1}%
}{1-\left\vert y\right\vert ^{2}} & \cdots & -1-\frac{y_{n-1}^{2}%
}{1-\left\vert y\right\vert ^{2}}%
\end{array}
\right] \\
& =\det\frac{1}{1-\left\vert y\right\vert ^{2}}\left[
\begin{array}
[c]{cccc}%
-1+\left\vert y\right\vert ^{2}-y_{1}^{2} & -\frac{y_{1}y_{2}}{1-\left\vert
y\right\vert ^{2}} & \cdots & -y_{1}y_{n-1}\\
-y_{2}y_{1} & -1+\left\vert y\right\vert ^{2}-y_{2}^{2} &  & -y_{2}y_{n-1}\\
\vdots &  & \ddots & \vdots\\
-y_{n-1}y_{1} & -y_{n-1}y_{1} & \cdots & -1+\left\vert y\right\vert
^{2}-y_{n-1}^{2}%
\end{array}
\right]  =\frac{\left(  -1\right)  ^{n-1}}{1-\left\vert y\right\vert ^{2}},
\end{align*}
by induction on $n$.

In particular then, from (\ref{Psi and d Psi}) and the above calculation, we
have $\Psi\left(  X\left(  y\right)  ,y\right)  =0$, $\phi\left(  X\left(
y\right)  ,y\right)  $ and $\partial_{x}\Psi\left(  X\left(  y\right)
,y\right)  =\operatorname{Id}_{n}$ and so%
\[
\mathfrak{P}_{h_{I;\kappa}^{n-1,\eta},\phi}\left(  y,\lambda\right)  =\left(
\frac{2\pi}{\lambda}\right)  ^{\frac{n-1}{2}}e^{i\left[  -\frac{\left(
n-1\right)  \pi}{4}+\lambda\right]  }\sqrt{1-\left\vert y\right\vert ^{2}%
}\varphi_{I}^{\eta}\left(  y\right)  ,
\]
which can be written in the variable $\xi=\left(  \lambda y,\lambda
\sqrt{1-\left\vert y\right\vert ^{2}}\right)  $ as%
\[
\mathfrak{P}_{h_{I;\kappa}^{n-1,\eta},\phi}\left(  \xi\right)  =\left(
\frac{2\pi}{\left\vert \xi\right\vert }\right)  ^{\frac{n-1}{2}}\frac{\xi_{n}%
}{\left\vert \xi\right\vert }e^{i\left(  \left\vert \xi\right\vert
-\frac{\left(  n-1\right)  \pi}{4}\right)  }h_{I;\kappa}^{n-1,\eta}\left(
\frac{\xi^{\prime}}{\left\vert \xi\right\vert }\right)  ,\ \ \ \ \ \xi
^{\prime}=\left(  \xi_{1},...,\xi_{n-1}\right)  .
\]
We compute that for $J\in\mathcal{K}\left(  I\right)  $ and $\ell\left(
I\right)  ^{2}\operatorname*{dist}\left(  0,J\right)  \approx2^{d}$,
\begin{align*}
& \left\vert \left\langle \mathfrak{P}_{h_{I;\kappa}^{n-1,\eta},\phi
},h_{J;\kappa}^{n,\eta}\right\rangle \right\vert =\left\vert \int
_{\mathbb{R}^{n}}\mathfrak{P}_{h_{I;\kappa}^{n-1,\eta},\phi}\left(
\xi\right)  h_{J;\kappa}^{n,\eta}\left(  \xi\right)  d\xi\right\vert
=\left\vert \int_{\mathbb{R}^{n}}\left(  \frac{2\pi}{\left\vert \xi\right\vert
}\right)  ^{\frac{n-1}{2}}\frac{\xi_{n}}{\left\vert \xi\right\vert
}e^{i\left(  \left\vert \xi\right\vert -\frac{\left(  n-2\right)  \pi}%
{4}\right)  }h_{I;\kappa}^{n-1,\eta}\left(  \frac{\xi^{\prime}}{\left\vert
\xi\right\vert }\right)  h_{J;\kappa}^{n,\eta}\left(  \xi\right)
d\xi\right\vert \\
& \lesssim\int_{\mathbb{R}^{n}}\left(  \frac{1}{\operatorname*{dist}\left(
0,J\right)  }\right)  ^{\frac{n-1}{2}}\left\vert h_{I;\kappa}^{n-1,\eta
}\left(  \frac{\xi^{\prime}}{\left\vert \xi\right\vert }\right)  \right\vert
\left\vert h_{J;\kappa}^{n,\eta}\left(  \xi\right)  \right\vert d\xi
\approx\left(  \frac{1}{\operatorname*{dist}\left(  0,J\right)  }\right)
^{\frac{n-1}{2}}\int_{\mathbb{R}^{n}}\frac{1}{\sqrt{\left\vert I\right\vert }%
}\mathbf{1}_{I}\left(  \frac{\xi^{\prime}}{\left\vert \xi\right\vert }\right)
\frac{1}{\sqrt{\left\vert J\right\vert }}\mathbf{1}_{J}\left(  \xi\right)
d\xi\\
& \approx\left(  \frac{1}{\operatorname*{dist}\left(  0,J\right)  }\right)
^{\frac{n-1}{2}}\frac{1}{\sqrt{\left\vert I\right\vert \left\vert J\right\vert
}}\left\vert J\right\vert =\left(  \frac{1}{\ell\left(  I\right)
^{2}\operatorname*{dist}\left(  0,J\right)  }\right)  ^{\frac{n-1}{2}}%
\sqrt{\left\vert I\right\vert \left\vert J\right\vert }\lesssim2^{-d\frac
{n-1}{2}}\sqrt{\left\vert I\right\vert \left\vert J\right\vert }.
\end{align*}
The intermediate terms $\mathfrak{P}_{\varphi_{I}^{\eta},\phi}^{\left(
\ell\right)  }\left(  y,\lambda\right)  $ can be estimated in a similar way.

Next we estimate the inner product with the error term $\mathfrak{R}%
_{h_{I;\kappa}^{n-1,\eta},\phi}^{\left(  M+1\right)  }$ using the bound
(\ref{special bound}),%
\[
\left\vert \mathfrak{R}_{h_{I;\kappa}^{n-1,\eta},\phi}^{\left(  M+1\right)
}\left(  y,\lambda\right)  \right\vert \leq C_{M}\lambda^{-\frac{n-1}{2}%
-M-1}\left\Vert \left(  \operatorname{Id}-\bigtriangleup_{x}\right)
^{N}h_{I;\kappa}^{n-1,\eta}\right\Vert _{L^{1}\left(  \mathbb{R}_{x}%
^{n-1}\right)  \times L^{\infty}\left(  \mathbb{R}_{y}^{n}-1\right)  }\leq
C_{M}\lambda^{-\frac{n-1}{2}-M-1}\frac{1}{\ell\left(  I\right)  ^{2N}}%
\sqrt{\left\vert I\right\vert },
\]
for $N>1+\frac{n-1}{2}$, to obtain%
\begin{align}
& \ \ \ \ \ \ \ \ \ \ \left\vert \left\langle \mathfrak{R}_{h_{I;\kappa
}^{n-1,\eta},\phi}^{\left(  M+1\right)  },h_{J;\kappa}^{n,\eta}\right\rangle
\right\vert =\left\vert \int_{\mathbb{R}^{n}}\mathfrak{R}_{h_{I;\kappa
}^{n-1,\eta},\phi}^{\left(  M+1\right)  }\left(  \xi\right)  h_{J;\kappa
}^{n,\eta}\left(  \xi\right)  d\xi\right\vert \label{calcul}\\
& \lesssim\left(  \frac{1}{\operatorname*{dist}\left(  0,J\right)  }\right)
^{\frac{n-1}{2}+1}\left(  \frac{1}{\ell\left(  I\right)  ^{2}}\right)
^{N}\sqrt{\left\vert I\right\vert \left\vert J\right\vert }\approx2^{-d\left(
\frac{n-1}{2}+1\right)  }\left(  \frac{1}{\ell\left(  I\right)  ^{2}}\right)
^{\tau}\sqrt{\left\vert I\right\vert \left\vert J\right\vert },\nonumber
\end{align}
where $\tau=N-\frac{n+1}{2}>0$.

Adding these estimates gives,%
\[
\left\vert \left\langle Th_{I;\kappa}^{n-1,\eta},h_{J;\kappa}^{n,\eta
}\right\rangle \right\vert \lesssim\left\{  \sum_{\ell=0}^{M}2^{-d\left(
\frac{n-1}{2}+\ell\right)  }+2^{-d\left(  \frac{n-1}{2}+M+1\right)  }\left(
\frac{1}{\ell\left(  I\right)  ^{2}}\right)  ^{\tau}\right\}  \sqrt{\left\vert
I\right\vert \left\vert J\right\vert },
\]
which completes the proof of (\ref{crude asym}). Since $N-\frac{n+1}{2}%
\in\frac{1}{2}\mathbb{Z}$, we may assume $0<\tau\leq1$.

\subsubsection{Tangential integration by parts}

Finally, we improve on the crude estimate (\ref{crude''}) in the case $k=0$,
$d\geq0$ and $m\in\mathbb{N}$ using a tangential integration by parts as our
last principle of decay, where the supports of $I$ and $\Phi^{-1}\left(
\pi_{\tan}J\right)  $ are separated by at least $\ell\left(  I\right)  $. Let
$\left(  I,J\right)  \in\mathcal{P}_{m}^{0,d}$ with $d\geq0$, i.e.
\[
\operatorname*{dist}\left(  \pi_{\tan}J,I\right)  \approx2^{m}\ell\left(
I\right)  ,\ \ \ \ell\left(  J\right)  =1,\ \ \ \text{and }\frac{2^{d-1}}%
{\ell\left(  I\right)  ^{2}}\leq\operatorname*{dist}\left(  0,J\right)
\leq\frac{2^{d+1}}{\ell\left(  I\right)  ^{2}}.
\]
Recall again the change of variable in (\ref{para}) and
(\ref{phi and psi notation}),%
\begin{align*}
& \left\langle Th_{I;\kappa}^{n-1,\eta},h_{J;\kappa}^{n,\eta}\right\rangle
=\int_{\mathbb{R}^{n}}\int_{\mathbb{R}^{n-1}}e^{-i\Phi\left(  x\right)
\cdot\xi}h_{I;\kappa}^{n-1,\eta}\left(  x\right)  h_{J;\kappa}^{n,\eta}\left(
\xi\right)  dxd\xi\\
& =\int_{\mathbb{R}}\int_{\mathbb{R}^{n-1}}\int_{\mathbb{R}^{n-1}}%
e^{-i\lambda\phi\left(  x,y\right)  }\varphi_{I}^{\eta}\left(  x\right)
\widetilde{\psi}_{J}^{\eta}\left(  y,\lambda\right)  dxdyd\lambda,
\end{align*}
where%
\begin{align*}
\phi\left(  x,y\right)   & \equiv\Phi\left(  x\right)  \cdot\Phi\left(
y\right)  ,\\
\varphi_{I}^{\eta}\left(  x\right)   & \equiv h_{I;\kappa}^{n-1,\eta}\left(
x\right)  \text{ and }\psi_{J}^{\eta}\left(  \xi\right)  =h_{J;\kappa}%
^{n,\eta}\left(  \xi\right)  ,\\
\widetilde{\psi}_{J}^{\eta}\left(  y,\lambda\right)   & \equiv\psi_{J}^{\eta
}\left(  \lambda y,\lambda\sqrt{1-\left\vert y\right\vert ^{2}}\right)
\frac{\lambda^{n-1}}{\sqrt{1-\left\vert y\right\vert ^{2}}}.
\end{align*}
Here the supports of $\pi_{\tan}J$ and $I$ are separated by a distance of
approximately $2^{m}\ell\left(  I\right)  $, and $\ell\left(  \pi_{\tan
}J\right)  \lesssim\ell\left(  I\right)  $, and this suggests we should
integrate by parts in the variables $x$ and $y$.

So let $y_{J}=\Phi^{-1}\left(  \pi_{\tan}c_{J}\right)  $ and $\mathbf{v}%
=\frac{y_{J}-c_{I}}{\left\vert y_{J}-c_{I}\right\vert }\in\mathbb{S}^{n-2}$ be
the unit vector in the direction of $y_{J}-c_{I}$, which is close to the
direction of $y-x$ for $x\in I$ and $y=\Phi^{-1}\left(  \pi_{\tan}\xi\right)
$ with $\xi\in J$. Consider the directional partial derivative $D_{\mathbf{v}%
}^{x}=\mathbf{v}\cdot\frac{\partial}{\partial x}$, and note that
\[
D_{\mathbf{v}}^{x}\phi\left(  x,y\right)  =\left(  D_{\mathbf{v}}\Phi\right)
\left(  x\right)  \cdot\Phi\left(  y\right)  .
\]
Since $\left(  D_{\mathbf{v}}\Phi\right)  \left(  x\right)  $ is perpendicular
to $\Phi\left(  x\right)  $ in $\mathbb{R}^{n}$, we have the estimate%
\[
\left\vert D_{\mathbf{v}}^{x}\phi\left(  x,y\right)  \right\vert
\approx\left\vert x-y\right\vert \ ,\ \ \ \ \ x\in I,\xi\in J.
\]

Now we compute%
\[
D_{\mathbf{v}}^{x}e^{-i\lambda\phi\left(  x,y\right)  }=-i\lambda
e^{-i\lambda\phi\left(  x,y\right)  }D_{\mathbf{v}}^{x}\phi\left(  x,y\right)
=-i\lambda e^{-i\lambda\phi\left(  x,y\right)  }\left(  D_{\mathbf{v}}%
\Phi\right)  \left(  x\right)  \cdot\Phi\left(  y\right)  ,
\]
and so%
\[
\left(  \frac{1}{-i\lambda\left(  D_{\mathbf{v}}\Phi\right)  \left(  x\right)
\cdot\Phi\left(  y\right)  }D_{\mathbf{v}}^{x}\right)  ^{N}e^{-i\lambda
\phi\left(  x,y\right)  }=e^{-i\lambda\phi\left(  x,y\right)  },
\]
which gives,%
\begin{align}
& \ \ \ \ \ \ \ \ \ \ \left\langle Th_{I;\kappa}^{n-1,\eta},h_{J;\kappa
}^{n,\eta}\right\rangle \label{tan int by parts}\\
& =\int_{\mathbb{R}}\int_{\mathbb{R}^{n-1}}\int_{\mathbb{R}^{n-1}}\left\{
\left(  \frac{1}{-i\lambda\left(  D_{\mathbf{v}}\Phi\right)  \left(  x\right)
\cdot\Phi\left(  y\right)  }D_{\mathbf{v}}^{x}\right)  ^{N}e^{i\lambda
\phi\left(  x,y\right)  }\right\}  \varphi_{I}^{\eta}\left(  x\right)
\widetilde{\psi}_{J}^{\eta}\left(  y,\lambda\right)  dxdyd\lambda\nonumber\\
& =i^{N}\int_{\mathbb{R}}\int_{\mathbb{R}^{n-1}}\int_{\mathbb{R}^{n-1}%
}e^{i\lambda\phi\left(  x,y\right)  }\left\{  \left(  D_{\mathbf{v}}^{x}%
\frac{1}{\left(  D_{\mathbf{v}}\Phi\right)  \left(  x\right)  \cdot\Phi\left(
y\right)  }\right)  ^{N}\right\}  \varphi_{I}^{\eta}\left(  x\right)
\widetilde{\psi}_{J}^{\eta}\left(  y,\lambda\right)  dxdy\frac{d\lambda
}{\lambda^{N}}.\nonumber
\end{align}
This integral can be estimated by%
\[
\left\vert \left\langle Th_{I;\kappa}^{n-1,\eta},h_{J;\kappa}^{n,\eta
}\right\rangle \right\vert \lesssim\int_{\mathbb{R}}\int_{\mathbb{R}^{n-1}%
}\int_{\mathbb{R}^{n-1}}\left\vert \left(  D_{\mathbf{v}}^{x}\frac{1}{\left(
D_{\mathbf{v}}\Phi\right)  \left(  x\right)  \cdot\Phi\left(  y\right)
}\right)  ^{N}\varphi_{I}^{\eta}\left(  x\right)  \right\vert \frac{1}%
{\lambda^{N}}\left\vert \widetilde{\psi}_{J}^{\eta}\left(  y,\lambda\right)
\right\vert dxdyd\lambda,
\]
where we have the following pointwise estimates for $N=0$ and $N=1$,%
\begin{align*}
& \left\vert \varphi_{I}^{\eta}\left(  x\right)  \right\vert \lesssim\frac
{1}{\sqrt{\left\vert I\right\vert }},\\
\text{and}  & \left\vert D_{\mathbf{v}}^{x}\frac{1}{\left(  D_{\mathbf{v}}%
\Phi\right)  \left(  x\right)  \cdot\Phi\left(  y\right)  }\varphi_{I}^{\eta
}\left(  x\right)  \right\vert \frac{1}{\lambda}\lesssim\frac{\left\vert
\partial_{x}\varphi_{I}^{\eta}\left(  x\right)  \right\vert }{\lambda
\left\vert \left(  D_{\mathbf{v}}\Phi\right)  \left(  x\right)  \cdot
\Phi\left(  y\right)  \right\vert }+\frac{\left\vert \varphi_{I}^{\eta}\left(
x\right)  \right\vert \left\vert \left(  D_{\mathbf{v}}^{2}\Phi\right)
\left(  x\right)  \cdot\Phi\left(  y\right)  \right\vert }{\lambda\left\vert
\left(  D_{\mathbf{v}}\Phi\right)  \left(  x\right)  \cdot\Phi\left(
y\right)  \right\vert ^{2}}\\
& \lesssim\frac{\frac{1}{\eta\ell\left(  I\right)  }\frac{1}{\sqrt{\left\vert
I\right\vert }}}{\lambda\left\vert x-y\right\vert }+\frac{\frac{1}%
{\sqrt{\left\vert I\right\vert }}}{\lambda\left\vert x-y\right\vert ^{2}%
}\lesssim\frac{\frac{1}{\eta}\frac{1}{\sqrt{\left\vert I\right\vert }}%
}{\lambda2^{m}\ell\left(  I\right)  \ell\left(  I\right)  }+\frac{\frac
{1}{\sqrt{\left\vert I\right\vert }}}{\lambda\left(  2^{m}\ell\left(
I\right)  \right)  ^{2}}\\
& \lesssim\frac{1}{\lambda2^{m}\ell\left(  I\right)  ^{2}}\frac{1}%
{\sqrt{\left\vert I\right\vert }}=2^{-m}\frac{1}{\operatorname*{dist}\left(
0,J\right)  \ell\left(  I\right)  ^{2}}\frac{1}{\sqrt{\left\vert I\right\vert
}}.
\end{align*}

We claim that by induction on $N$ we have%
\begin{equation}
\frac{1}{\lambda^{N}}\left\vert \left(  D_{\mathbf{v}}^{x}\frac{1}{\left(
D_{\mathbf{v}}\Phi\right)  \left(  x\right)  \cdot\Phi\left(  y\right)
}\right)  ^{N}\varphi_{I}^{\eta}\left(  x\right)  \right\vert \lesssim
2^{-Nm}\left(  \frac{1}{\operatorname*{dist}\left(  0,J\right)  \ell\left(
I\right)  ^{2}}\right)  ^{N}\frac{1}{\sqrt{\left\vert I\right\vert }%
}.\label{inductive conc}%
\end{equation}
For simplicity, we illustrate the inductive step in the case $N=2$, and
compute%
\begin{align*}
& D_{\mathbf{v}}^{x}\frac{1}{\left(  D_{\mathbf{v}}\Phi\right)  \left(
x\right)  \cdot\Phi\left(  y\right)  }D_{\mathbf{v}}^{x}\frac{1}{\left(
D_{\mathbf{v}}\Phi\right)  \left(  x\right)  \cdot\Phi\left(  y\right)
}\varphi_{I}^{\eta}\left(  x\right) \\
& =D_{\mathbf{v}}^{x}\left(  \frac{D_{\mathbf{v}}^{x}\varphi_{I}^{\eta}\left(
x\right)  }{\left[  \left(  D_{\mathbf{v}}\Phi\right)  \left(  x\right)
\cdot\Phi\left(  y\right)  \right]  ^{2}}-\frac{\varphi_{I}^{\eta}\left(
x\right)  \left(  D_{\mathbf{v}}^{2}\Phi\right)  \left(  x\right)  \cdot
\Phi\left(  y\right)  }{\left[  \left(  D_{\mathbf{v}}\Phi\right)  \left(
x\right)  \cdot\Phi\left(  y\right)  \right]  ^{3}}\right) \\
& =\frac{\left(  D_{\mathbf{v}}^{x}\right)  ^{2}\varphi_{I}^{\eta}\left(
x\right)  }{\left[  \left(  D_{\mathbf{v}}\Phi\right)  \left(  x\right)
\cdot\Phi\left(  y\right)  \right]  ^{2}}-3\frac{D_{\mathbf{v}}^{x}\varphi
_{I}^{\eta}\left(  x\right)  \left(  D_{\mathbf{v}}^{2}\Phi\right)  \left(
x\right)  \cdot\Phi\left(  y\right)  }{\left[  \left(  D_{\mathbf{v}}%
\Phi\right)  \left(  x\right)  \cdot\Phi\left(  y\right)  \right]  ^{3}}\\
& -\frac{\varphi_{I}^{\eta}\left(  x\right)  \left(  D_{\mathbf{v}}^{3}%
\Phi\right)  \left(  x\right)  \cdot\Phi\left(  y\right)  }{\left[  \left(
D_{\mathbf{v}}\Phi\right)  \left(  x\right)  \cdot\Phi\left(  y\right)
\right]  ^{3}}+3\frac{\varphi_{I}^{\eta}\left(  x\right)  \left[  \left(
D_{\mathbf{v}}^{2}\Phi\right)  \left(  x\right)  \cdot\Phi\left(  y\right)
\right]  ^{2}}{\left[  \left(  D_{\mathbf{v}}\Phi\right)  \left(  x\right)
\cdot\Phi\left(  y\right)  \right]  ^{4}},
\end{align*}
which gives,%
\begin{align*}
\frac{1}{\lambda^{2}}\left\vert \left(  D_{\mathbf{v}}^{x}\frac{1}{\left(
D_{\mathbf{v}}\Phi\right)  \left(  x\right)  \cdot\Phi\left(  y\right)
}\right)  ^{2}\varphi_{I}^{\eta}\left(  x\right)  \right\vert  & \lesssim
\frac{1}{\lambda^{2}}\left(  \frac{\left(  \frac{1}{\eta\ell\left(  I\right)
}\right)  ^{2}\frac{1}{\sqrt{\left\vert I\right\vert }}}{\left\vert
x-y\right\vert ^{2}}+\frac{\left(  \frac{1}{\eta\ell\left(  I\right)
}\right)  \frac{1}{\sqrt{\left\vert I\right\vert }}}{\left\vert x-y\right\vert
^{3}}+\frac{\frac{1}{\sqrt{\left\vert I\right\vert }}\left\vert x-y\right\vert
}{\left\vert x-y\right\vert ^{3}}+\frac{\frac{1}{\sqrt{\left\vert I\right\vert
}}}{\left\vert x-y\right\vert ^{4}}\right) \\
& \lesssim\frac{1}{\lambda^{2}}\left(  \frac{1}{2^{2m}\ell\left(  I\right)
^{4}}+\frac{1}{2^{3m}\ell\left(  I\right)  ^{4}}+\frac{1}{2^{4m}\ell\left(
I\right)  ^{4}}\right)  \frac{1}{\sqrt{\left\vert I\right\vert }}\\
& \lesssim\frac{1}{\lambda^{2}}\frac{1}{2^{2m}\ell\left(  I\right)  ^{4}}%
\frac{1}{\sqrt{\left\vert I\right\vert }}=2^{-2m}\left(  \frac{1}%
{\operatorname*{dist}\left(  0,J\right)  \ell\left(  I\right)  ^{2}}\right)
^{2}\frac{1}{\sqrt{\left\vert I\right\vert }},
\end{align*}
which is the case $N=2$ of (\ref{inductive conc}). The general case is similar.

The estimate (\ref{inductive conc}) leads to the inner product estimate,%
\begin{align}
& \ \ \ \ \ \ \ \ \ \ \ \ \ \ \ \left\vert \left\langle Th_{I;\kappa
}^{n-1,\eta},h_{J;\kappa}^{n,\eta}\right\rangle \right\vert
\label{tan int est}\\
& \leq\int_{\mathbb{R}}\int_{\mathbb{R}^{n-1}}\int_{\mathbb{R}^{n-1}}\frac
{1}{\lambda^{N}}\left\vert \left\{  \left(  D_{\mathbf{v}}^{x}\frac{1}{\left(
D_{\mathbf{v}}\Phi\right)  \left(  x\right)  \cdot\Phi\left(  y\right)
}\right)  ^{N}\right\}  \varphi_{I}^{\eta}\left(  x\right)  \right\vert
\left\vert \widehat{\psi}_{J}^{\eta}\left(  y,\lambda\right)  \right\vert
dxdyd\lambda\nonumber\\
& \leq\int_{\mathbb{R}}\int_{\mathbb{R}^{n-1}}\int_{\mathbb{R}^{n-1}}%
2^{-Nm}\left(  \frac{1}{\operatorname*{dist}\left(  0,J\right)  \ell\left(
I\right)  ^{2}}\right)  ^{N}\frac{1}{\sqrt{\left\vert I\right\vert }%
}\left\vert \widehat{\psi}_{J}^{\eta}\left(  y,\lambda\right)  \right\vert
dxdyd\lambda\nonumber\\
& \leq2^{-Nm}\left(  \frac{1}{\operatorname*{dist}\left(  0,J\right)
\ell\left(  I\right)  ^{2}}\right)  ^{N}\frac{1}{\sqrt{\left\vert I\right\vert
}}\left\vert I\right\vert \left\Vert \widehat{\psi}_{J}^{\eta}\left(
y,\lambda\right)  \right\Vert _{L^{1}}\approx2^{-N\left(  m+d\right)  }%
\sqrt{\left\vert I\right\vert \left\vert J\right\vert },\nonumber
\end{align}
since $\operatorname*{dist}\left(  0,J\right)  \ell\left(  I\right)
^{2}\approx2^{d} $ for $\left(  I,J\right)  \in\mathcal{P}_{m}^{0,d}$,
$d\geq0$.

\section{Interpolation estimates\label{Sub interp}}

Here we describe the decay principle needed to handle sums of resonant inner
products by probability. In fact the probabilistic estimates here rely only on
the \emph{transversality} induced by\ the curvature of the sphere, and not on
stationary phase estimates. Throughout this subsection we will use the
familiar notation $\widehat{\varphi}$ for the Fourier transform of $\varphi$,
and we will use the parameter $s\in\mathbb{N}$ to pigeonhole the side length
$2^{-s}$ of a cube $I\in\mathcal{G}$. Let
\[
\mathsf{Q}_{U}^{s}\equiv\sum_{I\in\mathcal{G}_{s}\left[  U\right]
}\bigtriangleup_{I;\kappa}^{n-1},\ \ \ \ \ \text{where }\mathcal{G}_{s}\left[
U\right]  =\left\{  I\in\mathcal{G}:I\subset U\text{ and }\ell\left(
I\right)  =2^{-s}\right\}  ,
\]
be the Alpert projection onto $\mathcal{G}_{s}\left[  U\right]  $, i.e.
$\bigtriangleup_{I;\kappa}$ and $\bigtriangleup_{I;\kappa}^{\eta}$ are
restricted to dyadic subcubes $I$ of $U$ at depth $s$ in the grid
$\mathcal{G}$. Then we have%
\begin{align*}
\left(  \mathsf{Q}_{U}^{s}\right)  ^{\spadesuit}f  & =S_{\kappa,\eta
}\mathsf{Q}_{U}^{s}\left(  S_{\kappa,\eta}\right)  ^{-1}f=S_{\kappa,\eta}%
\sum_{I\in\mathcal{G}_{s}\left[  U\right]  }\left\langle \left(
S_{\kappa,\eta}\right)  ^{-1}f,h_{I;\kappa}^{n-1}\right\rangle h_{I;\kappa
}^{n-1}\\
& =\sum_{I\in\mathcal{G}_{s}\left[  U\right]  }\left\langle \left(
S_{\kappa,\eta}\right)  ^{-1}f,h_{I;\kappa}^{n-1}\right\rangle h_{I;\kappa
}^{n-1,\eta}=\sum_{I\in\mathcal{G}_{s}\left[  U\right]  }\bigtriangleup
_{I;\kappa}^{n-1,\eta}f.
\end{align*}

Let $\varphi\in C^{\infty}\left(  \mathbb{R}^{n}\right)  $ be a smooth
nonnegative function satisfying
\begin{equation}
\varphi\left(  \xi\right)  =\left\{
\begin{array}
[c]{ccc}%
1 & \ \text{if } & \xi\in B_{\mathbb{R}^{n}}\left(  0,1\right)  \\
0 & \ \text{if } & \xi\notin B_{\mathbb{R}^{n}}\left(  0,2\right)
\end{array}
\right.  ,\label{def phi}%
\end{equation}
and set
\[
\varphi_{t}\left(  \xi\right)  =2^{-tn}\varphi\left(  2^{-t}\xi\right)
,\ \ \ \ \ \text{for }t\geq0,
\]
where we note that the scaling is with respect to $2^{-t}$ instead of the
usual scaling $t$. Recall that $\Phi\left(  x\right)  =\left(  x,\sqrt
{1-\left\vert x\right\vert ^{2}}\right)  \in\mathbb{S}^{n-1}$ for $x\in S$.
Define the spherical measure $f_{\Phi}^{I}$ by%
\begin{align}
f_{\Phi}^{I}\left(  z\right)    & \equiv\Phi_{\ast}\bigtriangleup_{I;\kappa
}^{n-1,\eta}f=\bigtriangleup_{I;\kappa}^{n-1,\eta}f\left(  \Phi^{-1}\left(
z\right)  \right)  \det\partial\Phi^{-1}\left(  z\right)  d\sigma_{n-1}\left(
z\right)  \label{def f_Phi^I}\\
& =\left\langle \left(  S_{\kappa,\eta}\right)  ^{-1}f,h_{I;\kappa}%
^{n-1}\right\rangle h_{I;\kappa}^{n-1,\eta}\left(  \Phi^{-1}\left(  z\right)
\right)  \det\partial\Phi^{-1}\left(  z\right)  d\sigma_{n-1}\left(  z\right)
,\nonumber
\end{align}
and set
\[
f_{\Phi}^{s}\left(  z\right)  \equiv\sum_{I\in\mathcal{G}_{s}\left[  U\right]
}f_{\Phi}^{I}\left(  z\right)  =\Phi_{\ast}\sum_{I\in\mathcal{G}_{s}\left[
U\right]  }f^{I}\left(  z\right)  =\Phi_{\ast}\left(  \mathsf{Q}_{U}%
^{s}\right)  ^{\spadesuit}f\ .
\]
Note that the spherical measure $f_{\Phi}^{I}$ has mass roughly $\left\vert
\left\langle \left(  S_{\kappa,\eta}\right)  ^{-1}f,h_{I;\kappa}%
^{n-1}\right\rangle \right\vert 2^{-s\left(  n-1\right)  }$ for $I\in
\mathcal{G}_{s}\left[  U\right]  $ and is supported in $\mathbb{S}^{n-1}$.

Here is the model result of this subsection, where we recall that%
\[
\left(  \mathcal{A}_{\mathbf{a}}\mathsf{Q}_{U}^{s}\right)  ^{\spadesuit
}f=S_{\kappa,\eta}\mathcal{A}_{\mathbf{a}}\mathsf{Q}_{U}^{s}\left(
S_{\kappa,\eta}\right)  ^{-1}f=\sum_{I\in\mathcal{G}_{s}\left[  U\right]
}a_{I}\bigtriangleup_{I;\kappa}^{n-1,\eta}f.
\]

\begin{proposition}
\label{prop interp}Let $n\geq2$. Then for $p>\frac{2n}{n-1}$, there is
$\varepsilon_{p,n}>0$ such that for every $s\in\mathbb{N}$, and every $f\in
L^{p}\left(  \mathbb{R}^{n-1}\right)  $, we have,
\begin{equation}
\mathbb{E}_{2^{\mathcal{G}}}^{\mu}\left\Vert T\left[  \left(  \mathcal{A}%
_{\mathbf{a}}\mathsf{Q}_{U}^{s}\right)  ^{\spadesuit}f\right]  \right\Vert
_{L^{p}\left(  B_{n}\left(  0,2^{s}\right)  \right)  }\lesssim2^{-s\varepsilon
_{p,n}}\left\Vert f\right\Vert _{L^{p}\left(  \mathbb{R}^{n-1}\right)
},\label{eps}%
\end{equation}
where the implied constant depends on $n$, $p$ and $U$, but is independent of
$s\in\mathbb{N}$.
\end{proposition}

This estimate is a building block toward controlling the resonant portion of
the disjoint form, which however requires a much larger localization to a ball
of radius $2^{2s}$.

We prove Proposition \ref{prop interp}\ in three steps, beginning with
Plancherel's theorem in the form of a lemma that allows improvement of the
traditional $L^{2}$ and $L^{4}$ curvature estimates in the presence of
probability and Alpert wavelets. Then we use the scaled Marcinkiewicz
interpolation theorem to obtain the desired conclusion if certain $L^{2}$ and
$L^{4}$ estimates hold. Finally we establish these $L^{2}$ and $L^{4}$
estimates to complete the proof of Proposition \ref{prop interp}.

Recall that%
\begin{equation}
f_{\Phi}^{s}\equiv\Phi_{\ast}\left(  \mathsf{Q}_{U}^{s}\right)  ^{\spadesuit
}f\text{ and }f_{\Phi}^{I}\equiv\left(  \bigtriangleup_{I;\kappa}^{n-1,\eta
}f\right)  _{\Phi}\ .\label{def Phi and I}%
\end{equation}
For $s\leq r\leq2s$, define a fattened $n$-dimensional measure $f_{\Phi,r}%
^{s}$ by%
\begin{equation}
f_{\Phi,r}^{s}\equiv f_{\Phi}^{s}\ast\varphi_{r}=\sum_{I\in\mathcal{G}%
_{s}\left[  U\right]  }f_{\Phi}^{I}\ast\varphi_{r}=\sum_{I\in\mathcal{G}%
_{s}\left[  U\right]  }f_{\Phi,r}^{I},\ \ \ \ \ \text{where }f_{\Phi,r}%
^{I}\equiv f_{\Phi}^{I}\ast\varphi_{r}\ .\label{def maj}%
\end{equation}
We will use the upper majorant properties of $L^{2}$ and $L^{4}$ (we use this
latter phrase loosely to denote that convolution is a positive operation) to
obtain Lemma \ref{fattened red} below in order to significantly reduce the
norm $\left\Vert T\left(  \mathsf{Q}_{U}^{s}\right)  ^{\spadesuit}f\right\Vert
_{L^{p}\left(  \left\vert \widehat{\varphi_{s}}\right\vert ^{4}\lambda
_{n}\right)  }^{p}$ when averaged over involutive Alpert multipliers of $f$.

\begin{description}
\item[Note] \label{fat}The $n$-dimensional measure $f_{\Phi,r}^{I}=f_{\Phi
}^{I}\ast\varphi_{r}$ is supported in the fattened spherical cap
\[
\mathcal{I}_{2^{-r}}\equiv\left\{  z\in\mathbb{R}^{n}:\operatorname*{dist}%
\left(  z,\operatorname*{Supp}f_{\Phi}^{I}\right)  \lesssim2^{-r}\right\}  ,
\]
which for $r=2s$ is roughly a rectangular block of side lengths $2^{-2s}%
\times2^{-s}$ oriented perpendicular to a normal of the spherical cap
$\operatorname*{Supp}f_{\Phi}^{I}$. We have the estimate,%
\begin{equation}
\left\vert f_{\Phi,r}^{I}\left(  z\right)  \right\vert \lesssim\left\vert
\left\langle S_{\kappa,\eta}^{-1}f,h_{I;\kappa}^{n-1}\right\rangle \right\vert
2^{r}2^{s\frac{n-1}{2}}\mathbf{1}_{\mathcal{I}_{2^{-r}}}\left(  z\right)
.\label{A app}%
\end{equation}

\end{description}

\begin{lemma}
\label{fattened red}Suppose $s\in\mathbb{N}$, and $\varphi$ is as in
(\ref{def phi}) above, so that $\left\vert \widehat{\varphi_{s}}\right\vert
\approx1$ on $B\left(  0,C2^{s}\right)  $. Then for $s\leq r\leq2s$, we have%
\begin{align*}
\int_{\mathbb{R}^{n}}\left\vert \widehat{f_{\Phi}^{s}}\left(  \xi\right)
\right\vert ^{2}\left\vert \widehat{\varphi_{2s}}\left(  \xi\right)
\right\vert ^{2}\left\vert \widehat{\varphi_{r}}\left(  \xi\right)
\right\vert ^{2}d\xi & =\int_{\mathbb{R}^{n}}\left\vert \widehat{f_{\Phi
,2s}^{s}}\left(  \xi\right)  \right\vert ^{2}\left\vert \widehat{\varphi_{r}%
}\left(  \xi\right)  \right\vert ^{2}d\xi,\\
\int_{\mathbb{R}^{n}}\left\vert \widehat{f_{\Phi}^{s}}\left(  \xi\right)
\right\vert ^{4}\left\vert \widehat{\varphi_{r}}\left(  \xi\right)
\right\vert ^{4}d\xi & =\int_{\mathbb{R}^{n}}\left\vert \widehat{f_{\Phi
,r}^{s}}\left(  \xi\right)  \right\vert ^{4}d\xi,
\end{align*}

\end{lemma}

\begin{proof}
From Plancherel's formula, we have%
\[
\int_{\mathbb{R}^{n}}\left\vert \widehat{f_{\Phi}^{s}}\left(  \xi\right)
\right\vert ^{2}\left\vert \widehat{\varphi_{2s}}\left(  \xi\right)
\right\vert ^{2}\left\vert \widehat{\varphi_{r}}\left(  \xi\right)
\right\vert ^{4}d\xi=\int_{\mathbb{R}^{n}}\left\vert \widehat{f_{\Phi}^{s}%
\ast\varphi_{2s}}\left(  \xi\right)  \right\vert ^{2}\left\vert \widehat
{\varphi_{r}}\left(  \xi\right)  \right\vert ^{2}d\xi=\int_{\mathbb{R}^{n}%
}\left\vert \widehat{f_{\Phi,2s}^{s}}\left(  \xi\right)  \right\vert
^{2}\left\vert \widehat{\varphi_{r}}\left(  \xi\right)  \right\vert ^{2}d\xi,
\]

and using Plancherel's formula again with the convolution identity
$\widehat{F\ast G}=\widehat{F}\widehat{G}$, gives%
\begin{align*}
& \int_{\mathbb{R}^{n}}\left\vert \widehat{f_{\Phi}^{s}}\left(  \xi\right)
\right\vert ^{4}\left\vert \widehat{\varphi_{r}}\left(  \xi\right)
\right\vert ^{4}d\xi=\int_{\mathbb{R}^{n}}\left\vert \widehat{f_{\Phi}^{s}\ast
f_{\Phi}^{s}\ast\varphi_{r}\ast\varphi_{r}}\left(  \xi\right)  \right\vert
^{2}d\xi\\
& =\int_{\mathbb{R}^{n}}\overline{\widehat{f_{\Phi}^{s}\ast f_{\Phi}^{s}%
\ast\varphi_{r}\ast\varphi_{r}}\left(  \xi\right)  }\left(  \xi\right)
\ \widehat{f_{\Phi}^{s}\ast f_{\Phi}^{s}\ast\varphi_{r}\ast\varphi_{r}}\left(
\xi\right)  d\xi\\
& =\int_{S}\overline{f_{\Phi,r}^{s}\ast f_{\Phi,r}^{s}\left(  x\right)
}\ f_{\Phi,r}^{s}\ast f_{\Phi,r}^{s}\left(  x\right)  \ dx=\int_{\mathbb{R}%
^{n}}\left\vert \widehat{f_{\Phi,r}^{s}}\left(  \xi\right)  \right\vert
^{4}d\xi.
\end{align*}

\end{proof}

Here is the lemma that obtains the required $L^{p}$ bounds from improved
$L^{2}$ and $L^{4}$ bounds.

\begin{lemma}
\label{interp red}Let $n\geq2$ and $s\in\mathbb{N}$. Assume that%
\begin{align}
\left\Vert \widehat{f_{\Phi,2s}^{s}}\right\Vert _{L^{2}\left(  \left\vert
\widehat{\varphi_{s}}\right\vert ^{2}\lambda_{n}\right)  }  & \lesssim
2^{\frac{s}{2}}\left\Vert f\right\Vert _{L^{2}\left(  S\right)  }%
,\label{assumed}\\
\mathbb{E}_{2^{\mathcal{G}}}^{\mu}\left\Vert \widehat{\left[  \left(
\mathcal{A}_{\mathbf{a}}\mathsf{Q}_{U}^{s}\right)  ^{\spadesuit}\right]
_{\Phi,2s}}\right\Vert _{L^{4}\left(  \lambda_{n}\right)  }  & \lesssim
2^{-s\frac{n-2}{4}}\left\Vert f\right\Vert _{L^{4}\left(  S\right)
}.\nonumber
\end{align}
Then for $p>\frac{2n}{n-1}$, there is $\varepsilon_{p,n}>0$ such that%
\[
\mathbb{E}_{2^{\mathcal{G}}}^{\mu}\left\Vert \widehat{\left[  \left(
\mathcal{A}_{\mathbf{a}}\mathsf{Q}_{U}^{s}\right)  ^{\spadesuit}\right]
_{\Phi,2s}}\right\Vert _{L^{p}\left(  \left\vert \widehat{\varphi_{s}%
}\right\vert ^{2}\left\vert \widehat{\varphi_{2s}}\right\vert ^{4}\lambda
_{n}\right)  }\lesssim2^{-s\varepsilon_{p,n}}\left\Vert f\right\Vert
_{L^{p}\left(  \mathbb{R}^{n-1}\right)  },
\]
holds for every $s\in\mathbb{N}$ with implied constant independent of $\Psi
$\ and $s$.
\end{lemma}

Note in particular that Lemma \ref{interp red} implies (\ref{eps}) in
Proposition \ref{prop interp} .

\begin{proof}
Combining Lemma \ref{fattened red} with the assumptions (\ref{assumed}) gives
the pair of inequalities,%
\begin{align*}
\left\Vert T\left(  \mathsf{Q}_{U}^{s}\right)  ^{\spadesuit}f\right\Vert
_{L^{2}\left(  \left\vert \widehat{\varphi_{s}}\right\vert ^{2}\left\vert
\widehat{\varphi_{2s}}\right\vert ^{4}\lambda_{n}\right)  }  & \lesssim
2^{\frac{s}{2}}\left\Vert f\right\Vert _{L^{2}\left(  S\right)  },\\
\left(  \mathbb{E}_{2^{\mathcal{G}}}^{\mu}\left\Vert T\left(  \mathcal{A}%
_{\mathbf{a}}\mathsf{Q}_{U}^{s}\right)  ^{\spadesuit}f\right\Vert
_{L^{4}\left(  \left\vert \widehat{\varphi_{s}}\right\vert ^{2}\left\vert
\widehat{\varphi_{2s}}\right\vert ^{4}\lambda_{n}\right)  }^{4}\right)
^{\frac{1}{4}}  & \lesssim2^{-s\frac{n-2}{4}}\left\Vert f\right\Vert
_{L^{4}\left(  S\right)  }.
\end{align*}
Indeed,%
\begin{align*}
& \left\Vert T\left(  \mathsf{Q}_{U}^{s}\right)  ^{\spadesuit}f\right\Vert
_{L^{2}\left(  \left\vert \widehat{\varphi_{s}}\right\vert ^{2}\left\vert
\widehat{\varphi_{2s}}\right\vert ^{4}\lambda_{n}\right)  }^{2}\leq\left\Vert
T\left(  \mathsf{Q}_{U}^{s}\right)  ^{\spadesuit}f\right\Vert _{L^{2}\left(
\left\vert \widehat{\varphi_{s}}\right\vert ^{2}\left\vert \widehat
{\varphi_{2s}}\right\vert ^{2}\lambda_{n}\right)  }^{2}\\
& =\int_{\mathbb{R}^{n}}\left\vert T\left(  \mathsf{Q}_{U}^{s}\right)
^{\spadesuit}f\left(  \xi\right)  \right\vert ^{2}\left\vert \widehat
{\varphi_{s}}\left(  \xi\right)  \right\vert ^{2}\left\vert \widehat
{\varphi_{2s}}\left(  \xi\right)  \right\vert ^{2}d\xi\\
& =\int_{\mathbb{R}^{n}}\left\vert \widehat{\left[  \left(  \mathsf{Q}_{U}%
^{s}\right)  ^{\spadesuit}f\right]  _{\Phi}}\left(  \xi\right)  \right\vert
^{2}\left\vert \widehat{\varphi_{2s}}\left(  \xi\right)  \right\vert
^{2}\left\vert \widehat{\varphi_{s}}\left(  \xi\right)  \right\vert ^{2}%
d\xi=\int_{\mathbb{R}^{n}}\left\vert \widehat{\left[  \left(  \mathsf{Q}%
_{U}^{s}\right)  ^{\spadesuit}f\right]  _{\Phi,2s}}\left(  \xi\right)
\right\vert ^{2}\left\vert \widehat{\varphi_{s}}\left(  \xi\right)
\right\vert ^{2}d\xi\\
& =\left\Vert \widehat{\left[  \left(  \mathsf{Q}_{U}^{s}\right)
^{\spadesuit}f\right]  _{\Phi,2s}}\right\Vert _{L^{2}\left(  \left\vert
\widehat{\varphi_{s}}\right\vert ^{2}\lambda_{n}\right)  }^{2}\lesssim
2^{s}\left\Vert \left(  \mathsf{Q}_{U}^{s}\right)  ^{\spadesuit}f\right\Vert
_{L^{2}\left(  S\right)  }^{2}\lesssim2^{s}\left\Vert f\right\Vert
_{L^{2}\left(  S\right)  }^{2},
\end{align*}
and%
\begin{align*}
& \mathbb{E}_{2^{\mathcal{G}}}^{\mu}\left\Vert T\left(  \mathcal{A}%
_{\mathbf{a}}\mathsf{Q}_{U}^{s}\right)  ^{\spadesuit}f\right\Vert
_{L^{4}\left(  \left\vert \widehat{\varphi_{s}}\right\vert ^{2}\left\vert
\widehat{\varphi_{2s}}\right\vert ^{4}\lambda_{n}\right)  }^{4}\leq
\mathbb{E}_{2^{\mathcal{G}}}^{\mu}\left\Vert T\left(  \mathcal{A}_{\mathbf{a}%
}\mathsf{Q}_{U}^{s}\right)  ^{\spadesuit}f\right\Vert _{L^{4}\left(
\left\vert \widehat{\varphi_{s}}\right\vert ^{2}\left\vert \widehat
{\varphi_{2s}}\right\vert ^{4}\lambda_{n}\right)  }^{4}\\
& \leq\mathbb{E}_{2^{\mathcal{G}}}^{\mu}\int_{\mathbb{R}^{n}}\left\vert
\widehat{\left(  \left(  \mathcal{A}_{\mathbf{a}}\mathsf{Q}_{U}^{s}\right)
^{\spadesuit}f\right)  _{\Phi}}\left(  \xi\right)  \right\vert ^{4}\left\vert
\widehat{\varphi_{2s}}\left(  \xi\right)  \right\vert ^{4}d\xi=\mathbb{E}%
_{2^{\mathcal{G}}}^{\mu}\int_{\mathbb{R}^{n}}\left\vert \widehat{\sum
_{I\in\mathcal{G}_{s}\left[  U\right]  }a_{I}\left(  \bigtriangleup_{I;\kappa
}^{n-1,\eta}f\right)  _{\Phi}}\left(  \xi\right)  \right\vert ^{4}\left\vert
\widehat{\varphi_{2s}}\left(  \xi\right)  \right\vert ^{4}d\xi\\
& =\mathbb{E}_{2^{\mathcal{G}}}^{\mu}\int_{\mathbb{R}^{n}}\left\vert
\widehat{\left(  \sum_{I\in\mathcal{G}_{s}\left[  U\right]  }a_{I}%
\bigtriangleup_{I;\kappa}^{n-1,\eta}f\right)  _{\Phi,2s}}\left(  \xi\right)
\right\vert ^{4}d\xi=\mathbb{E}_{2^{\mathcal{G}_{s}\left[  U\right]  }}^{\mu
}\int_{\mathbb{R}^{n}}\left\vert \widehat{\left(  \sum_{I\in\mathcal{G}%
_{s}\left[  U\right]  }a_{I}\left(  \bigtriangleup_{I;\kappa}^{n-1,\eta
}f\right)  \right)  _{\Phi,2s}}\left(  \xi\right)  \right\vert ^{4}d\xi\\
& =\mathbb{E}_{2^{\mathcal{G}_{s}\left[  U\right]  }}^{\mu_{m}}\int
_{\mathbb{R}^{n}}\left\vert \widehat{\sum_{I\in\mathcal{G}_{s}\left[
U\right]  }a_{I}\left(  \bigtriangleup_{I;\kappa}^{n-1,\eta}f\right)
_{\Phi,2s}}\left(  \xi\right)  \right\vert ^{4}d\xi=\mathbb{E}_{2^{\mathcal{G}%
_{s}\left[  U\right]  }}^{\mu_{m}}\int_{\mathbb{R}^{n}}\left\vert \left[
\left(  \mathcal{A}_{\mathbf{a}}\mathsf{Q}_{U}^{s}\right)  ^{\spadesuit
}f\right]  _{\Phi,2s}\left(  \xi\right)  \right\vert ^{44}d\xi\\
& =\mathbb{E}_{2^{\mathcal{G}_{s}\left[  U\right]  }}^{\mu_{m}}\left\Vert
\left[  \left(  \mathcal{A}_{\mathbf{a}}\mathsf{Q}_{U}^{s}\right)
^{\spadesuit}f\right]  _{\Phi,2s}\right\Vert _{L^{4}\left(  \lambda
_{n}\right)  }^{4}\lesssim2^{-s\left(  n-2\right)  }\left\Vert \left(
\mathsf{Q}_{U}^{s}\right)  ^{\spadesuit}f\right\Vert _{L^{4}\left(
\lambda_{n-1}\right)  }^{4}\lesssim2^{-s\left(  n-2\right)  }\left\Vert
f\right\Vert _{L^{4}\left(  \lambda_{n-1}\right)  }^{4}\ ,
\end{align*}
since all three operators in the factorization $\left(  \mathsf{Q}_{U}%
^{s}\right)  ^{\spadesuit}=S_{\kappa,\eta}\mathsf{Q}_{U}^{s}\left(
S_{\kappa,\eta}\right)  ^{-1}$ are \ bounded on $L^{4}\left(  \lambda
_{n-1}\right)  $.

These $L^{2}$ and $L^{4}$ estimates can be recast in terms of Fourier square
functions by Khintchine's inequalities, and we will now show that the scaled
Marcinkiewicz interpolation theorem applies to obtain (\ref{eps}).

Indeed, by Khinchine's inequalities, the above bounds are equivalent to%
\begin{align*}
\left\Vert \mathcal{S}_{T,s}f\right\Vert _{L^{2}\left(  \lambda_{n}\right)  }
& \lesssim2^{\frac{s}{2}}\left\Vert f\right\Vert _{L^{2}\left(  \sigma
_{n-1}\right)  }\ ,\\
\left\Vert \mathcal{S}_{T,s}f\right\Vert _{L^{4}\left(  \mathbf{1}_{B\left(
0,2^{s}\right)  }\lambda_{n}\right)  }  & \lesssim2^{-s\frac{n-2}{4}%
}\left\Vert f\right\Vert _{L^{4}\left(  \sigma_{n-1}\right)  }\ ,
\end{align*}
where $\mathcal{S}_{T,s}$ is the Fourier square function defined by%
\[
\mathcal{S}_{T,s}f\equiv\left(  \sum_{I\in\mathcal{G}_{s}\left[  U\right]
}\left\vert T_{S}\bigtriangleup_{I;\kappa}^{n-1,\eta}f\right\vert ^{2}\right)
^{\frac{1}{2}}.
\]
The sublinear operator $\mathcal{S}_{T,s}$ is actually \emph{linearizable}
since it is the supremum of the linear operators $L_{\mathbf{u}}f\equiv
T_{S}\sum_{I\in\mathcal{G}_{s}\left[  U\right]  }u_{I}\bigtriangleup
_{I;\kappa}^{n-1,\eta}f$ taken over all vectors $\mathbf{u}=\left(
u_{I}\right)  _{I\in\mathcal{G}_{s}\left[  S\right]  }$ with $\left\vert
\mathbf{u}\right\vert _{\ell^{2}}=1$. Then by the scaled Marcinkiewicz theorem
applied to $\mathcal{S}_{T,s}$, see e.g. \cite[Remark 29]{Tao2}, we have
\[
\left\Vert \mathcal{S}_{T,s}f\right\Vert _{L^{p}}\leq C_{n,p}2^{\frac{s}%
{2}\left(  1-\theta\right)  }2^{-s\frac{n-2}{4}\theta}=C_{n,p}2^{\frac{s}%
{2}\left(  1-\left(  2-\frac{4}{p}\right)  \right)  }2^{-s\frac{n-2}{4}\left(
2-\frac{4}{p}\right)  }=C_{n,p}2^{-s\varepsilon_{n,p}},
\]
where%
\[
\varepsilon_{n,p}=\frac{n-2}{4}\left(  2-\frac{4}{p}\right)  -\frac{1}%
{2}\left(  1-\left(  2-\frac{4}{p}\right)  \right)  =\frac{n-1}{2p}\left(
p-\frac{2n}{n-1}\right)  >0,
\]
for $p>\frac{2n}{n-1}$. Another application of Khintchine's inquality converts
this bound back to the expectation bound,%
\[
\mathbb{E}_{2^{\mathcal{G}_{s}\left[  U\right]  }}^{\mu_{s}}\left\Vert
T\left(  \mathcal{A}_{\mathbf{a}}\mathsf{Q}_{U}^{s}\right)  ^{\spadesuit
}f\right\Vert _{L^{p}\left(  B_{n}\left(  0,2^{s}\right)  \right)  }\lesssim
C_{n,p}2^{-s\varepsilon_{n,p}}\left\Vert f\right\Vert _{L^{p}\left(
\mathbb{R}^{n-1}\right)  },
\]

which completes the proof of Lemma \ref{interp red}.
\end{proof}

It remains to establish the improved bounds in (\ref{assumed}), which we
accomplish in the next two subsections. Once this is done, the proof of
Proposition \ref{prop interp} is complete.

\subsection{The $L^{2}$ estimate\label{subsubsection L2}}

We first compute the norm of $\Lambda_{\mathsf{Q}_{U}^{s}}^{2s}$ from
$L^{2}\left(  \lambda_{n-1}\right)  $ to $L^{2}\left(  \left\vert
\widehat{\varphi_{s}}\right\vert ^{2}\lambda_{n}\right)  $, where
\[
\Lambda_{\mathsf{Q}_{U}^{s}}^{2s}f\equiv\widehat{\left(  \left(
\mathsf{Q}_{U}^{s}\right)  ^{\spadesuit}f\right)  _{\Phi,2s}}.
\]
We write $f_{U}^{s}\equiv\left(  \mathsf{Q}_{U}^{s}\right)  ^{\spadesuit}f$
for convenience in notation so that we have,%
\begin{align*}
& \left\Vert \Lambda_{\mathsf{Q}_{U}^{s}}^{2s}f\right\Vert _{L^{2}\left(
\left\vert \widehat{\varphi_{s}}\right\vert ^{2}\lambda_{n}\right)  }^{2}%
=\int_{\mathbb{R}^{n}}\left\vert \widehat{\left(  f_{U}^{s}\right)  _{\Phi
,2s}}\left(  \xi\right)  \right\vert ^{2}\left\vert \widehat{\varphi_{s}%
}\left(  \xi\right)  \right\vert ^{2}d\xi\\
& =\int_{\mathbb{R}^{n}}\overline{\widehat{\left(  f_{U}^{s}\right)
_{\Phi,2s}\ast\varphi_{s}}\left(  \xi\right)  }\ \widehat{\left(  f_{U}%
^{s}\right)  _{\Phi,2s}\ast\varphi_{s}}\left(  \xi\right)  d\xi\\
& =\sum_{I,K\in\mathcal{G}_{s}\left[  U\right]  }\int_{\mathbb{R}^{n}%
}\overline{\widehat{f_{\Phi,2s}^{I}\ast\varphi_{s}}\left(  \xi\right)
}\ \widehat{f_{\Phi,2s}^{K}\ast\varphi_{s}}\left(  \xi\right)  d\xi
=\sum_{I,K\in\mathcal{G}_{s}\left[  U\right]  }\int_{S}\overline{f_{\Phi
,2s}^{I}\ast\varphi_{s}\left(  x\right)  }\ \left(  f_{\Phi,2s}^{K}\ast
\varphi_{s}\right)  \left(  x\right)  dx.
\end{align*}
Noting that the supports of $f_{\Phi,2s}^{I}\ast\varphi_{s}$ and $f_{\Phi
,2s}^{K}\ast\varphi_{s}$ are essentially disjoint unless $I\sim K$, and
recalling the definition of $\mathcal{I}_{2^{-s}}$ in Note \ref{fat}, we can
use (\ref{A app}),%
\[
\left\vert f_{\Phi,r}^{I}\left(  z\right)  \right\vert \lesssim\left\vert
\left\langle S_{\kappa,\eta}^{-1}f,h_{I;\kappa}^{n-1}\right\rangle \right\vert
2^{r}2^{s\frac{n-1}{2}}\mathbf{1}_{\mathcal{I}_{2^{-r}}}\left(  z\right)  ,
\]
with $r=s$ to estimate the above expression by%
\begin{align}
\left\Vert \Lambda_{\mathsf{Q}_{U}^{s}}^{2s}f\right\Vert _{L^{2}\left(
\left\vert \widehat{\varphi_{s}}\right\vert ^{2}\lambda_{n}\right)  }^{2}  &
\lesssim\sum_{I\in\mathcal{G}_{s}\left[  U\right]  }\int_{\mathbb{R}^{n}%
}\left\vert f_{\Phi,2s}^{I}\ast\varphi_{s}\left(  \xi\right)  \right\vert
^{2}d\xi\label{est above exp}\\
& \lesssim\sum_{I\in\mathcal{G}_{s}\left[  U\right]  }\int_{\mathbb{R}^{n}%
}\left\vert \left\vert \left\langle S_{\kappa,\eta}^{-1}f,h_{I;\kappa}%
^{n-1}\right\rangle \right\vert 2^{s}2^{s\frac{n-1}{2}}\mathbf{1}%
_{\mathcal{I}_{2^{-s}}}\ast\varphi_{s}\left(  \xi\right)  \right\vert ^{2}%
d\xi\nonumber\\
& \lesssim\sum_{I\in\mathcal{G}_{s}\left[  U\right]  }\left\vert \left\langle
S_{\kappa,\eta}^{-1}f,h_{I;\kappa}^{n-1}\right\rangle \right\vert ^{2}%
\int_{\mathbb{R}^{n}}\left\vert 2^{s}2^{s\frac{n-1}{2}}\mathbf{1}%
_{\mathcal{I}_{2^{-s}}}\left(  \xi\right)  \right\vert ^{2}d\xi.\nonumber
\end{align}
Then we continue with%
\begin{align*}
\left\Vert \Lambda_{\mathsf{Q}_{U}^{s}}^{2s}f\right\Vert _{L^{2}\left(
\left\vert \widehat{\varphi_{s}}\right\vert ^{4}\lambda_{n}\right)  }^{2}  &
\lesssim\sum_{I\in\mathcal{G}_{s}\left[  U\right]  }\left\vert \left\langle
S_{\kappa,\eta}^{-1}f,h_{I;\kappa}^{n-1}\right\rangle \right\vert ^{2}\left(
2^{s}2^{s\frac{n-1}{2}}\right)  ^{2}\left\vert \mathcal{I}_{2^{-s}}\right\vert
\\
& =2^{s}\sum_{I\in\mathcal{G}_{s}\left[  U\right]  }\left\vert \left\langle
S_{\kappa,\eta}^{-1}f,h_{I;\kappa}^{n-1}\right\rangle \right\vert ^{2}%
\lesssim2^{s}\left\Vert S_{\kappa,\eta}^{-1}f\right\Vert _{L^{2}\left(
\mathbb{R}^{n-1}\right)  }^{2}\lesssim2^{s}\left\Vert f\right\Vert
_{L^{2}\left(  S\right)  }^{2}.
\end{align*}
This proves the first line in (\ref{assumed}).

\subsection{The probabilistic $L^{4}$ estimate\label{subsubsection L4}}

Now we turn to computing the norm of $\Lambda_{2s}$ from $L^{4}\left(
\lambda_{n-1}\right)  $ to $L^{4}\left(  \mathbb{R}^{n}\right)  $. We have
using $f_{U}^{s}\equiv\left(  \mathsf{Q}_{U}^{s}\right)  ^{\spadesuit}f$ that%
\begin{align*}
& \left\Vert f_{U}^{s}\right\Vert _{L^{4}\left(  \lambda_{n-1}\right)  }%
^{4}=\int_{\mathbb{R}^{n-1}}\left(  \sum_{I\in\mathcal{G}_{s}\left[  U\right]
}\left\langle \left(  S_{\kappa,\eta}\right)  ^{-1}f,h_{I;\kappa}%
^{n-1}\right\rangle h_{I;\kappa}^{n-1,\eta}\left(  x\right)  \right)  ^{4}dx\\
& \approx\int_{\mathbb{R}^{n-1}}\sum_{I\in\mathcal{G}_{s}\left[  U\right]
}\left(  \left\langle \left(  S_{\kappa,\eta}\right)  ^{-1}f,h_{I;\kappa
}^{n-1}\right\rangle h_{I;\kappa}^{n-1,\eta}\left(  x\right)  \right)
^{4}dx\\
& =\sum_{I\in\mathcal{G}_{s}\left[  U\right]  }\left\vert \left\langle \left(
S_{\kappa,\eta}\right)  ^{-1}f,h_{I;\kappa}^{n-1}\right\rangle \right\vert
^{4}\int_{\mathbb{R}^{n-1}}\left\vert h_{I;\kappa}^{n-1,\eta}\left(  x\right)
\right\vert ^{4}dx\\
& \approx\sum_{I\in\mathcal{G}_{s}\left[  U\right]  }\left\vert \left\langle
\left(  S_{\kappa,\eta}\right)  ^{-1}f,h_{I;\kappa}^{n-1}\right\rangle
\right\vert ^{4}\left(  \frac{1}{\sqrt{\left\vert I\right\vert }}\right)
^{4}\left\vert I\right\vert =\sum_{I\in\mathcal{G}_{s}\left[  U\right]
}\left\vert \left\langle \left(  S_{\kappa,\eta}\right)  ^{-1}f,h_{I;\kappa
}^{n-1}\right\rangle \right\vert ^{4}\frac{1}{\left\vert I\right\vert }\\
& =2^{s\left(  n-1\right)  }\sum_{I\in\mathcal{G}_{s}\left[  U\right]
}\left\vert \left\langle \left(  S_{\kappa,\eta}\right)  ^{-1}f,h_{I;\kappa
}^{n-1}\right\rangle \right\vert ^{4}=2^{s\left(  n-1\right)  }\left\vert
\breve{f}\right\vert _{\ell^{4}\left(  \mathcal{G}_{s}\left[  U\right]
\right)  }^{4}\ ,
\end{align*}
where $\breve{f}\equiv\left\{  \left\langle \left(  S_{\kappa,\eta}\right)
^{-1}f,h_{I;\kappa}^{n-1}\right\rangle \right\}  _{I\in\mathcal{G}_{s}\left[
S\right]  }$ is the sequence of Alpert coefficients of $\left(  S_{\kappa
,\eta}\right)  ^{-1}f$ restricted to $\mathcal{G}_{s}\left[  S\right]  $.
Recall that $\left\Vert \left(  S_{\kappa,\eta}\right)  ^{-1}f\right\Vert
_{L^{p}\left(  \mathbb{R}^{n-1}\right)  }\approx\left\Vert f\right\Vert
_{L^{p}\left(  \mathbb{R}^{n-1}\right)  }$ by Theorem \ref{reproducing}.

Next we calculate the $L^{4}\left(  \lambda_{n}\right)  $ norm of
$\Lambda_{\mathsf{Q}_{U}^{s}}^{2s}f\equiv\widehat{\left(  \left(
\mathsf{Q}_{U}^{s}\right)  ^{\spadesuit}f\right)  _{\Phi,2s}}=\widehat{\left(
f_{U}^{s}\right)  _{\Phi,2s}}$:%
\begin{align*}
& \left\Vert \Lambda_{\mathsf{Q}_{U}^{s}}^{2s}f\right\Vert _{L^{4}\left(
\lambda_{n}\right)  }^{4}=\int_{\mathbb{R}^{n}}\left\vert \widehat{\left(
f_{U}^{s}\right)  _{\Phi,2s}}\left(  \xi\right)  \right\vert ^{4}d\xi
=\int_{\mathbb{R}^{n}}\left\vert \sum_{I\in\mathcal{G}_{s}\left[  U\right]
}\widehat{f_{\Phi,2s}^{I}}\left(  \xi\right)  \right\vert ^{4}d\xi\\
& =\int_{\mathbb{R}^{n}}\left\vert \sum_{I,J\in\mathcal{G}_{s}\left[
U\right]  }\widehat{f_{\Phi,2s}^{I}}\left(  \xi\right)  \widehat{f_{\Phi
,2s}^{J}}\left(  \xi\right)  \right\vert ^{2}d\xi=\int_{\mathbb{R}^{n}%
}\left\vert \sum_{I,J\in\mathcal{G}_{s}\left[  U\right]  }\widehat{f_{\Phi
,2s}^{I}\ast f_{\Phi,2s}^{J}}\left(  \xi\right)  \right\vert ^{2}d\xi,
\end{align*}
by the Fourier convolution formula, and then by Plancherel's theorem,%
\[
\left\Vert \Lambda_{\mathsf{Q}_{U}^{s}}^{2s}f\right\Vert _{L^{4}\left(
\lambda_{n}\right)  }^{4}=\int_{\mathbb{R}^{n}}\left\vert \sum_{I,J\in
\mathcal{G}_{s}\left[  U\right]  }f_{\Phi,2s}^{I}\ast f_{\Phi,2s}^{J}\left(
z\right)  \right\vert ^{2}dz=\sum_{I,J,I^{\prime},J^{\prime}\in\mathcal{G}%
_{s}\left[  U\right]  }\int f_{\Phi,2s}^{I}\ast f_{\Phi,2s}^{J}\left(
z\right)  \ f_{\Phi,2s}^{I^{\prime}}\ast f_{\Phi,2s}^{J^{\prime}}\left(
z\right)  \ dz.
\]

Now we compute the average $\mathbb{E}_{2^{\mathcal{G}}}^{\mu}\left\Vert
\Lambda_{\mathcal{A}_{\mathbf{a}}\mathsf{Q}_{U}^{s}}^{2s}f\right\Vert
_{L^{4}\left(  \lambda_{n}\right)  }^{4}$ over all involutive smooth Alpert
multipliers $\left(  \mathcal{A}_{\mathbf{a}}\mathsf{Q}_{U}^{s}\right)
^{\spadesuit}$, where remembering that the functions $f_{\Phi,2s}^{I}$ have
the $\eta$-smoothness built into their definition,%
\begin{align*}
& \mathbb{E}_{2^{\mathcal{G}}}^{\mu}\left\Vert \Lambda_{\mathcal{A}%
_{\mathbf{a}}\mathsf{Q}_{U}^{s}}^{2s}f\right\Vert _{L^{4}\left(  \lambda
_{n}\right)  }^{4}\\
& =\mathbb{E}_{2^{\mathcal{G}}}^{\mu}\sum_{I,J,I^{\prime},J^{\prime}%
\in\mathcal{G}_{s}\left[  S\right]  }\sum_{\left(  a_{I},a_{J},a_{I^{\prime}%
},a_{J^{\prime}}\right)  \in\left\{  -1,1\right\}  ^{\mathcal{G}_{s}\left[
U\right]  }}\mathbb{E}_{2^{\mathcal{G}}}^{\mu}\int\left(  a_{I}f_{\Phi,2s}%
^{I}\right)  \ast\left(  a_{J}f_{\Phi,2s}^{J}\right)  \left(  z\right)
\ \left(  a_{I^{\prime}}f_{\Phi,2s}^{I^{\prime}}\right)  \ast\left(
a_{J^{\prime}}f_{\Phi,2s}^{J^{\prime}}\right)  \left(  z\right)  \ dz\\
& =2\left\{  \sum_{\substack{I,J,I^{\prime},J^{\prime}\in\mathcal{G}%
_{s}\left[  U\right]  \\I=J\text{ and }I^{\prime}=J^{\prime}}}+\sum
_{\substack{I,J,I^{\prime},J^{\prime}\in\mathcal{G}_{s}\left[  U\right]
\\I=I^{\prime}\text{ and }J=J^{\prime}}}\right\}  \int f_{\Phi,2s}^{I}\ast
f_{\Phi,2s}^{J}\left(  z\right)  \ f_{\Phi,2s}^{I^{\prime}}\ast f_{\Phi
,2s}^{J^{\prime}}\left(  z\right)  \ dz\equiv\mathcal{E}_{1}+\mathcal{E}_{2},
\end{align*}
since the only summands that survive expectation are those for which
$a_{I}a_{J}a_{I^{\prime}}a_{J^{\prime}}$ is a product of squares, i.e. the
factors occur in pairs of equal sign $\pm1$.

\begin{remark}
This is the key consequence of taking expectation, and is the only place in
the paper where it arises. Note also that in $n=2$ dimensions, Fefferman made
the critical observation that the supports of the convolutions $f_{\Phi
,2s}^{I}\ast f_{\Phi,2s}^{J}$ are essentially pairwise disjoint, so that the
$L^{2}$ norm squared of the sum is the sum of the $L^{2}$ norms squared. This
then led to the resolution of the extension problem in dimension $n=2$.
However, in higher dimensions this observation doesn't generalize in a simple
way, since there is an $\left(  n-2\right)  $-dimension sphere contained
inside $\mathbb{S}^{n-1}$ whose pairs of `antipodal cubes' support functions
whose convolutions all occupy the same space. The products of distinct pairs
of antipodal cubes vanish under expectation, which leads to a favourable
$L^{4}$ estimate.
\end{remark}

We have%
\[
\mathcal{E}_{2}=2\sum_{I,J\in\mathcal{G}_{s}\left[  U\right]  }\int
f_{\Phi,2s}^{I}\ast f_{\Phi,2s}^{J}\left(  z\right)  \ f_{\Phi,2s}^{I}\ast
f_{\Phi,2s}^{J}\left(  z\right)  \ dz=2\sum_{I,J\in\mathcal{G}_{s}\left[
U\right]  }\int\left\vert f_{\Phi,2s}^{I}\ast f_{\Phi,2s}^{J}\left(  z\right)
\right\vert ^{2}dz.
\]
Since the supports of $f_{\Phi,2s}^{I}\ast f_{\Phi,2s}^{I}$ and $f_{\Phi
,2s}^{I^{\prime}}\ast f_{\Phi,2s}^{I^{\prime}}$ are disjoint unless
$\operatorname*{dist}\left(  I,I^{\prime}\right)  \lesssim1$, we also have%
\[
\mathcal{E}_{1}=2\sum_{I,I^{\prime}\in\mathcal{G}_{s}\left[  U\right]  }\int
f_{\Phi,2s}^{I}\ast f_{\Phi,2s}^{I}\left(  z\right)  \ f_{\Phi,2s}^{I^{\prime
}}\ast f_{\Phi,2s}^{I^{\prime}}\left(  z\right)  \ dz\lesssim\sum
_{I\in\mathcal{G}_{s}\left[  U\right]  }\int\left\vert f_{\Phi,2s}^{I}\ast
f_{\Phi,2s}^{I}\left(  z\right)  \right\vert ^{2}dz.
\]
Altogether we obtain%
\begin{align*}
& \mathbb{E}_{2^{\mathcal{G}}}^{\mu}\left\Vert \Lambda_{\mathcal{A}%
_{\mathbf{a}}\mathsf{Q}_{U}^{s}}^{2s}f\right\Vert _{L^{4}\left(  \lambda
_{n}\right)  }^{4}\lesssim\sum_{I,J\in\mathcal{G}_{s}\left[  U\right]  }%
\int\left\vert f_{\Phi,2s}^{I}\ast f_{\Phi,2s}^{J}\left(  z\right)
\right\vert ^{2}dz\\
& =\sum_{I,J\in\mathcal{G}_{s}\left[  U\right]  :\ \operatorname*{dist}\left(
I,J\right)  \lesssim2^{-s}}\int\left\vert f_{\Phi,2s}^{I}\ast f_{\Phi,2s}%
^{J}\left(  z\right)  \right\vert ^{2}dz+\sum_{t=0}^{s}\sum_{I,J\in
\mathcal{G}_{s}\left[  U\right]  :\ \operatorname*{dist}\left(  I,J\right)
\approx2^{-t}}\int\left\vert f_{\Phi,2s}^{I}\ast f_{\Phi,2s}^{J}\left(
z\right)  \right\vert ^{2}dz\\
& \equiv\Psi+\sum_{t=0}^{s}\Psi_{t}.
\end{align*}

Now note that the $L^{1}$ norm of $f_{\Phi,2s}^{I}\ast f_{\Phi,2s}^{J}$ is
essentially
\begin{align*}
\left\Vert f_{\Phi,2s}^{I}\right\Vert _{L^{1}}\left\Vert f_{\Phi,2s}%
^{J}\right\Vert _{L^{1}}  & \approx\left\vert \left\langle \left(
S_{\kappa,\eta}\right)  ^{-1}f,h_{I;\kappa}\right\rangle \left\langle \left(
S_{\kappa,\eta}\right)  ^{-1}f,h_{J;\kappa}\right\rangle \right\vert
\left\Vert h_{I}\right\Vert _{L^{1}}\left\Vert h_{J}\right\Vert _{L^{1}}\\
& =\left\vert \left\langle \left(  S_{\kappa,\eta}\right)  ^{-1}f,h_{I;\kappa
}\right\rangle \left\langle \left(  S_{\kappa,\eta}\right)  ^{-1}%
f,h_{J;\kappa}\right\rangle \right\vert 2^{-s\left(  n-1\right)  },
\end{align*}
and since the volume of $R_{2s}\left(  I,J\right)  =\mathcal{I}_{2^{-2s}%
}+\mathcal{J}_{2^{-2s}}$ is essentially $2^{-sn}\operatorname*{dist}\left(
I,J\right)  $, we have%
\[
\left\vert R_{s+t}\left(  I,J\right)  \right\vert \approx\left\vert
R_{2s}\left(  I,J\right)  \right\vert \approx2^{-sn}\operatorname*{dist}%
\left(  I,J\right)  =2^{-sn-t},\ \ \ \ \ \text{for }\operatorname*{dist}%
\left(  I,J\right)  \approx2^{-t},
\]
where the first equivalence is a simple consequence of the geometry of the
situation. Thus we conclude that for $\operatorname*{dist}\left(  I,J\right)
\approx2^{-t}$,
\begin{align*}
\left\Vert f_{\Phi,2s}^{I}\ast f_{\Phi,2s}^{J}\right\Vert _{L^{1}}  &
\lesssim\left\vert \left\langle \left(  S_{\kappa,\eta}\right)  ^{-1}%
f,h_{I;\kappa}\right\rangle \left\langle \left(  S_{\kappa,\eta}\right)
^{-1}f,h_{J;\kappa}\right\rangle \right\vert 2^{-s\left(  n-1\right)  }\\
& \approx\left\Vert \left\vert \left\langle \left(  S_{\kappa,\eta}\right)
^{-1}f,h_{I;\kappa}\right\rangle \left\langle \left(  S_{\kappa,\eta}\right)
^{-1}f,h_{J;\kappa}\right\rangle \right\vert 2^{-s\left(  n-1\right)  }%
\frac{1}{2^{-sn}\operatorname*{dist}\left(  I,J\right)  }\mathbf{1}%
_{R_{2s}\left(  I,J\right)  }\right\Vert _{L^{1}}.
\end{align*}
Since there is $\lambda>0$ and a rectangle $R_{I}$ such that $\left\vert
f_{\Phi,2s}^{I}\right\vert \leq\lambda\mathbf{1}_{R_{I}}$ and $\left\Vert
f_{\Phi,2s}^{I}\right\Vert _{L^{1}}\approx\left\Vert \lambda\mathbf{1}_{R_{I}%
}\right\Vert _{L^{1}}$, which again is a simple consequence of geometry, we
then deduce the comparability of the integrands for $\operatorname*{dist}%
\left(  I,J\right)  \approx2^{-t}$,
\begin{align*}
& f_{\Phi,2s}^{I}\ast f_{\Phi,2s}^{J}\left(  z\right)  \approx\left\vert
\left\langle \left(  S_{\kappa,\eta}\right)  ^{-1}f,h_{I;\kappa}\right\rangle
\left\langle \left(  S_{\kappa,\eta}\right)  ^{-1}f,h_{J;\kappa}\right\rangle
\right\vert 2^{-s\left(  n-1\right)  }\frac{1}{2^{-sn}\operatorname*{dist}%
\left(  I,J\right)  }\mathbf{1}_{R_{2s}\left(  I,J\right)  }\left(  z\right)
\\
& =\left\vert \left\langle \left(  S_{\kappa,\eta}\right)  ^{-1}f,h_{I;\kappa
}\right\rangle \left\langle \left(  S_{\kappa,\eta}\right)  ^{-1}%
f,h_{J;\kappa}\right\rangle \right\vert \frac{2^{s}}{\operatorname*{dist}%
\left(  I,J\right)  }\mathbf{1}_{R_{2s}\left(  I,J\right)  }\left(  z\right)
\\
& =2^{s+t}\left\vert \left\langle \left(  S_{\kappa,\eta}\right)
^{-1}f,h_{I;\kappa}\right\rangle \left\langle \left(  S_{\kappa,\eta}\right)
^{-1}f,h_{J;\kappa}\right\rangle \right\vert \mathbf{1}_{R_{2s}\left(
I,J\right)  }\left(  z\right)  .
\end{align*}
Thus we have%
\begin{align*}
\sum_{t=0}^{s}\Psi_{t}  & \lesssim\sum_{t=0}^{s}\sum_{I,J\in\mathcal{G}%
_{s}\left[  S\right]  :\ \operatorname*{dist}\left(  I,J\right)  \approx
2^{-t}}\int_{\mathbb{R}^{n}}\left\vert f_{\Phi,2s}^{I}\ast f_{\Phi,2s}%
^{J}\left(  z\right)  \right\vert ^{2}dz\\
& \lesssim\sum_{t=0}^{s}\sum_{I,J\in\mathcal{G}_{s}\left[  S\right]
:\ \operatorname*{dist}\left(  I,J\right)  \approx2^{-t}}\int_{\mathbb{R}^{n}%
}\left\vert 2^{s+t}\left\vert \left\langle \left(  S_{\kappa,\eta}\right)
^{-1}f,h_{I;\kappa}\right\rangle \left\langle \left(  S_{\kappa,\eta}\right)
^{-1}f,h_{J;\kappa}\right\rangle \right\vert \mathbf{1}_{R_{2s}\left(
I,J\right)  }\left(  z\right)  \right\vert ^{2}dz\\
& \lesssim\sum_{t=0}^{s}\sum_{I,J\in\mathcal{G}_{s}\left[  S\right]
:\ \operatorname*{dist}\left(  I,J\right)  \approx2^{-t}}2^{2s+2t}\left\vert
\left\langle \left(  S_{\kappa,\eta}\right)  ^{-1}f,h_{I;\kappa}\right\rangle
\left\langle \left(  S_{\kappa,\eta}\right)  ^{-1}f,h_{J;\kappa}\right\rangle
\right\vert ^{2}\left\vert R_{2s}\left(  I,J\right)  \right\vert \\
& \lesssim\sum_{t=0}^{s}\sum_{I,J\in\mathcal{G}_{s}\left[  S\right]
:\ \operatorname*{dist}\left(  I,J\right)  \approx2^{-t}}2^{-s\left(
n-2\right)  }2^{t}\left\vert \left\langle \left(  S_{\kappa,\eta}\right)
^{-1}f,h_{I;\kappa}\right\rangle \left\langle \left(  S_{\kappa,\eta}\right)
^{-1}f,h_{J;\kappa}\right\rangle \right\vert ^{2}\equiv\sum_{t=0}^{s}%
\Omega_{t},
\end{align*}
where we have defined $\Omega_{t}$ to be the bound for $\Psi_{t}$ obtained above.

Now recall that
\[
\left\Vert \left(  \mathsf{Q}_{U}^{s}\right)  ^{\spadesuit}f\right\Vert
_{L^{4}\left(  \lambda_{n-1}\right)  }^{4}\approx2^{s\left(  n-1\right)  }%
\sum_{I\in\mathcal{G}_{s}\left[  U\right]  }\left\langle \left(
S_{\kappa,\eta}\right)  ^{-1}f,h_{I;\kappa}^{n-1}\right\rangle ^{4}.
\]
Thus for $0<t<s$ we have%
\begin{align*}
& \Omega_{t}\lesssim\sum_{I,J\in\mathcal{G}_{s}\left[  U\right]
:\ \operatorname*{dist}\left(  I,J\right)  \approx2^{-t}}2^{-s\left(
n-2\right)  }2^{t}\left\vert \left\langle \left(  S_{\kappa,\eta}\right)
^{-1}f,h_{I;\kappa}\right\rangle \left\langle \left(  S_{\kappa,\eta}\right)
^{-1}f,h_{J;\kappa}\right\rangle \right\vert ^{2}\\
& \lesssim2^{-s\left(  n-2\right)  }2^{t}\sum_{I,J\in\mathcal{G}_{s}\left[
U\right]  :\ \operatorname*{dist}\left(  I,J\right)  \approx2^{-t}}\left\vert
\left\langle \left(  S_{\kappa,\eta}\right)  ^{-1}f,h_{I;\kappa}\right\rangle
\right\vert ^{4}\\
& \lesssim2^{-s\left(  n-2\right)  }2^{t}2^{\left(  s-t\right)  \left(
n-1\right)  }\sum_{I\in\mathcal{G}_{s}\left[  U\right]  }\left\vert
\left\langle \left(  S_{\kappa,\eta}\right)  ^{-1}f,h_{I;\kappa}\right\rangle
\right\vert ^{4}=2^{-t\left(  n-2\right)  }2^{-s\left(  n-2\right)
}\left\Vert \left(  \mathsf{Q}_{U}^{s}\right)  ^{\spadesuit}f\right\Vert
_{L^{4}\left(  S\right)  }^{4}\ ,
\end{align*}
since%
\[
\#\left\{  J\in\mathcal{G}_{s}\left[  S\right]  :\operatorname*{dist}\left(
I,J\right)  \approx2^{-t}\right\}  \approx\frac{\text{volume of annulus}%
}{\text{volume of cube}}\approx\frac{2^{-t\left(  n-1\right)  }}{2^{-s\left(
n-1\right)  }},
\]
which then gives%
\[
\sum_{t=0}^{s}\Psi_{t}\lesssim\sum_{t=0}^{s}\Omega_{t}\lesssim\sum_{t=0}%
^{s}2^{-t\left(  n-2\right)  }2^{-s\left(  n-2\right)  }\left\Vert \left(
\mathsf{Q}_{U}^{s}\right)  ^{\spadesuit}f\right\Vert _{L^{4}\left(  S\right)
}^{4}\approx2^{-s\left(  n-2\right)  }\left\Vert \left(  \mathsf{Q}_{U}%
^{s}\right)  ^{\spadesuit}f\right\Vert _{L^{4}\left(  S\right)  }^{4}.
\]
Similarly we obtain%
\[
\Psi\lesssim2^{-s\left(  n-2\right)  }\left\Vert \left(  \mathsf{Q}_{U}%
^{s}\right)  ^{\spadesuit}f\right\Vert _{L^{4}\left(  S\right)  }^{4},
\]
and adding these results gives
\[
\mathbb{E}_{2^{\mathcal{G}}}^{\mu}\left\Vert \Lambda_{\mathcal{A}_{\mathbf{a}%
}\mathsf{Q}_{U}^{s}}^{2s}f\right\Vert _{L^{4}\left(  \lambda_{n}\right)  }%
^{4}\lesssim2^{-s\left(  n-2\right)  }\left\Vert \left(  \mathsf{Q}_{U}%
^{s}\right)  ^{S_{\kappa,\eta}}f\right\Vert _{L^{4}\left(  \mathbb{R}%
^{n-1}\right)  }^{4}\lesssim2^{-s\left(  n-2\right)  }\left\Vert f\right\Vert
_{L^{4}\left(  \mathbb{R}^{n-1}\right)  }^{4},
\]
which implies the second line in (\ref{assumed}) since $\mu$ is a probability measure.

\section{Control of the $\operatorname{below}$ form}

Combining the principles of decay in Subsection 4.2, and staying the
introduction of absolute values until the very end, we will be able to obtain
estimates on the inner products $\left\langle Th_{I;\kappa}^{n-1,\eta
},h_{J;\kappa}^{n,\eta}\right\rangle $, which will lead to the following form
bounds for some fixed $\delta>0$ depending only on $n$ and $p $,%
\[
\left\vert \mathsf{B}_{\operatorname{below}}^{k,d}\left(  f,g\right)
\right\vert \lesssim2^{-\delta\left(  \left\vert d\right\vert +\left\vert
k\right\vert \right)  }\left\Vert f\right\Vert _{L^{p}}\left\Vert g\right\Vert
_{L^{p^{\prime}}}\ ,\ \ \ \ \ \text{for }p>\frac{2n}{n-1}.
\]
In fact we obtain stronger bounds in which the absolute values are inside the
sum. Indeed, if we define%
\[
\left\vert \mathsf{B}_{\operatorname{below}}\right\vert \left(  f,g\right)
\equiv\sum_{\left(  I,J\right)  \in\mathcal{P}_{0}}\left\vert \left\langle
T\bigtriangleup_{I;\kappa}^{n-1,\eta}f,\bigtriangleup_{J;\kappa}^{n,\eta
}g\right\rangle \right\vert ,
\]
we prove in this section that%
\begin{equation}
\left\vert \mathsf{B}_{\operatorname{below}}\right\vert \left(  f,g\right)
\lesssim\left\Vert f\right\Vert _{L^{p}}\left\Vert g\right\Vert _{L^{p^{\prime
}}}\ ,\ \ \ \ \ \text{for }p>\frac{2n}{n-1}.\label{strong below}%
\end{equation}
We will begin with the two easier cases involving $d\leq0$, since each of
these cases requires just one of the decay principles described above.

Later we turn to the subforms involving $d\geq0$, which are harder to control
as each of them requires combining two of the decay principles described above.

\begin{remark}
The next result shows in particular that the basic form $\mathsf{B}%
_{\operatorname{below}}^{0,0}\left(  f,g\right)  $ is bounded using only the
crude estimate (\ref{crude''}), and the strict restriction to $p>\frac
{2n}{n-1}$. See also the Direct Argument in Subsubsection \ref{subsub direct}
for a much shorter proof of essentially the same result.
\end{remark}

\subsection{Subforms with $k\geq0,d\leq0$}

Here is the conclusion of this first subsection.

\begin{lemma}
\label{Kak below'}Fix $s\in\mathbb{N}$. Then%
\begin{equation}
\sum_{k\geq0}\sum_{d\leq0}\left\vert \mathsf{B}_{\operatorname{below}}%
^{k,d}\left(  f,g\right)  \right\vert \leq\sum_{k\geq0}\sum_{d\leq0}%
\sum_{\left(  I,J\right)  \in\mathcal{P}_{0}^{k,d}}\left\vert \left\langle
T\bigtriangleup_{I;\kappa}^{n-1,\eta}f,\bigtriangleup_{J;\kappa}^{n,\eta
}g\right\rangle \right\vert \lesssim\left\Vert f\right\Vert _{L^{p}}\left\Vert
g\right\Vert _{L^{p^{\prime}}}\ ,\ \ \ \ \ \text{for }p\geq\frac{2n}%
{n-1}.\label{Kak below}%
\end{equation}

\end{lemma}

To prove Lemma \ref{Kak below'}, we just need the estimate (\ref{crude k})
that used radial integration by parts, namely,%
\[
\left\vert \left\langle Th_{I;\kappa}^{n-1,\eta},h_{J;\kappa}^{n,\eta
}\right\rangle \right\vert \leq C_{N}2^{-kN}\left\Vert h_{I;\kappa}^{n-1,\eta
}\right\Vert _{L^{1}}\left\Vert h_{J;\kappa}^{n,\eta}\right\Vert _{L^{1}%
}\approx2^{-kN}\sqrt{\left\vert I\right\vert \left\vert J\right\vert
},\ \ \ \ \ k\geq0.
\]
Let $I_{\eta}\equiv\left(  1+\eta\right)  I$ so that $\operatorname*{Supp}%
\bigtriangleup_{I;\kappa}^{n-1,\eta}f\subset I_{\eta}$. Note also that
$\left\vert I_{\eta}\right\vert \approx\left\vert I\right\vert $. Then we have
from (\ref{crude k}),%
\begin{align*}
& \left\vert \mathsf{B}_{\operatorname{below}}^{k,d}\left(  f,g\right)
\right\vert \leq\sum_{\left(  I,J\right)  \in\mathcal{P}_{0}^{k,d}}\left\vert
\left\langle T\bigtriangleup_{I;\kappa}^{n-1,\eta}f,\bigtriangleup_{J;\kappa
}^{n,\eta}g\right\rangle \right\vert \leq\sum_{\left(  I,J\right)
\in\mathcal{P}_{0}^{k,d}}2^{-kN}\left(  \int_{I_{\eta}}\left\vert
\bigtriangleup_{I;\kappa}^{n-1,\eta}f\left(  x\right)  \right\vert dx\right)
\left(  \int_{J_{\eta}}\left\vert \bigtriangleup_{J;\kappa}^{n,\eta}g\left(
\xi\right)  \right\vert d\xi\right) \\
& =2^{-kN}\int_{\mathbb{R}^{n}}\sum_{\left(  I,J\right)  \in\mathcal{P}%
_{0}^{k,d}}\left(  \int_{I_{\eta}}\left\vert \bigtriangleup_{I;\kappa
}^{n-1,\eta}f\left(  x\right)  \right\vert dx\right)  \mathbf{1}_{J_{\eta}%
}\left(  \xi\right)  \left\vert \bigtriangleup_{J;\kappa}^{n,\eta}g\left(
\xi\right)  \right\vert d\xi\\
& \leq2^{-kN}\int_{\mathbb{R}^{n}}\sqrt{\sum_{\left(  I,J\right)
\in\mathcal{P}_{0}^{k,d}}\left(  \int_{I_{\eta}}\left\vert \bigtriangleup
_{I;\kappa}^{n-1,\eta}f\left(  x\right)  \right\vert dx\mathbf{1}_{J_{\eta}%
}\left(  \xi\right)  \right)  ^{2}}\sqrt{\sum_{\left(  I,J\right)
\in\mathcal{P}_{0}^{k,d}}\left\vert \bigtriangleup_{J;\kappa}^{n,\eta}g\left(
\xi\right)  \right\vert ^{2}}d\xi\\
& \lesssim2^{-kN}\left(  \int_{\mathbb{R}^{n}}\left(  \sum_{\left(
I,J\right)  \in\mathcal{P}_{0}^{k,d}}\left(  \int_{I}\left\vert \bigtriangleup
_{I;\kappa}^{n-1,\eta}f\left(  x\right)  \right\vert dx\mathbf{1}_{J_{\eta}%
}\left(  \xi\right)  \right)  ^{2}\right)  ^{\frac{p}{2}}d\xi\right)
^{\frac{1}{p}}\left(  \int_{\mathbb{R}^{n}}\left(  \sum_{\left(  I,J\right)
\in\mathcal{P}_{0}^{k,d}}\left\vert \bigtriangleup_{J;\kappa}^{n,\eta}g\left(
\xi\right)  \right\vert ^{2}\right)  ^{\frac{p^{\prime}}{2}}d\xi\right)
^{\frac{1}{p^{\prime}}}\\
& \equiv2^{-kN}\Gamma_{1}\Gamma_{2}\ ,
\end{align*}
where%
\[
\Gamma_{2}^{p^{\prime}}=\int_{\mathbb{R}^{n}}\left(  \sum_{\left(  I,J\right)
\in\mathcal{P}_{0}^{k,d}}\left\vert \bigtriangleup_{J;\kappa}^{n,\eta}g\left(
\xi\right)  \right\vert ^{2}\right)  ^{\frac{p^{\prime}}{2}}d\xi
=\int_{\mathbb{R}^{n}}\left(  \sum_{J\in\mathcal{D}}\left(  \sum
_{I\in\mathcal{G}:\ \left(  I,J\right)  \in\mathcal{P}_{0}^{k,d}}1\right)
\left\vert \bigtriangleup_{J;\kappa}^{n,\eta}g\left(  \xi\right)  \right\vert
^{2}\right)  ^{\frac{p^{\prime}}{2}}d\xi.
\]

We now choose a dyadic cube $I_{J}\in\mathcal{G}$ that approximates the
spherical projection $\pi_{\tan}\left(  J\right)  $ of $J$. So fix
$J\in\mathcal{D}$ and let $I_{J}\in\mathcal{G}$ satisfy%
\[
c_{n}\ell\left(  \pi_{\tan}\left(  J\right)  \right)  \leq\ell\left(
I_{J}\right)  \leq\ell\left(  \pi_{\tan}\left(  J\right)  \right)  \text{ and
}I_{J}\subset\pi_{\tan}\left(  J\right)  ,
\]
where $\pi_{\tan}\left(  J\right)  $\ is the spherical projection $J$ onto
$\mathbb{S}^{n-1}$, and where $c_{n}>0$ is chosen small enough that such a
cube $I_{J}$ exists.

Now $\left(  I,J\right)  \in\mathcal{P}_{0}^{k,d}$ if and only if%
\[
\pi_{\tan}J\subset\Phi\left(  C_{\operatorname{pseudo}}I\right)  \text{ and
}\frac{2^{d-1}}{\ell\left(  I\right)  ^{2}}\leq\operatorname*{dist}\left(
0,J\right)  \leq\frac{2^{d+1}}{\ell\left(  I\right)  ^{2}},
\]
which is essentially equivalent to%
\[
I\supset\pi_{\tan}J\supset I_{J}\text{ and }\sqrt{\frac{2^{d-1}}%
{2\operatorname*{dist}\left(  0,J\right)  }}\leq\ell\left(  I\right)
\leq\sqrt{\frac{2^{d+1}}{\operatorname*{dist}\left(  0,J\right)  }}.
\]
Thus for fixed $J\in\mathcal{D}_{k}$ where%
\[
\mathcal{D}_{k}\equiv\left\{  J\in\mathcal{D}:\ell\left(  J\right)
=2^{k}\right\}  ,
\]
the set of cubes $I\in\mathcal{G}$ with $\left(  I,J\right)  \in
\mathcal{P}_{0}^{k,d}$ is contained in the finite tower of dyadic cubes
$\left\{  \pi^{\left(  k\right)  }I_{J}\right\}  _{k=d-A}^{d+A}$ for some
fixed $A\in\mathbb{N}$. It follows that $\sum_{I\in\mathcal{G}:\ \left(
I,J\right)  \in\mathcal{P}_{0}^{k,d}}1\leq2A$ and so%
\begin{equation}
\Gamma_{2}^{p^{\prime}}=\int_{\mathbb{R}^{n}}\left(  \sum_{\left(  I,J\right)
\in\mathcal{P}_{0}^{k,d}}\left\vert \bigtriangleup_{J;\kappa}^{n,\eta}g\left(
\xi\right)  \right\vert ^{2}\right)  ^{\frac{p^{\prime}}{2}}d\xi\leq
\int_{\mathbb{R}^{n}}\left(  \sum_{J\in\mathcal{D}_{k}}2A\left\vert
\bigtriangleup_{J;\kappa}^{n,\eta}g\left(  \xi\right)  \right\vert
^{2}\right)  ^{\frac{p^{\prime}}{2}}d\xi\lesssim\left\Vert g\right\Vert
_{L^{p^{\prime}}}^{p^{\prime}}\ ,\label{Gamma 2}%
\end{equation}
by the Alpert square function estimate (\ref{squ est}).

We turn now to estimating $\Gamma_{1}$. Since the cubes $J_{\eta}$ in
$\mathcal{D}_{k}$ have bounded overlap with measure roughly $2^{kn}$,%
\begin{align}
\Gamma_{1}^{p}  & =\int_{\mathbb{R}^{n}}\left(  \sum_{\left(  I,J\right)
\in\mathcal{P}_{0}^{k,d}}\left(  \int_{I_{\eta}}\left\vert \bigtriangleup
_{I;\kappa}^{n-1,\eta}f\left(  x\right)  \right\vert dx\mathbf{1}_{J_{\eta}%
}\left(  \xi\right)  \right)  ^{2}\right)  ^{\frac{p}{2}}d\xi\label{Gamma 1}\\
& =\int_{\mathbb{R}^{n}}\left(  \sum_{J\in\mathcal{D}_{k}}\left\{  \sum
_{I\in\mathcal{G}\left[  S\right]  :\ \left(  I,J\right)  \in\mathcal{P}%
_{0}^{k,d}}\left(  \int_{I_{\eta}}\left\vert \bigtriangleup_{I;\kappa
}^{n-1,\eta}f\left(  x\right)  \right\vert dx\right)  ^{2}\right\}
\mathbf{1}_{J_{\eta}}\left(  \xi\right)  \right)  ^{\frac{p}{2}}%
d\xi\nonumber\\
& \approx\int_{\mathbb{R}^{n}}\sum_{J\in\mathcal{D}_{k}}\left\{  \sum
_{I\in\mathcal{G}\left[  S\right]  :\ \left(  I,J\right)  \in\mathcal{P}%
_{0}^{k,d}}\left(  \int_{I_{\eta}}\left\vert \bigtriangleup_{I;\kappa
}^{n-1,\eta}f\left(  x\right)  \right\vert dx\right)  ^{2}\right\}  ^{\frac
{p}{2}}\mathbf{1}_{J_{\eta}}\left(  \xi\right)  d\xi\nonumber\\
& \approx2^{kn}\sum_{J\in\mathcal{D}_{k}}\left(  \sum_{I\in\mathcal{G}\left[
S\right]  :\ \left(  I,J\right)  \in\mathcal{P}_{0}^{k,d}}\left(
\int_{I_{\eta}}\left\vert \bigtriangleup_{I;\kappa}^{n-1,\eta}f\left(
x\right)  \right\vert dx\right)  ^{2}\right)  ^{\frac{p}{2}}.\nonumber
\end{align}
Now for each fixed $J\in\mathcal{D}_{k}$ and $I\in\mathcal{G}\left[  S\right]
$ with $\left(  I,J\right)  \in\mathcal{P}_{0}^{k,d}$, we have%
\begin{align*}
\ell\left(  J\right)   & =2^{k},\ \ \ \ell\left(  I\right)  ^{2}%
\operatorname*{dist}\left(  0,J\right)  \approx2^{d},\ \ \ \pi_{\tan}%
J\subset\Phi\left(  C_{\operatorname{pseudo}}I\right)  ,\\
\ell\left(  I_{J}\right)   & \approx\ell\left(  \pi_{\tan}J\right)
\approx\frac{\ell\left(  J\right)  }{\operatorname*{dist}\left(  0,J\right)
}=\frac{2^{k}}{\operatorname*{dist}\left(  0,J\right)  },
\end{align*}
which implies
\begin{align*}
\ell\left(  I\right)   & \approx\sqrt{\frac{2^{d}}{\operatorname*{dist}\left(
0,J\right)  }}\approx\sqrt{\frac{2^{d}\ell\left(  \pi_{\tan}J\right)  }{2^{k}%
}}=2^{\frac{d-k}{2}}\sqrt{\ell\left(  I_{J}\right)  },\\
\log_{2}\frac{\ell\left(  I\right)  }{\ell\left(  I_{J}\right)  }  &
\approx\log_{2}\frac{2^{\frac{d-k}{2}}}{\sqrt{\ell\left(  I_{J}\right)  }%
}\approx\frac{1}{2}\left(  d-k-\log_{2}\frac{1}{\ell\left(  I_{J}\right)
}\right)  .
\end{align*}
Thus with $d^{\ast}\equiv\frac{1}{2}\left(  d-k-\log_{2}\frac{1}{\ell\left(
I_{J}\right)  }\right)  $ and $A$ as in (\ref{Gamma 2}) above, we have for
each $J\in\mathcal{D}$,%
\begin{align*}
& \left(  \sum_{I\in\mathcal{G}\left[  S\right]  :\ \left(  I,J\right)
\in\mathcal{P}_{0}^{k,d}}\left(  \int_{I_{\eta}}\left\vert \bigtriangleup
_{I;\kappa}^{n-1,\eta}f\left(  x\right)  \right\vert dx\right)  ^{2}\right)
^{\frac{p}{2}}\leq\left(  \sum_{s=d^{\ast}-A}^{d^{\ast}+A}\left(  \int
_{\pi^{\left(  m\right)  }\left(  I_{J}\right)  _{\eta}}\left\vert
\bigtriangleup_{\pi^{\left(  s\right)  }\left(  I_{J}\right)  ;\kappa
}^{n-1,\eta}f\left(  x\right)  \right\vert dx\right)  ^{2}\right)  ^{\frac
{p}{2}}\\
& \leq\left(  2A\right)  ^{\frac{p}{2}-1}\sum_{I\in\mathcal{G}\left[
S\right]  :\ \left(  I,J\right)  \in\mathcal{P}_{0}^{k,d}}\left(
\int_{I_{\eta}}\left\vert \bigtriangleup_{I;\kappa}^{n-1,\eta}f\left(
x\right)  \right\vert dx\right)  ^{p}\approx\sum_{I\in\mathcal{G}\left[
S\right]  :\ \left(  I,J\right)  \in\mathcal{P}_{0}^{k,d}}\left(
\int_{I_{\eta}}\left\vert \bigtriangleup_{I;\kappa}^{n-1,\eta}f\left(
x\right)  \right\vert dx\right)  ^{p}.
\end{align*}

Altogether then,
\begin{align}
\Gamma_{1}^{p}  & \lesssim2^{kn}\sum_{J\in\mathcal{D}_{k}}\sum_{I\in
\mathcal{G}\left[  S\right]  :\ \left(  I,J\right)  \in\mathcal{P}_{0}^{k,d}%
}\left(  \int_{I_{\eta}}\left\vert \bigtriangleup_{I;\kappa}^{n-1,\eta
}f\left(  x\right)  \right\vert dx\right)  ^{p}\label{Gamma 1'}\\
& \leq2^{kn}\sum_{J\in\mathcal{D}_{k}}\sum_{I\in\mathcal{G}\left[  S\right]
:\ \left(  I,J\right)  \in\mathcal{P}_{0}^{k,d}}\left\vert I\right\vert
^{\frac{p}{2}}\left(  \int_{I_{\eta}}\left\vert \bigtriangleup_{I;\kappa
}^{n-1,\eta}f\left(  x\right)  \right\vert ^{2}dx\right)  ^{\frac{p}{2}%
}\nonumber\\
& \approx2^{kn}\sum_{I\in\mathcal{G}\left[  S\right]  }\left(  \sum
_{J\in\mathcal{D}_{k}:\ \left(  I,J\right)  \in\mathcal{P}_{0}^{k,d}}1\right)
\left\vert I\right\vert ^{p}\left(  \frac{1}{\left\vert I_{\eta}\right\vert
}\int_{I_{\eta}}\left\vert \bigtriangleup_{I;\kappa}^{n-1,\eta}f\left(
x\right)  \right\vert ^{2}dx\right)  ^{\frac{p}{2}}.\nonumber
\end{align}
Now recall that $\mathcal{P}_{0}\equiv\left\{  \left(  I,J\right)
\in\mathcal{G}\left[  S\right]  \times\mathcal{D}:\pi_{\tan}\left(  J\right)
\subset\Phi\left(  C_{\operatorname{pseudo}}I\right)  \right\}  $, and define%
\[
\mathcal{K}\left(  I\right)  \equiv%
{\displaystyle\bigcup}
\left\{  J\in\mathcal{D}:\pi_{\tan}\left(  J\right)  \subset\Phi\left(
C_{\operatorname{pseudo}}I\right)  \right\}  .
\]
Now for fixed $I\in\mathcal{G}\left[  S\right]  $,
\begin{align}
& \#\left\{  J\in\mathcal{D}_{k}:\ \left(  I,J\right)  \in\mathcal{P}%
_{0}^{k,d}\right\} \label{card m=0}\\
& \approx2^{-kn}\left\vert \mathcal{K}_{d}\left(  I\right)  \right\vert
\approx2^{-kn}\left(  \frac{2^{d}}{\ell\left(  I\right)  ^{2}}\ell\left(
I\right)  \right)  ^{n-1}\frac{2^{d}}{\ell\left(  I\right)  ^{2}}\nonumber\\
& =2^{-kn}\frac{2^{dn}}{\ell\left(  I\right)  ^{n+1}}=2^{-kn}2^{dn}\left(
\frac{1}{\left\vert I\right\vert }\right)  ^{\frac{n+1}{n-1}};\nonumber\\
& \text{where }\mathcal{K}_{d}\left(  I\right)  \equiv%
{\displaystyle\bigcup}
\left\{  J\subset\mathcal{K}\left(  I\right)  :\frac{2^{d-1}}{\ell\left(
I\right)  ^{2}}\leq\operatorname*{dist}\left(  0,J\right)  \leq\frac{2^{d+1}%
}{\ell\left(  I\right)  ^{2}}\right\}  ,\nonumber
\end{align}
and so we have%
\begin{align*}
\Gamma_{1}^{p}  & \lesssim2^{kn}\sum_{I\in\mathcal{G}\left[  U\right]
}\left(  \#\left\{  J\in\mathcal{D}_{k}:\ \left(  I,J\right)  \in
\mathcal{P}_{0}^{k,d}\right\}  \right)  \left\vert I\right\vert ^{p}\left(
\frac{1}{\left\vert I_{\eta}\right\vert }\int_{I_{\eta}}\left\vert
\bigtriangleup_{I;\kappa}^{n-1,\eta}f\right\vert ^{2}\right)  ^{\frac{p}{2}}\\
& \lesssim2^{kn}2^{-kn}2^{dn}\sum_{I\in\mathcal{G}\left[  U\right]
}\left\vert I\right\vert ^{p-\frac{n+1}{n-1}}\left(  \frac{1}{\left\vert
I_{\eta}\right\vert }\int_{I_{\eta}}\left\vert \bigtriangleup_{I;\kappa
}^{n-1,\eta}f\right\vert ^{2}\right)  ^{\frac{p}{2}}\\
& =2^{dn}\int_{S}\sum_{I\in\mathcal{G}\left[  U\right]  }\left\vert
I\right\vert ^{p-\frac{n+1}{n-1}-1}\left(  \frac{1}{\left\vert I_{\eta
}\right\vert }\int_{I_{\eta}}\left\vert \bigtriangleup_{I;\kappa}^{n-1,\eta
}f\right\vert ^{2}\right)  ^{\frac{p}{2}}\mathbf{1}_{I}\left(  x\right)  dx\\
& \leq2^{dn}\int_{S}\sum_{I\in\mathcal{G}\left[  U\right]  }\left(  \frac
{1}{\left\vert I_{\eta}\right\vert }\int_{I_{\eta}}\left\vert \bigtriangleup
_{I;\kappa}^{n-1,\eta}f\right\vert ^{2}\mathbf{1}_{I}\left(  x\right)
\right)  ^{\frac{p}{2}}dx\ ,
\end{align*}
if $p\geq\frac{2n}{n-1}$. Now using H\"{o}lder's inequality with $\frac{p}%
{2}>1$, and the Fefferman Stein vector valued maximal inequality,we can
continue with%
\begin{align}
\Gamma_{1}^{p}  & \lesssim2^{dn}\int_{S}\left(  \sum_{I\in\mathcal{G}\left[
U\right]  }\frac{1}{\left\vert I_{\eta}\right\vert }\int_{I_{\eta}}\left\vert
\bigtriangleup_{I;\kappa}^{n-1,\eta}f\right\vert ^{2}\mathbf{1}_{I}\left(
x\right)  \right)  ^{\frac{p}{2}}dx\lesssim2^{dn}\int_{S}\left(  \sum
_{I\in\mathcal{G}\left[  U\right]  }\left(  M\left\vert \bigtriangleup
_{I;\kappa}^{n-1,\eta}f\right\vert ^{2}\right)  \left(  x\right)  \right)
^{\frac{p}{2}}dx\label{Gamma 1''}\\
& \lesssim2^{dn}\int_{S}\left(  \sum_{I\in\mathcal{G}\left[  U\right]
}\left\vert \bigtriangleup_{I;\kappa}^{n-1,\eta}f\right\vert ^{2}\left(
x\right)  \right)  ^{\frac{p}{2}}dx\lesssim2^{dn}\left\Vert f\right\Vert
_{L^{p}}^{p}\ ,\nonumber
\end{align}
by the Alpert square function estimate (\ref{squ est}). Thus we have proved,%
\[
\left\vert \mathsf{B}_{\operatorname{below}}^{k,d}\left(  f,g\right)
\right\vert \lesssim2^{-kN}2^{\frac{dn}{p}}\left\Vert f\right\Vert _{L^{p}%
}\left\Vert g\right\Vert _{L^{p^{\prime}}}\ ,\ \ \ \ \ \text{for }k\geq0\text{
and }d\leq0,
\]
which gives%
\[
\sum_{k\geq0}\sum_{d\leq0}\left\vert \mathsf{B}_{\operatorname{below}}%
^{k,d}\left(  f,g\right)  \right\vert \lesssim\left\Vert f\right\Vert _{L^{p}%
}\left\Vert g\right\Vert _{L^{p^{\prime}}}\ ,\ \ \ \ \ \text{for }p\geq
\frac{2n}{n-1}.
\]

\subsection{Subforms with $k\leq0,d\leq0$}

This case also requires just one principle of decay, but this time we use the
moment vanishing decay principle instead of the radial integration by parts
decay principle. From (\ref{van mom formula}) we have
\[
\left\langle Th_{I;\kappa}^{n-1,\eta},h_{J;\kappa}^{n,\eta}\right\rangle
=\int_{S}e^{-i\Phi\left(  x\right)  \cdot c_{J}}h_{I;\kappa}^{n-1,\eta}\left(
x\right)  \left\{  \int_{\mathbb{R}^{n}}R_{\kappa}\left(  -i\Phi\left(
x\right)  \cdot\left(  \xi-c_{J}\right)  \right)  h_{J;\kappa}^{n,\eta}\left(
\xi\right)  d\xi\right\}  dx,
\]
and then from (\ref{van mom est}), we obtain the estimate,%
\begin{align*}
\left\vert \left\langle Th_{I;\kappa}^{n-1,\eta},h_{J;\kappa}^{n,\eta
}\right\rangle \right\vert  & \leq\int_{S}\left\vert h_{I;\kappa}^{n-1,\eta
}\left(  x\right)  \right\vert \int_{\mathbb{R}^{n}}\frac{\left\vert
\Phi\left(  x\right)  \cdot\left(  \xi-c_{J}\right)  \right\vert ^{\kappa}%
}{\left(  \kappa+1\right)  !}\left\vert h_{J;\kappa}^{n,\eta}\left(
\xi\right)  \right\vert d\xi dx\\
& \lesssim\ell\left(  J\right)  ^{\kappa}\left\Vert \varphi_{I}^{\eta
}\right\Vert _{L^{1}}\left\Vert \psi_{J}^{\eta}\right\Vert _{L^{1}}%
\approx2^{-\left\vert k\right\vert \kappa}\sqrt{\left\vert I\right\vert
\left\vert J\right\vert }.
\end{align*}
The proof is now virtually the same as that in the previous subsection, but
using the above estimate instead, and results in the bound,%
\[
\left\vert \mathsf{B}_{\operatorname{below}}^{k,d}\left(  f,g\right)
\right\vert \lesssim2^{-\left\vert k\right\vert \kappa}2^{\frac{dn}{p}%
}\left\Vert f\right\Vert _{L^{p}}\left\Vert g\right\Vert _{L^{p^{\prime}}%
}\ ,\ \ \ \ \ \text{for }k\leq0\text{ and }d\leq0,
\]
which gives%
\[
\sum_{k\leq0}\sum_{d\leq0}\left\vert \mathsf{B}_{\operatorname{below}}%
^{k,d}\left(  f,g\right)  \right\vert \lesssim\left\Vert f\right\Vert _{L^{p}%
}\left\Vert g\right\Vert _{L^{p^{\prime}}}\ ,\ \ \ \ \ \text{for }p\geq
\frac{2n}{n-1}.
\]

\subsection{Subforms with $k\leq0,d\geq0$}

Here we will use the vanishing moments of $h_{J;\kappa}^{n,\eta}$ together
with stationary phase. In the case $k\leq0$ and $d\geq0$, we have from
(\ref{van mom formula}), which used the vanishing moments of$\ h_{J;\kappa
}^{n,\eta}$,%
\[
\left\langle Th_{I;\kappa}^{n-1,\eta},h_{J;\kappa}^{n,\eta}\right\rangle
=\int_{S}e^{-i\Phi\left(  x\right)  \cdot c_{J}}h_{I;\kappa}^{n-1,\eta}\left(
x\right)  \left\{  \int_{\mathbb{R}^{n}}R_{\kappa}\left(  -i\Phi\left(
x\right)  \cdot\left(  \xi-c_{J}\right)  \right)  h_{J;\kappa}^{n,\eta}\left(
\xi\right)  d\xi\right\}  dx,
\]
and using the change of variable $\xi\rightarrow\left(  y,\lambda\right)  $ in
(\ref{phi and psi notation}) with $\frac{c_{J}}{\left\vert c_{J}\right\vert
}=\Phi\left(  y_{J}\right)  $, this can be written,%
\begin{align}
& \ \ \ \ \ \ \ \ \ \ \ \ \ \ \ \left\langle Th_{I;\kappa}^{n-1,\eta
},h_{J;\kappa}^{n,\eta}\right\rangle \label{rec}\\
& =\int_{\mathbb{R}^{n}}\left\{  \int_{S}e^{-i\lambda\phi\left(
x,y_{J}\right)  }h_{I;\kappa}^{n-1,\eta}\left(  x\right)  R_{\kappa}\left(
-i\lambda\Phi\left(  x\right)  \cdot\left(  \Phi\left(  y\right)
-\frac{\left\vert c_{J}\right\vert }{\lambda}\Phi\left(  y_{J}\right)
\right)  \right)  dx\right\}  h_{J;\kappa}^{n,\eta}\left(  \lambda\Phi\left(
y\right)  \right)  \frac{dy}{\sqrt{1-\left\vert y\right\vert ^{2}}}%
\lambda^{n-1}d\lambda\nonumber\\
& =\int_{\mathbb{R}^{n}}\mathcal{I}_{\overset{\frown}{\varphi_{I}^{\eta}}%
,\phi}\left(  y_{J},\lambda\right)  h_{J;\kappa}^{n,\eta}\left(  \lambda
\Phi\left(  y\right)  \right)  \frac{dy}{\sqrt{1-\left\vert y\right\vert ^{2}%
}}\lambda^{n-1}d\lambda,\nonumber
\end{align}
where%
\[
\mathcal{I}_{\overset{\frown}{\varphi_{I}^{\eta}},\phi}\left(  y_{J}%
,\lambda\right)  =\int_{S}e^{-i\lambda\phi\left(  x,y_{J}\right)  }%
\overset{\frown}{\varphi_{I}^{\eta}}\left(  x,y,y_{J}\right)  dx,
\]
and%
\[
\overset{\frown}{\varphi_{I}^{\eta}}\left(  x,y_{J},y\right)  \equiv
h_{I;\kappa}^{n-1,\eta}\left(  x\right)  R_{\kappa}\left(  -i\lambda
\Phi\left(  x\right)  \cdot\left(  \Phi\left(  y\right)  -\frac{\left\vert
c_{J}\right\vert }{\lambda}\Phi\left(  y_{J}\right)  \right)  \right)
=h_{I;\kappa}^{n-1,\eta}\left(  x\right)  R_{\kappa}\left(  -i\Phi\left(
x\right)  \cdot\left(  \xi-c_{J}\right)  \right)  ,
\]
where $R_{\kappa}$ satisfies the estimates,%
\begin{align}
\left\vert R_{\kappa}\left(  ib\right)  \right\vert  & =\left\vert \int
_{0}^{1}e^{itb}\left(  ib\right)  ^{\kappa}\frac{\left(  1-t\right)  ^{\kappa
}}{\kappa!}dt\right\vert \lesssim\frac{\left\vert b\right\vert ^{\kappa}%
}{\kappa!},\label{remd}\\
\left\vert R_{\kappa}^{\left(  \ell\right)  }\left(  b\right)  \right\vert  &
=\left\vert \int_{0}^{1}\partial_{b}^{\ell}\left[  e^{itb}\left(  ib\right)
^{\kappa}\right]  \frac{\left(  1-t\right)  ^{\kappa}}{\kappa!}dt\right\vert
\lesssim\max\left\{  \left\vert b\right\vert ^{\kappa-\ell},\left\vert
b\right\vert ^{\kappa}\right\}  ,\nonumber
\end{align}
and $y_{J}$ is the unique point in $S$ such that $\frac{c_{J}}{\left\vert
c_{J}\right\vert }=\Phi\left(  y_{J}\right)  $.

Theorem \ref{osc int} with $M=0$ gives the asymptotic expansion,%
\[
\mathcal{I}_{\overset{\frown}{\varphi_{I}^{\eta}},\phi}\left(  y_{J}%
,\lambda\right)  =\mathfrak{P}_{\overset{\frown}{\varphi_{I}^{\eta}},\phi
}\left(  y_{J},\lambda\right)  +\mathfrak{R}_{\overset{\frown}{\varphi
_{I}^{\eta}},\phi}^{\left(  1\right)  }\left(  y_{J},\lambda\right)  ,
\]
where%
\[
\mathfrak{P}_{\overset{\frown}{\varphi_{I}^{\eta}},\phi}\left(  y_{J}%
,\lambda\right)  =\left(  \frac{2\pi}{\lambda}\right)  ^{\frac{n}{2}}%
\frac{e^{i\operatorname{sgn}\left[  \partial_{x}^{2}\phi\left(  X\left(
y_{J}\right)  ,y_{J}\right)  \right]  \frac{\pi}{4}+\lambda\phi\left(
X\left(  y_{J}\right)  ,y_{J}\right)  }}{\sqrt{\left\vert \partial_{x}^{2}%
\phi\left(  X\left(  y_{J}\right)  ,y_{J}\right)  \right\vert }}%
\overset{\frown}{\varphi_{I}^{\eta}}\left(  X\left(  y_{J}\right)
,y_{J},y\right)  ,
\]
and%
\begin{align}
\mathfrak{R}_{\overset{\frown}{\varphi_{I}^{\eta}},\phi}^{\left(  1\right)
}\left(  y_{J},\lambda\right)   & =\left(  \frac{2\pi}{\lambda}\right)
^{\frac{n}{2}}\frac{e^{i\left[  \operatorname{sgn}B\left(  y_{J}\right)
\frac{\pi}{4}+\lambda\phi\left(  X\left(  y_{J}\right)  ,y_{J}\right)
\right]  }}{\sqrt{\det B\left(  y_{J}\right)  }}\nonumber\\
& \times\int\mathcal{F}_{z}^{-1}\left(  \left[  \frac{\left\langle
i\partial_{z},B\left(  y_{J}\right)  ^{-1}\partial_{z}\right\rangle }%
{2\lambda}\right]  ^{1}f\right)  \left(  \zeta\right)  R_{1}\left(
-i\frac{\zeta^{\operatorname*{tr}}B\left(  y_{J}\right)  ^{-1}\zeta}{2\lambda
}\right)  d\zeta,\nonumber
\end{align}
and where%
\[
R_{1}\left(  ib\right)  =\int_{0}^{1}e^{itb}\left(  ib\right)  ^{1}%
\frac{\left(  1-t\right)  ^{1}}{\left(  M+1\right)  !}dt,\ \ \ \ \ \text{ for
}b\in\mathbb{R},
\]
and%
\begin{equation}
f\left(  z,y_{J},y\right)  \equiv\frac{\overset{\frown}{\varphi_{I}^{\eta}%
}\left(  \Psi_{y}^{-1}\left(  z\right)  ,y_{J},y\right)  }{\det\left[  \left(
\partial_{x}\Psi\right)  \left(  \Psi_{y}^{-1}\left(  z\right)  \right)
\right]  }.\label{fphI}%
\end{equation}

We can rewrite the principal term as%
\begin{align*}
\mathfrak{P}_{\overset{\frown}{\varphi_{I}^{\eta}},\phi}\left(  y_{J}%
,\lambda\right)   & =\left(  \frac{2\pi}{\lambda}\right)  ^{\frac{n-1}{2}%
}\frac{e^{i\operatorname{sgn}\left[  \partial_{x}^{2}\phi\left(  X\left(
y_{J}\right)  ,y_{J}\right)  \right]  \frac{\pi}{4}+\lambda\phi\left(
X\left(  y_{J}\right)  ,y_{J}\right)  }}{\sqrt{\left\vert \det B\left(
y_{J}\right)  \right\vert }}\overset{\frown}{\varphi_{I}^{\eta}}\left(
X\left(  y\right)  ,y_{J},y\right) \\
& =\left(  \frac{2\pi}{\lambda}\right)  ^{\frac{n-1}{2}}e^{i\frac{\left(
n-1\right)  \pi}{4}+\lambda}\sqrt{1-\left\vert y_{J}\right\vert ^{2}}%
\overset{\frown}{\varphi_{I}^{\eta}}\left(  y_{J},y_{J},y\right) \\
& =e^{-\frac{\left(  n-1\right)  \pi}{4}}e^{i\left\vert \xi\right\vert
}\left(  \frac{2\pi}{\left\vert \xi\right\vert }\right)  ^{\frac{n-1}{2}}%
\frac{\xi_{n}}{\left\vert \xi\right\vert }\overset{\frown}{\varphi_{I}^{\eta}%
}\left(  \frac{c_{J}^{\prime}}{\left\vert c_{J}\right\vert },\frac
{c_{J}^{\prime}}{\left\vert c_{J}\right\vert },\frac{\xi^{\prime}}{\left\vert
\xi\right\vert }\right)  ,
\end{align*}
and the remainder term as%
\begin{align}
\mathfrak{R}_{\overset{\frown}{\varphi_{I}^{\eta}},\phi}^{\left(  1\right)
}\left(  y_{J},\lambda\right)   & =\left(  \frac{2\pi}{\lambda}\right)
^{\frac{n-1}{2}}\frac{e^{i\left[  \operatorname{sgn}B\left(  y_{J}\right)
\frac{\pi}{4}+\lambda\phi\left(  X\left(  y_{J}\right)  ,y_{J}\right)
\right]  }}{\sqrt{\left\vert \det B\left(  y\right)  \right\vert }%
}\label{remt}\\
& \times\int\mathcal{F}_{z}^{-1}\left(  \left[  \frac{\left\langle
i\partial_{z},B\left(  y_{J}\right)  ^{-1}\partial_{z}\right\rangle }%
{2\lambda}\right]  f\right)  \left(  \zeta\right)  R_{1}\left(  -i\frac
{\zeta^{\operatorname*{tr}}B\left(  y_{J}\right)  ^{-1}\zeta}{2\lambda
}\right)  d\zeta.\nonumber
\end{align}

Now we compute that for $x\in I$ and $y\in\pi_{\tan}J$,
\begin{align}
\left\vert \lambda~\Phi\left(  x\right)  \cdot\left(  \Phi\left(  y\right)
-\frac{\left\vert c_{J}\right\vert }{\lambda}\Phi\left(  y_{J}\right)
\right)  \right\vert  & \lesssim\lambda\left\vert \Phi\left(  y\right)
-\frac{\left\vert c_{J}\right\vert }{\lambda}\Phi\left(  y_{J}\right)
\right\vert \lesssim\ell\left(  J\right)  ,\label{y in pi tan J}\\
\text{and }\left\vert \lambda~\partial_{x}^{N}\Phi\left(  x\right)
\cdot\left(  \Phi\left(  y\right)  -\frac{\left\vert c_{J}\right\vert
}{\lambda}\Phi\left(  y_{J}\right)  \right)  \right\vert  & \lesssim
\lambda\left\vert \Phi\left(  y\right)  -\frac{\left\vert c_{J}\right\vert
}{\lambda}\Phi\left(  y_{J}\right)  \right\vert \lesssim\ell\left(  J\right)
,\ \ \ \ \ \text{for }N\geq1.\nonumber
\end{align}

Since $\left\vert \lambda\Phi\left(  x\right)  \cdot\left(  \Phi\left(
y\right)  -\frac{\left\vert c_{J}\right\vert }{\lambda}\Phi\left(
y_{J}\right)  \right)  \right\vert \lesssim\ell\left(  J\right)  \lesssim1$,
the modulus of the inner product $\left\langle \mathfrak{P}_{\overset{\frown
}{\varphi_{I}^{\eta}},\phi},h_{J;\kappa}^{n,\eta}\right\rangle $ is thus
bounded by,%
\begin{align*}
& \left\vert \left\langle \mathfrak{P}_{\overset{\frown}{\varphi_{I}^{\eta}%
},\phi},h_{J;\kappa}^{n,\eta}\right\rangle \right\vert \leq\int_{\mathbb{R}%
^{n}}\left\vert \mathfrak{P}_{\overset{\frown}{\varphi_{I}^{\eta}},\phi
}\left(  \xi\right)  h_{J;\kappa}^{n,\eta}\left(  \xi\right)  \right\vert
d\xi\leq\left\Vert \mathfrak{P}_{\overset{\frown}{\varphi_{I}^{\eta}},\phi
}\right\Vert _{L^{\infty}}\left\Vert h_{J;\kappa}^{n,\eta}\right\Vert
_{L^{\infty}}\left\vert J\right\vert \\
& \lesssim\left(  \frac{1}{\operatorname*{dist}0,J}\right)  ^{\frac{n-1}{2}%
}\left\Vert \overset{\frown}{\varphi_{I}^{\eta}}\right\Vert _{L^{\infty}}%
\sqrt{\left\vert J\right\vert }\lesssim\left(  \frac{1}{\operatorname*{dist}%
0,J}\right)  ^{\frac{n-1}{2}}\frac{1}{\sqrt{\left\vert I\right\vert }}%
\sup_{y\in\pi_{\tan}J}\left\vert R_{\kappa}\left(  -i\lambda\Phi\left(
x\right)  \cdot\left(  \Phi\left(  y\right)  -\frac{\left\vert c_{J}%
\right\vert }{\lambda}\Phi\left(  y_{J}\right)  \right)  \right)  \right\vert
\sqrt{\left\vert J\right\vert }\\
& \lesssim\left(  \frac{1}{\operatorname*{dist}\left(  0,J\right)  }\right)
^{\frac{n-1}{2}}\frac{1}{\sqrt{\left\vert I\right\vert }}\ell\left(  J\right)
^{\kappa}\sqrt{\left\vert J\right\vert }=\left(  \frac{1}{\ell\left(
I\right)  ^{2}\operatorname*{dist}\left(  0,J\right)  }\right)  ^{\frac
{n-1}{2}}\ell\left(  J\right)  ^{\kappa}\sqrt{\left\vert I\right\vert
\left\vert J\right\vert }\\
& =\left(  \frac{1}{\ell\left(  I\right)  ^{2}\operatorname*{dist}\left(
0,J\right)  }\right)  ^{\frac{n-1}{2}}\ell\left(  J\right)  ^{\kappa}%
\sqrt{\left\vert I\right\vert \left\vert J\right\vert }\approx2^{-d\frac
{n-1}{2}}2^{-\left\vert k\right\vert \kappa}\sqrt{\left\vert I\right\vert
\left\vert J\right\vert }\\
& \lesssim2^{-d\frac{n-1}{2}}2^{-\left\vert k\right\vert \kappa}%
\sqrt{\left\vert I\right\vert \left\vert J\right\vert }.
\end{align*}

To estimate the remainder term (\ref{remt}), we thank Cristian Rios\ for the
following argument, which corrects and simplifies an earlier one in a previous
version of this paper. We first need to estimate derivatives of $f$ in
(\ref{fphI}). From the identity
\begin{equation}
\frac{\partial}{\partial x^{\alpha}}R_{\kappa}\left(  ib\left(  x\right)
\right)  =\sum_{0\neq\beta\leq\alpha}\left(
\begin{array}
[c]{c}%
\alpha\\
\beta
\end{array}
\right)  \frac{d^{\left\vert \beta\right\vert }R_{\kappa}}{db^{\left\vert
\beta\right\vert }}\left(  ib\right)  \frac{\partial}{\partial x^{\alpha
-\beta}}\prod_{\ell=1}^{n-1}\left(  \partial_{x_{\ell}}\left(  ib\left(
x\right)  \right)  \right)  ^{\beta_{\ell}}\label{chainrule}%
\end{equation}
With $R=R_{\kappa}$, $b\left(  x\right)  =-\lambda\Phi\left(  x\right)
\cdot\left(  \Phi\left(  y\right)  -\frac{\left\vert c_{J}\right\vert
}{\lambda}\Phi\left(  y_{J}\right)  \right)  $, by (\ref{remd}) and the fact
that $\left\vert \frac{\partial}{\partial x^{\beta}}ib\left(  x\right)
\right\vert \lesssim\ell\left(  J\right)  $, we have that%
\[
\left\vert \frac{\partial}{\partial x^{\alpha}}R_{\kappa}\left(  -i\lambda
\Phi\left(  x\right)  \cdot\left(  \Phi\left(  y\right)  -\frac{\left\vert
c_{J}\right\vert }{\lambda}\Phi\left(  y_{J}\right)  \right)  \right)
\right\vert \lesssim\sum_{j=1}^{\left\vert \alpha\right\vert }\left\vert
\frac{d^{\left\vert \beta\right\vert }R_{\kappa}}{db^{\left\vert
\beta\right\vert }}\left(  ib\right)  \right\vert \ell\left(  J\right)
^{\left\vert \beta\right\vert }\lesssim\ell\left(  J\right)  ^{\kappa}.
\]
Then, whever $\kappa\geq\left\vert \alpha\right\vert $ we have%
\begin{align}
\left\vert \partial_{x^{\alpha}}\overset{\frown}{\varphi_{I}^{\eta}}\left(
x\right)  \right\vert  & \leq\left\vert \partial_{x^{\alpha}}\overset{\frown
}{\varphi_{I}^{\eta}}\left(  x,y_{J},y\right)  \right\vert \nonumber\\
& =\left\vert \sum_{\beta\leq\alpha}\left(
\begin{array}
[c]{c}%
\alpha\\
\beta
\end{array}
\right)  \left(  \frac{\partial}{\partial x^{\beta-\alpha}}h_{I;\kappa
}^{n-1,\eta}\left(  x\right)  \right)  \left(  \frac{\partial}{\partial
x^{\beta-\alpha}}R_{\kappa}\left(  ib\left(  x\right)  \right)  \right)
\right\vert \nonumber\\
& \lesssim\sum_{j=0}^{\left\vert \alpha\right\vert }\frac{\mathbf{1}_{I_{\eta
}}\left(  x\right)  }{\sqrt{\left\vert I\right\vert }}\frac{\ell\left(
J\right)  ^{\kappa}}{\ell\left(  I\right)  ^{j}}\lesssim\frac{\mathbf{1}%
_{I_{\eta}}\left(  x\right)  }{\sqrt{\left\vert I\right\vert }}\frac
{\ell\left(  J\right)  ^{\kappa}}{\ell\left(  I\right)  ^{\left\vert
\alpha\right\vert }}.\label{mphIder}%
\end{align}
Now we estimate the first factor in the integral in (\ref{remt})
\[
\mathcal{F}_{z}^{-1}\left(  \left[  \frac{\left\langle i\partial_{z},B\left(
y_{J}\right)  ^{-1}\partial_{z}\right\rangle }{2\lambda}\right]  f\right)
\left(  \zeta\right)  =\int_{\Psi\left(  I_{\eta}\right)  }\left(  \left[
\frac{\left\langle i\partial_{z},B\left(  y_{J}\right)  ^{-1}\partial
_{z}\right\rangle }{2\lambda}\right]  \frac{\overset{\frown}{\varphi_{I}%
^{\eta}}\left(  \Psi_{y}^{-1}\left(  z\right)  ,y_{J},y\right)  }{\det\left[
\left(  \partial_{x}\Psi\right)  \left(  \Psi_{y}^{-1}\left(  z\right)
\right)  \right]  }\right)  e^{iz\cdot\zeta}~dz
\]
Since $\Psi_{y}$ is a diffeomorphism we have that $\left\vert \det\left[
\left(  \partial_{x}\Psi\right)  \left(  \Psi_{y}^{-1}\left(  z\right)
\right)  \right]  \right\vert \approx1$, and $\left\vert \partial_{z}^{j}%
\det\left[  \left(  \partial_{x}\Psi\right)  \left(  \Psi_{y}^{-1}\left(
z\right)  \right)  \right]  \right\vert \lesssim C_{j}$ for $j\geq1$. Then by
the worst case $\left\vert \alpha\right\vert =2$ in (\ref{mphIder}) we obtain%
\[
\left\vert \left[  \frac{\left\langle i\partial_{z},B\left(  y_{J}\right)
^{-1}\partial_{z}\right\rangle }{2\lambda}\right]  \frac{\overset{\frown
}{\varphi_{I}^{\eta}}\left(  \Psi_{y}^{-1}\left(  z\right)  ,y_{J},y\right)
}{\det\left[  \left(  \partial_{x}\Psi\right)  \left(  \Psi_{y}^{-1}\left(
z\right)  \right)  \right]  }\right\vert \lesssim\frac{1}{\lambda}\left\vert
\partial_{x}^{2}\overset{\frown}{\varphi_{I}^{\eta}}\left(  x,y_{J},y\right)
\right\vert \lesssim\frac{1}{\lambda}\frac{\mathbf{1}_{I_{\eta}}\left(
x\right)  }{\sqrt{\left\vert I\right\vert }}\frac{\ell\left(  J\right)
^{\kappa}}{\ell\left(  I\right)  ^{2}}.
\]
Hence,%
\begin{equation}
\left\vert \mathcal{F}_{z}^{-1}\left(  \left[  \frac{\left\langle
i\partial_{z},B\left(  y_{J}\right)  ^{-1}\partial_{z}\right\rangle }%
{2\lambda}\right]  f\right)  \left(  \zeta\right)  \right\vert \lesssim
\int_{\Psi\left(  I_{\eta}\right)  }\frac{1}{\lambda}\frac{\mathbf{1}%
_{I_{\eta}}\left(  x\right)  }{\sqrt{\left\vert I\right\vert }}\frac
{\ell\left(  J\right)  ^{\kappa}}{\ell\left(  I\right)  ^{2}}~d\zeta
\lesssim\frac{1}{\lambda}\frac{\ell\left(  J\right)  ^{\kappa}}{\ell\left(
I\right)  ^{2}}\sqrt{\left\vert I\right\vert }.\label{fo1}%
\end{equation}
From the identity $e^{iz\cdot\zeta}=\left\vert \zeta\right\vert ^{-2N}\left(
-i\sum_{j=1}^{n-1}\zeta_{j}\partial_{z_{j}}\right)  ^{N}e^{iz\cdot\zeta}$, we
can also write%
\[
\left\vert \mathcal{F}_{z}^{-1}\left(  \left[  \frac{\left\langle
i\partial_{z},B\left(  y_{J}\right)  ^{-1}\partial_{z}\right\rangle }%
{2\lambda}\right]  f\right)  \left(  \zeta\right)  \right\vert =\left\vert
\zeta\right\vert ^{-2N}\left\vert \int_{\Psi\left(  I_{\eta}\right)  }\left(
\left(  i\sum_{j=1}^{n-1}\zeta_{j}\partial_{z_{j}}\right)  ^{Ne^{iz\cdot\zeta
}}\left[  \frac{\left\langle i\partial_{z},B\left(  y_{J}\right)
^{-1}\partial_{z}\right\rangle }{2\lambda}\right]  f\right)  e^{iz\cdot\zeta
}~dz\right\vert
\]
and since, as before, we have the bounds%
\[
\left\vert \left(  i\sum_{j=1}^{n-1}\zeta_{j}\partial_{z_{j}}\right)  ^{N}
\left[  \frac{\left\langle i\partial_{z},B\left(  y_{J}\right)  ^{-1}%
\partial_{z}\right\rangle }{2\lambda}\right]  f\right\vert \lesssim
\frac{\left\vert \zeta\right\vert ^{N}}{\lambda}\left\vert \partial_{x}%
^{N+2}\overset{\frown}{\varphi_{I}^{\eta}}\left(  x,y_{J},y\right)
\right\vert \lesssim\frac{\left\vert \zeta\right\vert ^{N}}{\lambda}%
\frac{\mathbf{1}_{I_{\eta}}\left(  x\right)  }{\sqrt{\left\vert I\right\vert
}}\frac{\ell\left(  J\right)  ^{\kappa}}{\ell\left(  I\right)  ^{N+2}},
\]
hence,%
\[
\left\vert \mathcal{F}_{z}^{-1}\left(  \left[  \frac{\left\langle
i\partial_{z},B\left(  y_{J}\right)  ^{-1}\partial_{z}\right\rangle }%
{2\lambda}\right]  f\right)  \left(  \zeta\right)  \right\vert \lesssim
\left\vert \zeta\right\vert ^{-2N}\int_{\Psi\left(  I_{\eta}\right)  }%
\frac{\left\vert \zeta\right\vert ^{N}}{\lambda}\frac{\mathbf{1}_{I_{\eta}%
}\left(  x\right)  }{\sqrt{\left\vert I\right\vert }}\frac{\ell\left(
J\right)  ^{\kappa}}{\ell\left(  I\right)  ^{N+2}}~dz\lesssim\frac{1}{\lambda
}\frac{1}{\left\vert \zeta\right\vert ^{N}}\frac{\ell\left(  J\right)
^{\kappa}}{\ell\left(  I\right)  ^{N+2}}\sqrt{\left\vert I\right\vert }.
\]
Combining this with (\ref{fo1}) yields%
\[
\left\vert \mathcal{F}_{z}^{-1}\left(  \left[  \frac{\left\langle
i\partial_{z},B\left(  y_{J}\right)  ^{-1}\partial_{z}\right\rangle }%
{2\lambda}\right]  f\right)  \left(  \zeta\right)  \right\vert \lesssim
\frac{1}{\lambda}\sqrt{\left\vert I\right\vert }\frac{\ell\left(  J\right)
^{\kappa}}{\ell\left(  I\right)  ^{2}}\min\left\{  1,\frac{1}{\left\vert
\zeta\right\vert ^{N}}\frac{1}{\ell\left(  I\right)  ^{N}}\right\}  .
\]
Then, from (\ref{remt}) and the fact that $\left\vert R_{1}\left(  ib\right)
\right\vert \leq\left\vert b\right\vert $, we obtain%
\begin{align}
& \ \ \ \ \ \ \ \ \ \ \ \ \ \ \ \left\vert \mathfrak{R}_{\overset{\frown
}{\varphi_{I}^{\eta}},\phi}^{\left(  1\right)  }\left(  y_{J},\lambda\right)
\right\vert \label{R1}\\
& =\left\vert \frac{\left(  \frac{2\pi}{\lambda}\right)  ^{\frac{n-1}{2}}%
}{\sqrt{\left\vert \det B\left(  y\right)  \right\vert }}\right\vert
\left\vert \int\mathcal{F}_{z}^{-1}\left(  \left[  \frac{\left\langle
i\partial_{z},B\left(  y_{J}\right)  ^{-1}\partial_{z}\right\rangle }%
{2\lambda}\right]  f\right)  \left(  \zeta\right)  R_{1}\left(  -i\frac
{\zeta^{\operatorname*{tr}}B\left(  y_{J}\right)  ^{-1}\zeta}{2\lambda
}\right)  d\zeta\right\vert \nonumber\\
& \lesssim\frac{1}{\lambda^{\frac{n-1}{2}}}\int_{\mathbb{R}^{n-1}}\frac
{1}{\lambda}\sqrt{\left\vert I\right\vert }\frac{\ell\left(  J\right)
^{\kappa}}{\ell\left(  I\right)  ^{2}}\min\left\{  1,\frac{1}{\left\vert
\zeta\right\vert ^{N}}\frac{1}{\ell\left(  I\right)  ^{N}}\right\}
\frac{\left\vert \zeta\right\vert ^{2}}{\lambda}~d\zeta\nonumber\\
& =\frac{1}{\lambda^{\frac{n-1}{2}+2}}\frac{\ell\left(  J\right)  ^{\kappa}%
}{\ell\left(  I\right)  ^{2}}\sqrt{\left\vert I\right\vert }\int
_{\mathbb{R}^{n-1}}\min\left\{  1,\frac{1}{\left\vert \zeta\right\vert ^{N}%
}\frac{1}{\ell\left(  I\right)  ^{N}}\right\}  \left\vert \zeta\right\vert
^{2}~d\zeta\nonumber\\
& \approx\frac{1}{\lambda^{\frac{n-1}{2}+2}}\frac{\ell\left(  J\right)
^{\kappa}}{\ell\left(  I\right)  ^{2}}\sqrt{\left\vert I\right\vert }\left(
\int_{0}^{\frac{1}{\ell\left(  I\right)  }}r^{2}r^{n-2}~dr+\frac{1}%
{\ell\left(  I\right)  ^{N}}\int_{\frac{1}{\ell\left(  I\right)  }}^{\infty
}\frac{1}{r^{N}}r^{2}r^{n-2}~dr\right)  .\nonumber
\end{align}
Choosing $N=n+2$ so the second integral is finite, we get%
\begin{align*}
\left\vert \mathfrak{R}_{\overset{\frown}{\varphi_{I}^{\eta}},\phi}^{\left(
1\right)  }\left(  \xi\right)  \right\vert  & \lesssim\frac{1}{\lambda
^{\frac{n-1}{2}+2}}\frac{\ell\left(  J\right)  ^{\kappa}}{\ell\left(
I\right)  ^{2}}\sqrt{\left\vert I\right\vert }\frac{1}{\ell\left(  I\right)
^{n+1}}\approx\frac{1}{\left(  \operatorname*{dist}\left(  0,J\right)
\ell\left(  I\right)  ^{2}\right)  ^{\frac{n-1}{2}+2}}\frac{\ell\left(
I\right)  ^{n+3}}{\ell\left(  I\right)  ^{n+3}}\ell\left(  J\right)  ^{\kappa
}\sqrt{\left\vert I\right\vert }\\
& \lesssim2^{-d\left(  \frac{n-1}{2}+2\right)  }\ell\left(  J\right)
^{\kappa}\sqrt{\left\vert I\right\vert },
\end{align*}
if we take $\kappa\geq N=n+2$.

\begin{remark}
This error estimate is the same estimate as that for the main term, but with
an \emph{additional} small factor of $2^{-2d}$.
\end{remark}

Combining the two estimates for the principle term and the remainder term, we
have
\begin{align*}
& \left\vert \left\langle Th_{I;\kappa}^{n-1,\eta},h_{J;\kappa}^{n,\eta
}\right\rangle \right\vert \leq\left\vert \left\langle \mathfrak{P}%
_{\overset{\frown}{\varphi_{I}^{\eta}},\phi},h_{J;\kappa}^{n,\eta
}\right\rangle \right\vert +\left\vert \left\langle \mathfrak{R}%
_{\overset{\frown}{\varphi_{I}^{\eta}},\phi}^{\left(  1\right)  },h_{J;\kappa
}^{n,\eta}\right\rangle \right\vert \\
& \lesssim2^{-d\frac{n-1}{2}}2^{-\left\vert k\right\vert \kappa}%
\sqrt{\left\vert I\right\vert \left\vert J\right\vert }+2^{-d\frac{n+3}{2}%
}2^{-\left\vert k\right\vert \kappa}\sqrt{\left\vert I\right\vert \left\vert
J\right\vert },
\end{align*}
when $k\leq0$, $d\geq0$, and $\kappa\geq n+2$. We record this as%
\begin{equation}
\left\vert \left\langle Th_{I;\kappa}^{n-1,\eta},h_{J;\kappa}^{n,\eta
}\right\rangle \right\vert \lesssim2^{-d\frac{n-1}{2}}\ 2^{-\left\vert
k\right\vert \kappa}\ \sqrt{\left\vert I\right\vert \left\vert J\right\vert
}.\label{sharp bound}%
\end{equation}

Next, we will use the estimate (\ref{sharp bound}), in the argument we used
above to bound $\mathsf{B}_{\operatorname{below}}^{0,d}\left(  f,g\right)  $,
to show that there is $\delta>0$ such that for all $p>\frac{2n}{n-1}$,%
\[
\left\vert \mathsf{B}_{\operatorname{below}}^{k,d}\left(  f,g\right)
\right\vert \lesssim2^{-\left\vert k\right\vert \delta}2^{-\left\vert
d\right\vert \delta}\left\Vert f\right\Vert _{L^{p}}\left\Vert g\right\Vert
_{L^{p^{\prime}}}\ ,\ \ \ \ \ \text{for all }k\leq0,d\geq0.
\]
Of course we now have $d\geq0$ instead of the opposite inequality $d\leq0$
used in the previous argument, but we will see that much of the geometry of
the decomposition remains the same.

For $k\leq0$ and $d\geq0$, the estimates (\ref{sharp bound}) imply,%
\begin{align*}
& \left\vert \mathsf{B}_{\operatorname{below}}^{k,d}\left(  f,g\right)
\right\vert \equiv\left\vert \sum_{\left(  I,J\right)  \in\mathcal{P}%
_{0}^{k,d}}\left\langle T\bigtriangleup_{I;\kappa}^{n-1,\eta}f,\bigtriangleup
_{J;\kappa}^{n,\eta}g\right\rangle \right\vert =\left\vert \sum_{\left(
I,J\right)  \in\mathcal{P}_{0}^{k,d}}\left\langle Th_{I;\kappa}^{n-1,\eta
}f,h_{J;\kappa}^{n,\eta}g\right\rangle _{\omega}\left\langle f,h_{I;\kappa
}^{n-1,\eta}\right\rangle \left\langle g,h_{J;\kappa}^{n,\eta}\right\rangle
\right\vert \\
& \lesssim\sum_{\left(  I,J\right)  \in\mathcal{P}_{0}^{k,d}}\left\vert
\left\langle Th_{I;\kappa}^{n-1,\eta}f,h_{J;\kappa}^{n,\eta}g\right\rangle
\right\vert \left\{  \frac{1}{\sqrt{\left\vert I\right\vert }}\int_{I_{\eta}%
}\left\vert \bigtriangleup_{I;\kappa}^{n-1,\eta}f\left(  x\right)  \right\vert
dx\right\}  \left\{  \frac{1}{\sqrt{\left\vert J\right\vert }}\int_{J_{\eta}%
}\left\vert \bigtriangleup_{J;\kappa}^{n,\eta}g\left(  \xi\right)  \right\vert
d\xi\right\} \\
& \lesssim\int_{\mathbb{R}^{n}}\sum_{\left(  I,J\right)  \in\mathcal{P}%
_{0}^{k,d}}\frac{\left\vert \left\langle Th_{I;\kappa}^{n-1,\eta},h_{J;\kappa
}^{n,\eta}\right\rangle \right\vert }{\sqrt{\left\vert I\right\vert }%
\sqrt{\left\vert J\right\vert }}\left\{  \int_{I_{\eta}}\left\vert
\bigtriangleup_{I;\kappa}^{n-1,\eta}f\left(  x\right)  \right\vert dx\right\}
\left\vert \bigtriangleup_{J;\kappa}^{n,\eta}g\left(  \xi\right)  \right\vert
d\xi\\
& \lesssim2^{-d\frac{n-1}{2}}2^{-\left\vert k\right\vert \kappa}%
\int_{\mathbb{R}^{n}}\sum_{\left(  I,J\right)  \in\mathcal{P}_{0}^{k,d}%
}\left\{  \int_{I_{\eta}}\left\vert \bigtriangleup_{I;\kappa}^{n-1,\eta
}f\left(  x\right)  \right\vert dx\right\}  \left\vert \bigtriangleup
_{J;\kappa}^{n,\eta}g\left(  \xi\right)  \right\vert d\xi
\end{align*}
which is at most
\begin{align*}
& 2^{-d\frac{n-1}{2}}2^{-\left\vert k\right\vert \kappa}\int_{\mathbb{R}^{n}%
}\sqrt{\sum_{\left(  I,J\right)  \in\mathcal{P}_{0}^{k,d}}\left(
\int_{I_{\eta}}\left\vert \bigtriangleup_{I;\kappa}^{n-1,\eta}f\left(
x\right)  \right\vert dx\right)  ^{2}}\sqrt{\sum_{\left(  I,J\right)
\in\mathcal{P}_{0}^{k,d}}\left\vert \bigtriangleup_{J;\kappa}^{n,\eta}g\left(
\xi\right)  \right\vert ^{2}}d\xi\\
& \lesssim2^{-d\frac{n-1}{2}}2^{-\left\vert k\right\vert \kappa}\left(
\int_{\mathbb{R}^{n}}\left(  \sum_{\left(  I,J\right)  \in\mathcal{P}%
_{0}^{k,d}}\left(  \int_{I_{\eta}}\left\vert \bigtriangleup_{I;\kappa
}^{n-1,\eta}f\left(  x\right)  \right\vert dx\mathbf{1}_{J}\left(  \xi\right)
\right)  ^{2}\right)  ^{\frac{p}{2}}d\xi\right)  ^{\frac{1}{p}}\\
& \ \ \ \ \ \ \ \ \ \ \ \ \ \ \ \ \ \ \ \ \ \ \ \ \ \ \ \ \ \ \times\left(
\int_{\mathbb{R}^{n}}\left(  \sum_{\left(  I,J\right)  \in\mathcal{P}%
_{0}^{k,d}}\left\vert \bigtriangleup_{J;\kappa}^{n,\eta}g\left(  \xi\right)
\right\vert ^{2}\right)  ^{\frac{p^{\prime}}{2}}d\xi\right)  ^{\frac
{1}{p^{\prime}}}\\
& \equiv2^{-d\frac{n-1}{2}}2^{-\left\vert k\right\vert \kappa}\Gamma_{1}%
\Gamma_{2}.
\end{align*}

We have%
\[
\Gamma_{2}^{p^{\prime}}=\int_{\mathbb{R}^{n}}\left(  \sum_{\left(  I,J\right)
\in\mathcal{P}_{0}^{k,d}}\left\vert \bigtriangleup_{J;\kappa}^{n,\eta}g\left(
x\right)  \right\vert ^{2}\right)  ^{\frac{p^{\prime}}{2}}dx=\int
_{\mathbb{R}^{n}}\left(  \sum_{J\in\mathcal{D}}\left(  \sum_{I\in
\mathcal{G}:\ \left(  I,J\right)  \in\mathcal{P}_{0}^{k,d}}1\right)
\left\vert \bigtriangleup_{J;\kappa}^{n,\eta}g\left(  x\right)  \right\vert
^{2}\right)  ^{\frac{p^{\prime}}{2}}dx,
\]
and now we repeat some of the geometric constructions relating to
$\mathcal{P}_{0}^{k,d}$ from before. Fix $J\in\mathcal{D}$ and let $I_{J}%
\in\mathcal{G}$ satisfy%
\[
c_{n}\pi_{1}\left(  J\right)  \leq\ell\left(  I_{J}\right)  \leq\pi_{1}\left(
J\right)  \text{ and }I_{J}\subset\pi_{1}\left(  J\right)  ,
\]
where $\pi_{1}\left(  J\right)  $\ is the spherical projection $J$ onto
$\mathbb{S}^{n-1}$, and where $c_{n}>0$ is chosen small enough that such a
cube $I_{J}$ exists. Now $\left(  I,J\right)  \in\mathcal{P}_{0}^{k,d}$ if and
only if%
\[
J\subset\mathcal{K}\left(  I\right)  \text{ and }\frac{2^{d-1}}{\ell\left(
I\right)  ^{2}}\leq\operatorname*{dist}\left(  0,J\right)  \leq\frac{2^{d+1}%
}{\ell\left(  I\right)  ^{2}},
\]
which is essentially equivalent to%
\[
I\supset\pi_{1}J\supset I_{J}\text{ and }\sqrt{\frac{2^{d-1}}%
{2\operatorname*{dist}\left(  0,J\right)  }}\leq\ell\left(  I\right)
\leq\sqrt{\frac{2^{d+1}}{\operatorname*{dist}\left(  0,J\right)  }}.
\]
Thus\ just as in the previous argument, the set of cubes $I\in\mathcal{G}%
\left[  U\right]  $ with $\left(  I,J\right)  \in\mathcal{P}_{0}^{k,d}$ is
contained in the finite tower of dyadic cubes $\left\{  \pi^{\left(  k\right)
}I_{J}\right\}  _{k=d-A}^{d+A}$ for some fixed $A\in\mathbb{N}$. It follows
that $\sum_{I\in\mathcal{G}\left[  U\right]  :\ \left(  I,J\right)
\in\mathcal{C}_{0}^{0,0}}1\leq2A$ and so%
\[
\Gamma_{2}^{p^{\prime}}=\int_{\mathbb{R}^{n}}\left(  \sum_{\left(  I,J\right)
\in\mathcal{P}_{0}^{0,0}}\left\vert \bigtriangleup_{J;\kappa}^{n,\eta}g\left(
x\right)  \right\vert ^{2}\right)  ^{\frac{p^{\prime}}{2}}dx\leq
\int_{\mathbb{R}^{n}}\left(  \sum_{J\in\mathcal{D}}2A\left\vert \bigtriangleup
_{J;\kappa}^{n,\eta}g\left(  x\right)  \right\vert ^{2}\right)  ^{\frac
{p^{\prime}}{2}}dx\lesssim\left\Vert g\right\Vert _{L^{p^{\prime}}}%
^{p^{\prime}}\ .
\]

We see that on the other hand, since the cubes $J$ in $\mathcal{D}_{k}$ are
pairwise disjoint with measure $2^{kn}$,%
\begin{align*}
\Gamma_{1}^{p}  & =\int_{\mathbb{R}^{n}}\left(  \sum_{\left(  I,J\right)
\in\mathcal{P}_{0}^{k,d}}\left(  \int_{I_{\eta}}\left\vert \bigtriangleup
_{I;\kappa}^{n-1,\eta}f\left(  x\right)  \right\vert dx\right)  ^{2}%
\mathbf{1}_{J_{\eta}}\left(  \xi\right)  \right)  ^{\frac{p}{2}}d\xi\\
& \approx\int_{\mathbb{R}^{n}}\left(  \sum_{J\in\mathcal{D}_{k}}\left\{
\sum_{I\in\mathcal{G}\left[  U\right]  :\ \left(  I,J\right)  \in
\mathcal{P}_{0}^{k,d}}\left(  \int_{I_{\eta}}\left\vert \bigtriangleup
_{I;\kappa}^{n-1,\eta}f\left(  x\right)  \right\vert dx\right)  ^{2}\right\}
\mathbf{1}_{J_{\eta}}\left(  \xi\right)  \right)  ^{\frac{p}{2}}d\xi\\
& =\int_{\mathbb{R}^{n}}\sum_{J\in\mathcal{D}_{k}}\left\{  \sum_{I\in
\mathcal{G}\left[  U\right]  :\ \left(  I,J\right)  \in\mathcal{P}_{0}^{k,d}%
}\left(  \int_{I_{\eta}}\left\vert \bigtriangleup_{I;\kappa}^{n-1,\eta
}f\left(  x\right)  \right\vert dx\right)  ^{2}\right\}  ^{\frac{p}{2}%
}\mathbf{1}_{J_{\eta}}\left(  \xi\right)  d\xi\\
& \approx\sum_{J\in\mathcal{D}_{k}}2^{kn}\left(  \sum_{I\in\mathcal{G}\left[
U\right]  :\ \left(  I,J\right)  \in\mathcal{P}_{0}^{k,d}}\left(
\int_{I_{\eta}}\left\vert \bigtriangleup_{I;\kappa}^{n-1,\eta}f\left(
x\right)  \right\vert dx\right)  ^{2}\right)  ^{\frac{p}{2}}.
\end{align*}
Now for each fixed $J\in\mathcal{D}_{k}$ we have with $A$ as above,%
\begin{align*}
& \left(  \sum_{I\in\mathcal{G}:\ \left(  I,J\right)  \in\mathcal{P}_{0}%
^{k,d}}\left(  \int_{I_{\eta}}\left\vert \bigtriangleup_{I;\kappa}^{n-1,\eta
}f\left(  x\right)  \right\vert dx\right)  ^{2}\right)  ^{\frac{p}{2}}%
\leq\left(  \sum_{s=d-A}^{d+A}\left(  \int_{\pi^{\left(  s\right)  }\left(
I_{J}\right)  _{\eta}}\left\vert \bigtriangleup_{\pi^{\left(  s\right)
}\left(  I_{J}\right)  ;\kappa}^{n-1,\eta}f\left(  x\right)  \right\vert
dx\right)  ^{2}\right)  ^{\frac{p}{2}}\\
& \leq\left(  2A\right)  ^{\frac{p}{2}-1}\sum_{I\in\mathcal{G}\left[
U\right]  :\ \left(  I,J\right)  \in\mathcal{P}_{0}^{k,d}}\left(
\int_{I_{\eta}}\left\vert \bigtriangleup_{I;\kappa}^{n-1,\eta}f\left(
x\right)  \right\vert dx\right)  ^{p}\approx\sum_{I\in\mathcal{G}\left[
U\right]  :\ \left(  I,J\right)  \in\mathcal{P}_{0}^{k,d}}\left(
\int_{I_{\eta}}\left\vert \bigtriangleup_{I;\kappa}^{n-1,\eta}f\left(
x\right)  \right\vert dx\right)  ^{p}.
\end{align*}

Altogether then,%
\begin{align*}
\Gamma_{1}^{p}  & \lesssim\sum_{J\in\mathcal{D}_{k}}2^{kn}\sum_{I\in
\mathcal{G}\left[  U\right]  :\ \left(  I,J\right)  \in\mathcal{P}_{0}^{k,d}%
}\left(  \int_{I_{\eta}}\left\vert \bigtriangleup_{I;\kappa}^{n-1,\eta
}f\left(  x\right)  \right\vert dx\right)  ^{p}\\
& \leq\sum_{J\in\mathcal{D}_{k}}2^{kn}\sum_{I\in\mathcal{G}\left[  U\right]
:\ \left(  I,J\right)  \in\mathcal{P}_{0}^{k,d}}\left\vert I\right\vert
^{\frac{p}{2}}\left(  \int_{I_{\eta}}\left\vert \bigtriangleup_{I;\kappa
}^{n-1,\eta}f\left(  x\right)  \right\vert ^{2}dx\right)  ^{\frac{p}{2}}\\
& =2^{kn}\sum_{I\in\mathcal{G}\left[  U\right]  }\left(  \sum_{J\in
\mathcal{D}_{k}:\ \left(  I,J\right)  \in\mathcal{P}_{0}^{k,d}}1\right)
\left\vert I\right\vert ^{p}\left(  \frac{1}{\left\vert I_{\eta}\right\vert
}\int_{I_{\eta}}\left\vert \bigtriangleup_{I;\kappa}^{n-1,\eta}f\left(
x\right)  \right\vert ^{2}dx\right)  ^{\frac{p}{2}},
\end{align*}
and since
\begin{align*}
& \#\left\{  J\in\mathcal{D}_{k}:\ \left(  I,J\right)  \in\mathcal{P}%
_{0}^{k,d}\right\} \\
& \approx2^{-kn}\left\vert \mathcal{K}_{d}\left(  I\right)  \right\vert
\approx2^{-kn}\left(  \frac{2^{d}}{\ell\left(  I\right)  ^{2}}\ell\left(
I\right)  \right)  ^{n-1}\frac{2^{d}}{\ell\left(  I\right)  ^{2}}=2^{-kn}%
\frac{2^{dn}}{\ell\left(  I\right)  ^{n+1}}=2^{-kn}2^{dn}\left(  \frac
{1}{\left\vert I\right\vert }\right)  ^{\frac{n+1}{n-1}};\\
& \text{where }\mathcal{K}_{d}\left(  I\right)  \equiv\left\{  J\subset
\mathcal{K}\left(  I\right)  :\frac{2^{d-1}}{\ell\left(  I\right)  ^{2}}%
\leq\operatorname*{dist}\left(  0,J\right)  \leq\frac{2^{d+1}}{\ell\left(
I\right)  ^{2}}\right\}  ,
\end{align*}
we have that%
\begin{align*}
\Gamma_{1}^{p}  & \lesssim2^{kn}\sum_{I\in\mathcal{G}\left[  U\right]
}\left(  \#\left\{  J\in\mathcal{D}_{k}:\ \left(  I,J\right)  \in
\mathcal{P}_{0}^{k,d}\right\}  \right)  \left\vert I\right\vert ^{p}\left(
\frac{1}{\left\vert I_{\eta}\right\vert }\int_{I_{\eta}}\left\vert
\bigtriangleup_{I;\kappa}^{n-1,\eta}f\left(  x\right)  \right\vert
^{2}dx\right)  ^{\frac{p}{2}}\\
& \lesssim2^{kn}2^{-kn}2^{dn}\sum_{I\in\mathcal{G}\left[  U\right]
}\left\vert I\right\vert ^{p-\frac{n+1}{n-1}}\left(  \frac{1}{\left\vert
I_{\eta}\right\vert }\int_{I_{\eta}}\left\vert \bigtriangleup_{I;\kappa
}^{n-1,\eta}f\left(  x\right)  \right\vert ^{2}dx\right)  ^{\frac{p}{2}}\\
& =2^{dn}\int_{S}\sum_{I\in\mathcal{G}\left[  U\right]  }\left\vert
I\right\vert ^{p-\frac{n+1}{n-1}-1}\left(  \frac{1}{\left\vert I_{\eta
}\right\vert }\int_{I_{\eta}}\left\vert \bigtriangleup_{I;\kappa}^{n-1,\eta
}f\left(  x\right)  \right\vert ^{2}dx\right)  ^{\frac{p}{2}}\mathbf{1}%
_{I}\left(  z\right)  dz\lesssim2^{dn}\left\Vert f\right\Vert _{L^{p}}^{p}\ ,
\end{align*}
provided $p\geq\frac{2n}{n-1}$, using the the Alpert square function estimate
(\ref{squ est}) as in (\ref{Gamma 1''}) above. Thus we have proved,%
\begin{align*}
\left\vert \mathsf{B}_{\operatorname{below}}^{k,d}\left(  f,g\right)
\right\vert  & \lesssim2^{-d\frac{n-1}{2}}2^{-\left\vert k\right\vert \kappa
}\left(  2^{dn}\right)  ^{\frac{1}{p}}\left\Vert f\right\Vert _{L^{p}%
}\left\Vert g\right\Vert _{L^{p^{\prime}}}\\
& \lesssim2^{-d\left(  \frac{n-1}{2}-\frac{n}{p}\right)  }2^{-\left\vert
k\right\vert \kappa}\left\Vert f\right\Vert _{L^{p}}\left\Vert g\right\Vert
_{L^{p^{\prime}}}\ ,\ \ \ \ \ \text{for }k\leq0,d\geq0,
\end{align*}
and so
\[
\sum_{k\leq0}\sum_{d\geq0}\left\vert \mathsf{B}_{\operatorname{below}}%
^{k,d}\left(  f,g\right)  \right\vert \lesssim\sum_{k\leq0}2^{-\left\vert
k\right\vert \kappa}\sum_{d\geq0}2^{-d\left(  \frac{n-1}{2}-\frac{n}%
{p}\right)  }\left\Vert f\right\Vert _{L^{p}}\left\Vert g\right\Vert
_{L^{p^{\prime}}}\lesssim\left\Vert f\right\Vert _{L^{p}}\left\Vert
g\right\Vert _{L^{p^{\prime}}}\ ,
\]
provided $p>\frac{2n}{n-1}$, and $\kappa\geq1$. Note that we only needed
\emph{strict} inequality $p>\frac{2n}{n-1}$ in this last line. Moreover, the
previous lines of argument can be simplified when $p>\frac{2n}{n-1}$ - see
Subsubsection \ref{subsub direct}.

\subsection{Subforms with $k\geq0,d\geq0$}

We take both $k$ and $d$ to be nonnegative, and begin with the radial
integration by parts formula (\ref{int by parts formula}) to obtain,%
\begin{align*}
\left\langle Th_{I;\kappa}^{n-1,\eta},h_{J;\kappa}^{n,\eta}\right\rangle  &
=\int_{\left(  0,\infty\right)  }\int_{\mathbb{R}^{n-1}}\left\{
\int_{\mathbb{R}^{n-1}}e^{i\lambda\phi\left(  x,y\right)  }\frac{\varphi
_{I}^{\eta}\left(  x\right)  }{\phi\left(  x,y\right)  ^{Z}}dx\right\}
\partial_{\lambda}^{Z}\widetilde{\psi}_{J}^{\eta}\left(  y,\lambda\right)
dyd\lambda\\
& =\int_{\left(  0,\infty\right)  }\int_{\mathbb{R}^{n-1}}\mathcal{I}%
_{\widetilde{\varphi_{I}^{\eta}},\phi}\left(  y,\lambda\right)  \partial
_{\lambda}^{Z}\widetilde{\psi}_{J}^{\eta}\left(  y,\lambda\right)  dyd\lambda,
\end{align*}
where%
\[
\mathcal{I}_{\widetilde{\varphi_{I}^{\eta}},\phi}\left(  y,\lambda\right)
=\int_{\mathbb{R}^{n-1}}e^{i\lambda\phi\left(  x,y\right)  }\frac{\varphi
_{I}^{\eta}\left(  x\right)  }{\phi\left(  x,y\right)  ^{Z}}dx
\]
which is an oscillatory term having the form of (\ref{asym formula}),\ but
with amplitude
\[
\widetilde{\varphi_{I}^{\eta}}\left(  x,y\right)  =\frac{\varphi_{I}^{\eta
}\left(  x\right)  }{\phi\left(  x,y\right)  ^{Z}},
\]
in place of $\varphi_{I}^{\eta}\left(  x\right)  $, which is then paired with
the function
\[
\partial_{\lambda}^{Z}\widetilde{\psi}_{J}^{\eta}\left(  y,\lambda\right)
=\partial_{\lambda}^{Z}h_{J;\kappa}^{n,\eta}\left(  \lambda y,\lambda
\sqrt{1-\left\vert y\right\vert ^{2}}\right)  \frac{\lambda^{n-1}}%
{\sqrt{1-\left\vert y\right\vert ^{2}}}%
\]
in place of $\widetilde{\psi}_{J}^{\eta}\left(  y,\lambda\right)  $, and where
we can take $Z\in\mathbb{N}$ to be a large positive integer depending only on
$n$.

Now we proceed by treating the integral%
\[
\int_{\left(  0,\infty\right)  }\int_{\mathbb{R}^{n-1}}\mathcal{I}%
_{\widetilde{\varphi_{I}^{\eta}},\phi}\left(  y,\lambda\right)  \partial
_{\lambda}^{Z}\widetilde{\psi}_{J}^{\eta}\left(  y,\lambda\right)  dyd\lambda
\]
as in the previous case where $k\leq0$ and $d\geq0$, but with the new
amplitudes $\widetilde{\varphi_{I}^{\eta}}$ and pairing functions
$\partial_{\lambda}^{Z}\widetilde{\psi}_{J}^{\eta}\left(  y,\lambda\right)  $
as above. The end result that we will obtain below is the estimate,
\begin{equation}
\left\vert \mathsf{B}_{\operatorname{below}}^{k,d}\left(  f,g\right)
\right\vert \lesssim2^{-d\delta}2^{-k\delta}\left\Vert f\right\Vert _{L^{p}%
}\left\Vert g\right\Vert _{L^{p^{\prime}}}\ ,\ \ \ \ \ \text{for }k\geq
0,d\geq0,\label{the est}%
\end{equation}
for some $\delta>0$.

Indeed, we apply Theorem \ref{osc int} to $\mathcal{I}_{\widetilde{\varphi
_{I}^{\eta}},\phi}\left(  y,\lambda\right)  =\mathfrak{P}_{\widetilde
{\varphi_{I}^{\eta}},\phi}\left(  y,\lambda\right)  +\mathfrak{R}%
_{\widetilde{\varphi_{I}^{\eta}},\phi}^{\left(  1\right)  }\left(
y,\lambda\right)  $ and first note that
\[
\mathfrak{P}_{\widetilde{\varphi_{I}^{\eta}},\phi}\left(  y,\lambda\right)
=\left(  \frac{2\pi}{\lambda}\right)  ^{\frac{n-1}{2}}\frac
{e^{i\operatorname{sgn}\left[  \partial_{x}^{2}\phi\left(  X\left(  y\right)
,y\right)  \right]  \frac{\pi}{4}+\lambda\phi\left(  X\left(  y\right)
,y\right)  }}{\sqrt{\left\vert \det B\left(  y\right)  \right\vert }%
}\widetilde{\varphi_{I}^{\eta}}\left(  X\left(  y\right)  \right)  ,
\]
and arguing as above, we get%
\[
\left\vert \int_{\left(  0,\infty\right)  }\mathfrak{P}_{\widetilde
{\varphi_{I}^{\eta}},\phi}\left(  y,\lambda\right)  \partial_{\lambda}%
^{Z}\widetilde{\psi}_{J}^{\eta}\left(  y,\lambda\right)  dyd\lambda\right\vert
\lesssim2^{-d\frac{n-1}{2}}2^{-kZ}\sqrt{\left\vert I\right\vert \left\vert
J\right\vert }.
\]
As for the remainder term $\mathfrak{R}_{\widetilde{\varphi_{I}^{\eta}},\phi
}^{\left(  1\right)  }\left(  y,y_{J},\lambda\right)  $, we again invoke the
argument of C. Rios to obtain from (\ref{R1}) with $\kappa=0$ that
\begin{subequations}
\begin{align}
& \left\vert \left\langle \mathfrak{R}_{\widetilde{\varphi_{I}^{\eta}},\phi
}^{\left(  1\right)  }\left(  y,\lambda\right)  ,\partial_{\lambda}%
h_{J;\kappa}^{n,\eta}\right\rangle \right\vert \leq\int\left\vert
\mathfrak{R}_{\widetilde{\varphi_{I}^{\eta}},\phi}^{\left(  1\right)  }\left(
\xi\right)  \partial_{\lambda}h_{J;\kappa}^{n,\eta}\left(  \xi\right)
\right\vert d\xi\leq\left\Vert \mathfrak{R}_{\widetilde{\varphi_{I}^{\eta}%
},\phi}^{\left(  1\right)  }\right\Vert _{L^{\infty}}\left\Vert \partial
_{\lambda}h_{J;\kappa}^{n,\eta}\right\Vert _{L^{\infty}}\left\vert
J\right\vert \label{bound}\\
& \lesssim2^{-d\left(  \frac{n-1}{2}+2\right)  }2^{-kZ}\sqrt{\left\vert
I\right\vert }\sqrt{\left\vert J\right\vert }\leq2^{-d\frac{n-1}{2}}%
2^{-kZ}\sqrt{\left\vert I\right\vert \left\vert J\right\vert },\nonumber
\end{align}
where we have discarded the small factor $2^{-2d}$.

\subsubsection{The Alpert square function estimates}

From above, we have the estimate,
\end{subequations}
\[
\left\vert \int_{\left(  0,\infty\right)  }\int_{S}\mathcal{I}_{\widetilde
{\varphi_{I}^{\eta}},\phi}\left(  y_{J},\lambda\right)  \partial_{\lambda}%
^{Z}\widehat{\psi}_{J}^{\eta}\left(  y,\lambda\right)  dyd\lambda\right\vert
\lesssim2^{-d\frac{n-1}{2}}2^{-kZ}\sqrt{\left\vert I\right\vert \left\vert
J\right\vert }.
\]
Now we apply the Alpert square function arguments to obtain (\ref{the est})
for some $\delta>0$ by choosing $Z$ sufficiently large depending on $n$.
Indeed, following the argument in the above subsection, we have
\begin{align*}
& \left\vert \mathsf{B}_{\operatorname{below}}^{k,d}\left(  f,g\right)
\right\vert \lesssim2^{-d\frac{n-1}{2}}2^{-kZ}\int_{\mathbb{R}^{n}}\sqrt
{\sum_{\left(  I,J\right)  \in\mathcal{P}_{0}^{k,d}}\left(  \int_{I_{\eta}%
}\left\vert \bigtriangleup_{I;\kappa}^{n-1,\eta}f\left(  x\right)  \right\vert
dx\right)  ^{2}}\sqrt{\sum_{\left(  I,J\right)  \in\mathcal{P}_{0}^{k,d}%
}\left\vert \bigtriangleup_{J;\kappa}^{n,\eta}g\left(  \xi\right)  \right\vert
^{2}}d\xi\\
& \lesssim2^{-d\frac{n-1}{2}}2^{-kZ}\left(  \int_{\mathbb{R}^{n}}\left(
\sum_{\left(  I,J\right)  \in\mathcal{P}_{0}^{k,d}}\left(  \int_{I_{\eta}%
}\left\vert \bigtriangleup_{I;\kappa}^{n-1,\eta}f\left(  x\right)  \right\vert
dx\mathbf{1}_{J_{\eta}}\left(  \xi\right)  \right)  ^{2}\right)  ^{\frac{p}%
{2}}d\xi\right)  ^{\frac{1}{p}}\\
& \ \ \ \ \ \ \ \ \ \ \ \ \ \ \ \ \ \ \ \ \ \ \ \ \ \ \ \ \ \ \times\left(
\int_{\mathbb{R}^{n}}\left(  \sum_{\left(  I,J\right)  \in\mathcal{P}%
_{0}^{k,d}}\left\vert \bigtriangleup_{J;\kappa}^{n,\eta}g\left(  \xi\right)
\right\vert ^{2}\right)  ^{\frac{p^{\prime}}{2}}d\xi\right)  ^{\frac
{1}{p^{\prime}}}\\
& \equiv2^{-d\frac{n-1}{2}}2^{-kZ}\Gamma_{1}\Gamma_{2}.
\end{align*}
and $\sum_{I\in\mathcal{G}:\ \left(  I,J\right)  \in\mathcal{C}_{0}^{0,0}%
}1\leq2A$, which together give,%
\[
\Gamma_{2}^{p^{\prime}}=\int_{\mathbb{R}^{n}}\left(  \sum_{\left(  I,J\right)
\in\mathcal{P}_{0}^{0,0}}\left\vert \bigtriangleup_{J;\kappa}^{n,\eta}g\left(
x\right)  \right\vert ^{2}\right)  ^{\frac{p^{\prime}}{2}}dx\leq
\int_{\mathbb{R}^{n}}\left(  \sum_{J\in\mathcal{D}}2A\left\vert \bigtriangleup
_{J;\kappa}^{n,\eta}g\left(  x\right)  \right\vert ^{2}\right)  ^{\frac
{p^{\prime}}{2}}dx\lesssim\left\Vert g\right\Vert _{L^{p^{\prime}}}%
^{p^{\prime}}\ ,
\]
by the\ Alpert square function estimate (\ref{squ est}).

We also have%
\[
\Gamma_{1}^{p}=2^{kn}\sum_{J\in\mathcal{D}_{k}}\left(  \sum_{I\in
\mathcal{G}\left[  U\right]  :\ \left(  I,J\right)  \in\mathcal{P}_{0}^{k,d}%
}\left(  \int_{I_{\eta}}\left\vert \bigtriangleup_{I;\kappa}^{n-1,\eta
}f\left(  x\right)  \right\vert dx\right)  ^{2}\right)  ^{\frac{p}{2}},
\]
and since $k\geq0$, we obtain that $\#\left\{  J\in\mathcal{D}_{k}:\ \left(
I,J\right)  \in\mathcal{P}_{0}^{k,d}\right\}  \lesssim2^{-kn}$, which yields%
\begin{align*}
\Gamma_{1}^{p}  & \lesssim2^{kn}\sum_{I\in\mathcal{G}\left[  U\right]
}\left(  \#\left\{  J\in\mathcal{D}_{k}:\ \left(  I,J\right)  \in
\mathcal{P}_{0}^{k,d}\right\}  \right)  \left\vert I\right\vert ^{p}\left(
\frac{1}{\left\vert I_{\eta}\right\vert }\int_{I_{\eta}}\left\vert
\bigtriangleup_{I;\kappa}^{n-1,\eta}f\left(  x\right)  \right\vert
^{2}dx\right)  ^{\frac{p}{2}}\\
& \lesssim2^{kn}2^{-kn}2^{dn}\sum_{I\in\mathcal{G}\left[  U\right]
}\left\vert I\right\vert ^{p-\frac{n+1}{n-1}}\left(  \frac{1}{\left\vert
I_{\eta}\right\vert }\int_{I_{\eta}}\left\vert \bigtriangleup_{I;\kappa
}^{n-1,\eta}f\left(  x\right)  \right\vert ^{2}dx\right)  ^{\frac{p}{2}}\\
& =2^{dn}\int_{S}\sum_{I\in\mathcal{G}\left[  U\right]  }\left\vert
I\right\vert ^{p-\frac{n+1}{n-1}-1}\left(  \frac{1}{\left\vert I_{\eta
}\right\vert }\int_{I_{\eta}}\left\vert \bigtriangleup_{I;\kappa}^{n-1,\eta
}f\left(  x\right)  \right\vert ^{2}dx\right)  ^{\frac{p}{2}}\mathbf{1}%
_{I}\left(  z\right)  dz\lesssim2^{dn}\left\Vert f\right\Vert _{L^{p}}^{p}\ ,
\end{align*}
just as before, by the Alpert square function estimate (\ref{squ est}),
provided $p\geq\frac{2n}{n-1}$.

Altogether then we have%
\[
\left\vert \mathsf{B}_{\operatorname{below}}^{k,d}\left(  f,g\right)
\right\vert \lesssim2^{-d\frac{n-1}{2}}2^{-kZ}\Gamma_{1}\Gamma_{2}%
\lesssim2^{-d\left(  \frac{n-1}{2}-\frac{n}{p}\right)  }2^{-kZ}\left\Vert
f\right\Vert _{L^{p}}\left\Vert g\right\Vert _{L^{p^{\prime}}},
\]
which implies (\ref{the est}) with
\[
\delta\equiv\min\left\{  \frac{n-1}{2}-\frac{n}{p},Z\right\}  >0,
\]
provided $p>\frac{2n}{n-1}$ and $Z\geq1$. Finally, summing in $k,d\geq0$, we
obtain%
\[
\sum_{k\geq0}\sum_{d\geq0}\left\vert \mathsf{B}_{\operatorname{below}}%
^{k,d}\left(  f,g\right)  \right\vert \leq\sum_{k\geq0}\sum_{d\geq
0}2^{-d\delta}2^{-k\delta}\left\Vert f\right\Vert _{L^{p}}\left\Vert
g\right\Vert _{L^{p^{\prime}}}\lesssim\left\Vert f\right\Vert _{L^{p}%
}\left\Vert g\right\Vert _{L^{p^{\prime}}}\ .
\]

\subsection{Wrapup}

Combining the estimates from all four subsections above yields the desired
bound,%
\[
\left\vert \mathsf{B}_{\operatorname{below}}\left(  f,g\right)  \right\vert
\lesssim\left\Vert f\right\Vert _{L^{p}}\left\Vert g\right\Vert _{L^{p^{\prime
}}}\ ,\ \ \ \ \ p>\frac{2n}{n-1},
\]
in fact the stronger bound (\ref{strong below}).

\begin{remark}
The \emph{strict} inequality $p>\frac{2n}{n-1}$ was used only in bounding the
below form for large $d$. We will also use $p>\frac{2n}{n-1}$ for
probabilistic control of the disjoint form, but only $p>1$ for controlling the
above form $\mathsf{B}_{\operatorname{above}}\left(  f,g\right)  $, to which
we turn next.
\end{remark}

\section{Control of the $\operatorname{above}$ form\label{Sec above}}

Next we control the above form,%
\[
\mathsf{B}_{\operatorname{above}}\left(  f,g\right)  \equiv\sum_{\left(
I,J\right)  \in\mathcal{R}}\left\langle Th_{I;\kappa}^{n-1,\eta},h_{J;\kappa
}^{n,\eta}\right\rangle ,
\]
where%
\[
\mathcal{R}\equiv\left\{  \left(  I,J\right)  \in\mathcal{G}\left[  U\right]
\times\mathcal{D}:\Phi\left(  I\right)  \subset\pi_{\tan}\left(
C_{\operatorname{pseudo}}J\right)  \right\}  \ .
\]
For this form, we will use the pigeonholed parameter $k=\log_{2}\ell\left(
J\right)  $ already used in the below subforms, together with a new parameter
$r=\log_{2}\frac{\ell\left(  \pi_{\tan}J\right)  }{\ell\left(  I\right)  }$,
measuring the ratio of the side lengths of $I$ and $\pi_{\tan}J$. Note that
for fixed $k$ and $r$, and a fixed cube $I\in\mathcal{G}$, there is at most a
bounded number of cubes $J\in\mathcal{D}$ satisfying the pigeonholed
properties $\ell\left(  J\right)  =2^{k}$ and $\frac{\ell\left(  \pi_{\tan
}J\right)  }{\ell\left(  I\right)  }=2^{r}$ such that $\left(  I,J\right)
\in\mathcal{R}$. This fact dictates that we arrange our Alpert square function
decompositions relative to the cubes $I$ in the grid $\mathcal{G}$ (rather
than to cubes $J$ in $\mathcal{D}$ as as in $\mathsf{B}_{\operatorname*{below}%
}\left(  f,g\right)  $) in the arguments below.

To achieve geometric decay in both of these parameters, we will use the high
order moment vanishing principle of decay for the Alpert wavelets
$h_{I;\kappa}^{n-1,\eta}$ in $S$ for decay in $r$, an integration by parts in
the radial Fourier variable for decay in $k\geq0$, and the high order moment
vanishing principle of decay for the Alpert wavelets $h_{J;\kappa}^{n,\eta}$
for decay in $k\leq0$. The stationary phase estimate in Theorem \ref{osc int}
is not needed for the form $\mathsf{B}_{\operatorname{above}}\left(
f,g\right)  $.

In fact we will prove the stronger result that the sublinear form%
\[
\left\vert \mathsf{B}_{\operatorname{above}}\right\vert \left(  f,g\right)
\equiv\sum_{\left(  I,J\right)  \in\mathcal{R}}\left\vert \left\langle
T_{S}h_{I;\kappa}^{n-1,\eta},h_{J;\kappa}^{n,\eta}\right\rangle \right\vert
\]
satisfies%
\begin{equation}
\left\vert \mathsf{B}_{\operatorname{above}}\right\vert \left(  f,g\right)
\lesssim\left\Vert f\right\Vert _{L^{p}}\left\Vert g\right\Vert _{L^{p^{\prime
}}},\ \ \ \ \ \text{for }p>\frac{2n}{n-1}.\label{strong above}%
\end{equation}

Here is the decomposition of $\mathcal{R}$ we will use:%
\begin{align}
& \mathcal{R}=\bigcup_{k\in\mathbb{Z}}\bigcup_{r=1}^{\infty}\mathcal{R}%
^{k,r},\text{ where for all }k\in\mathbb{Z}\text{ and }r\in\mathbb{N}%
,\label{R_k,r}\\
& \mathcal{R}^{k,r}\equiv\left\{  \left(  I,J\right)  \in\mathcal{R}%
:\ell\left(  J\right)  =2^{k}\text{, and }\ell\left(  \pi_{\tan}J\right)
\approx2^{r}\ell\left(  I\right)  \right\}  .\nonumber
\end{align}
First we reduce matters to consideration of cubes $J$ that are disjoint from a
large cube $\left[  -2^{M},2^{M}\right]  ^{n}$ centered at the origin, which
will permit the manipulations used below.

\subsection{Reduction to far away dyadic cubes\label{red faraway}}

We now dispense with the first set of trivial pairs $\left(  I,J\right)
\in\mathcal{R}$, namely those for which $J\subset\left[  -2^{M},2^{M}\right]
^{n}$ for some fixed large positive integer $M$. This can be achieved by
splitting the function $g$ into
\[
g=\mathbf{1}_{\left[  -2^{M},2^{M}\right]  ^{n}}g+\mathbf{1}_{\mathbb{R}%
^{n}\setminus\left[  -2^{M},2^{M}\right]  ^{n}}g=g_{1}+g_{2},
\]
and noting that%
\[
\left\vert \left\langle Tf,g_{1}\right\rangle \right\vert \lesssim\left\Vert
f\right\Vert _{L^{1}}\left\Vert g_{1}\right\Vert _{L^{1}}\lesssim\left\Vert
f\right\Vert _{L^{p}}2^{Mnp}\left\Vert g_{1}\right\Vert _{L^{p^{\prime}}%
},\ \ \ \ \ 1<p<\infty.
\]
Then we may assume that $g$ is supported outside $\left[  -2^{M},2^{M}\right]
^{n}$, and it follows that $\bigtriangleup_{J;\kappa}^{n,\eta}f=\left\langle
f,h_{J;\kappa}^{n}\right\rangle h_{J;\kappa}^{n,\eta}$ vanishes for
$J\subset\left[  -2^{M},2^{M}\right]  ^{n}$.

Next we deal with the slightly less trivial case of dyadic cubes $J$ that have
the origin as one of their vertices. These cubes are contained in $2^{n} $
towers of dyadic cubes, and we will derive here the bound corresponding to the
tower $\left\{  J_{k}\right\}  _{k=M}^{\infty}$ where $J_{k}=\left[
0,2^{k}\right]  ^{n}$, the other cases being similar. First we note that%
\[
\left(  \frac{1}{-ix_{n}}\mathbf{e}_{n}\cdot\partial_{\xi}\right)
^{N}e^{-ix\cdot\xi}=e^{-ix\cdot\xi}\text{ for all }N\text{,}%
\]
and so integrating by parts $N$ times gives,%
\begin{align*}
\left\langle Tf,\bigtriangleup_{J_{k}}^{n,\eta}g\right\rangle  &
=\int_{\mathbb{R}^{n}}\int_{\Phi\left(  S\right)  }f\left(  z\right)
e^{-iz\cdot\xi}d\sigma_{n-1}\left(  z\right)  \bigtriangleup_{J_{k}}^{n,\eta
}g\left(  \xi\right)  d\xi\\
& =\int_{\Phi\left(  S\right)  }\left\{  \int_{\mathbb{R}^{n}}e^{-iz\cdot\xi
}h_{J_{k}}^{n,\eta}\left(  \xi\right)  \left\langle g,h_{J_{k}}^{n,\eta
}\right\rangle d\xi\right\}  d\sigma_{n-1}\left(  z\right) \\
& =i^{N}\left\langle g,h_{J_{k}}^{n,\eta}\right\rangle \int_{S}\left\{
\int_{\mathbb{R}^{n}}e^{-iz\cdot\xi}\left(  \mathbf{e}_{n}\cdot\partial_{\xi
}\right)  ^{N}h_{J_{k}}^{n,\eta}g\left(  \xi\right)  d\xi\right\}  \left(
\frac{1}{z_{n}}\right)  ^{N}f\left(  z\right)  d\sigma_{n-1}\left(  z\right)
,
\end{align*}
and then%
\begin{align*}
& \sum_{k=N}^{\infty}\left\vert \left\langle Tf,\bigtriangleup_{J_{k}}%
^{n,\eta}g\right\rangle \right\vert \lesssim\sum_{k=N}^{\infty}\left\vert
\left\langle g,h_{J_{k}}^{n,\eta}\right\rangle \right\vert \int_{S}\left(
\frac{1}{\eta\ell\left(  J_{k}\right)  }\right)  ^{N}\sqrt{\left\vert
J_{k}\right\vert }\left(  \frac{1}{x_{n}}\right)  ^{N}f\left(  x\right)  dx\\
& \leq\left(  \frac{1}{\eta}\right)  ^{N}\sum_{k=N}^{\infty}\left\vert
\left\langle g,h_{J_{k}}^{n,\eta}\right\rangle \right\vert \ell\left(
J_{k}\right)  ^{\frac{n}{2}-N}\left\Vert f\right\Vert _{L^{1}}=\left(
\frac{1}{\eta}\right)  ^{N}\left\Vert f\right\Vert _{L^{1}}\int_{\mathbb{R}%
^{n}}\sum_{k=N}^{\infty}\left(  \left\vert \left\langle g,h_{J_{k}}^{n,\eta
}\right\rangle \right\vert \frac{1}{\sqrt{\left\vert J_{k}\right\vert }%
}\right)  \ell\left(  J_{k}\right)  ^{-N}\mathbf{1}_{J_{k}}\left(  z\right)
dz\\
& \leq\left(  \frac{1}{\eta}\right)  ^{N}\left\Vert f\right\Vert _{L^{1}}%
\int_{\mathbb{R}^{n}}\left(  \sum_{k=N}^{\infty}\left(  \left\vert
\left\langle g,h_{J_{k}}^{n,\eta}\right\rangle \right\vert \frac{1}%
{\sqrt{\left\vert J_{k}\right\vert }}\right)  ^{2}\mathbf{1}_{J_{k}}\left(
z\right)  \right)  ^{\frac{1}{2}}\left(  \sum_{k=N}^{\infty}\ell\left(
J_{k}\right)  ^{-2N}\mathbf{1}_{J_{k}}\left(  z\right)  \right)  ^{\frac{1}%
{2}}dz\\
& \leq\left(  \frac{1}{\eta}\right)  ^{N}\left\Vert f\right\Vert _{L^{1}%
}\left(  \int_{\mathbb{R}^{n}}\left(  \sum_{k=N}^{\infty}\frac{\left\vert
\left\langle g,h_{J_{k}}^{n,\eta}\right\rangle \right\vert ^{2}}{\left\vert
J_{k}\right\vert }\mathbf{1}_{J_{k}}\left(  z\right)  \right)  ^{\frac
{p^{\prime}}{2}}dz\right)  ^{\frac{1}{p^{\prime}}}\left(  \int_{\mathbb{R}%
^{n}}\left(  \sum_{k=N}^{\infty}\ell\left(  J_{k}\right)  ^{-2N}%
\mathbf{1}_{J_{k}}\left(  z\right)  \right)  ^{\frac{p}{2}}dz\right)
^{\frac{1}{p}}.
\end{align*}
Thus we obtain%
\[
\sum_{k=N}^{\infty}\left\vert \left\langle T_{S}f,\bigtriangleup_{J_{k}%
;\kappa}^{n,\eta}g\right\rangle \right\vert \leq C_{p,N}\left(  \frac{1}{\eta
}\right)  ^{N}\left\Vert f\right\Vert _{L^{1}}\left\Vert g\right\Vert
_{L^{p^{\prime}}}\leq C_{p,N}\left(  \frac{1}{\eta}\right)  ^{N}\left\Vert
f\right\Vert _{L^{p}}\left\Vert g\right\Vert _{L^{p^{\prime}}}%
\ ,\ \ \ \ \ 1<p<\infty,
\]
using the equivalence (\ref{square}) of Alpert square function norms on $g$,
together with the finiteness of the final factor if $N$ is chosen sufficiently
large. Indeed, $\left\Vert g\right\Vert _{L^{p^{\prime}}}\approx\left\Vert
\mathcal{S}g\right\Vert _{L^{p^{\prime}}}$ where
\begin{align*}
\left\Vert \mathcal{S}g\right\Vert _{L^{p^{\prime}}}^{p^{\prime}}  &
=\int_{\mathbb{R}^{n}}\left(  \sum_{J\in\mathcal{D}}^{\infty}\left\vert
\bigtriangleup_{J;\kappa}^{n,\eta}g\left(  z\right)  \right\vert ^{2}\right)
^{\frac{p^{\prime}}{2}}dz=\int_{\mathbb{R}^{n}}\left(  \sum_{J\in\mathcal{D}%
}^{\infty}\left(  \left\langle g,h_{J;\kappa}^{n,\eta}\right\rangle
h_{J;\kappa}^{n,\eta}\left(  z\right)  \right)  ^{2}\right)  ^{\frac
{p^{\prime}}{2}}dz\\
& =\int_{\mathbb{R}^{n}}\left(  \sum_{J\in\mathcal{D}}^{\infty}\left(
\left\langle g,h_{J;\kappa}^{n,\eta}\right\rangle \frac{1}{\sqrt{\left\vert
J\right\vert }}\right)  ^{2}\mathbf{1}_{J_{\eta}}\left(  z\right)  \right)
^{\frac{p^{\prime}}{2}}dz=\int_{\mathbb{R}^{n}}\left(  \sum_{J\in\mathcal{D}%
}^{\infty}\frac{\left\langle g,h_{J;\kappa}^{n,\eta}\right\rangle ^{2}%
}{\left\vert J\right\vert }\mathbf{1}_{J_{\eta}}\left(  z\right)  \right)
^{\frac{p^{\prime}}{2}},
\end{align*}
and for $N>\frac{n}{p}$ we have,%
\[
\int_{\mathbb{R}^{n}}\left(  \sum_{k=N}^{\infty}\ell\left(  J_{k}\right)
^{-2N}\mathbf{1}_{\left(  J_{k}\right)  _{\eta}}\left(  z\right)  \right)
^{\frac{p}{2}}dz=\int_{\mathbb{R}^{n}}\left(  \sum_{k=N}^{\infty}%
2^{-2Nk}\mathbf{1}_{\left(  \left[  0,2^{k}\right]  ^{n}\right)  _{\eta}%
}\left(  z\right)  \right)  ^{\frac{p}{2}}dz\lesssim\int_{\mathbb{R}^{n}%
}\left(  1+\left\vert z\right\vert ^{-2N}\right)  ^{\frac{p}{2}}dz<\infty.
\]

\begin{definition}
\label{def R*}Set%
\begin{align*}
\mathcal{R}_{\ast}  & \equiv\left\{  \left(  I,J\right)  \in\mathcal{R}%
:J\cap\left[  -2^{M},2^{M}\right]  ^{n}=\emptyset\right\} \\
& =\left\{  \left(  I,J\right)  \in\mathcal{G}\left[  U\right]  \times
\mathcal{D}:\Phi\left(  I\right)  \subset\pi_{\tan}\left(
C_{\operatorname{pseudo}}J\right)  \text{ and }J\cap\left[  -2^{M}%
,2^{M}\right]  ^{n}=\emptyset\right\}  ,
\end{align*}
and with $\mathcal{R}^{k,r}$ as in (\ref{R_k,r}),%
\begin{align}
\mathcal{R}_{\ast}^{k,r}  & \equiv\left\{  \left(  I,J\right)  \in
\mathcal{R}^{k,r}:J\cap\left[  -2^{M},2^{M}\right]  ^{n}=\emptyset\right\}
,\label{def R*kr}\\
\mathcal{R}_{\ast}^{r}  & \equiv\bigcup_{k}\mathcal{R}_{\ast}^{k,r}%
\ .\nonumber
\end{align}

\end{definition}

\begin{description}
\item[Assumption] It is understood from now on that all of the cubes
$J\in\mathcal{R}$ considered below in this section satisfy $J\cap\left[
-2^{M},2^{M}\right]  ^{n}=\emptyset$, i.e. $\left(  I,J\right)  \in
\mathcal{R}_{\ast}$.
\end{description}

\subsection{Pigeonholed subforms\label{Sub pigeon}}

Using the moment vanishing of the smooth wavelets $h_{I;\kappa}^{n-1,\eta}$,
we first show the preliminary estimate that for all $r\in\mathbb{N}$,%
\begin{equation}
\left\vert \left\langle Th_{I;\kappa}^{n-1,\eta},h_{J;\kappa}^{n,\eta
}\right\rangle \right\vert \lesssim\ell\left(  I\right)  ^{\kappa}\ell\left(
J\right)  ^{\kappa}\sqrt{\left\vert I\right\vert \left\vert J\right\vert
},\ \ \ \ \ \text{for all }\left(  I,J\right)  \in\mathcal{R}_{\ast}^{r}\text{
when }r\geq1.\label{pre est}%
\end{equation}
So consider the case $\left(  I,J\right)  \in\mathcal{R}_{\ast}^{r}$, $r\geq
1$. Using (\ref{exp Tay}) and (\ref{g kappa bound}), with $c_{I}$ denoting the
center of $I$, we have%
\begin{align*}
& \left\langle Th_{I;\kappa}^{n-1,\eta},h_{J;\kappa}^{n,\eta}\right\rangle
=\int_{\mathbb{R}^{n}}\int_{\mathbb{R}^{n-1}}e^{-i\Phi\left(  x\right)
\cdot\xi}h_{I;\kappa}^{n-1,\eta}\left(  x\right)  dxh_{J;\kappa}^{n,\eta
}\left(  \xi\right)  d\xi\\
& =\int_{\mathbb{R}^{n}}e^{-i\Phi\left(  c_{I}\right)  \cdot\xi}h_{J;\kappa
}^{n,\eta}\left(  \xi\right)  \left\{  \int_{\mathbb{R}^{n-1}}e^{-i\left[
\Phi\left(  x\right)  -\Phi\left(  c_{I}\right)  \right]  \cdot\xi}%
h_{I;\kappa}^{n-1,\eta}\left(  x\right)  dx\right\}  d\xi\\
& =\int_{\mathbb{R}^{n}}e^{-i\Phi\left(  c_{I}\right)  \cdot\xi}h_{J;\kappa
}^{n,\eta}\left(  \xi\right)  \left\{  \int_{\mathbb{R}^{n-1}}\left[
\sum_{\ell=0}^{\kappa-1}\frac{\left(  -i\xi\cdot\left[  \Phi\left(  x\right)
-\Phi\left(  c_{I}\right)  \right]  \right)  ^{\ell}}{\ell!}+R_{\kappa}\left(
-i\xi\cdot\left[  \Phi\left(  x\right)  -\Phi\left(  c_{I}\right)  \right]
\right)  \right]  h_{I;\kappa}^{n-1,\eta}\left(  x\right)  dx\right\}  d\xi.
\end{align*}
In order to apply the moment vanishing properties of $h_{I;\kappa}^{n-1,\eta}%
$, we need to express $\Phi\left(  x\right)  $ by Taylor's formula as well,%
\[
\Phi\left(  x\right)  =\sum_{\ell=0}^{\kappa-1}\frac{\left(  \left(
x-c_{I}\right)  \cdot\partial_{x}\right)  ^{\ell}}{\ell!}\Phi\left(
c_{I}\right)  +\Gamma_{\kappa}\left(  x-c_{I}\right)  ,
\]
and then plug this expression into the previous Taylor formula. The result is
that all the terms with a polynomial in $x$ of order less than $\kappa$
vanish, and we are left with%
\begin{align}
\left\langle Th_{I;\kappa}^{n-1,\eta},h_{J;\kappa}^{n,\eta}\right\rangle  &
=\int_{\mathbb{R}^{n}}e^{-i\Phi\left(  c_{I}\right)  \cdot\xi}h_{J;\kappa
}^{n,\eta}\left(  \xi\right)  \left\{  \int_{\mathbb{R}^{n-1}}\Gamma\left(
\xi,x\right)  h_{I;\kappa}^{n-1,\eta}\left(  x\right)  dx\right\}
d\xi\label{first formula}\\
& =\int_{\mathbb{R}^{n}}e^{-i\Phi\left(  c_{I}\right)  \cdot\xi}h_{J;\kappa
}^{n,\eta}\left(  \xi\right)  \left\{  \int_{\mathbb{R}^{n-1}}\left[
R_{\kappa}\left(  -i\xi\cdot\left[  \Phi\left(  x\right)  -\Phi\left(
c_{I}\right)  \right]  \right)  \right]  h_{I;\kappa}^{n-1,\eta}\left(
x\right)  dx\right\}  d\xi\nonumber\\
& +\int_{\mathbb{R}^{n}}e^{-i\Phi\left(  c_{I}\right)  \cdot\xi}h_{J;\kappa
}^{n,\eta}\left(  \xi\right)  \left\{  \int_{\mathbb{R}^{n-1}}\left[
\Gamma_{\kappa}\left(  x-c_{I}\right)  \right]  h_{I;\kappa}^{n-1,\eta}\left(
x\right)  dx\right\}  d\xi\nonumber
\end{align}
where
\begin{equation}
\Gamma\left(  \xi,x\right)  =R_{\kappa}\left(  -i\xi\cdot\left[  \Phi\left(
x\right)  -\Phi\left(  c_{I}\right)  \right]  \right)  +\Gamma_{\kappa}\left(
x-c_{I}\right) \label{Gamma}%
\end{equation}
consists of the remainder term $R_{\kappa}$ and a collection of error
expressions in $\Gamma_{\kappa}\left(  \xi,x\right)  $. Because $\left\vert
x-c_{I}\right\vert \leq\left\vert \Phi\left(  x\right)  -\Phi\left(
c_{I}\right)  \right\vert $, these error expressions satisfy the same
pointwise bounds as the original remainder term $R_{\kappa}\left(  -i\xi
\cdot\left[  \Phi\left(  x\right)  -\Phi\left(  c_{I}\right)  \right]
\right)  $. Recalling from (\ref{g kappa bound}) that the remainder term
$R_{\kappa}$ satisfies $\left\vert R_{\kappa}\left(  ib\right)  \right\vert
\leq\frac{\left\vert b\right\vert ^{\kappa}}{\left(  \kappa+1\right)  !}$, and
taking absolute values inside the integral, we obtain,%
\begin{equation}
\left\vert \left\langle Th_{I;\kappa}^{n-1,\eta},h_{J;\kappa}^{n,\eta
}\right\rangle \right\vert \lesssim\left(  \operatorname*{dist}\left(
0,J\right)  \ell\left(  I\right)  \sin\theta\right)  ^{\kappa}\sqrt{\left\vert
I\right\vert \left\vert J\right\vert },\label{first prelim}%
\end{equation}
where $\theta$ is the angle between $\xi$ and $\Phi\left(  x\right)
-\Phi\left(  c_{I}\right)  $. In the case at hand where $\left(  I,J\right)
\in\mathcal{R}_{\ast}^{r}$, we have $\theta\approx\ell\left(  \pi_{\tan
}J\right)  \approx\frac{\ell\left(  J\right)  }{\operatorname*{dist}\left(
0,J\right)  }$, and so%
\[
\left\vert \left\langle Th_{I;\kappa}^{n-1,\eta},h_{J;\kappa}^{n,\eta
}\right\rangle \right\vert \lesssim\left(  \operatorname*{dist}\left(
0,J\right)  \ell\left(  I\right)  \frac{\ell\left(  J\right)  }%
{\operatorname*{dist}\left(  0,J\right)  }\right)  ^{\kappa}\sqrt{\left\vert
I\right\vert \left\vert J\right\vert }\approx\ell\left(  I\right)  ^{\kappa
}\ell\left(  J\right)  ^{\kappa}\sqrt{\left\vert I\right\vert \left\vert
J\right\vert },\ \ \ \ \ \text{for }\left(  I,J\right)  \in\mathcal{R}_{\ast
}^{r}\ ,
\]
which proves the preliminary estimate (\ref{pre est}).

The case $k\leq0$ will be handled by this last estimate alone, since for
$\left(  I,J\right)  \in\mathcal{R}_{\ast}^{r}$, it yields%
\begin{equation}
\left\vert \left\langle Th_{I;\kappa}^{n-1,\eta},h_{J;\kappa}^{n,\eta
}\right\rangle \right\vert \lesssim\ell\left(  \pi_{\tan}J\right)  ^{\kappa
}\left(  \frac{\ell\left(  I\right)  }{\ell\left(  \pi_{\tan}J\right)
}\right)  ^{\kappa}\ell\left(  J\right)  ^{\kappa}\sqrt{\left\vert
I\right\vert \left\vert J\right\vert }\leq2^{-r\kappa}2^{-\left\vert
k\right\vert \kappa},\ \ \ \ \ \text{for }k\leq0,\label{first prelim'}%
\end{equation}
upon discarding the small factor $\ell\left(  \pi_{\tan}J\right)  ^{\kappa} $.

To handle the case $k\geq0$, we introduce the radial integration by parts
principle of decay, that will deliver geometric gain in $k$. First we observe
that $\left(  I,J\right)  \in\mathcal{R}_{\ast}$ implies $I\subset\pi_{\tan
}\left(  C_{\operatorname{pseudo}}J\right)  $, and so for $\mathbf{v}%
=\pi_{\tan}c_{J}$ and for $x\in\pi_{\tan}\left(  C_{\operatorname{pseudo}%
}J\right)  $ we have
\[
\mathbf{v}\cdot\Phi\left(  x\right)  \geq c>0,
\]
and%
\[
\left(  \frac{1}{-i\mathbf{v}\cdot\Phi\left(  x\right)  }\mathbf{v}%
\cdot\partial_{\xi}\right)  ^{N}e^{-i\Phi\left(  x\right)  \cdot\xi
}=e^{-ix\cdot\xi}\text{ for all }N\text{.}%
\]
Integrating by parts $N$ times then gives,%
\begin{align}
& \left\langle Th_{I;\kappa}^{n-1,\eta},h_{J;\kappa}^{n,\eta}\right\rangle
=\int_{\mathbb{R}^{n}}\int_{\mathbb{R}^{n-1}}h_{I;\kappa}^{n-1,\eta}%
e^{-i\Phi\left(  x\right)  \cdot\xi}dxh_{J}^{n,\eta}g\left(  \xi\right)
d\xi\label{second formula}\\
& =\int_{\mathbb{R}^{n-1}}\left\{  \int_{\mathbb{R}^{n}}e^{-ix\cdot\xi}%
h_{J}^{n,\eta}g\left(  \xi\right)  d\xi\right\}  h_{I;\kappa}^{n-1,\eta
}dx\nonumber\\
& =i^{N}\int_{\mathbb{R}^{n-1}}\left\{  \int_{\mathbb{R}^{n}}e^{-ix\cdot\xi
}\left(  \mathbf{v}\cdot\partial_{\xi}\right)  ^{N}h_{J}^{n,\eta}\left(
\xi\right)  d\xi\right\}  \left(  \frac{1}{\mathbf{v}\cdot\Phi\left(
x\right)  }\right)  ^{N}h_{I;\kappa}^{n-1,\eta}\left(  x\right)  dx,\nonumber
\end{align}
and then we have the second preliminary estimate,%
\begin{equation}
\left\vert \left\langle Th_{I;\kappa}^{n-1,\eta},h_{J;\kappa}^{n,\eta
}\right\rangle \right\vert \lesssim\int_{\mathbb{R}^{n-1}}\left(  \frac
{1}{\eta\ell\left(  J\right)  }\right)  ^{N}\sqrt{\left\vert J\right\vert
}\left(  \frac{1}{c}\right)  ^{N}\left\vert h_{I;\kappa}^{n-1,\eta}\left(
x\right)  \right\vert dx\approx\ell\left(  J\right)  ^{-N}\sqrt{\left\vert
I\right\vert \left\vert J\right\vert }.\label{second prelim}%
\end{equation}

We must now combine these two preliminary estimates in the case $k\geq0$. As
usual, to achieve this we iterate the two associated formulas
(\ref{first formula}) and (\ref{second formula}) \emph{before} taking absolute
values inside the resulting integral. Thus we write,%
\begin{align*}
& \left\langle Th_{I;\kappa}^{n-1,\eta},h_{J;\kappa}^{n,\eta}\right\rangle
=i^{N}\int_{\mathbb{R}^{n-1}}\left\{  \int_{\mathbb{R}^{n}}e^{-i\Phi\left(
x\right)  \cdot\xi}\left(  \mathbf{v}\cdot\partial_{\xi}\right)
^{N}h_{J;\kappa}^{n,\eta}\left(  \xi\right)  d\xi\right\}  \left(  \frac
{1}{\mathbf{v}\cdot\Phi\left(  x\right)  }\right)  ^{N}h_{I;\kappa}^{n-1,\eta
}\left(  x\right)  dx\\
& =\int_{\mathbb{R}^{n}}e^{-i\Phi\left(  c_{I}\right)  \cdot\xi}\left\{
\int_{\mathbb{R}^{n-1}}e^{-i\left[  \Phi\left(  x\right)  -\Phi\left(
c_{I}\right)  \right]  \cdot\xi}h_{I;\kappa}^{n-1,\eta}\left(  x\right)
\left(  \frac{1}{\mathbf{v}\cdot\Phi\left(  x\right)  }\right)  ^{N}%
dx\right\}  \left(  \mathbf{v}\cdot\partial_{\xi}\right)  ^{N}h_{J;\kappa
}^{n,\eta}\left(  \xi\right)  d\xi\\
& =\int_{\mathbb{R}^{n}}e^{-i\Phi\left(  c_{I}\right)  \cdot\xi}\left\{
\int_{\mathbb{R}^{n-1}}\Gamma\left(  \xi,x\right)  h_{I;\kappa}^{n-1,\eta
}\left(  x\right)  \left(  \frac{1}{\mathbf{v}\cdot\Phi\left(  x\right)
}\right)  ^{N}dx\right\}  \left(  \mathbf{v}\cdot\partial_{\xi}\right)
^{N}h_{J;\kappa}^{n,\eta}\left(  \xi\right)  d\xi,
\end{align*}
where
\[
\Gamma\left(  \xi,x\right)  =R_{\kappa}\left(  -i\xi\cdot\left[  \Phi\left(
x\right)  -\Phi\left(  c_{I}\right)  \right]  \right)  +\Gamma_{\kappa}\left(
x-c_{I}\right)  ,
\]
is as in (\ref{Gamma}) above, and $\Gamma\left(  \xi,x\right)  $ satisfies the
estimates\ given there. Now we take absolute values inside the integral, and
using the estimates developed above, we obtain the following inequality for
$k\geq0$,%
\begin{equation}
\left\vert \left\langle Th_{I;\kappa}^{n-1,\eta},h_{J;\kappa}^{n,\eta
}\right\rangle \right\vert \lesssim\ell\left(  I\right)  ^{\kappa}\ell\left(
J\right)  ^{\kappa}\ell\left(  J\right)  ^{-N}\sqrt{\left\vert I\right\vert
\left\vert J\right\vert }\lesssim\left(  \frac{\ell\left(  I\right)  }%
{\ell\left(  \pi_{\tan}J\right)  }\right)  ^{\kappa}\ell\left(  J\right)
^{2\kappa-N}\lesssim2^{-r\kappa}2^{-k\left(  N-2\kappa\right)  }%
\sqrt{\left\vert I\right\vert \left\vert J\right\vert }.\label{emerges}%
\end{equation}

Combining (\ref{first prelim'}) and (\ref{emerges}) gives%
\begin{equation}
\left\vert \left\langle Th_{I;\kappa}^{n-1,\eta},h_{J;\kappa}^{n,\eta
}\right\rangle \right\vert \lesssim2^{-r\kappa}2^{-\left\vert k\right\vert
\min\left\{  \kappa,N-2\kappa\right\}  }\sqrt{\left\vert I\right\vert
\left\vert J\right\vert },\label{combine}%
\end{equation}
and with this estimate in hand, we will now prove that for all $N>2\kappa$ and
$r\in\mathbb{N}$,%
\begin{equation}
\left\vert \sum_{\left(  I,J\right)  \in\mathcal{R}_{\ast}^{k,r}}\left\langle
T\bigtriangleup_{I;\kappa}^{n-1,\eta}f,\bigtriangleup_{J;\kappa}^{n,\eta
}g\right\rangle \right\vert \lesssim2^{-r\left(  \kappa-\frac{n-1}{2}\right)
}2^{-\left\vert k\right\vert \min\left\{  \kappa,N-2\kappa\right\}
}\left\Vert f\right\Vert _{L^{p}}\left\Vert g\right\Vert _{L^{p^{\prime}}%
},\label{will now prove}%
\end{equation}
where $\mathcal{R}_{\ast}^{k,r}$ is defined in (\ref{def R*kr}). Indeed, we
have from (\ref{combine}) that%
\begin{align*}
& \sum_{\left(  I,J\right)  \in\mathcal{R}_{\ast}^{k,r}}\left\vert
\left\langle T\bigtriangleup_{I;\kappa}^{n-1,\eta}f,\bigtriangleup_{J;\kappa
}^{n,\eta}g\right\rangle \right\vert \leq\sum_{\left(  I,J\right)
\in\mathcal{R}_{\ast}^{k,r}}2^{-r\kappa}2^{-\left\vert k\right\vert
\min\left\{  \kappa,N-2\kappa\right\}  }\left(  \int_{I_{\eta}}\left\vert
\bigtriangleup_{I;\kappa}^{n-1,\eta}f\right\vert \right)  \left(
\int_{J_{\eta}}\left\vert \bigtriangleup_{J;\kappa}^{n,\eta}g\right\vert
\right) \\
& =2^{-r\kappa}2^{-\left\vert k\right\vert \min\left\{  \kappa,N-2\kappa
\right\}  }\int_{\mathbb{R}^{n}}\sum_{\left(  I,J\right)  \in\mathcal{R}%
_{\ast}^{k,r}}\left(  \int_{J_{\eta}}\left\vert \bigtriangleup_{J;\kappa
}^{n,\eta}g\right\vert \right)  \left\vert \bigtriangleup_{I;\kappa}%
^{n-1,\eta}f\left(  x\right)  \right\vert dx\\
& \leq2^{-r\kappa}2^{-\left\vert k\right\vert \min\left\{  \kappa
,N-2\kappa\right\}  }\int_{\mathbb{R}^{n}}\sqrt{\sum_{\left(  I,J\right)
\in\mathcal{R}_{\ast}^{k,r}}\left(  \int_{J_{\eta}}\left\vert \bigtriangleup
_{J;\kappa}^{n,\eta}g\right\vert \right)  ^{2}}\sqrt{\sum_{\left(  I,J\right)
\in\mathcal{R}_{\ast}^{k,r}}\left\vert \bigtriangleup_{I;\kappa}^{n-1,\eta
}f\left(  x\right)  \right\vert ^{2}}dx\\
& \leq2^{-r\kappa}2^{-\left\vert k\right\vert \min\left\{  \kappa
,N-2\kappa\right\}  }\left(  \int_{\mathbb{R}^{n}}\left(  \sum_{\left(
I,J\right)  \in\mathcal{R}_{\ast}^{k,r}}\left(  \int_{J_{\eta}}\left\vert
\bigtriangleup_{J;\kappa}^{n,\eta}g\right\vert \right)  ^{2}\right)
^{\frac{p^{\prime}}{2}}dx\right)  ^{\frac{1}{p^{\prime}}}\left(
\int_{\mathbb{R}^{n}}\left(  \sum_{\left(  I,J\right)  \in\mathcal{R}_{\ast
}^{k,r}}\left\vert \bigtriangleup_{I;\kappa}^{n-1,\eta}f\left(  x\right)
\right\vert ^{2}\right)  ^{\frac{p}{2}}dx\right)  ^{\frac{1}{p}}%
\end{align*}
where the Alpert square function estimate (\ref{square}) shows that%
\[
\left(  \int_{\mathbb{R}^{n}}\left(  \sum_{\left(  I,J\right)  \in
\mathcal{R}_{\ast}^{k,r}}\left\vert \bigtriangleup_{I;\kappa}^{n-1,\eta
}f\left(  x\right)  \right\vert ^{2}\right)  ^{\frac{p}{2}}dx\right)
^{\frac{1}{p}}\lesssim\left\Vert f\right\Vert _{L^{p}},
\]
since for each $I\in\mathcal{G}$, there is at most one cube $J\in\mathcal{D}$
such that $\left(  I,J\right)  \in\mathcal{R}_{\ast}^{k,r}$. On the other
hand, for each fixed $J\in\mathcal{D}$, the number of cubes $I\in\mathcal{G}$
such that $\left(  I,J\right)  \in\mathcal{R}_{\ast}^{k,r}$ is approximately
$2^{r\left(  n-1\right)  }$, and so%
\begin{align*}
& \sum_{\left(  I,J\right)  \in\mathcal{R}_{\ast}^{k,r}}\left\vert
\left\langle T\bigtriangleup_{I;\kappa}^{n-1,\eta}f,\bigtriangleup_{J;\kappa
}^{n,\eta}g\right\rangle \right\vert \\
& \lesssim2^{-r\kappa}2^{-\left\vert k\right\vert \min\left\{  \kappa
,N-2\kappa\right\}  }\left(  \int_{\mathbb{R}^{n}}\left(  \sum_{J\in
\mathcal{D}}2^{r\left(  n-1\right)  }\left(  \int_{J_{\eta}}\left\vert
\bigtriangleup_{J;\kappa}^{n,\eta}g\right\vert \right)  ^{2}\right)
^{\frac{p^{\prime}}{2}}dx\right)  ^{\frac{1}{p^{\prime}}}\left\Vert
f\right\Vert _{L^{p}}\\
& \approx2^{-r\left(  \kappa-\frac{n-1}{2}\right)  }2^{-\left\vert
k\right\vert \min\left\{  \kappa,N-2\kappa\right\}  }\left\Vert g\right\Vert
_{L^{p^{\prime}}}\left\Vert f\right\Vert _{L^{p}}\ ,
\end{align*}
for $1<p<\infty$ by the Alpert square function estimate (\ref{squ est}) again.

\subsubsection{The enlarged form}

For $k\geq0$ define%
\[
\mathcal{E}_{\ast}^{k,r}\equiv\left\{  \left(  I,J\right)  \in\mathcal{G}%
\left[  U\right]  \times\mathcal{D}:\ell\left(  J\right)  =2^{k}\text{, }%
\ell\left(  \pi_{\tan}J\right)  =2^{r}\ell\left(  I\right)  \text{, and
}I\subset C_{\operatorname{pseudo}}2^{k}\pi_{\tan}J\right\}  ,
\]
and define the \emph{enlarged} form,%
\[
\mathsf{B}_{\operatorname{enlarge}}\left(  f,g\right)  \equiv\sum
_{k=0}^{\infty}\sum_{r=0}^{\infty}\sum_{\left(  I,J\right)  \in\mathcal{E}%
_{\ast}^{k,r}}\left\langle T\bigtriangleup_{I;\kappa}^{n-1,\eta}%
f,\bigtriangleup_{J;\kappa}^{n,\eta}g\right\rangle .
\]
Then for each fixed $J\in\mathcal{D}$, the number of cubes $I\in\mathcal{G} $
such that $\left(  I,J\right)  \in\mathcal{E}_{\ast}^{k,r}$ is approximately
$\frac{\left\vert 2^{k}\pi_{\tan}J\right\vert }{\left\vert I\right\vert
}=\frac{2^{k\left(  n-1\right)  }\left\vert \pi_{\tan}J\right\vert
}{2^{-r\left(  n-1\right)  }\left\vert \pi_{\tan}J\right\vert }=2^{\left(
r+k\right)  \left(  n-1\right)  }$, and so we have%
\begin{align*}
& \sum_{\left(  I,J\right)  \in\mathcal{R}_{\ast}^{k,r}}\left\vert
\left\langle T\bigtriangleup_{I;\kappa}^{n-1,\eta}f,\bigtriangleup_{J;\kappa
}^{n,\eta}g\right\rangle \right\vert \\
& \lesssim2^{-r\kappa}2^{-\left\vert k\right\vert \min\left\{  \kappa
,N-2\kappa\right\}  }\left(  \int_{\mathbb{R}^{n}}\left(  \sum_{J\in
\mathcal{D}}2^{\left(  r+k\right)  \left(  n-1\right)  }\left(  \int_{J_{\eta
}}\left\vert \bigtriangleup_{J;\kappa}^{n,\eta}g\right\vert \right)
^{2}\right)  ^{\frac{p^{\prime}}{2}}dx\right)  ^{\frac{1}{p^{\prime}}%
}\left\Vert f\right\Vert _{L^{p}}\\
& \approx2^{-r\left(  \kappa-\frac{n-1}{2}\right)  }2^{-\left\vert
k\right\vert \min\left\{  \kappa-\frac{n-1}{2},N-2\kappa-\frac{n-1}%
{2}\right\}  }\left\Vert g\right\Vert _{L^{p^{\prime}}}\left\Vert f\right\Vert
_{L^{p}}\ ,
\end{align*}
for $1<p<\infty$ by the Alpert square function estimate (\ref{squ est}) again.

\subsection{Wrapup}

Finally, taking $\kappa>\frac{n-1}{2}$, $N>2\kappa$ and summing the above
estimates over $r\in\mathbb{N}$ and $k\in\mathbb{Z}$, gives,%
\[
\left\vert \sum_{\left(  I,J\right)  \in\mathcal{R}_{\ast}}\left\langle
T\bigtriangleup_{I;\kappa}^{n-1,\eta}f,\bigtriangleup_{J;\kappa}^{n,\eta
}g\right\rangle \right\vert \lesssim\left\Vert f\right\Vert _{L^{p}}\left\Vert
g\right\Vert _{L^{p^{\prime}}}\ .
\]
Combined with the reduction in the first subsection, we obtain the desired
bound,%
\[
\left\vert \mathsf{B}_{\operatorname{above}}\left(  f,g\right)  \right\vert
\lesssim\left\Vert f\right\Vert _{L^{p}}\left\Vert g\right\Vert _{L^{p^{\prime
}}}\ ,\ \ \ \ \ 1<p<\infty,
\]
in fact the stronger bound (\ref{strong above}).

\begin{remark}
The only restriction on $p$ here is $1<p<\infty$, and so the above form
$\mathsf{B}_{\operatorname{above}}\left(  f,g\right)  $ is bounded for all
$1<p<\infty$.
\end{remark}

\section{Control of the $\operatorname*{upper}\operatorname*{disjoint}$ and
$\operatorname*{upper}\operatorname*{distal}$ forms}

The principle of stationary phase is not used for the disjoint or distal
subforms, as the critical point of the phase now lies outside the support of
the amplitude. When $k\geq0$ we must introduce the radial integration by parts
principle of decay to bound the subforms, while in the case $k\leq0$, we must
use the high order vanishing moments of $h_{J;\kappa}^{n,\eta}$. Just as in
the case of the below form $\mathsf{B}_{\operatorname{below}}$, combining the
appropriate formulas, and staying the introduction of absolute values until
the very end, will yield the desired inequalities. There is however a crucial
difference between the cases $d\geq0$ and $d<0$ in the case of both disjoint
subforms $\mathsf{B}_{\operatorname*{disjoint}}^{k,d,m}\left(  f,g\right)  $
and distal subforms $\mathsf{B}_{\operatorname*{distal}}^{k,d}\left(
f,g\right)  $, and we will treat the upper and lower cases in separate
sections, as the resonant lower forms with $d<0$ require probability and
interpolation techniques.

In fact, when $d\geq0$, the standard principles of decay apply to give the
required control. However, as $d$ becomes increasingly negative, resonance
begins to set in more strongly, and by the time $d=-m$, none of the standard
principles of decay are any longer of use. Instead we must invoke classical
methods of estimating $L^{2}$ and $L^{4}$ bounds, but using \emph{probability}
in order to obtain improved bounds for functions restricted to smooth Alpert pseudoprojections.

Recall from (\ref{def up disj}) that%
\begin{align*}
& \mathsf{B}_{\operatorname*{disjoint}}^{\operatorname*{upper}}\left(
f,g\right)  \equiv\sum_{m=1}^{\infty}\sum_{k\in\mathbb{Z}}\sum_{d\geq
0}\mathsf{B}_{\operatorname*{disjoint}}^{k,d,m}\left(  f,g\right)  ,\\
& \text{where }\mathsf{B}_{\operatorname*{disjoint}}^{k,d,m}\left(
f,g\right)  \equiv\sum_{\left(  I,J\right)  \in\mathcal{P}_{m}^{k,d}%
}\left\langle T\bigtriangleup_{I;\kappa}^{n-1,\eta}f,\bigtriangleup_{J;\kappa
}^{n,\eta}g\right\rangle ,\\
& \text{and }\mathcal{P}_{m}^{k,d}\equiv\left\{  \left(  I,J\right)
\in\mathcal{P}_{m}:\ell\left(  J\right)  =2^{k}\text{, and }2^{d}\leq
\ell\left(  I\right)  ^{2}\operatorname*{dist}\left(  0,J\right)  \leq
2^{d+1}\right\}  ,\\
& \text{and }\mathcal{P}_{m}\equiv\left\{  \left(  I,J\right)  \in
\mathcal{G}\left[  U\right]  \times\mathcal{D}:2^{m+1}I\subset S\text{ and
}\pi_{\tan}\left(  J\right)  \subset\Phi\left(  4U\cap2^{m+1}%
C_{\operatorname{pseudo}}I\right)  \setminus\Phi\left(  2^{m}%
C_{\operatorname{pseudo}}I\right)  \right\}  ,
\end{align*}
and similarly from (\ref{def up dist}) that,%
\begin{align*}
& \mathsf{B}_{\operatorname*{distal}}^{\operatorname*{upper}}\left(
f,g\right)  \equiv\sum_{k\in\mathbb{Z}}\sum_{d\geq0}\mathsf{B}%
_{\operatorname*{distal}}^{k,d}\left(  f,g\right)  ,\\
& \text{where }\mathsf{B}_{\operatorname*{distal}}^{k,d}\left(  f,g\right)
\equiv\sum_{\left(  I,J\right)  \in\mathcal{X}^{k,d}}\left\langle
T\bigtriangleup_{I;\kappa}^{n-1,\eta}f,\bigtriangleup_{J;\kappa}^{n,\eta
}g\right\rangle ,\\
& \text{and }\mathcal{X}^{k,d}\equiv\left\{  \left(  I,J\right)
\in\mathcal{X}:\ell\left(  J\right)  =2^{k}\text{, and }2^{d}\leq\ell\left(
I\right)  ^{2}\operatorname*{dist}\left(  0,J\right)  \leq2^{d+1}\right\}  ,\\
& \text{and }\mathcal{X}\equiv\left\{  \left(  I,J\right)  \in\mathcal{G}%
\left[  U\right]  \times\mathcal{D}:2^{m+1}I\subset S\text{ and }\pi_{\tan
}\left(  J\right)  \cap\Phi\left(  2U\right)  =\emptyset\right\}  .
\end{align*}

\subsection{Upper disjoint subforms with $d\geq0$}

When $k=0$, we obtain geometric gain simultaneously in $m\geq1$ and $d\geq0 $
using the tangential integration by parts principle of decay. In order to
handle arbitrary $k\in\mathbb{Z}$, we must include additional principles of
decay combined with tangential integration by parts. For $k\geq0,$ we include
radial integration by parts, and taking absolute values inside the integral at
the very end, we will obtain below that,%
\begin{equation}
\left\vert \left\langle Th_{I;\kappa}^{n-1,\eta},h_{J;\kappa}^{n,\eta
}\right\rangle \right\vert \lesssim2^{-kN_{1}}2^{-N_{2}\left(  m+d\right)
}\sqrt{\left\vert I\right\vert \left\vert J\right\vert }.\label{kmd decay}%
\end{equation}
For $k\leq0$, we include instead the moment vanishing properties of
$h_{J;\kappa}^{n,\eta}$, and taking absolute values inside the integral at the
very end, we will obtain below that,%
\begin{equation}
\left\vert \left\langle Th_{I;\kappa}^{n-1,\eta},h_{J;\kappa}^{n,\eta
}\right\rangle \right\vert \lesssim2^{-\left\vert k\right\vert \kappa
}2^{-N_{2}\left(  m+d\right)  }\sqrt{\left\vert I\right\vert \left\vert
J\right\vert }.\label{kmd decay'}%
\end{equation}

With these estimates in hand, together with the Alpert square function
arguments used repeatedly above, we obtain,
\[
\left\vert \mathsf{B}_{\operatorname*{disjoint}}^{k,d,m}\left(  f,g\right)
\right\vert \lesssim2^{-\delta\left\vert k\right\vert }2^{-\delta\left(
m+d\right)  }\left(  \int\left\vert \bigtriangleup_{I;\kappa}^{n-1,\eta
}f\right\vert \right)  \left(  \int\left\vert \bigtriangleup_{J;\kappa
}^{n,\eta}g\right\vert \right)  ,\ \ \ \ \ \text{for }p\geq\frac{2n}{n-1},
\]
for some $\delta>0$ provided $\kappa$, $N_{1}$ and $N_{2}$ are chosen
sufficiently large, and finally then,%
\[
\sum_{k\in\mathbb{Z}}\sum_{d\geq0}\sum_{m=1}^{\infty}\left\vert \mathsf{B}%
_{\operatorname*{disjoint}}^{k,d,m}\left(  f,g\right)  \right\vert
\lesssim\left\Vert f\right\Vert _{L^{p}}\left\Vert g\right\Vert _{L^{p^{\prime
}}}\ ,\ \ \ \ \ \text{for }p\geq\frac{2n}{n-1}.
\]

Here is a brief sketch of the two inner product estimates mentioned above,
followed by the appropriate Alpert square function estimate.

\subsubsection{The case $k\geq0,d\geq0$\label{Subsub IBP}}

Combining the radial integration by parts formula (\ref{int by parts formula}%
),%
\[
\left\langle Th_{I;\kappa}^{n-1,\eta},h_{J;\kappa}^{n,\eta}\right\rangle
=\int_{\mathbb{R}}\int_{\mathbb{R}^{n-1}}\int_{\mathbb{R}^{n-1}}%
\frac{e^{i\lambda\phi\left(  x,y\right)  }}{\phi\left(  x,y\right)  ^{N_{1}}%
}\varphi_{I}^{\eta}\left(  x\right)  \partial_{\lambda}^{N_{1}}\widehat{\psi
}_{J}^{\eta}\left(  y,\lambda\right)  dxdyd\lambda,
\]
with the tangential integration by parts formula (\ref{tan int by parts}),%
\[
\left\langle Th_{I;\kappa}^{n-1,\eta},h_{J;\kappa}^{n,\eta}\right\rangle
=i^{N}\int_{\mathbb{R}}\int_{\mathbb{R}^{n-1}}\int_{\mathbb{R}^{n-1}%
}e^{i\lambda\phi\left(  x,y\right)  }\left\{  \left(  D_{\mathbf{v}}^{x}%
\frac{1}{\left(  D_{\mathbf{v}}\Phi\right)  \left(  x\right)  \cdot\Phi\left(
y\right)  }\right)  ^{N_{2}}\right\}  \varphi_{I}^{\eta}\left(  x\right)
\widehat{\psi}_{J}^{\eta}\left(  y,\lambda\right)  dxdy\frac{d\lambda}%
{\lambda^{N_{2}}}.
\]
gives%
\begin{align*}
& \left\langle Th_{I;\kappa}^{n-1,\eta},h_{J;\kappa}^{n,\eta}\right\rangle
=i^{N}\int_{\mathbb{R}}\int_{\mathbb{R}^{n-1}}\int_{\mathbb{R}^{n-1}%
}e^{i\lambda\phi\left(  x,y\right)  }\left\{  \left(  D_{\mathbf{v}}^{x}%
\frac{1}{\left(  D_{\mathbf{v}}\Phi\right)  \left(  x\right)  \cdot\Phi\left(
y\right)  }\right)  ^{N_{2}}\right\}  \varphi_{I}^{\eta}\left(  x\right)
\widehat{\psi}_{J}^{\eta}\left(  y,\lambda\right)  dxdy\frac{d\lambda}%
{\lambda^{N_{2}}}\\
& =i^{N}\int_{\mathbb{R}}\int_{\mathbb{R}^{n-1}}\int_{\mathbb{R}^{n-1}}%
\frac{e^{i\lambda\phi\left(  x,y\right)  }}{\phi\left(  x,y\right)  ^{N_{1}}%
}\left\{  \left(  D_{\mathbf{v}}^{x}\frac{1}{\left(  D_{\mathbf{v}}%
\Phi\right)  \left(  x\right)  \cdot\Phi\left(  y\right)  }\right)  ^{N_{2}%
}\right\}  \varphi_{I}^{\eta}\left(  x\right)  \partial_{\lambda}^{N_{1}}%
\frac{\widehat{\psi}_{J}^{\eta}\left(  y,\lambda\right)  }{\lambda^{N_{2}}%
}dxdyd\lambda.
\end{align*}
Taking absolute values inside the integral, and using (\ref{int est}) together
with $\min\left\{  \frac{1}{\eta\ell\left(  J\right)  },\frac{1}{\lambda
}\right\}  \lesssim\frac{1}{\ell\left(  J\right)  }$, and (\ref{tan int est}),
we obtain,%
\begin{equation}
\left\vert \left\langle Th_{I;\kappa}^{n-1,\eta},h_{J;\kappa}^{n,\eta
}\right\rangle \right\vert \lesssim2^{-kN_{1}}2^{-N_{2}\left(  m+d\right)
}\sqrt{\left\vert I\right\vert \left\vert J\right\vert },\label{as req}%
\end{equation}
as required.

\subsubsection{The case $k\leq0,d\geq0$}

This time we use (\ref{tan int by parts}),%
\[
\left\langle Th_{I;\kappa}^{n-1,\eta},h_{J;\kappa}^{n,\eta}\right\rangle
=i^{N}\int_{\mathbb{R}}\int_{\mathbb{R}^{n-1}}\int_{\mathbb{R}^{n-1}%
}e^{i\lambda\phi\left(  x,y\right)  }\left\{  \left(  D_{\mathbf{v}}^{x}%
\frac{1}{\left(  D_{\mathbf{v}}\Phi\right)  \left(  x\right)  \cdot\Phi\left(
y\right)  }\right)  ^{N}\right\}  \varphi_{I}^{\eta}\left(  x\right)
\widehat{\psi}_{J}^{\eta}\left(  y,\lambda\right)  dxdy\frac{d\lambda}%
{\lambda^{N}},
\]
together with (\ref{van mom formula}),
\[
\left\langle Th_{I;\kappa}^{n-1,\eta},h_{J;\kappa}^{n,\eta}\right\rangle
=\int_{\mathbb{R}^{n-1}}e^{-i\Phi\left(  x\right)  \cdot c_{J}}h_{I;\kappa
}^{n-1,\eta}\left(  x\right)  \left\{  \int_{\mathbb{R}^{n}}R_{\kappa}\left(
-i\Phi\left(  x\right)  \cdot\left(  \xi-c_{J}\right)  \right)  h_{J;\kappa
}^{n,\eta}\left(  \xi\right)  d\xi\right\}  dx,
\]
to obtain,%
\begin{align*}
& \left\langle Th_{I;\kappa}^{n-1,\eta},h_{J;\kappa}^{n,\eta}\right\rangle
=i^{N}\int_{\mathbb{R}}\int_{\mathbb{R}^{n-1}}\int_{\mathbb{R}^{n-1}%
}e^{-i\lambda\phi\left(  x,y\right)  }\left\{  \left(  D_{\mathbf{v}}^{x}%
\frac{1}{\left(  D_{\mathbf{v}}\Phi\right)  \left(  x\right)  \cdot\Phi\left(
y\right)  }\right)  ^{N}\right\}  \varphi_{I}^{\eta}\left(  x\right)
\widehat{\psi}_{J}^{\eta}\left(  y,\lambda\right)  dxdy\frac{d\lambda}%
{\lambda^{N}}\\
& =\left(  -i\right)  ^{N}\int_{\mathbb{R}^{n-1}}\left\{  \int_{\mathbb{R}%
^{n}}e^{-i\Phi\left(  x\right)  \cdot\xi}\left\{  \left(  D_{\mathbf{v}}%
^{x}\frac{1}{\left(  D_{\mathbf{v}}\Phi\right)  \left(  x\right)  \cdot
\frac{\xi}{\left\vert \xi\right\vert }}\right)  ^{N}\right\}  \frac
{h_{J;\kappa}^{n,\eta}\left(  \xi\right)  }{\left\vert \xi\right\vert ^{N}%
}d\xi\right\}  h_{I;\kappa}^{n-1,\eta}\left(  x\right)  dx\\
& =\left(  -i\right)  ^{N}\int_{\mathbb{R}^{n}}\left\{  \int_{\mathbb{R}%
^{n-1}}e^{-i\Phi\left(  x\right)  \cdot c_{J}}\left[  \frac{1}{\left\vert
\xi\right\vert ^{N}}\left(  D_{\mathbf{v}}^{x}\frac{1}{\left(  D_{\mathbf{v}%
}\Phi\right)  \left(  x\right)  \cdot\frac{\xi}{\left\vert \xi\right\vert }%
}\right)  ^{N}R_{\kappa}\left(  -i\Phi\left(  x\right)  \cdot\left(  \xi
-c_{J}\right)  \right)  h_{I;\kappa}^{n-1,\eta}\left(  x\right)  \right]
dx\right\}  h_{J;\kappa}^{n,\eta}\left(  \xi\right)  d\xi,
\end{align*}
where in the second line above, we have reversed the change of variable in
(\ref{para}). Now from the estimates used in (\ref{tan int est}) and
(\ref{van mom est}) we obtain,%
\[
\left\vert \left\langle Th_{I;\kappa}^{n-1,\eta},h_{J;\kappa}^{n,\eta
}\right\rangle \right\vert \lesssim2^{-\left\vert k\right\vert \kappa
}2^{-N\left(  m+d\right)  }\sqrt{\left\vert I\right\vert \left\vert
J\right\vert },
\]
as required.

\subsubsection{The Alpert square function argument for $d\geq0$}

We follow the Alpert square function argument used for the below form
$\mathsf{B}_{\operatorname{below}}^{k,d}\left(  f,g\right)  $ when
$k\geq0,d\leq0$. The only difference is that we now accumulate a factor of a
large power of $2^{m}$ depending on $n$ and $p$, but this will be offset by
gains from integration by parts in both parameters $m$ and $d$ - and this uses
in a crucial way that $d\geq0$. We begin by writing the sum over $\left(
I,J\right)  \in\mathcal{P}_{m}^{k,d}$ as,
\[
\sum_{\left(  I,J\right)  \in\mathcal{P}_{m}^{k,d}}=\sum_{\substack{\left(
I,J\right)  \in\mathcal{G}\left[  U\right]  \times\mathcal{D}:\ 2^{m+1}%
I\subset U\text{ and }\pi_{\tan}\left(  J\right)  \subset\Phi\left(
2^{m+1}CI\right)  \setminus\Phi\left(  2^{m}CI\right)  \\\ell\left(  J\right)
=2^{k}\text{ and }2^{d}\leq\ell\left(  I\right)  ^{2}\operatorname*{dist}%
\left(  0,J\right)  \leq2^{d+1}}}\ ,
\]
and%
\begin{align*}
& \left\vert \mathsf{B}_{\operatorname*{disjoint}}^{k,d,m}\left(  f,g\right)
\right\vert =\left\vert \sum_{\left(  I,J\right)  \in\mathcal{P}_{m}^{k,d}%
}\left\langle T\bigtriangleup_{I;\kappa}^{n-1,\eta}f,\bigtriangleup_{J;\kappa
}^{n,\eta}g\right\rangle \right\vert \leq\sum_{\left(  I,J\right)
\in\mathcal{P}_{m}^{k,d}}\left\vert \left\langle T\bigtriangleup_{I;\kappa
}^{n-1,\eta}f,\bigtriangleup_{J;\kappa}^{n,\eta}g\right\rangle \right\vert \\
& \lesssim\sum_{\left(  I,J\right)  \in\mathcal{P}_{m}^{k,d}}2^{-\left\vert
k\right\vert \kappa}2^{-N_{2}\left(  m+d\right)  }\left(  \int\left\vert
\bigtriangleup_{I;\kappa}^{n-1,\eta}f\right\vert \right)  \left(
\int\left\vert \bigtriangleup_{J;\kappa}^{n,\eta}g\right\vert \right) \\
& =2^{-\left\vert k\right\vert \kappa}2^{-N_{2}\left(  m+d\right)  }%
\int_{\mathbb{R}^{n}}\sum_{\left(  I,J\right)  \in\mathcal{P}_{m}^{k,d}%
}\left(  \int_{\mathbb{R}^{n-1}}\left\vert \bigtriangleup_{I;\kappa}%
^{n-1,\eta}f\left(  x\right)  \right\vert dx\right)  \mathbf{1}_{J}\left(
\xi\right)  \left\vert \bigtriangleup_{J;\kappa}^{n,\eta}g\left(  \xi\right)
\right\vert d\xi\\
& \lesssim2^{-\left\vert k\right\vert \kappa}2^{-N_{2}\left(  m+d\right)
}\int_{\mathbb{R}^{n}}\sqrt{\sum_{\left(  I,J\right)  \in\mathcal{P}_{m}%
^{k,d}}\left(  2^{m\left(  n-1\right)  }\int_{I_{\eta}}\left\vert
\bigtriangleup_{I;\kappa}^{n-1,\eta}f\left(  x\right)  \right\vert dx\right)
^{2}\mathbf{1}_{J}\left(  \xi\right)  }\sqrt{\sum_{\left(  I,J\right)
\in\mathcal{P}_{m}^{k,d}}2^{-m\left(  n-1\right)  }\left\vert \bigtriangleup
_{J;\kappa}^{n,\eta}g\left(  \xi\right)  \right\vert ^{2}}d\xi,
\end{align*}
which gives%
\begin{align*}
\left\vert \mathsf{B}_{\operatorname*{disjoint}}^{k,d,m}\left(  f,g\right)
\right\vert  & \lesssim2^{-\left\vert k\right\vert \kappa}2^{-N_{2}\left(
m+d\right)  }\left(  \int_{\mathbb{R}^{n}}\left(  \sum_{\left(  I,J\right)
\in\mathcal{P}_{m}^{k,d}}\left(  2^{m\left(  n-1\right)  }\int_{I_{\eta}%
}\left\vert \bigtriangleup_{I;\kappa}^{n-1,\eta}f\left(  x\right)  \right\vert
dx\right)  ^{2}\mathbf{1}_{J}\left(  \xi\right)  \right)  ^{\frac{p}{2}}%
d\xi\right)  ^{\frac{1}{p}}\\
& \times\left(  \int_{\mathbb{R}^{n}}\left(  \sum_{\left(  I,J\right)
\in\mathcal{P}_{m}^{k,d}}2^{-m\left(  n-1\right)  }\left\vert \bigtriangleup
_{J;\kappa}^{n,\eta}g\left(  \xi\right)  \right\vert ^{2}\right)
^{\frac{p^{\prime}}{2}}d\xi\right)  ^{\frac{1}{p^{\prime}}}\\
& \equiv2^{-\left\vert k\right\vert \kappa}2^{-N_{2}\left(  m+d\right)
}\Gamma_{1}\Gamma_{2}\ .
\end{align*}

We first consider $\Gamma_{2}$ which satisfies,%
\[
\Gamma_{2}^{p^{\prime}}=\int_{\mathbb{R}^{n}}\left(  \sum_{\left(  I,J\right)
\in\mathcal{P}_{m}^{k,d}}2^{-m\left(  n-1\right)  }\left\vert \bigtriangleup
_{J;\kappa}^{n,\eta}g\left(  \xi\right)  \right\vert ^{2}\right)
^{\frac{p^{\prime}}{2}}d\xi\lesssim\int_{\mathbb{R}^{n}}\left(  \sum
_{J\in\mathcal{D}}\left\vert \bigtriangleup_{J;\kappa}^{n,\eta}g\left(
\xi\right)  \right\vert ^{2}\right)  ^{\frac{p^{\prime}}{2}}d\xi
\approx\left\Vert g\right\Vert _{L^{p^{\prime}}}^{p^{\prime}}\ ,
\]
since for a fixed $J$ with $\ell\left(  J\right)  =2^{k}$, the number of cubes
$I$ such that
\[
\left(  I,J\right)  \in\mathcal{P}_{m}^{k,d}=\left\{  \left(  I,J\right)
\in\mathcal{G}\left[  U\right]  \times\mathcal{D}:%
\begin{array}
[c]{c}%
\ 2^{m+1}I\subset U\text{ and }\pi_{\tan}\left(  J\right)  \subset\Phi\left(
2^{m+1}CI\right)  \setminus\Phi\left(  2^{m-1}CI\right) \\
\text{and }\ell\left(  J\right)  =2^{k}\text{ and }2^{d}\leq\ell\left(
I\right)  ^{2}\operatorname*{dist}\left(  0,J\right)  \leq2^{d+1}%
\end{array}
\right\}
\]
is roughly $2^{m\left(  n-1\right)  }$, and where the final approximation is
the Alpert square function estimate (\ref{squ est}).

Now we turn to $\Gamma_{1}$ for which we have the estimate,%
\begin{align*}
\Gamma_{1}^{p}  & =\int_{\mathbb{R}^{n}}\left(  \sum_{\left(  I,J\right)
\in\mathcal{P}_{m}^{k,d}}2^{2m\left(  n-1\right)  }\left(  \int_{I_{\eta}%
}\left\vert \bigtriangleup_{I;\kappa}^{n-1,\eta}f\left(  x\right)  \right\vert
dx\right)  ^{2}\mathbf{1}_{J}\left(  \xi\right)  \right)  ^{\frac{p}{2}}d\xi\\
& =2^{pm\left(  n-1\right)  }\int_{\mathbb{R}^{n}}\left(  \sum_{J\in
\mathcal{D}_{k}}\sum_{I\in\mathcal{G}\left[  U\right]  :\ \left(  I,J\right)
\in\mathcal{P}_{m}^{k,d}}\left(  \int_{I_{\eta}}\left\vert \bigtriangleup
_{I;\kappa}^{n-1,\eta}f\left(  x\right)  \right\vert dx\right)  ^{2}%
\mathbf{1}_{J}\left(  \xi\right)  \right)  ^{\frac{p}{2}}d\xi\\
& =2^{pm\left(  n-1\right)  }\int_{\mathbb{R}^{n}}\sum_{J\in\mathcal{D}_{k}%
}\left(  \sum_{I\in\mathcal{G}\left[  U\right]  :\ \left(  I,J\right)
\in\mathcal{P}_{m}^{k,d}}\left(  \int_{I_{\eta}}\left\vert \bigtriangleup
_{I;\kappa}^{n-1,\eta}f\left(  x\right)  \right\vert dx\right)  ^{2}\right)
^{\frac{p}{2}}\mathbf{1}_{J}\left(  \xi\right)  d\xi\\
& =2^{pm\left(  n-1\right)  }2^{kn}\sum_{J\in\mathcal{D}_{k}}\left(
\sum_{I\in\mathcal{G}\left[  U\right]  :\ \left(  I,J\right)  \in
\mathcal{P}_{m}^{k,d}}\left(  \int_{I_{\eta}}\left\vert \bigtriangleup
_{I;\kappa}^{n-1,\eta}f\left(  x\right)  \right\vert dx\right)  ^{2}\right)
^{\frac{p}{2}}.
\end{align*}

Now for each $J\in\mathcal{D}_{k}$, the number of\ cubes $I\in\mathcal{G}%
\left[  U\right]  $ with $\left(  I,J\right)  \in\mathcal{P}_{m}^{k,d}$ is
approximately $2^{mn}$, and so we compute that,%
\begin{align*}
\left(  \sum_{I\in\mathcal{G}\left[  U\right]  :\ \left(  I,J\right)
\in\mathcal{P}_{m}^{k,d}}\left(  \int_{I_{\eta}}\left\vert \bigtriangleup
_{I;\kappa}^{n-1,\eta}f\left(  x\right)  \right\vert dx\right)  ^{2}\right)
^{\frac{p}{2}}  & \lesssim\left(  \sum_{I\in\mathcal{G}\left[  U\right]
:\ \left(  I,J\right)  \in\mathcal{P}_{m}^{k,d}}1\right)  ^{\frac{p}{2}-1}%
\sum_{I\in\mathcal{G}\left[  U\right]  :\ \left(  I,J\right)  \in
\mathcal{P}_{m}^{k,d}}\left(  \int_{I_{\eta}}\left\vert \bigtriangleup
_{I;\kappa}^{n-1,\eta}f\left(  x\right)  \right\vert dx\right)  ^{p}\\
& \approx2^{mn\left(  \frac{p}{2}-1\right)  }\sum_{I\in\mathcal{G}\left[
U\right]  :\ \left(  I,J\right)  \in\mathcal{P}_{m}^{k,d}}\left(
\int_{I_{\eta}}\left\vert \bigtriangleup_{I;\kappa}^{n-1,\eta}f\left(
x\right)  \right\vert dx\right)  ^{p},
\end{align*}
and hence that%
\begin{align*}
\Gamma_{1}^{p}  & \lesssim2^{pm\left(  n-1\right)  }2^{kn}\sum_{J\in
\mathcal{D}_{k}}\left(  \sum_{I\in\mathcal{G}\left[  U\right]  :\ \left(
I,J\right)  \in\mathcal{P}_{m}^{k,d}}\left(  \int_{I_{\eta}}\left\vert
\bigtriangleup_{I;\kappa}^{n-1,\eta}f\left(  x\right)  \right\vert dx\right)
^{2}\right)  ^{\frac{p}{2}}\\
& \lesssim2^{pm\left(  n-1\right)  }2^{kn}\sum_{J\in\mathcal{D}_{k}%
}2^{mn\left(  \frac{p}{2}-1\right)  }\sum_{I\in\mathcal{G}\left[  U\right]
:\ \left(  I,J\right)  \in\mathcal{P}_{m}^{k,d}}\left(  \int_{I_{\eta}%
}\left\vert \bigtriangleup_{I;\kappa}^{n-1,\eta}f\left(  x\right)  \right\vert
dx\right)  ^{p}\\
& \lesssim2^{m\left[  p\left(  n-1\right)  +n\left(  \frac{p}{2}-1\right)
\right]  }2^{kn}\sum_{J\in\mathcal{D}_{k}}\sum_{I\in\mathcal{G}\left[
U\right]  :\ \left(  I,J\right)  \in\mathcal{P}_{m}^{k,d}}\left\vert
I\right\vert ^{\frac{p}{2}}\left(  \int_{I_{\eta}}\left\vert \bigtriangleup
_{I;\kappa}^{n-1,\eta}f\left(  x\right)  \right\vert ^{2}dx\right)  ^{\frac
{p}{2}}\\
& \approx2^{m\left[  \frac{3}{2}pn-\left(  p+n\right)  \right]  }2^{kn}%
\sum_{I\in\mathcal{G}\left[  U\right]  }\left(  \sum_{J\in\mathcal{D}%
_{k}:\ \left(  I,J\right)  \in\mathcal{P}_{m}^{k,d}}1\right)  \left\vert
I\right\vert ^{p}\left(  \frac{1}{\left\vert I_{\eta}\right\vert }%
\int_{I_{\eta}}\left\vert \bigtriangleup_{I;\kappa}^{n-1,\eta}f\left(
x\right)  \right\vert ^{2}dx\right)  ^{\frac{p}{2}},
\end{align*}
where by the extension of (\ref{card m=0}) to $m\geq1$,%
\[
\sum_{J\in\mathcal{D}_{k}:\ \left(  I,J\right)  \in\mathcal{P}_{m}^{k,d}%
}1\approx2^{m\left(  n-1\right)  }2^{-kn}\left\vert \mathcal{K}_{d}\left(
I\right)  \right\vert \approx2^{m\left(  n-1\right)  }2^{-kn}2^{dn}\left(
\frac{1}{\left\vert I\right\vert }\right)  ^{\frac{n+1}{n-1}}.
\]
Thus we have%
\begin{align*}
\Gamma_{1}^{p}  & \lesssim2^{m\left[  \frac{3}{2}pn-\left(  p+n\right)
\right]  }2^{kn}2^{m\left(  n-1\right)  }2^{-kn}2^{dn}\sum_{I\in
\mathcal{G}\left[  U\right]  }\left(  \frac{1}{\left\vert I\right\vert
}\right)  ^{\frac{n+1}{n-1}}\left\vert I\right\vert ^{p}\left(  \frac
{1}{\left\vert I_{\eta}\right\vert }\int_{I_{\eta}}\left\vert \bigtriangleup
_{I;\kappa}^{n-1,\eta}\right\vert ^{2}\right)  ^{\frac{p}{2}}\\
& =2^{m\left[  \frac{3}{2}pn-\left(  p+1\right)  \right]  }2^{dn}\sum
_{I\in\mathcal{G}\left[  U\right]  }\left\vert I\right\vert ^{p-\frac
{n+1}{n-1}-1}\left(  \frac{1}{\left\vert I_{\eta}\right\vert }\int_{I_{\eta}%
}\left\vert \bigtriangleup_{I;\kappa}^{n-1,\eta}f\right\vert ^{2}\right)
^{\frac{p}{2}}\mathbf{1}_{I}\left(  x\right)  dx\\
& \lesssim2^{m\left[  \frac{3}{2}pn-\left(  p+1\right)  \right]  }2^{dn}%
\int_{\mathbb{R}^{n-1}}\sum_{I\in\mathcal{G}\left[  U\right]  }\left(
\frac{1}{\left\vert I_{\eta}\right\vert }\int_{I_{\eta}}\left\vert
\bigtriangleup_{I;\kappa}^{n-1,\eta}f\right\vert ^{2}\mathbf{1}_{I}\left(
x\right)  \right)  ^{\frac{p}{2}}dx,
\end{align*}
if $p\geq\frac{2n}{n-1}$, and then using $p\geq2$ and the Fefferman Stein
vector valued inequality, we can continue with%
\begin{align*}
\Gamma_{1}^{p}  & \lesssim2^{m\left[  \frac{3}{2}pn-\left(  p+1\right)
\right]  }2^{dn}\int_{\mathbb{R}^{n-1}}\left(  \sum_{I\in\mathcal{G}\left[
U\right]  }\left(  M\left\vert \bigtriangleup_{I;\kappa}^{n-1,\eta
}f\right\vert ^{2}\right)  \left(  x\right)  \right)  ^{\frac{p}{2}}dx\\
& \lesssim2^{m\left[  \frac{3}{2}pn-\left(  p+1\right)  \right]  }2^{dn}%
\int_{\mathbb{R}^{n-1}}\left(  \sum_{I\in\mathcal{G}\left[  U\right]
}\left\vert \bigtriangleup_{I;\kappa}^{n-1,\eta}f\right\vert ^{2}\left(
x\right)  \right)  ^{\frac{p}{2}}dx\lesssim2^{m\left[  \frac{3}{2}pn-\left(
p+1\right)  \right]  }2^{dn}\left\Vert f\right\Vert _{L^{p}}^{p}\ .
\end{align*}
Altogether then we have%
\begin{align*}
\left\vert \mathsf{B}_{\operatorname*{disjoint}}^{k,d,m}\left(  f,g\right)
\right\vert  & \lesssim2^{-\left\vert k\right\vert \kappa}2^{-N_{2}\left(
m+d\right)  }\Gamma_{1}\Gamma_{2}\lesssim2^{-\left\vert k\right\vert \kappa
}2^{-N_{2}\left(  m+d\right)  }2^{m\left[  \frac{3}{2}pn-\left(  p+1\right)
\right]  }2^{dn}\left\Vert f\right\Vert _{L^{p}}\left\Vert g\right\Vert
_{L^{p^{\prime}}}\\
& =2^{-\left\vert k\right\vert \kappa}2^{-\left(  N_{2}-\frac{3}{2}pn+\left(
p+1\right)  \right)  m}2^{-\left(  N_{2}-n\right)  d}2^{dn}\left\Vert
f\right\Vert _{L^{p}}\left\Vert g\right\Vert _{L^{p^{\prime}}}\leq
2^{-\left\vert k\right\vert \delta}2^{-\delta m}2^{-\delta d}\left\Vert
f\right\Vert _{L^{p}}\left\Vert g\right\Vert _{L^{p^{\prime}}}\ ,
\end{align*}
for $d\geq0$ and $p\geq\frac{2n}{n-1}$, so%
\[
\sum_{k\in\mathbb{Z}}\sum_{d=0}^{\infty}\sum_{m=1}^{\infty}\left\vert
\mathsf{B}_{\operatorname*{disjoint}}^{k,d,m}\left(  f,g\right)  \right\vert
\lesssim\sum_{k\in\mathbb{Z}}\sum_{d=0}^{\infty}\sum_{m=1}^{\infty
}2^{-\left\vert k\right\vert \delta}2^{-\delta m}2^{-\delta d}\left\Vert
f\right\Vert _{L^{p}}\left\Vert g\right\Vert _{L^{p^{\prime}}}\lesssim
\left\Vert f\right\Vert _{L^{p}}\left\Vert g\right\Vert _{L^{p^{\prime}}}\ .
\]

\subsection{Upper distal subforms with $d\geq0$}

We can obtain similar estimates for the upper $\operatorname*{distal}$ form,
by treating this form as the sum over pairs $\left(  I,J\right)  $ with $J$ in
the `missing sector', i.e. by setting $m=s$ in the corresponding disjoint form
estimates, as we now do. Indeed, recall that in (\ref{kmd decay}) and
(\ref{kmd decay'}) above we showed that
\[
\left\vert \left\langle Th_{I;\kappa}^{n-1,\eta},h_{J;\kappa}^{n,\eta
}\right\rangle \right\vert \lesssim2^{-\left\vert k\right\vert \min\left\{
N_{1},\kappa\right\}  }2^{-N_{2}\left(  m+d\right)  }\sqrt{\left\vert
I\right\vert \left\vert J\right\vert },
\]
for $\left(  I,J\right)  \in\mathcal{P}_{m}^{k,d}$, $k\in\mathbb{N}$ and
$d\geq0$. The same arguments, when applied to $\left(  I,J\right)
\in\mathcal{X}^{k,d}$, yield%

\[
\left\vert \left\langle Th_{I;\kappa}^{n-1,\eta},h_{J;\kappa}^{n,\eta
}\right\rangle \right\vert \lesssim2^{-\left\vert k\right\vert \min\left\{
N_{1},\kappa\right\}  }2^{-N\left(  s+d\right)  }\sqrt{\left\vert I\right\vert
\left\vert J\right\vert }\lesssim2^{-\left\vert k\right\vert N_{1}}%
2^{-Nd}\sqrt{\left\vert I\right\vert \left\vert J\right\vert },
\]
for $\left(  I,J\right)  \in\mathcal{X}^{k,d}$, $k\in\mathbb{N}$ and $d\geq0
$. Then the Alpert square function argument in the previous subsubsection
applies to give%
\[
\sum_{k\in\mathbb{Z}}\sum_{d=0}^{\infty}\left\vert \mathsf{B}%
_{\operatorname*{distal}}^{k,d}\left(  f,g\right)  \right\vert \lesssim
\sum_{k\in\mathbb{Z}}\sum_{d=0}^{\infty}2^{-\left\vert k\right\vert \delta
}2^{-\delta d}\left\Vert f\right\Vert _{L^{p}}\left\Vert g\right\Vert
_{L^{p^{\prime}}}\lesssim\left\Vert f\right\Vert _{L^{p}}\left\Vert
g\right\Vert _{L^{p^{\prime}}}\ ,
\]
for some $\delta>0$.

\subsection{Wrapup}

If we define%
\begin{align*}
\left\vert \mathsf{B}_{\operatorname*{disjoint}}^{\operatorname*{upper}%
}\right\vert \left(  f,g\right)   & \equiv\sum_{m=1}^{\infty}\sum_{\left(
I,J\right)  \in\mathcal{P}_{m}:\ \ell\left(  I\right)  ^{2}%
\operatorname*{dist}\left(  0,J\right)  \geq1}\left\vert \left\langle
T\bigtriangleup_{I;\kappa}^{n-1,\eta}f,\bigtriangleup_{J;\kappa}^{n,\eta
}g\right\rangle \right\vert ,\\
\left\vert \mathsf{B}_{\operatorname*{distal}}^{\operatorname*{upper}%
}\right\vert \left(  f,g\right)   & \equiv\sum_{\left(  I,J\right)
\in\mathcal{X}:\ \ell\left(  I\right)  ^{2}\operatorname*{dist}\left(
0,J\right)  \geq1}\left\vert \left\langle T\bigtriangleup_{I;\kappa}%
^{n-1,\eta}f,\bigtriangleup_{J;\kappa}^{n,\eta}g\right\rangle \right\vert ,
\end{align*}
in which the absolute values are taken inside the sums, we have proved both%
\begin{equation}
\left\vert \mathsf{B}_{\operatorname*{disjoint}}^{\operatorname*{upper}%
}\right\vert \left(  f,g\right)  \lesssim\left\Vert f\right\Vert _{L^{p}%
}\left\Vert g\right\Vert _{L^{p^{\prime}}},\ \ \ \ \ \,\text{for }p>\frac
{2n}{n-1},\label{strong upper disjoint}%
\end{equation}
and%
\begin{equation}
\left\vert \mathsf{B}_{\operatorname*{distal}}^{\operatorname*{upper}%
}\right\vert \left(  f,g\right)  \lesssim\left\Vert f\right\Vert _{L^{p}%
}\left\Vert g\right\Vert _{L^{p^{\prime}}},\ \ \ \ \ \,\text{for }p>\frac
{2n}{n-1}.\label{strong upper distal}%
\end{equation}

\section{Control of the $\operatorname*{lower}\operatorname*{disjoint}$ and
$\operatorname*{lower}\operatorname*{distal}$ forms}

Momentarily fix $s\in\mathbb{N}$. Let $\left\{  D_{i}\right\}  _{i=1}^{M}$ be
the set of dyadic cubes of side length $2^{2s+1}$ such that $0\in3D_{i}$. Then
$M\leq C_{n}$ and
\[
B\left(  0,2^{2s}\right)  \subset D_{\ast}\equiv\bigcup_{i=1}^{M}D_{i}.
\]
In this section we bundle the \emph{lower} disjoint and distal forms together,
and control their sum by bounding the form
\[
\mathsf{B}^{\operatorname*{lower}}\left(  f,g\right)  \equiv\sum_{s=1}%
^{\infty}\mathsf{B}_{s}^{\operatorname*{lower}}\left(  f,g\right)  ,
\]
where%
\[
\mathsf{B}_{s}^{\operatorname*{lower}}\left(  f,g\right)  \equiv\sum_{i=1}%
^{M}\sum_{\left(  I,J\right)  \in\mathcal{G}_{s}\left[  U\right]
\times\mathcal{D}_{2s}\left[  D_{i}\right]  }\left\langle T\bigtriangleup
_{I;\kappa}^{n-1,\eta}f,\bigtriangleup_{J;\kappa}^{n,\eta}g\right\rangle .
\]
The form $\mathsf{B}^{\operatorname*{lower}}\left(  f,g\right)  $ turns out to
include more pairs $\left(  I,J\right)  $ than occur in the sum $\mathsf{B}%
_{\operatorname*{disjoint}}^{\operatorname*{lower}}\left(  f,g\right)
+\mathsf{B}_{\operatorname*{distal}}^{\operatorname*{lower}}\left(
f,g\right)  $\ defined in (\ref{def up disj}) and (\ref{def up dist}), but the
resulting overcounting is inconsequential because the sum of
the\emph{\ moduli} $\left\vert \left\langle T\bigtriangleup_{I;\kappa
}^{n-1,\eta}f,\bigtriangleup_{J;\kappa}^{n,\eta}g\right\rangle \right\vert $
of the inner products for the overcounted pairs has already been controlled
without using probability in the previous sections. We fix $D\in\left\{
D_{i}\right\}  _{i=1}^{M}$ for the moment and consider just the form%
\[
\mathsf{B}_{s,D}^{\operatorname*{lower}}\left(  f,g\right)  \equiv\sum
_{i=1}^{M}\sum_{\left(  I,J\right)  \in\mathcal{G}_{s}\left[  U\right]
\times\mathcal{D}_{2s}\left[  D\right]  }\left\langle T\bigtriangleup
_{I;\kappa}^{n-1,\eta}f,\bigtriangleup_{J;\kappa}^{n,\eta}g\right\rangle \ ,
\]
where for convenience we assume that $B\left(  0,2^{2s}\right)  \subset D$.

Now we decompose the collection of pairs $\left(  I,J\right)  $ arising in
$\mathsf{B}_{s,D}^{\operatorname*{lower}}\left(  f,g\right)  $ by
\begin{align*}
& \mathcal{G}_{s}\left[  U\right]  \times\mathcal{D}_{2s}\left[  D\right]
=\bigcup_{w=0}^{s}\bigcup_{r=0}^{w}\mathcal{L}_{s,w,r},\\
\mathcal{L}_{s,r}  & \equiv\left\{  \left(  I,J\right)  \in\mathcal{G}%
_{s}\left[  U\right]  \times\mathcal{D}_{2s}\left[  D\right]  :J\subset T%
_{s}^{I}\left[  r\right]  \right\}  ,\ \ \ \ \ 0\leq r\leq s,\\
\mathcal{L}_{s,w,r}  & \equiv\left\{  \left(  I,J\right)  \in\mathcal{G}%
_{s}\left[  U\right]  \times\mathcal{D}_{2s}\left[  D\right]  :J\subset
P_{s,w}^{I}\left[  r\right]  \right\}  ,\ \ \ \ \ 0\leq r\leq w<s,
\end{align*}
where $T_{s}^{I}\left[  r\right]  $ and $P_{s,w}^{I}\left[  r\right]  $ are
tubes and pipes respectively, that are defined in the subsections below. Then
we will control the corresponding subforms,%
\begin{align*}
\mathsf{B}_{s,r,D}^{\operatorname*{lower}}\left(  f,g\right)   & \equiv
\sum_{\left(  I,J\right)  \in\mathcal{L}_{s,r}}\left\langle T\bigtriangleup
_{I;\kappa}^{n-1,\eta}f,\bigtriangleup_{J;\kappa}^{n,\eta}g\right\rangle \ ,\\
\mathsf{B}_{s,w,r,D}^{\operatorname*{lower}}\left(  f,g\right)   & \equiv
\sum_{\left(  I,J\right)  \in\mathcal{L}_{s,w,r}}\left\langle T\bigtriangleup
_{I;\kappa}^{n-1,\eta}f,\bigtriangleup_{J;\kappa}^{n,\eta}g\right\rangle \ ,
\end{align*}
and add in the parameters $r$ and $w$ to control the lower form
\begin{equation}
\mathsf{B}_{s}^{\operatorname*{lower}}\left(  f,g\right)  \equiv\sum_{i=1}%
^{M}\sum_{r=0}^{s}\left\{  \mathsf{B}_{s,r,D}^{\operatorname*{lower}}\left(
f,g\right)  +\sum_{w=0}^{r}\mathsf{B}_{s,w,r,D}^{\operatorname*{lower}}\left(
f,g\right)  \right\}  ,\label{decomp B^lower}%
\end{equation}
by%
\begin{equation}
\mathbb{E}_{\mathcal{G}_{s}\left[  U\right]  }^{\mu}\left\vert \mathsf{B}%
_{s}^{\operatorname*{lower}}\left(  \left(  \mathcal{A}_{\mathbf{a}}%
\mathsf{Q}_{U}^{s}\right)  ^{\spadesuit}f,\mathsf{P}_{2s}\left[  D\right]
g\right)  \right\vert \lesssim2^{-\varepsilon_{p,n}s}\left\Vert f\right\Vert
_{L^{p}\left(  U\right)  }\left\Vert g\right\Vert _{L^{p^{\prime}}\left(
D\right)  },\label{cont by}%
\end{equation}
as well as the stronger average norm estimate,%
\[
\mathbb{E}_{2^{\mathcal{G}\left[  U\right]  }}^{\mu}\left\Vert T\left(
\mathcal{A}_{\mathbf{a}}\mathsf{Q}_{U}^{s}\right)  ^{\spadesuit}f\right\Vert
_{L^{p}\left(  A_{+}\left(  0,2^{2s-w}\right)  \right)  }\lesssim
2^{-\varepsilon_{n,p}s}\left\Vert f\right\Vert _{L^{p}\left(  U\right)  }%
^{p}\ ,\ \ \ \ \ \text{for }p>\frac{2n}{n-1}.
\]
Note that when averaging over the family of `martingale transforms' $T\left(
\mathcal{A}_{\mathbf{a}}\mathsf{Q}_{U}^{s}\right)  ^{\spadesuit}f$, it makes
no difference whether we use $\mathbb{E}_{2^{\mathcal{G}\left[  U\right]  }%
}^{\mu}$ or $\mathbb{E}_{2^{\mathcal{G}_{s}\left[  U\right]  }}^{\mu}$.

Before turning to the details of these estimates, we discuss in the next
subsection the problematic \emph{resonance} that plagues the lower form
$\mathsf{B}^{\operatorname*{lower}}\left(  f,g\right)  $. The details
themselves are found in the second and third subsections using the `pipe' decomposition.

\subsection{Resonance in the lower form}

Note that for fixed $\xi\in\mathbb{R}^{n}$, the wavelength of the oscillation
of the function $x\rightarrow e^{-i\Phi\left(  x\right)  \cdot\xi}$ is roughly
$\frac{1}{\left\vert \xi\right\vert }\approx\frac{\ell\left(  I\right)  ^{2}%
}{2^{d}}$, while the depth of the patch of the sphere $\Phi\left(  I\right)  $
in the direction toward $\xi$ is roughly $\ell\left(  I\right)  \sin
\theta\approx2^{m}\ell\left(  I\right)  ^{2}$. Thus we will have
\emph{oscillation} along the patch $\Phi\left(  I\right)  $ if and only if the
wavelength $\frac{\ell\left(  I\right)  ^{2}}{2^{d}}$ is less than the depth
$2^{m}\ell\left(  I\right)  ^{2}$, i.e. $m\gg\left\vert d\right\vert $, while
we will have \emph{smoothness} along the patch if and only if $m\ll\left\vert
d\right\vert $.

On the other hand, for $\xi\in J$, the wavelength of the oscillation of the
function $\xi\rightarrow e^{-i\Phi\left(  x\right)  \cdot\xi}$ is roughly
$\frac{1}{\cos\measuredangle\left(  \Phi\left(  x\right)  ,c_{J}\right)
}\approx1$ (unless the unit vectors $\frac{c_{J}}{\left\vert c_{J}\right\vert
}$ and $\Phi\left(  c_{I}\right)  $ are nearly orthogonal), while the depth of
the cube in the diretion of $\xi$ is roughly $\ell\left(  J\right)  =2^{k}$.
Thus we will have \emph{oscillation} along the cube $J$ if and only if the
wavelength $1$ is less than the depth $2^{k}$, i.e. $k\gg0$, while we will
have \emph{smoothness} along the cube if and only if $k\ll0$.

\begin{conclusion}
The most problematic case occurs when $d<0$ and both $m\approx\left\vert
d\right\vert $ and $k\approx0$.
\end{conclusion}

We begin by illustrating our approach to controlling resonance in the most
problematic of the subcases in the next subsection, and it is here that we
require the use of \emph{probability} and an interpolation argument. In such
instances where we need to use expectation over `martingale transforms', we
will also need to apply this expectation to \emph{norms} rather than bilinear
forms, which must be addressed.

In order to handle cases with partial resonance in the subsequent subsection,
we introduce a different decomposition of the disjoint form into resonant
pipes that respects resonance when $d<0$, and then apply principles of decay
along with\ probability and the interpolation argument to control these
remaining subcases.

But first we look at the extreme resonant case and show how expectation plays
a role in controlling this simple case before tackling the general case. We
will also show why the annular cone decomposition used in $\mathcal{P}_{m}$
must be replaced by a pipe decomposition, namely because pipes respect
resonance while sectors do not.

\subsubsection{The extreme resonant case}

The most resonant of\ the disjoint subforms is $\mathsf{B}%
_{\operatorname*{disjoint}}^{k,d,m}\left(  f,g\right)  =\mathsf{B}%
_{\operatorname*{disjoint}}^{0,-m,m}\left(  f,g\right)  $ when $\ell\left(
J\right)  =1$ and $d=-m$. Fix $\left(  I,J\right)  \in\mathcal{P}_{m}^{0,-m}$
and let $J_{\max}^{m}\left[  I\right]  $ be any dyadic cube in $\mathcal{D}$
satisfying the following conditions,
\begin{align}
\ell\left(  J_{\max}^{m}\left[  I\right]  \right)   & =\frac{1}{\ell\left(
I\right)  },\label{def Jmax}\\
\operatorname*{dist}\left(  0,J_{\max}^{m}\left[  I\right]  \right)   &
\approx\frac{2^{-m}}{\ell\left(  I\right)  ^{2}},\nonumber\\
\pi_{\tan}J_{\max}^{m}\left[  I\right]   & \subset2^{m+1}I\setminus
2^{m-1}I,\nonumber\\
\ell\left(  \pi_{\tan}J_{\max}^{m}\left[  I\right]  \right)   & =2^{m}%
\ell\left(  I\right)  ,\nonumber
\end{align}
where $\ell\left(  \pi_{\tan}J_{\max}^{m}\left[  I\right]  \right)  $ denotes
the diameter of the quasicube $\pi_{\tan}J_{\max}^{m}\left[  I\right]  $. If
$\ell\left(  I\right)  =2^{-s}$ with $s\geq m$ (which follows from
(\ref{def Jmax}) and $\ell\left(  \pi_{\tan}J_{\max}^{m}\left[  I\right]
\right)  \lesssim1$), then we have%
\[
\ell\left(  J_{\max}^{m}\left[  I\right]  \right)  =2^{s}%
,\ \ \ \operatorname*{dist}\left(  0,J_{\max}^{m}\left[  I\right]  \right)
\approx2^{2s-m},\ \ \ \ell\left(  \pi_{\tan}J_{\max}^{m}\left[  I\right]
\right)  =\frac{\ell\left(  J_{\max}^{m}\left[  I\right]  \right)
}{\operatorname*{dist}\left(  0,J_{\max}^{m}\left[  I\right]  \right)
}=2^{m-s}.
\]

At this point we note that the cubes $J_{\max}^{m}\left[  I\right]  $ are
essentially the maximal dyadic cubes that fit inside the annular conic region
given by (\ref{def Jmax}), and hence there are roughly $\frac
{\operatorname*{dist}\left(  0,J_{\max}^{m}\left[  I\right]  \right)  }%
{\ell\left(  J_{\max}^{m}\left[  I\right]  \right)  }\approx\frac{2^{2s-m}%
}{2^{s}}\approx2^{s-m}$ such cubes stacked away from the origin. We enumerate
these cubes by $\left\{  J_{\max}^{m,t}\left[  I\right]  \right\}
_{t=1}^{c2^{s-m}}$ and let
\begin{equation}
J_{\max}^{m,\ast}\left[  I\right]  \equiv%
{\displaystyle\bigcup\limits_{t=1}^{c2^{s-m}}}
J_{\max}^{m,t}\left[  I\right] \label{def quasi}%
\end{equation}
denote their union. Thus $J_{\max}^{m,\ast}\left[  I\right]  $ is a
\emph{quasirectangle} of `length' roughly $\operatorname*{dist}\left(
0,J_{\max}^{m}\left[  I\right]  \right)  \approx2^{2s-m}$, and `width' roughly
$2^{s}$ - we say `quasi' because $J_{\max}^{m,\ast}\left[  I\right]  $ is a
union of dyadic cubes $J_{\max}^{m,t}\left[  I\right]  $ staggered in the
direction of the annular conic region. Note that there are at most $C_{n}$
such quasirectangles $J_{\max}^{m,\ast}\left[  I\right]  $ associated to any
given cube $I\in\mathcal{G}\left[  S\right]  $.

\begin{remark}
Since quasirectangles do not respect resonance (which varies along the
quasirectangle), they will not play a part in the proof going forward, but
will instead be replaced by pipes in the next subsection.
\end{remark}

If $\phi\equiv\measuredangle\left(  c_{J_{\max}^{m}\left[  I\right]  }%
-\Phi\left(  c_{I}\right)  ,\Phi\left(  c_{I}\right)  ^{\perp}\right)  $ is
the angle between the vector $c_{J_{\max}^{m}\left[  I\right]  }-\Phi\left(
c_{I}\right)  $ and the unit vector $\Phi\left(  c_{I}\right)  $, and if
$\theta\equiv\measuredangle\left(  \frac{c_{J_{\max}^{m}\left[  I\right]  }%
}{\left\vert c_{J_{\max}^{m}\left[  I\right]  }\right\vert },\Phi\left(
c_{I}\right)  \right)  $ is the angle between the unit vectors $\frac
{c_{J_{\max}^{m}\left[  I\right]  }}{\left\vert c_{J_{\max}^{m}\left[
I\right]  }\right\vert }$ and $\Phi\left(  c_{I}\right)  $, then
$\theta\approx2^{m}\ell\left(  I\right)  $ and we have%
\begin{align}
\frac{\pi}{2}-\phi & =\measuredangle\left(  c_{J_{\max}^{m}\left[  I\right]
}-\Phi\left(  c_{I}\right)  ,\Phi\left(  c_{I}\right)  \right) \label{angle}\\
& =\measuredangle\left(  c_{J_{\max}^{m}\left[  I\right]  }-\frac{c_{J_{\max
}^{m}\left[  I\right]  }}{\left\vert c_{J_{\max}^{m}\left[  I\right]
}\right\vert },\Phi\left(  c_{I}\right)  \right)  +\measuredangle\left(
c_{J_{\max}^{m}\left[  I\right]  }-\Phi\left(  c_{I}\right)  ,c_{J_{\max}%
^{m}\left[  I\right]  }-\frac{c_{J_{\max}^{m}\left[  I\right]  }}{\left\vert
c_{J_{\max}^{m}\left[  I\right]  }\right\vert }\right) \nonumber\\
& =\measuredangle\left(  \frac{c_{J_{\max}^{m}\left[  I\right]  }}{\left\vert
c_{J_{\max}^{m}\left[  I\right]  }\right\vert },\Phi\left(  c_{I}\right)
\right)  +O\left(  \frac{\left\vert \Phi\left(  c_{I}\right)  -\frac
{c_{J_{\max}^{m}\left[  I\right]  }}{\left\vert c_{J_{\max}^{m}\left[
I\right]  }\right\vert }\right\vert }{\left\vert c_{J_{\max}^{m}\left[
I\right]  }-\Phi\left(  c_{I}\right)  \right\vert }\right)  \approx2^{m}%
\ell\left(  I\right)  +\frac{2^{m}\ell\left(  I\right)  }{\operatorname*{dist}%
\left(  0,J_{\max}^{m}\left[  I\right]  \right)  }\nonumber\\
& =2^{m}\ell\left(  I\right)  \left\{  1+\frac{1}{\operatorname*{dist}\left(
0,J_{\max}^{m}\left[  I\right]  \right)  }\right\}  \approx2^{m}\ell\left(
I\right)  \left\{  1+2^{m-2s}\right\}  \approx2^{m}\ell\left(  I\right)
,\nonumber
\end{align}
since $s\geq m$. Thus it follows that there is neither oscillation nor
smoothness of the inner product
\[
\left\langle T\bigtriangleup_{I;\kappa}^{n-1,\eta}f,\bigtriangleup_{J;\kappa
}^{n,\eta}g\right\rangle =\int_{\mathbb{R}^{n}}\left\{  \int_{\mathbb{R}%
^{n-1}}\left\langle f,h_{I;\kappa}^{n-1,\eta}\right\rangle h_{I;\kappa
}^{n-1,\eta}\left(  x\right)  e^{i\Phi\left(  x\right)  \cdot\xi}dx\right\}
\bigtriangleup_{J;\kappa}^{n,\eta}g\left(  \xi\right)  d\xi
\]
in the integral over $I$ in braces, since the `tilted depth' of $\Phi\left(
I\right)  $ in the direction $\frac{\pi}{2}-\phi$ is given by%
\[
\operatorname*{tilted}\operatorname*{depth}\approx\ell\left(  I\right)
\cos\phi=\ell\left(  I\right)  \sin\left(  \frac{\pi}{2}-\phi\right)
\approx2^{m}\ell\left(  I\right)  ^{2},
\]
and so
\begin{equation}
\operatorname{wavelength}\approx\frac{1}{\operatorname*{dist}\left(
0,J_{\max}^{m}\left[  I\right]  \right)  }=2^{m}\ell\left(  I\right)
^{2}\approx\operatorname*{tilted}\operatorname*{depth}.\label{wavelength}%
\end{equation}
Of course there is neither oscillation nor smoothness in the integral over $J
$ either since $\ell\left(  J\right)  =1$ and the wavelength coming from the
sphere is approximately $\ell\left(  J\right)  =1$ as well.

Then $\left(  I,J\right)  \in\mathcal{P}_{m}^{0,-m}$ \emph{essentially} if and
only if $J\subset J_{\max}^{m,\ast}\left[  I\right]  $ and $\ell\left(
J\right)  =1$. There are roughly $\frac{1}{\ell\left(  I\right)  ^{n}}$ cubes
$J\subset J_{\max}^{m,t}\left[  I\right]  $ of side length $1$ for each $1\leq
t\leq c2^{s-m}$, and we may restrict our attention to the cubes $I$ having
side length $2^{-s}$ with $s\geq m$, that are contained in a cube $Q$ where
\begin{equation}
Q\subset S\text{ with }\ell\left(  Q\right)  \approx2^{m-s}\text{, such that
}J_{\max}^{m,\ast}\left[  I\right]  \approx J_{\max}^{m,\ast}\left[
I^{\prime}\right]  \text{ for all such cubes }I\subset Q.\label{bundle}%
\end{equation}
We also then set
\begin{equation}
Q^{\ast}\equiv\bigcup_{I\subset Q}J_{\max}^{m,\ast}\left[  I\right]
,\label{bundle'}%
\end{equation}
which is approximately equal to any of the $J_{\max}^{m,\ast}\left[  I\right]
$ taken individually, and thus $Q^{\ast}$ is a quasirectangle of length
roughly $2^{2s-m}$, and width roughly $2^{s}$. Thus we have defined cube /
quasirectangle pairs $\left(  Q,Q^{\ast}\right)  $ which we now analyze a bit
further. Recall from (\ref{def Jmax}) that $\ell\left(  \pi_{\tan}Q^{\ast
}\right)  \approx2^{m}\ell\left(  I\right)  =2^{m-s}$.

We write%
\begin{equation}
\mathsf{Q}_{Q}^{s}g\equiv\sum_{I\in\mathsf{Q}_{Q}^{s}}\bigtriangleup
_{I;\kappa}^{n-1}g\text{ and }\mathsf{P}_{m,s}^{\eta,0,Q^{\ast}}g\equiv
\sum_{J\subset Q^{\ast}:\ \ell\left(  J\right)  =1}\bigtriangleup_{J;\kappa
}^{n,\eta}g,\label{def Qm and Pm}%
\end{equation}
and recalling that $\left(  \mathcal{A}_{\mathbf{a}}\mathsf{Q}_{Q}^{s}\right)
^{\spadesuit}=\left(  \mathcal{A}_{\mathbf{a}}\mathsf{Q}_{Q}^{s}\right)
^{S_{\kappa,\eta}}=S_{\kappa,\eta}\mathcal{A}_{\mathbf{a}}\mathsf{Q}_{Q}%
^{s}\left(  S_{\kappa,\eta}\right)  ^{-1}$ is the conjugation of
$\mathcal{A}_{\mathbf{a}}\mathsf{Q}_{Q}^{s}$ by $S_{\kappa,\eta}$, we claim
that%
\begin{align}
\mathbb{E}_{2^{\mathcal{D}}}^{\mu}\left\vert \sum_{m=1}^{\infty}\sum
_{s=m}^{\infty}\sum_{Q}\left\langle T\left(  \mathcal{A}_{\mathbf{a}%
}\mathsf{Q}_{Q}^{s}\right)  ^{\spadesuit}f,\mathsf{P}_{m,s}^{\eta,0,Q^{\ast}%
}g\right\rangle \right\vert  & \leq\sum_{m=1}^{\infty}\sum_{s=m}^{\infty}%
\sum_{Q}\mathbb{E}_{2^{\mathcal{D}}}^{\mu}\left\vert \left\langle T\left(
\mathcal{A}_{\mathbf{a}}\mathsf{Q}_{Q}^{s}\right)  ^{\spadesuit}%
,\mathsf{P}_{m,s}^{\eta,0,Q^{\ast}}g\right\rangle \right\vert \label{no osc}\\
& \lesssim\left\Vert f\right\Vert _{L^{p}}\left\Vert g\right\Vert
_{L^{p^{\prime}}}\ ,\ \ \ \ \ p\geq\frac{2n}{n-1},\nonumber
\end{align}
where we recall that the parameters $k$ and $d$ are fixed at $k=0$ and $d=-m$.
It is here in (\ref{no osc}) that our argument requires averaging over all
involutive smooth Alpert multipliers on the left hand side of the inequality.
Note that we have replaced the large projection $\mathsf{Q}_{S}$ with the
smaller projections $\mathsf{Q}_{Q}^{s}$ for $Q\subset S$.

\subsubsection{The interpolation argument\label{Subinterp}}

In order to illustrate the probabilistic methods in a relatively simple
situation, we first prove (\ref{no osc}) when the sum is taken only over
$s=m\in\mathbb{N}$, so that both $Q$ and $Q^{\ast}$ reduce to cubes of side
length roughly $1$. Thus there are only a bounded number of such cube / cube
pairs $\left(  Q,Q^{\ast}\right)  $, which for convenience we treat as a
single pair $\left(  Q_{0},Q_{0}^{\ast}\right)  $. We claim,%
\begin{equation}
\mathbb{E}_{2^{\mathcal{G}}}^{\mu}\left\vert \sum_{m=1}^{\infty}\left\langle
T\left(  \mathcal{A}_{\mathbf{a}}\mathsf{Q}_{Q_{0}}^{m}\right)  ^{\spadesuit
}f,\mathsf{P}_{m,m}^{\eta,0,Q_{0}^{\ast}}g\right\rangle \right\vert
\lesssim\left\Vert f\right\Vert _{L^{p}}\left\Vert g\right\Vert _{L^{p^{\prime
}}}\ ,\ \ \ \ \ p>\frac{2n}{n-1}.\label{no osc s=m}%
\end{equation}
We note that the expectation $\mathbb{E}_{2^{\mathcal{G}}}^{\mu}$ will
circumvent some of the geometric $L^{4}$ arguments that go back to Fefferman
\cite{Fef} (see also \cite{Bou}, \cite{Gut} and \cite{Tao4}). Recall that we
are in the case $d=-m$, and that%
\[
\mathsf{Q}_{Q_{0}}^{m}g=\sum_{I\subset Q_{0}:\ \ell\left(  I\right)  =2^{-m}%
}\bigtriangleup_{I;\kappa}^{n-1}g\text{ and }\mathsf{P}_{m,m}^{\eta
,0,Q_{0}^{\ast}}g\equiv\sum_{J\subset Q_{0}^{\ast}:\ \ell\left(  J\right)
=1}\bigtriangleup_{J;\kappa}^{n,\eta}g,
\]
where $Q_{0}$ is a cube in $\mathbb{R}^{n-1}$ centered at the origin with side
length approximately $1$, and $Q_{0}^{\ast}$ is a cube in $\mathbb{R}^{n}$ at
distance $2^{m}$ from the origin with side length approximately $2^{m}$, and
such that $\operatorname*{dist}\left(  Q_{0},\pi_{\tan}Q_{0}^{\ast}\right)
\approx1$. We will again use $\widehat{\varphi}$ to denote the Fourier
transform of $\varphi$. Thus we must estimate the average of the moduli of the
inner products,%
\begin{align}
& \left\langle T\left(  \mathcal{A}_{\mathbf{a}}\mathsf{Q}_{Q_{0}}^{m}\right)
^{\spadesuit}f,\mathsf{P}_{m,m}^{\eta,0,Q_{0}^{\ast}}g\right\rangle
=\left\langle T\sum_{I\in\mathcal{G}_{m}\left[  Q_{0}\right]  }a_{I}%
\bigtriangleup_{I;\kappa}^{n-1,\eta}f,\sum_{J\subset Q_{0}^{\ast}%
:\ \ell\left(  J\right)  =1}\bigtriangleup_{J;\kappa}^{n,\eta}g\right\rangle
\label{must est}\\
& =\sum_{I\in\mathcal{G}_{m}\left[  Q_{0}\right]  }\sum_{J\subset Q_{0}^{\ast
}:\ \ell\left(  J\right)  =1}\int_{S}\int_{\mathbb{R}^{n}}e^{-i\Phi\left(
x\right)  \cdot\xi}a_{I}\bigtriangleup_{I;\kappa}^{n-1,\eta}f\left(  x\right)
\bigtriangleup_{J;\kappa}^{n,\eta}g\left(  \xi\right)  dxd\xi\nonumber\\
& =\int_{\mathbb{R}^{n}}\left\{  \int e^{-iz\cdot\xi}\sum_{I\in\mathcal{G}%
_{m}\left[  Q_{0}\right]  }a_{I}\bigtriangleup_{I;\kappa}^{n-1,\eta}f\left(
\Phi^{-1}\left(  z\right)  \right)  \partial\Phi^{-1}\left(  z\right)
dz\right\}  \sum_{J\subset Q_{0}^{\ast}:\ \ell\left(  J\right)  =1}%
\bigtriangleup_{J;\kappa}^{n,\eta}g\left(  \xi\right)  d\xi\nonumber\\
& \equiv\int_{\mathbb{R}^{n}}\widehat{f_{\mathbf{a},\Phi}}\left(  \xi\right)
g_{m}\left(  \xi\right)  d\xi,\nonumber
\end{align}
where $\widehat{f_{\mathbf{a},\Phi}}$ denotes the Fourier transform of
$f_{\mathbf{a},\Phi}$ as in Section \ref{Sub interp}, and
\begin{align*}
g_{m}\left(  \xi\right)   & \equiv\sum_{J\subset Q_{0}^{\ast}:\ \ell\left(
J\right)  =1}\bigtriangleup_{J;\kappa}^{n,\eta}g\left(  \xi\right)
=\mathsf{P}_{m,m}^{\eta,0,Q_{0}^{\ast}}g\left(  \xi\right)  \ ,\\
f_{\mathbf{a},\Phi}\left(  z\right)   & \equiv\left(  \mathcal{A}_{\mathbf{a}%
}\mathsf{Q}_{Q_{0}}^{m}\right)  ^{\spadesuit}f\left(  \Phi^{-1}\left(
z\right)  \right)  \partial\Phi^{-1}\left(  z\right)  =\sum_{I\in
\mathcal{G}_{m}\left[  Q_{0}\right]  }a_{I}\bigtriangleup_{I;\kappa}%
^{n-1,\eta}f\left(  \Phi^{-1}\left(  z\right)  \right)  \partial\Phi
^{-1}\left(  z\right) \\
& =\sum_{I\in\mathcal{G}_{m}\left[  Q_{0}\right]  }a_{I}\left\langle
f,h_{I;\kappa}^{n-1,\eta}\right\rangle h_{I;\kappa}^{n-1,\eta}\left(
\Phi^{-1}\left(  z\right)  \right)  \partial\Phi^{-1}\left(  z\right)
\equiv\sum_{I\in\mathcal{G}_{m}\left[  Q_{0}\right]  }f_{\mathbf{a},\Phi}%
^{I}\left(  z\right)  \ ,
\end{align*}
and where the spherical measure $f_{\mathbf{a},\Phi}^{I}$ has mass roughly
$\left\vert \widehat{f}\left(  I\right)  \right\vert 2^{-m\left(  n-1\right)
}$ and is supported in $\mathbb{S}^{n-1}$.

The bound (\ref{no osc s=m}) now follows immediately from H\"{o}lder's
inequality and Proposition \ref{prop interp}, upon noting that $\mathsf{Q}%
_{S}^{s}$ in Proposition \ref{prop interp} is the projection $\mathsf{Q}%
_{Q_{0}}^{m}$ here. Indeed, from Proposition \ref{prop interp} we have%
\[
\sum_{m=1}^{\infty}\mathbb{E}_{2^{\mathcal{G}}}^{\mu}\left\Vert T\left(
\mathcal{A}_{\mathbf{a}}\mathsf{Q}_{Q_{0}}^{m}\right)  ^{\spadesuit
}f\right\Vert _{L^{p}\left(  \left\vert \varphi_{m}\right\vert ^{4}\right)
}\lesssim\sum_{m=1}^{\infty}2^{-m\varepsilon_{n,p}}\left\Vert f\right\Vert
_{L^{p}\left(  \left\vert \varphi_{m}\right\vert ^{4}\right)  }%
\]
and then in particular,
\begin{align*}
& \mathbb{E}_{2^{\mathcal{G}}}^{\mu}\left\vert \sum_{m=1}^{\infty}\left\langle
T\left(  \mathcal{A}_{\mathbf{a}}\mathsf{Q}_{Q_{0}}^{m}\right)  ^{\spadesuit
}f,\mathsf{P}_{m,m}^{\eta,0,Q_{0}^{\ast}}g\right\rangle \right\vert \leq
\sum_{m=1}^{\infty}\mathbb{E}_{2^{\mathcal{G}}}^{\mu}\left\Vert T\left(
\mathcal{A}_{\mathbf{a}}\mathsf{Q}_{Q_{0}}^{m}\right)  ^{\spadesuit
}f\right\Vert _{L^{p}\left(  \left\vert \varphi_{m}\right\vert ^{4}\right)
}\left\Vert \mathsf{P}_{m,m}^{\eta,0,Q_{0}^{\ast}}g\right\Vert _{L^{p^{\prime
}}\left(  \left\vert \varphi_{m}\right\vert ^{4}\right)  }\\
& \leq\sum_{m=1}^{\infty}2^{-m\varepsilon_{n,p}}\left\Vert f\right\Vert
_{L^{p}\left(  \left\vert \varphi_{m}\right\vert ^{4}\right)  }\left\Vert
g\right\Vert _{L^{p^{\prime}}\left(  \left\vert \varphi_{m}\right\vert
^{4}\right)  }\lesssim\left\Vert f\right\Vert _{L^{p}}\left\Vert g\right\Vert
_{L^{p^{\prime}}}\ ,\ \ \ \ \ \text{where }\varepsilon_{n,p}>0\text{ for
}p>\frac{2n}{n-1},m\in\mathbb{N}.
\end{align*}

But we can in fact obtain more. Define the smooth Alpert pseudoprojection
\begin{equation}
\mathsf{P}_{m,m}^{\eta,Q_{0}^{\ast}}g\equiv\sum_{k\in\mathbb{Z}}\sum_{J\subset
Q_{0}^{\ast}:\ \ell\left(  J\right)  =2^{k}}\bigtriangleup_{J;\kappa}^{n,\eta
}g,\label{k included}%
\end{equation}
where of course the restriction $J\subset Q_{0}^{\ast}$ means that $k\leq m$
in the sum above (contrast this with the restriction to $k=0$ in
$\mathsf{P}_{m,m}^{\eta,0,Q_{0}^{\ast}}g$). Then we have the stronger
inequality in which the sum over $k$ is included,%
\begin{align}
& \mathbb{E}_{2^{\mathcal{G}}}^{\mu}\left\vert \sum_{m=1}^{\infty}\left\langle
T_{S}\left(  \mathcal{A}_{\mathbf{a}}\mathsf{Q}_{Q_{0}}^{m}\right)
^{\spadesuit}f,\mathsf{P}_{m,m}^{\eta,Q_{0}^{\ast}}g\right\rangle \right\vert
\leq\sum_{m=1}^{\infty}\mathbb{E}_{2^{\mathcal{G}}}^{\mu}\left\Vert
T_{S}\left(  \mathcal{A}_{\mathbf{a}}\mathsf{Q}_{Q_{0}}^{m}\right)
^{\spadesuit}f\right\Vert _{L^{p}}\left\Vert \mathsf{P}_{m,m}^{\eta
,Q_{0}^{\ast}}g\right\Vert _{L^{p^{\prime}}}\label{no osc s=m,k}\\
& \leq\sum_{m=1}^{\infty}2^{-m\varepsilon_{p,n}}\left\Vert S_{\kappa,\eta
}\mathcal{A}_{\mathbf{a}}\mathsf{Q}_{Q_{0}}^{m}\left(  S_{\kappa,\eta}\right)
^{-1}f\right\Vert _{L^{p}}\left\Vert \mathsf{P}_{m,m}^{\eta,Q_{0}^{\ast}%
}g\right\Vert _{L^{p^{\prime}}}\lesssim\left\Vert f\right\Vert _{L^{p}%
}\left\Vert g\right\Vert _{L^{p^{\prime}}}\ ,\ \ \ \ \ p>\frac{2n}{n-1}%
,m\in\mathbb{N}.\nonumber
\end{align}

\subsection{The resonant pipe decomposition}

We now abandon the decomposition into annular cones parameterized by $m$, and
distances parameterized by $d$, since this decomposition does not respect
resonance in the inner products. Instead, we will use (\ref{decomp B^lower})
to decompose the lower form as%
\begin{align*}
& \mathsf{B}^{\operatorname*{lower}}\left(  f,g\right)  =\sum_{s=1}^{\infty
}\sum_{i=1}^{M}\sum_{r=0}^{s}\left\{  \mathsf{B}_{s,r,Q}%
^{\operatorname*{lower}}\left(  f,g\right)  +\sum_{w=0}^{r}\mathsf{B}%
_{s,w,r,Q}^{\operatorname*{lower}}\left(  f,g\right)  \right\} \\
& =\sum_{s=1}^{\infty}\sum_{i=1}^{M}\sum_{r=0}^{s}\left\{  \sum_{\left(
I,J\right)  \in\mathcal{L}_{s,r}}\left\langle T\bigtriangleup_{I;\kappa
}^{n-1,\eta}f,\bigtriangleup_{J;\kappa}^{n,\eta}g\right\rangle +\sum_{w=0}%
^{r}\sum_{\left(  I,J\right)  \in\mathcal{L}_{s,w,r}}\left\langle
T\bigtriangleup_{I;\kappa}^{n-1,\eta}f,\bigtriangleup_{J;\kappa}^{n,\eta
}g\right\rangle \right\}  \ ,
\end{align*}
where%
\begin{align*}
\mathcal{L}_{s,r}  & \equiv\left\{  \left(  I,J\right)  \in\mathcal{G}%
_{s}\left[  U\right]  \times\mathcal{D}_{2s}\left[  Q\right]  :J\subset T%
_{s}^{I}\left[  r\right]  \right\}  ,\ \ \ \ \ 0\leq r\leq s,\\
\mathcal{L}_{s,w,r}  & \equiv\left\{  \left(  I,J\right)  \in\mathcal{G}%
_{s}\left[  U\right]  \times\mathcal{D}_{2s}\left[  Q\right]  :J\subset
P_{s,w}^{I}\left[  r\right]  \right\}  ,\ \ \ \ \ 0\leq r\leq w<s.
\end{align*}
Thus for each $I\in\mathcal{G}_{s}\left[  U\right]  $, we are now decomposing
the set of cubes $J\in\mathcal{D}_{2s}\left[  Q\right]  $ into `truncated
tubes' $T_{s}^{I}\left[  r\right]  $ and `truncated pipes' $P_{s,w}^{I}\left[
r\right]  $, instead of the quasirectangles $J_{\max}^{m,\ast}\left[
I\right]  $ introduced in (\ref{def quasi}) above, using \emph{new} parameters
$w,r$ in place of $m,d$ above. The advantage of this new decomposition into
pipes is that it does indeed respect resonance.

In the remainder of this section, we will define the tubes $T_{s}^{I}\left[
r\right]  $ and pipes $P_{s,w}^{I}\left[  r\right]  $, and prove the
associated subform and norm estimates.

Fix $s\in\mathbb{N}$ and consider a cube $I\in G_{s}\left[  U\right]  $. Let
$\mathbf{u}_{n}^{I}$ be the unit outward normal to the sphere at the point
$\Phi\left(  c_{I}\right)  $, and let $\left(  \mathbf{u}^{I}\right)
^{\prime}=\left\{  \mathbf{u}_{1}^{I},...,\mathbf{u}_{n-1}^{I}\right\}  $ be
an orthonormal basis for the space $\left(  \mathbf{u}_{n}^{I}\right)
^{\perp}$ perpendicular to $\mathbf{u}_{n}^{I}$. We will use the coordinate
system $\left\{  \left(  \mathbf{u}^{I}\right)  ^{\prime},\mathbf{u}_{n}%
^{I}\right\}  $ in $\mathbb{R}^{n}$ in connection with the cube $I\in
G_{s}\left[  U\right]  $, so that as we vary $I\in G_{s}\left[  U\right]  $
the coordinate systems $\left\{  \left(  \mathbf{u}^{I}\right)  ^{\prime
},\mathbf{u}_{n}^{I}\right\}  $ rotate ($\operatorname*{Span}\left\{
\mathbf{u}_{n}^{I}\right\}  $ and $\operatorname*{Span}\left(  \mathbf{u}%
^{I}\right)  ^{\prime}$ are determined canonically under rotation, but not the
individual basis vectors $\mathbf{u}_{1}^{I},...,\mathbf{u}_{n-1}^{I}$).

For convenience in notation, we momentarily suppose without loss of generality
that $I=I_{0}\in G_{s}\left[  U\right]  $ is centered at the origin in $S$,
and consequently we can take $\left\{  \mathbf{u}_{1}^{I},...,\mathbf{u}%
_{n-1}^{I},\mathbf{u}_{n}^{I}\right\}  $ to be the standard orthonormal basis
$\left\{  \mathbf{e}_{1},...,\mathbf{e}_{n-1},\mathbf{e}_{n}\right\}  $ in
$\mathbb{R}^{n}$, and $\xi=\left(  \xi_{1},...,\xi_{n}\right)  =\left(
\xi^{\prime},\xi_{n}\right)  \in\mathbb{R}^{n}$ is the usual representation of
a point $\xi$ in $\mathbb{R}^{n}$. Then the pairs $\left(  I_{0},J\right)
\in\mathcal{G}\left[  U\right]  \times\mathcal{D}$ for which we have resonance
on both sides of the inner product, are precisely those satisfying
$\ell\left(  J\right)  \approx1$ and,%
\begin{align}
\frac{1}{\operatorname*{dist}\left(  0,J\right)  }  & \approx
\operatorname*{tilted}\operatorname*{depth}\approx2^{-s}\sin\theta
,\label{reso}\\
\text{i.e. }\left\vert \xi\right\vert  & \approx\frac{2^{s}}{\sin\theta}%
=2^{s}\frac{\left\vert \xi\right\vert }{\left\vert \xi^{\prime}\right\vert
},\ \ \ \ \text{ for }\xi\in J,\nonumber\\
\text{i.e. }2^{s-1}  & \leq\left\vert \xi^{\prime}\right\vert \leq
2^{s+1},\ \ \ \ \text{ for }\xi\in J,\nonumber
\end{align}
where $\theta$ is the angle $\xi$ makes with the positive $\xi_{n}$-axis. Thus
the union $P_{s}^{I_{0}}$ of the $J^{\prime}s$ satisfying $\ell\left(
J\right)  \approx1$ and (\ref{reso}) is essentially the difference of two
infinite tubes, namely the $\left(  2^{s+1}\times2^{s+1}\times\infty\right)
$-tube and the $\left(  2^{s-1}\times2^{s-1}\times\infty\right)  $-tube that
are oriented vertically with infinite length. We refer to $P_{s}^{I_{0}}$ as
the resonant $2^{s} $-pipe for $I_{0}$. In terms of the projection $\pi
_{\Phi\left(  c_{I_{0}}\right)  ^{\perp}}$ of $\mathbb{R}^{n}$ onto the
horizontal plane perpendicular to $\Phi\left(  c_{I_{0}}\right)  $, we have%
\[
P_{s}^{I_{0}}\approx\left\{  \xi\in\mathbb{R}^{n}:\operatorname*{dist}\left(
c_{I_{0}},\pi_{\Phi\left(  c_{I_{0}}\right)  ^{\perp}}\xi\right)  \approx
2^{s}\right\}  ,
\]
since $\left\vert \xi^{\prime}\right\vert \approx\operatorname*{dist}\left(
c_{I_{0}},\pi_{\left(  c_{I_{0}}\right)  ^{\perp}}\xi\right)  $.

\begin{definition}
We define the \emph{truncated} pipe%
\[
P_{s,w}^{I_{0}}\equiv P_{s}^{I_{0}}\cap L_{w}^{I_{0}},\ \ \ \ \ 1\leq w\leq s,
\]
to be the intersection of the infinite pipe $P_{s}^{I_{0}}$ and the horizontal
slab
\[
L_{w}^{I_{0}}\equiv\left\{  \xi\in\mathbb{R}^{n}:2^{2s-w-1}<\xi_{n}%
\leq2^{2s-w}\right\}  ,
\]
that is distance $2^{2s-w-1}$ above the plane $\xi_{n}=0$, and has height
roughly $2^{2s-w}$. We also define the truncated pipes $P_{s,w}^{I_{0}}$ for
$-s\leq w\leq-1$ by reflecting the pipes $P_{s,-w}^{I_{0}}$ across the plane
$\xi_{n}=0$, so that these pipes lie below the $\xi_{n}=0$.\newline Finally,
we define the truncated tubes $T_{s,+}^{I_{0}}\equiv P_{s}^{I_{0}}\cap L_{+}$
where $L_{+}\equiv\left\{  \xi\in\mathbb{R}^{n}:0\leq\xi_{n}\leq2^{s}\right\}
$, and their reflections $T_{s,-}^{I_{0}}\equiv-T_{s,\ast}^{I_{0}}$ across the
plane $\xi_{n}=0$.
\end{definition}

We now extend these notions of tubes and pipes to all $I\in\mathcal{G}%
_{s}\left[  U\right]  $.

\begin{definition}
For $I\in\mathcal{G}_{s}\left[  S\right]  $ and $0\leq w\leq s$, define the
truncated pipe $P_{s,w}^{I}$ to be the rotation of the pipe $P_{s,w}^{I_{0}}$
by any rotation $R$ that takes $\Phi\left(  c_{I_{0}}\right)  $ to
$\Phi\left(  c_{I}\right)  $, i.e.%
\[
P_{s,w}^{I}\equiv RP_{s,w}^{I_{0}}\approx\left\{  \xi\in\mathbb{R}%
^{n}:\operatorname*{dist}\left(  c_{I_{0}},\pi_{\Phi\left(  c_{I}\right)
^{\perp}}\xi\right)  \approx2^{s}\right\}  ,
\]
where $\pi_{\Phi\left(  c_{I}\right)  ^{\perp}}=\pi_{R\Phi\left(  c_{I_{0}%
}\right)  ^{\perp}}$. Similarly we define tubes $T_{s,+}^{I}$ and $T_{s,-}%
^{I}$.
\end{definition}

We will define expanded versions of these tubes and pipes below as needed.

Note that if $\left\vert \xi^{\prime}\right\vert \gg2^{s}$ then $e^{-i\Phi
\left(  x\right)  \cdot\xi}$ oscillates at least $\frac{\left\vert \xi
^{\prime}\right\vert }{2^{s}}$ times along the span of $\Phi\left(  I\right)
$, so that integration by parts is effective, while if $\left\vert \xi
^{\prime}\right\vert \ll2^{s}$ then $e^{-i\Phi\left(  x\right)  \cdot\xi}$
varies by at most $\frac{\left\vert \xi^{\prime}\right\vert }{2^{s}} $ along
the span of $\Phi\left(  I\right)  $, so that the vanishing moment properties
of $h_{I;\kappa}^{\eta}$ are effective.

\begin{definition}
For $r>0$ and $n\geq2$, define the $n$-dimensional annulus $A\left(
0,r\right)  =A_{n}\left(  0,r\right)  $ by%
\[
A\left(  0,r\right)  \equiv B\left(  0,r\right)  \setminus B\left(  0,\frac
{r}{2}\right)  ,
\]
where $B\left(  0,r\right)  =B_{n}\left(  0,r\right)  $ is the ball of radius
$r>0$ in $\mathbb{R}^{n}$ centered at the origin. Define the \emph{upper half}
ball $B_{\frac{1}{2}}\left(  0,r\right)  $ by%
\[
B_{+}\left(  0,r\right)  \equiv\left\{  \xi\in B\left(  0,r\right)  :\xi
_{n}\geq0\right\}  .
\]
and the \emph{upper half} annulus $A_{+}\left(  0,r\right)  $ by%
\[
A_{+}\left(  0,r\right)  \equiv\left\{  \xi\in A\left(  0,r\right)  :\xi
_{n}\geq0\right\}  .
\]

\end{definition}

To complete control of the lower disjoint form, in which $d<0$, we will use
the decomposition,%
\[
B_{+}\left(  0,2^{2s}\right)  =B_{+}\left(  0,2^{s}\right)  \cup\bigcup
_{w=0}^{s}A_{+}\left(  0,2^{2s-w}\right)  .
\]
We will later establish average control of $L^{p}$ norms, but first we turn to
controlling inner products.

\begin{lemma}
Suppose $s\in\mathbb{N}$ and $0\leq w\leq s$. Then%
\[
\mathbb{E}_{2^{\mathcal{G}_{s}\left[  U\right]  }}^{\mu}\left\vert
\left\langle T_{S}\left(  \mathcal{A}_{\mathbf{a}}\mathsf{Q}_{U}^{s}\right)
^{\spadesuit}f,\mathsf{P}_{A_{+}\left(  0,2^{2s-w}\right)  }^{\eta
}g\right\rangle \right\vert \lesssim2^{-\varepsilon_{n,p}s}\left\Vert
f\right\Vert _{L^{p}}\left\Vert g\right\Vert _{L^{p^{\prime}}}%
\ ,\ \ \ \ \ \text{for }p>\frac{2n}{n-1},
\]
where the implied constant is \emph{independent} of $s$ and $w$.
\end{lemma}

To prove the lemma, fix $0\leq w\leq s$ and $\mathbf{a}\in2^{\mathcal{G}%
_{s}\left[  S\right]  }$, and consider the positive expression,%
\begin{equation}
Z_{s,w}^{\mathbf{a}}\equiv\left\vert \sum_{I\in\mathcal{G}_{s}\left[
U\right]  }\sum_{J\subset P_{s,w}^{I}}\int_{\mathbb{R}^{n}}\left\{
\int_{\mathbb{R}^{n-1}}e^{-i\Phi\left(  x\right)  \cdot\xi}\left(
\mathcal{A}_{\mathbf{a}}\bigtriangleup_{I;\kappa}^{n-1}\mathsf{Q}_{S}%
^{s}\right)  ^{\spadesuit}f\left(  x\right)  dx\right\}  \bigtriangleup
_{J;\kappa}^{n,\eta}g\left(  \xi\right)  d\xi\right\vert .\label{mod inner}%
\end{equation}
We begin by establishing control of $Z_{s,w}^{\mathbf{a}}$, and then control
the sums over cubes $J$ in expanding geometric annuli away from the truncated
pipes $P_{s,w}^{I}$, by applying decay principles to obtain geometric decay
factors. Finally we apply the arguments used to bound $Z_{s,w}^{\mathbf{a}}$
to each of these collections of annuli, and then sum up the annuli to cover
all of the upper half annulus $A_{+}\left(  0,2^{2s-w}\right)  $, which
completes the proof of the lemma.

\begin{definition}
\label{expanded pipes}Define the \emph{expanded} truncated pipes
\[
P_{s,w}^{I_{0}}\left[  r\right]  =\left\{  \xi\in\mathbb{R}^{n}:\delta_{r}%
\xi\in P_{s,w}^{I_{0}}\right\}  ,
\]
where $\delta_{r}\xi=\left(  \frac{\xi^{\prime}}{2^{r}},\frac{\xi_{n}}%
{C_{n}2^{r}}\right)  $ is a (slightly nonisotropic) dilation for
$r\in\mathbb{Z}$, and $C_{n}$ is chosen sufficiently large. Thus $P%
_{s,w}^{I_{0}}\left[  r\right]  $ is a truncated pipe of height roughly
$C_{n}2^{2s-w+r}$ and width roughly $2^{s+r}$ centered at a point horizontally
located away from that of $P_{s,w}^{I_{0}}$. Then define the rotated expanded
truncated pipes $P_{s,w}^{I}\left[  r\right]  $ for $I\in\mathcal{G}%
_{s}\left[  S\right]  $, by $P_{s,w}^{I}\left[  r\right]  \equiv RP%
_{s,w}^{I_{0}}\left[  r\right]  $ for any rotation $R$ in $\mathbb{R}^{n}$
that takes $c_{I_{0}}$ to $c_{I}$.
\end{definition}

Note that if $C_{n}$ is chosen sufficiently large in the definition of
$P_{s,w}^{I_{0}}\left(  r\right)  $, then for every $I\in\mathcal{G}%
_{s}\left[  U\right]  $, the upper half annulus $A_{+}\left(  0,2^{2s-w}%
\right)  $ is contained in the union of the\ tube $T_{s,w}^{I}$, which we
recall is the convex hull of the truncated pipe $P_{s,w}^{I}$, and the
expanded truncated pipes $P_{s,w}^{I}\left[  r\right]  $ for $r\leq w$, i.e.%
\begin{equation}
A_{+}\left(  0,2^{2s-w}\right)  \subset T_{s,w}^{I}\cup\left(
{\displaystyle\bigcup\limits_{r=1}^{w}}
P_{s,w}^{I}\left[  r\right]  \right)  ,\ \ \ \ \ \text{for all }%
I\in\mathcal{G}_{s}\left[  S\right]  .\label{covers}%
\end{equation}
Moreover, the overlap of the truncated pipes $P_{s,w}^{I}$ is approximately%
\[
\frac{\left(  \#\ \text{pipes }P_{s,w}^{I}\right)  \times\left(  \text{volume
of a pipe }P_{s,w}^{I}\right)  }{\text{volume of annulus }A_{+}\left(
0,2^{2s-w}\right)  }\approx\frac{\left(  2^{s}\right)  ^{n-1}\times\left(
2^{s}\right)  ^{n-1}2^{2s-w}}{\left(  2^{2s-w}\right)  ^{n}}=2^{w\left(
n-1\right)  }.
\]
We will need to choose $C_{n}$ even larger in Subsubsection \ref{subsub gen} below.

\begin{definition}
For $\mathbf{a}\in2^{\mathcal{G}_{s}\left[  S\right]  }$ and $r\geq0$, define%
\begin{equation}
Z_{s,w}^{\mathbf{a}}\left[  r\right]  \equiv\left\vert \sum_{I\in
\mathcal{G}_{s}\left[  U\right]  }\sum_{J\subset P_{s,w}^{I}\left[  r\right]
}\int_{\mathbb{R}^{n}}\left\{  \int e^{-i\Phi\left(  x\right)  \cdot\xi
}\left(  \mathcal{A}_{\mathbf{a}}\bigtriangleup_{I;\kappa}^{n-1}\right)
^{\spadesuit}f\left(  x\right)  dx\right\}  \bigtriangleup_{J;\kappa}^{n,\eta
}g\left(  \xi\right)  d\xi\right\vert .\label{mod inner'}%
\end{equation}

\end{definition}

We will now control the average of this sum of inner products, as well as the
stronger average norm estimates, see (\ref{Lp est}) below. First, we consider
the two extreme cases $w=0$ and $w=s$, which are easily handled by two
different techniques. Then we combine these two proofs to give a single
argument for the general case.

\begin{definition}
Define
\[
\mathcal{R}_{s}^{k,w}\left(  r\right)  \equiv\left\{  \left(  I,J\right)
\in\mathcal{G}_{s}\left[  U\right]  \times\mathcal{D}_{k}:J\subset P_{s,w}%
^{I}\left[  r\right]  \right\}
\]
to be the set of pairs $\left(  I,J\right)  \in\mathcal{G}\left[  U\right]
\times\mathcal{D}$ with $\ell\left(  I\right)  =2^{-s}$, $\ell\left(
J\right)  =2^{k}$ and $J\subset P_{s,w}^{I}\left(  r\right)  $. When $r=0$ we
write simply
\[
\mathcal{R}_{s}^{k,w}=\mathcal{R}_{s}^{k,w}\left(  0\right)  .
\]

\end{definition}

For symmetry of notation, we also introduce tubes $\widehat{I_{0}}\left[
w\right]  $ that are essentially the same as the tubes $T_{s,w}^{I}$. For
$I\in\mathcal{G}_{s}\left[  U\right]  $ and $0\leq w\leq s$, define
\[
\widehat{I_{0}}\left[  w\right]  \equiv\left[  -2^{s},2^{s}\right]
^{n-1}\times\left[  2^{2s-w-1},2^{2s-w}\right]  \approx T_{s,w}^{I_{0}},
\]
and extend this definition to $\widehat{I}\left[  w\right]  $ by rotation , so
that $\widehat{I}\left[  w\right]  \approx T_{s,w}^{I}$ and $\widehat
{I}\left[  0\right]  \approx\widehat{I}$.

\subsubsection{The case $w=0$ (Direct Argument):\label{subsub direct}}

In the case $w=0$, we first consider $Z_{s,0}^{\mathbf{a}}$ with the sequence
$\mathbf{a}=\mathbf{1}\ $of all $1^{\prime}s$, since the arguments in this
subsubsection take absolute values inside anyways, and do not use probability.
The bound for the subform
\[
Z_{s,0}^{\mathbf{1}}=\left\vert \sum_{s=1}^{\infty}\sum_{I\in\mathcal{G}%
_{s}\left[  U\right]  }\sum_{J\in\mathcal{D}:\ J\subset\widehat{I}%
}\left\langle T\bigtriangleup_{I;\kappa}^{n-1,\eta}f,\bigtriangleup_{J;\kappa
}^{n,\eta}g\right\rangle \right\vert
\]
applies more generally to indicators $\mathbf{1}_{I}$ times $f$, in place of
smooth Alpert pseudoprojections $\bigtriangleup_{I;\kappa}^{n-1,\eta}$ applied
to $f$, and to $\mathbf{1}_{\widehat{I}}$ in place of $\sum_{J\in
\mathcal{D}:\ J\subset\widehat{I}}\bigtriangleup_{J;\kappa}^{n,\eta}$. To see
this, we first note that%
\begin{align*}
\left\Vert T\mathbf{1}_{I}f\right\Vert _{L^{p}\left(  \widehat{I}\right)  }  &
=\left(  \int_{\widehat{I}}\left\vert \int_{I}e^{-i\Phi\left(  x\right)
\cdot\xi}f\left(  x\right)  dx\right\vert ^{p}d\xi\right)  ^{\frac{1}{p}}%
\leq\left\vert \widehat{I}\right\vert ^{\frac{1}{p}}\left\vert I\right\vert
^{\frac{1}{p^{\prime}}}\left(  \int_{I}\left\vert f\left(  x\right)
\right\vert ^{p}dx\right)  ^{\frac{1}{p}}\\
& =2^{s\frac{n+1}{p}}2^{-s\frac{n-1}{p^{\prime}}}\left\Vert \mathbf{1}%
_{I}f\right\Vert _{L^{p}\left(  \mathbb{R}^{n-1}\right)  }=2^{-s\varepsilon
_{p,n}}\left\Vert \mathbf{1}_{I}f\right\Vert _{L^{p}\left(  \mathbb{R}%
^{n-1}\right)  }\ ,
\end{align*}
where%
\[
\varepsilon_{p,n}\equiv\frac{n-1}{p^{\prime}}-\frac{n+1}{p}=\frac{n-1}%
{p}\left(  p-1-\frac{n+1}{n-1}\right)  =\frac{n-1}{p}\left(  p-\frac{2n}%
{n-1}\right)  .
\]

Then with $s$ fixed, we continue with%
\begin{align*}
& \sum_{I\in\mathcal{G}_{s}\left[  U\right]  }\left\vert \left\langle
T\mathbf{1}_{I}f,\mathbf{1}_{\widehat{I}}g\right\rangle \right\vert \leq
\sum_{I\in\mathcal{G}_{s}\left[  U\right]  }\left\Vert T\mathbf{1}%
_{I}f\right\Vert _{L^{p}\left(  \widehat{I}\right)  }\left\Vert g\right\Vert
_{L^{p^{\prime}}\left(  \widehat{I}\right)  }\leq\left(  \sum_{I\in
\mathcal{G}_{s}\left[  U\right]  }\left\Vert T\mathbf{1}_{I}f\right\Vert
_{L^{p}\left(  \widehat{I}\right)  }^{p}\right)  ^{\frac{1}{p}}\left(
\sum_{I\in\mathcal{G}_{s}\left[  U\right]  }\left\Vert g\right\Vert
_{L^{p^{\prime}}\left(  \widehat{I}\right)  }^{p^{\prime}}\right)  ^{\frac
{1}{p^{\prime}}}\\
& \lesssim\left(  \sum_{I\in\mathcal{G}_{s}\left[  U\right]  }%
2^{-sp\varepsilon_{p,n}}\left\Vert \mathbf{1}_{I}f\right\Vert _{L^{p}\left(
\mathbb{R}^{n-1}\right)  }^{p}\right)  ^{\frac{1}{p}}\left\Vert g\right\Vert
_{L^{p^{\prime}}\left(  \cup_{I\in\mathcal{G}_{s}\left[  U\right]  }%
\widehat{I}\right)  }\leq2^{-s\varepsilon_{p,n}}\left\Vert f\right\Vert
_{L^{p}\left(  \mathbb{R}^{n-1}\right)  }\left\Vert g\right\Vert
_{L^{p^{\prime}}\left(  \mathbb{R}^{n}\right)  }\ ,
\end{align*}
and finally we sum over $s\in\mathbb{N}$ to obtain%
\[
\left\vert \sum_{s=1}^{\infty}\sum_{I\in\mathcal{G}_{s}\left[  U\right]
}\left\langle T\mathbf{1}_{I}f,\mathbf{1}_{\widehat{I}}g\right\rangle
\right\vert \leq\sum_{s=1}^{\infty}\sum_{I\in\mathcal{G}_{s}\left[  U\right]
}\left\vert \left\langle T\mathbf{1}_{I}f,\mathbf{1}_{\widehat{I}%
}g\right\rangle \right\vert \leq C_{n}\left\Vert f\right\Vert _{L^{p}\left(
\mathbb{R}^{n-1}\right)  }\left\Vert g\right\Vert _{L^{p^{\prime}}\left(
\mathbb{R}^{n}\right)  }\ ,
\]
where%
\[
C_{n}\equiv\sum_{s=1}^{\infty}2^{-\varepsilon_{p,n}s}<\infty\text{ for
}p>\frac{2n}{n-1}.
\]

\begin{corollary}
\label{cor enlarge}If we enlarge the cubes $I$ by a factor $2^{t}$ to
$I\left[  t\right]  \equiv2^{t}I$, and if we enlarge the tubes $\widehat{I}$
transversally (meaning perpendicular to $\Phi\left(  c_{I}\right)  $) by a
factor of $2^{r}$ to $\widehat{I}\left[  r\right]  $, then we obtain the
estimate,%
\[
\left\vert \sum_{I\in\mathcal{G}_{s}\left[  U\right]  }\left\langle
T\mathbf{1}_{I\left[  t\right]  }f,\mathbf{1}_{\widehat{I}\left[  r\right]
}g\right\rangle \right\vert \leq C2^{t\frac{n}{p^{\prime}}}2^{r\frac{n-1}{p}%
}2^{-s\varepsilon_{p,n}}\left\Vert f\right\Vert _{L^{p}\left(  \mathbb{R}%
^{n-1}\right)  }\left\Vert g\right\Vert _{L^{p^{\prime}}\left(  \mathbb{R}%
^{n}\right)  }\ .
\]

\end{corollary}

\begin{proof}
Apply the above argument and use $\left(  \left\vert \widehat{I}\left[
r\right]  \right\vert \left\vert I\left[  t\right]  \right\vert ^{p-1}\right)
^{\frac{1}{p}}=2^{r\frac{n}{p}}2^{t\frac{n-1}{p^{\prime}}}\left(  \left\vert
\widehat{I}\right\vert \left\vert I\right\vert ^{p-1}\right)  ^{\frac{1}{p}}$.
\end{proof}

We now turn to obtaining the stronger norm estimate for smooth Alpert
pseudoprojections,%
\begin{equation}
\left\Vert T\left(  \mathsf{Q}_{U}^{s}\right)  ^{\spadesuit}f\right\Vert
_{L^{p}\left(  A_{+}\left(  0,2^{2s}\right)  \right)  }\lesssim2^{-\varepsilon
_{p,n}s}\left\Vert f\right\Vert _{L^{p}}\ ,\ \ \ \ \ \text{for }s\in
\mathbb{N},\label{Lp est}%
\end{equation}
where integration by parts in the $x$-variable in the expanded pipes
$\widehat{I}\left[  r\right]  $ will compensate for the growth $2^{r\frac
{n}{p}} $ in Corollary \ref{cor enlarge}.

\medskip

\textbf{Expanded pipes}

\medskip

Consider an expanded truncated pipe $P_{s,0}^{I_{0}}\left[  r\right]  $. For
$r\gg0$, we claim that the wavelength on $I_{0}$ in the inner product is much
smaller than the diameter $2^{-s}$ of $I_{0}$, and so we can use integration
by parts to gain a geometric decay factor of $C_{N}2^{-rN}$ for all $N\geq1$.
Indeed, for $\xi\in J$ with $J\subset P_{s,0}^{I_{0}}\left[  r\right]  $ and
$0\leq r\lesssim s$, the wavelength of the exponential factor $e^{-i\Phi
\left(  x\right)  \cdot\xi}$ is roughly $\frac{1}{\left\vert \xi\right\vert
}\approx\frac{1}{2^{2s}}$, and referring to (\ref{reso}), we see that the
tilted depth of $I_{0}$ in the direction $\xi$, is roughly $\ell\left(
I\right)  \sin\theta$, where $\sin\theta=\frac{\left\vert \xi^{\prime
}\right\vert }{\left\vert \xi\right\vert }\approx\frac{2^{r+s}}{2^{2s}}$.
Altogether then, since $\xi\in B\left(  0,2^{2s}\right)  \cap P_{s,0}^{I_{0}%
}\left[  r\right]  $, we have
\[
\operatorname*{tilted}\operatorname*{depth}\approx\ell\left(  I\right)
\sin\theta\gtrsim2^{-s}\frac{2^{r+s}}{2^{2s}}=2^{r}\frac{1}{2^{2s}}%
=2^{r}\operatorname*{wavelength},
\]
and so the exponential factor $e^{-i\Phi\left(  x\right)  \cdot\xi}$
oscillates at least $2^{r}$ times as $x$ traverses $I_{0}$.

Thus%
\[
\left\langle T\bigtriangleup_{I;\kappa}^{n-1,\eta}f,\bigtriangleup_{J;\kappa
}^{n,\eta}g\right\rangle =\int_{\mathbb{R}^{n}}\left\{  \int_{\mathbb{R}%
^{n-1}}e^{-i\Phi\left(  x\right)  \cdot\xi}\bigtriangleup_{I;\kappa}%
^{n-1,\eta}f\left(  x\right)  dx\right\}  \bigtriangleup_{J;\kappa}^{n,\eta
}g\left(  \xi\right)  d\xi,
\]
where for $\xi\in J$ and $J\subset P_{s,0}^{I_{0}}\left(  r\right)  $, the
integral in braces satisfies,%
\begin{align*}
\int_{\mathbb{R}^{n-1}}e^{-i\Phi\left(  x\right)  \cdot\xi}\bigtriangleup
_{I;\kappa}^{n-1,\eta}f\left(  x\right)  dx  & =\int_{\mathbb{R}^{n-1}}\left(
\frac{1}{-i\partial_{x}\left(  \Phi\left(  x\right)  \cdot\xi\right)
}\partial_{x}\right)  ^{N}e^{-i\Phi\left(  x\right)  \cdot\xi}\bigtriangleup
_{I;\kappa}^{n-1,\eta}f\left(  x\right)  dx\\
& =\left(  -1\right)  ^{N}\int_{\mathbb{R}^{n-1}}e^{-i\Phi\left(  x\right)
\cdot\xi}\left(  \partial_{x}\frac{1}{-i\Phi^{\prime}\left(  x\right)
\cdot\xi}\right)  ^{N}\bigtriangleup_{I;\kappa}^{n-1,\eta}f\left(  x\right)
dx,
\end{align*}
and hence is dominated in modulus by $C_{N}2^{-rN}\int\left\vert \partial
^{N}\bigtriangleup_{I;\kappa}^{n-1,\eta}f\left(  x\right)  \right\vert dx$
since
\[
\left\vert \Phi^{\prime}\left(  x\right)  \cdot\xi\right\vert \approx
\left\vert \xi^{\prime}\right\vert \approx2^{r+s}\ \ \ \ \ \left(  \text{also
}\approx\frac{1}{\ell\left(  I\right)  }\frac{\operatorname*{tilted}%
\operatorname*{depth}}{\operatorname*{wavelength}}\gtrsim2^{r+s}\right)
,\ \ \ \ \ \text{for }\xi\in P_{s,0}^{I_{0}}\left(  r\right)  .
\]
In conclusion, for any cube $I\in\mathcal{G}_{s}\left[  S\right]  $ we have%
\begin{equation}
\left\vert \int_{\mathbb{R}^{n-1}}e^{-i\Phi\left(  x\right)  \cdot\xi
}\bigtriangleup_{I;\kappa}^{n-1,\eta}f\left(  x\right)  dx\right\vert \lesssim
C_{N}2^{-\left(  r+s\right)  N}\int_{\mathbb{R}^{n-1}}\left\vert \partial
^{N}\bigtriangleup_{I;\kappa}^{n-1,\eta}f\left(  x\right)  \right\vert
dx,\ \ \ \ \ \xi\in P_{s,0}^{I}\left[  r\right]  .\label{dom in mod}%
\end{equation}

Plugging this estimate back into the inner product gives%
\begin{align}
\left\vert \left\langle T\bigtriangleup_{I;\kappa}^{n-1,\eta}f,\bigtriangleup
_{J;\kappa}^{n,\eta}g\right\rangle \right\vert  & \leq\int_{\mathbb{R}^{n}%
}\left\vert \int_{\mathbb{R}^{n-1}}e^{-i\Phi\left(  x\right)  \cdot\xi
}\bigtriangleup_{I;\kappa}^{n-1,\eta}f\left(  x\right)  dx\right\vert
\left\vert \bigtriangleup_{J;\kappa}^{n,\eta}g\left(  \xi\right)  \right\vert
d\xi\label{back dom}\\
& \lesssim C_{N}2^{-\left(  r+s\right)  N}\left(  \int_{\mathbb{R}^{n-1}%
}\left\vert \partial^{N}\bigtriangleup_{I;\kappa}^{n-1,\eta}f\right\vert
\right)  \left(  \int_{\mathbb{R}^{n}}\left\vert \bigtriangleup_{J;\kappa
}^{n,\eta}g\right\vert \right)  .\nonumber
\end{align}
For use later on, we note that for any $K\in\mathcal{G}\left[  S\right]  $
with $\ell\left(  K\right)  \geq2^{-s}$, we can sum over $I\in\mathcal{G}%
_{s}\left[  K\right]  $ in (\ref{dom in mod}) to obtain%
\begin{equation}
\left\vert \int_{\mathbb{R}^{n-1}}e^{-i\Phi\left(  x\right)  \cdot\xi}\left(
\mathsf{Q}_{K}^{s}\right)  ^{\spadesuit}f\left(  x\right)  dx\right\vert
\lesssim C_{N}2^{-\left(  r+s\right)  N}\int_{\mathbb{R}^{n-1}}\left\vert
\partial^{N}\left(  \mathsf{Q}_{K}^{s}\right)  ^{\spadesuit}f\left(  x\right)
\right\vert dx,\ \ \ \ \ \xi\in P_{s,0}^{K}\left[  r\right]
,\label{dom in mod'}%
\end{equation}
and with a similar estimate of the corresponding inner product.

We now apply the argument used above for bounding%
\[
Z_{s,0}^{\mathbf{1}}\equiv\left\vert \sum_{I\in\mathcal{G}_{s}\left[
U\right]  }\sum_{J\subset T_{s}^{I}\left[  0\right]  }\int_{\mathbb{R}^{n}%
}\left\{  \int_{\mathbb{R}^{n-1}}e^{-i\Phi\left(  x\right)  \cdot\xi
}\bigtriangleup_{I;\kappa}^{n-1,\eta}f\left(  x\right)  dx\right\}
\bigtriangleup_{J;\kappa}^{n,\eta}g\left(  \xi\right)  d\xi\right\vert ,
\]
to the expanded truncated pipes $P_{s,0}^{I}\left[  r\right]  $ in place of
the tubes $T_{s}^{I}\left[  0\right]  $, to obtain from Corollary
\ref{cor enlarge} and the estimate (\ref{dom in mod}), that%
\begin{align}
& \left\Vert T\bigtriangleup_{I;\kappa}^{n-1,\eta}f\right\Vert _{L^{p}\left(
P_{s,0}^{I}\left[  r\right]  \right)  }=\left(  \int_{P_{s,0}^{I}\left[
r\right]  }\left\vert \int_{\mathbb{R}^{n-1}}e^{-i\Phi\left(  x\right)
\cdot\xi}\bigtriangleup_{I;\kappa}^{n-1,\eta}f\left(  x\right)  dx\right\vert
^{p}d\xi\right)  ^{\frac{1}{p}}\label{norm est}\\
& \leq\left\vert P_{s,0}^{I}\left[  r\right]  \right\vert ^{\frac{1}{p}%
}\left\vert I\right\vert ^{\frac{1}{p^{\prime}}}\left(  C_{N}2^{-\left(
r+s\right)  Np}\int_{\mathbb{R}^{n-1}}\left\vert \partial^{N}\bigtriangleup
_{I;\kappa}^{n-1,\eta}f\left(  x\right)  \right\vert ^{p}dx\right)  ^{\frac
{1}{p}}\nonumber\\
& \leq C_{N}2^{-\left(  r+s\right)  N}2^{r\frac{n}{p}}\left\vert P_{s,0}%
^{I}\right\vert ^{\frac{1}{p}}\left\vert I\right\vert ^{\frac{1}{p^{\prime}}%
}\left(  \int_{\mathbb{R}^{n-1}}\left\vert \partial^{N}\bigtriangleup
_{I;\kappa}^{n-1,\eta}f\left(  x\right)  \right\vert ^{p}dx\right)  ^{\frac
{1}{p}}\nonumber\\
& \leq C_{N}2^{-r\left(  N-\frac{n}{p}\right)  }2^{-s\varepsilon_{p,n}}%
2^{-sN}\left\Vert \partial^{N}\bigtriangleup_{I;\kappa}^{n-1,\eta}f\right\Vert
_{L^{p}\left(  \mathbb{R}^{n-1}\right)  }\ ,\nonumber
\end{align}
since $\left\vert P_{s,0}^{I}\right\vert \approx\left(  2^{s+r}\right)
^{n-1}2^{2s+r}$ implies%
\[
\left\vert P_{s,0}^{I}\right\vert ^{\frac{1}{p}}\left\vert I\right\vert
^{\frac{1}{p^{\prime}}}\approx2^{s\frac{n+1}{p}}2^{r\frac{n}{p}}%
2^{-s\frac{n-1}{p^{\prime}}}=2^{s\left(  \frac{n+1}{p}-\frac{n-1}{p^{\prime}%
}\right)  }2^{r\frac{n}{p}}=2^{-\varepsilon_{p,n}s}2^{r\frac{n}{p}}.
\]
Thus%
\begin{align*}
\left(  \sum_{I\in\mathcal{G}_{s}\left[  U\right]  }\left\Vert T\bigtriangleup
_{I;\kappa}^{n-1,\eta}f\right\Vert _{L^{p}\left(  P_{s,0}^{I}\left[  r\right]
\right)  }^{p}\right)  ^{\frac{1}{p}}  & \lesssim C_{N}2^{-r\left(  N-\frac
{n}{p}\right)  }2^{-s\varepsilon_{p,n}}\left(  \sum_{I\in\mathcal{G}%
_{s}\left[  U\right]  }2^{-sNp}\left\Vert \partial^{N}\bigtriangleup
_{I;\kappa}^{n-1,\eta}f\right\Vert _{L^{p}\left(  \mathbb{R}^{n-1}\right)
}^{p}\right)  ^{\frac{1}{p}}\\
& \lesssim C_{N}2^{-r\left(  N-\frac{n}{p}\right)  }2^{-s\varepsilon_{p,n}%
}\left\Vert f\right\Vert _{L^{p}\left(  \mathbb{R}^{n-1}\right)  }^{p},
\end{align*}

and so also,
\begin{align}
& Z_{s,0}^{\mathbf{1}}\left[  r\right]  \equiv\left\vert \sum_{I\in
\mathcal{G}_{s}\left[  U\right]  }\sum_{J\subset P_{s}^{I}\left[  r\right]
}\int_{\mathbb{R}^{n}}\left\{  \int_{\mathbb{R}^{n-1}}e^{-i\Phi\left(
x\right)  \cdot\xi}\bigtriangleup_{I;\kappa}^{n-1,\eta}f\left(  x\right)
dx\right\}  \bigtriangleup_{J;\kappa}^{n,\eta}g\left(  \xi\right)
d\xi\right\vert \label{above arg}\\
& \leq\sum_{I\in\mathcal{G}_{s}\left[  S\right]  }\left\Vert T\bigtriangleup
_{I;\kappa}^{n-1,\eta}f\right\Vert _{L^{p}\left(  P_{s,0}^{I}\left[  r\right]
\right)  }\left\Vert g\right\Vert _{L^{p^{\prime}}\left(  P_{s,0}^{I}\left[
r\right]  \right)  }\nonumber\\
& \leq\left(  \sum_{I\in\mathcal{G}_{s}\left[  U\right]  }\left\Vert
T\bigtriangleup_{I;\kappa}^{n-1,\eta}f\right\Vert _{L^{p}\left(  P_{s,0}%
^{I}\left[  r\right]  \right)  }^{p}\right)  ^{\frac{1}{p}}\left(  \sum
_{I\in\mathcal{G}_{s}\left[  U\right]  }\left\Vert g\right\Vert _{L^{p^{\prime
}}\left(  P_{s,0}^{I}\left[  r\right]  I\right)  }^{p^{\prime}}\right)
^{\frac{1}{p^{\prime}}}\nonumber\\
& \leq C_{N}2^{-r\left(  N-\frac{n}{p}\right)  }2^{-s\varepsilon_{p,n}%
}\left\Vert f\right\Vert _{L^{p}\left(  \mathbb{R}^{n-1}\right)  }\left\Vert
g\right\Vert _{L^{p^{\prime}}\left(  \mathbb{R}^{n}\right)  }.\nonumber
\end{align}

Summing in $r$ gives%
\begin{equation}
\left\vert \int_{\mathbb{R}^{n}}T\left(  \mathsf{Q}_{U}^{s}\right)
^{\spadesuit}f,\left(  \mathsf{P}_{A_{+}\left(  0,2^{2s}\right)  }^{s}\right)
^{\spadesuit}g\right\vert \lesssim\sum_{r=0}^{\infty}Z_{s,0}^{\mathbf{1}%
}\left[  r\right]  \lesssim C_{N}2^{-s\varepsilon_{p,n}}\left\Vert
f\right\Vert _{L^{p}\left(  \mathbb{R}^{n-1}\right)  }\left\Vert \left(
\mathsf{P}_{A_{+}\left(  0,2^{2s}\right)  }^{s}\right)  ^{\spadesuit
}g\right\Vert _{L^{p^{\prime}}\left(  \mathbb{R}^{n}\right)  }%
,\label{above arg'}%
\end{equation}
and a standard argument then yields,%
\begin{equation}
\left\vert \int_{\mathbb{R}^{n}}T\left(  \mathsf{Q}_{U}^{s}\right)
^{\spadesuit}f,\mathbf{1}_{A_{+}\left(  0,2^{2s}\right)  }g\right\vert
\lesssim C_{N}2^{-s\varepsilon_{p,n}}\left\Vert f\right\Vert _{L^{p}\left(
\mathbb{R}^{n-1}\right)  }\left\Vert g\right\Vert _{L^{p^{\prime}}\left(
A_{+}\left(  0,2^{2s}\right)  \right)  }\ .\label{above arg''}%
\end{equation}

\medskip

\textbf{Norm estimate}

\medskip

Simce there is no expectation involved, we can extend the inner product
estimate (\ref{above arg''}) to a norm estimate by duality. Indeed, for each
$s\in\mathbb{N}$, choose an appropriate function $g_{s}$ with $\left\Vert
g_{s}\right\Vert _{L^{p^{\prime}}\left(  \mathbb{R}^{n}\right)  }=1$ and
\begin{equation}
\left\langle T\left(  \mathsf{Q}_{U}^{s}\right)  ^{\spadesuit}f,g_{s}%
\right\rangle =\left\Vert T\left(  \mathsf{Q}_{U}^{s}\right)  ^{\spadesuit
}f\right\Vert _{L^{p}\left(
{\displaystyle\bigcup\limits_{I\in\mathcal{G}_{s}\left[  U\right]  }}
\left\{  T_{s}^{I}\cup%
{\displaystyle\bigcup\limits_{r\geq0}}
P_{s}^{I}\left[  r\right]  \right\}  \right)  }\ ,\label{choose g_s}%
\end{equation}
and then with $N>\frac{n}{p}$ and $p>\frac{2n}{n-1}$, sum in $r$ and $s$ to
obtain%
\begin{align*}
& \sum_{s=1}^{\infty}\left\Vert T\left(  \mathsf{Q}_{U}^{s}\right)
^{\spadesuit}f\right\Vert _{L^{p}\left(  A_{+}\left(  0,2^{2s}\right)
\right)  }\leq\sum_{s=1}^{\infty}\left\Vert T\left(  \mathsf{Q}_{U}%
^{s}\right)  ^{\spadesuit}f\right\Vert _{L^{p}\left(
{\displaystyle\bigcup\limits_{I\in\mathcal{G}_{s}\left[  U\right]  }}
\left\{  T_{s}^{I}\cup%
{\displaystyle\bigcup\limits_{r\geq0}}
P_{s}^{I}\left[  r\right]  \right\}  \right)  }\\
& =\sum_{s=1}^{\infty}\left\vert \left\langle T\left(  \mathsf{Q}_{U}%
^{s}\right)  ^{\spadesuit}f,g_{s}\right\rangle \right\vert \leq\sum
_{s=1}^{\infty}\sum_{r=0}^{\infty}C_{N}2^{-r\left(  N-\frac{n}{p}\right)
}2^{-s\varepsilon_{p,n}}\left\Vert f\right\Vert _{L^{p}\left(  \mathbb{R}%
^{n-1}\right)  }\left\Vert g_{s}\right\Vert _{L^{p^{\prime}}\left(
\mathbb{R}^{n}\right)  }\lesssim\left\Vert f\right\Vert _{L^{p}\left(
\mathbb{R}^{n-1}\right)  }\ ,
\end{align*}
which is (\ref{Lp est}). Here we have used (\ref{choose g_s}) in the first
equality in the second line above, (\ref{above arg''}) in the second
inequality, and $\left\Vert g_{s}\right\Vert _{L^{p^{\prime}}\left(
\mathbb{R}^{n}\right)  }=1$ in the final inequality.

\subsubsection{The case $w=s$}

In this case we need to take expectation. Since each fixed cube $J$ in
the\ upper half annulus $A_{+}\left(  0,2^{s}\right)  $ belongs to the
truncated tube $T_{s,s}^{I}\equiv T_{s}^{I}\cap L_{s}^{I} $ for essentially
all $I\in\mathcal{G}_{s}\left[  S\right]  $, we get%
\begin{align*}
Z_{s,s}^{\mathbf{a}}  & =\left\vert \sum_{I\in\mathcal{G}_{s}\left[  U\right]
}\sum_{J\subset T_{s,s}^{I}}\int_{\mathbb{R}^{n}}\left\{  \int e^{-i\Phi
\left(  x\right)  \cdot\xi}\left(  \mathcal{A}_{\mathbf{a}}\mathsf{Q}_{U}%
^{s}\right)  ^{\spadesuit}f\left(  x\right)  dx\right\}  \bigtriangleup
_{J;\kappa}^{n,\eta}g\left(  \xi\right)  d\xi\right\vert \\
& \approx\left\vert \sum_{Q_{0}}\left\langle T\left(  \mathcal{A}_{\mathbf{a}%
}\mathsf{Q}_{Q_{0}}^{s}\right)  ^{\spadesuit}f,\mathsf{P}_{Q_{0}^{\ast
},s;\kappa}^{n,\eta}g\right\rangle \right\vert \lesssim\sum_{Q_{0}}\left\Vert
T\left(  \mathcal{A}_{\mathbf{a}}\mathsf{Q}_{Q_{0}}^{s}\right)  ^{\spadesuit
}f\right\Vert _{L^{p}}\left\Vert \mathsf{P}_{Q_{0}^{\ast},s;\kappa}^{n,\eta
}g\right\Vert _{L^{p^{\prime}}},
\end{align*}
where $\mathsf{Q}_{Q_{0}}^{s}=\sum_{I\in\mathcal{G}_{s}\left[  Q_{0}\right]
}\bigtriangleup_{I;\kappa}^{n-1}$ and $\mathsf{P}_{Q_{0}^{\ast},s;\kappa
}^{n,\eta}=\sum_{J\in\mathcal{D}_{k}\left[  Q_{0}^{\ast}\right]
}\bigtriangleup_{I;\kappa}^{n-1,\eta}$, and where $Q_{0}$ ranges over a
bounded number of cubes in $S$ with side length approximately $1$. Also note
that
\[
\left(  \mathcal{A}_{\mathbf{a}}\mathsf{Q}_{Q_{0}}^{s}\right)  ^{\spadesuit
}f=S_{\kappa,\eta}\mathcal{A}_{\mathbf{a}}\sum_{I\in\mathcal{G}_{s}\left[
Q_{0}\right]  }\left\langle \left(  S_{\kappa,\eta}\right)  ^{-1}%
f,h_{I;\kappa}^{n-1}\right\rangle h_{I;\kappa}^{n-1}=\sum_{I\in\mathcal{G}%
_{s}\left[  Q_{0}\right]  }a_{I}\bigtriangleup_{I;\kappa}^{n-1,\eta}\ .
\]
Now we apply just part of the estimate (\ref{no osc s=m,k}), which followed
from Proposition \ref{prop interp},\ to obtain%
\[
\mathbb{E}_{2^{\mathcal{G}_{s}\left[  U\right]  }}^{\mu}Z_{s,s}^{\mathbf{a}%
}\lesssim\mathbb{E}_{2^{\mathcal{G}_{s}\left[  U\right]  }}^{\mu}\left\Vert
T_{S}\left(  \mathcal{A}_{\mathbf{a}}\mathsf{Q}_{Q_{0}}^{s}\right)
^{\spadesuit}f\right\Vert _{L^{p}\left(  B\left(  0,2^{s}\right)  \right)
}\left\Vert \mathsf{P}_{Q_{0}^{\ast},s;\kappa}^{n,\eta}g\right\Vert
_{L^{p^{\prime}}}\lesssim2^{-\varepsilon_{p,n}s}\left\Vert f\right\Vert
_{L^{p}}\left\Vert g\right\Vert _{L^{p^{\prime}}}\ ,
\]
for $p>\frac{2n}{n-1}$ and $m=s\in\mathbb{N}$. We do not need to make use of
expanded pipes in this case, due to the small size of the ball $B\left(
0,2^{s}\right)  $.

However, we actually obtain from Proposition \ref{prop interp} the stronger
average norm inequality,%
\begin{equation}
\mathbb{E}_{2^{\mathcal{G}_{s}\left[  U\right]  }}^{\mu}\left\Vert
T_{S}\left(  \mathcal{A}_{\mathbf{a}}\mathsf{Q}_{Q_{0}}^{s}\right)
^{\spadesuit}f\right\Vert _{L^{p}\left(  B\left(  0,2^{s}\right)  \right)
}\lesssim2^{-\varepsilon_{p,n}s}\left\Vert f\right\Vert _{L^{p}}%
\ ,\ \ \ \ \ \text{for }s\in\mathbb{N},\label{strong avg ineq}%
\end{equation}
and this is what we will use going forward.

\subsection{The general case $0\leq w\leq s$ via Fourier square functions
\label{subsub gen}}

In this subsection we prove the average norm estimate for each $s\in
\mathbb{N}$ and $0\leq w\leq s$,%
\begin{equation}
\mathbb{E}_{2^{\mathcal{G}\left[  U\right]  }}^{\mu}\left\Vert T\left(
\mathcal{A}_{\mathbf{a}}\mathsf{Q}_{U}^{s}\right)  ^{\spadesuit}f\right\Vert
_{L^{p}\left(  A_{+}\left(  0,2^{2s-w}\right)  \right)  }\lesssim
2^{-\varepsilon_{n,p}s}\left\Vert f\right\Vert _{L^{p}\left(  U\right)
}\ ,\ \ \ \ \ \text{for }p>\frac{2n}{n-1}.\label{prob K rem square}%
\end{equation}
Note that we have already proved the endpoint case $w=0$ in (\ref{Lp est}),
and the other endpoint case $w=s$ in (\ref{strong avg ineq}). It will be
convenient to pass back and forth between average norm estimates and Fourier
square function estimates using Khintchine's inequalities. For example
(\ref{prob K rem square}) is equivalent to,%
\begin{equation}
\left\Vert \mathcal{S}_{T,s}^{\eta}f\right\Vert _{L^{p}\left(  A_{+}\left(
0,2^{2s-w}\right)  \right)  }\lesssim2^{-\varepsilon_{n,p}s}\left\Vert
f\right\Vert _{L^{p}}\ ,\ \ \ \ \ \text{for }p>\frac{2n}{n-1}%
,\label{Fourier square est}%
\end{equation}
where
\begin{equation}
\mathcal{S}_{T,s}^{\eta}f\equiv\left(  \sum_{I\in\mathcal{G}_{s}\left[
U\right]  }\left\vert T\bigtriangleup_{I;\kappa}^{n-1,\eta}f\right\vert
^{2}\right)  ^{\frac{1}{2}}\label{def squ}%
\end{equation}
is the Fourier square function associated with the random decomposition
\[
T\left(  \mathcal{A}_{\mathbf{a}}\mathsf{Q}_{U}^{s}\right)  ^{\spadesuit
}f=\sum_{I\in\mathcal{G}_{s}\left[  U\right]  }a_{I}T\bigtriangleup_{I;\kappa
}^{n-1,\eta}f,\ \ \ \ \ \text{for\ each }\mathbf{a}\in2^{\mathcal{G}\left[
U\right]  }.
\]

We will prove (\ref{Fourier square est}) in three steps, the first two being
local estimates requiring \emph{probabilistic} arguments, and the third being
a global estimate that uses \emph{Fourier square function} arguments. The
probabilistic local estimates are used to control the sums over cubes
$I\in\mathcal{G}_{s}\left[  K\right]  $ which are typically close together,
while the Fourier square function estimate is used to control the sums of
cubes $K\in\mathcal{G}_{s-w}\left[  S\right]  $ in which the subcubes $I$ of
different $K^{\prime}s$ are typically farther apart. Once we have established
(\ref{Fourier square est}), we use the decomposition
\[
B_{+}\left(  0,2^{2s}\right)  =P_{s,0}^{I_{0}}\left[  r\right]  \ \ \ \cup
\ \ \
{\displaystyle\bigcup\limits_{w=0}^{s-1}}
A_{+}\left(  0,2^{2s-w}\right)  ,
\]
and then appeal to reflection across the horizontal plane to conclude that,%
\begin{equation}
\left\Vert \mathcal{S}_{T,s}^{\eta}f\right\Vert _{L^{p}\left(  B_{+}\left(
0,2^{2s}\right)  \right)  }\lesssim2^{-\varepsilon_{n,p}s}\left\Vert
f\right\Vert _{L^{p}}\ ,\ \ \ \ \ \text{for }p>\frac{2n}{n-1}%
.\label{conclude that}%
\end{equation}

\subsubsection{Step 1: The local probabilistic argument}

Here we prove the local Fourier square function inequality,%
\[
\left\Vert \mathcal{S}_{T,s}^{\eta}\left(  \mathsf{Q}_{K}^{s}\right)
^{\spadesuit}f\right\Vert _{L^{p}\left(  A_{+}\left(  0,2^{2s-w}\right)
\right)  }^{{}}\lesssim2^{-s\varepsilon_{p,n}}\left\Vert \left(
\mathsf{Q}_{K}^{s}\right)  ^{\spadesuit}f\right\Vert _{L^{p}\left(
\mathbb{R}^{n-1}\right)  },\ \ \ \ \ \text{for all }K\in\mathcal{G}%
_{s-w}\left[  U\right]  \text{ and }s\in\mathbb{N},
\]
which by Khintchine's inequalities is equivalent to the local average
expectation inequality,%
\[
\mathbb{E}_{2^{\mathcal{G}\left[  U\right]  }}^{\mu}\left\Vert T\left(
\mathcal{A}_{\mathbf{a}}\mathsf{Q}_{K}^{s}\right)  ^{\spadesuit}f\right\Vert
_{L^{p}\left(  A_{+}\left(  0,2^{2s-w}\right)  \right)  }\lesssim
2^{-s\varepsilon_{p,n}}\left\Vert \left(  \mathsf{Q}_{K}^{s}\right)
^{\spadesuit}f\right\Vert _{L^{p}\left(  \mathbb{R}^{n-1}\right)
},\ \ \ \ \ \text{for all }K\in\mathcal{G}_{s-w}\left[  U\right]  \text{ and
}s\in\mathbb{N}.
\]

Consider $\left(  I,J\right)  \in\mathcal{R}_{s}^{k,w}$, i.e. $I\in
\mathcal{G}_{s}\left[  S\right]  $, $\ell\left(  J\right)  =2^{k}$ and
$J\subset P_{s,w}^{I}$. Recall that $T_{s,w}^{I}$ is the tube given by the
convex hull of the pipe $P_{s,w}^{I}$. For $0<w<s$, these tubes have bounded
overlap approximately $2^{w\left(  n-1\right)  }$.

\begin{definition}
For each $K\in\mathcal{G}_{s-w}$ define a `tube' $T_{s,w}^{K,\natural}\equiv%
{\displaystyle\bigcup\limits_{I\in\mathcal{G}_{s}\left[  K\right]  }}
T_{s,w}^{I}$ consisting of all the tubes $T_{s,w}^{I}$ with $I\subset K$,
where each tube $T_{s,w}^{I}$ has dimensions $C_{1}2^{s}\times2^{2s-w}$, and
due to the $2^{w\left(  n-1\right)  }$-overlap, each of the `tubes' $T%
_{s,w}^{K,\natural}$ also has dimensions $C_{2}2^{s}\times2^{2s-w}$, but with
a larger constant $C_{2}$.
\end{definition}

We begin with the following more elementary local average inequality for
$0\leq w\leq s$, in which we restrict the integration over $\mathbb{R}^{n}$ to
the tubes $T_{s,w}^{K,\natural}$,\
\begin{equation}
\mathbb{E}_{2^{\mathcal{G}_{s}\left[  S\right]  }}^{\mu}\left\Vert
T_{S}\left(  \mathcal{A}_{\mathbf{a}}\mathsf{Q}_{K}^{s}\right)  ^{\spadesuit
}f\right\Vert _{L^{p}\left(  T_{s,w}^{K,\natural}\right)  }\lesssim2^{-\left(
2s-w\right)  \varepsilon_{p,n}}\left\Vert f\right\Vert _{L^{p}\left(
U\right)  }\ ,\ \ \ \ \ \text{for }K\in\mathcal{G}_{s-w}\left[  U\right]
\text{ and }p>\frac{2n}{n-1}.\label{difficult}%
\end{equation}
To prove this, we consider the $L^{2}$ and average $L^{4}$ bounds separately
and then interpolate.

\medskip

\textbf{Step 1(a): local }$L^{2}$\textbf{\ estimate}

\medskip

We first compute the norm of $\Lambda_{\mathsf{Q}_{K}^{s}}^{2s}$ from
$L^{2}\left(  \lambda_{n-1}\right)  $ to $L^{2}\left(  T_{s,w}^{K,\natural
}\right)  $, where we recall that
\[
\Lambda_{\mathsf{Q}_{K}^{s}}^{2s}f\equiv\widehat{\left(  \left(
\mathsf{Q}_{K}^{s}\right)  ^{\spadesuit}f\right)  _{\Phi,2s}}.
\]
Consistent with (\ref{def Phi and I}), we write and%
\begin{align}
f_{K}^{s}  & \equiv\left(  \mathsf{Q}_{K}^{s}\right)  ^{\spadesuit
}f\ ,\label{cont not}\\
\left(  f_{K}^{s}\right)  _{\Phi}  & \equiv\Phi_{\ast}\left[  \left(
\mathsf{Q}_{K}^{s}\right)  ^{\spadesuit}f\right]  =\sum_{I\in\mathcal{G}%
_{s}\left(  K\right)  }\Phi_{\ast}\left[  \left(  \bigtriangleup_{I;\kappa
}^{n-1}\right)  ^{\spadesuit}f\right]  =\sum_{I\in\mathcal{G}_{s}\left(
K\right)  }f_{\Phi}^{I}\ ,\nonumber\\
\left(  f_{K}^{s}\right)  _{\Phi,r}  & =\sum_{I\in\mathcal{G}_{s}\left(
K\right)  }f_{\Phi,r}^{I}\ .\nonumber
\end{align}
For $I_{0}\in\mathcal{G}_{s}\left[  K\right]  $, whose normal is
$\mathbf{e}_{n}$, we will use the rectangular convolver $\varphi
_{s,2s-w}\left(  z\right)  $ that has dimensions $2^{-s}\times...\times
2^{-s}\times2^{w-2s}$, and we will multiply by a modulation $m\left(
z\right)  $ that translates the associated Fourier tube $\left[  -2^{s}%
,2^{s}\right]  ^{n-1}\times\left[  -2^{2s-w},2^{2s-w}\right]  $ to be
positioned near $T_{s,w}^{K,\natural}$. For convenience we momentarily set
\begin{equation}
\psi\left(  z\right)  \equiv m\left(  z\right)  \varphi_{s,2s-w}\left(
z\right)  .\label{def psi}%
\end{equation}
We then have with $f_{K}^{s}=\left(  \mathsf{Q}_{K}^{s}\right)  ^{\spadesuit}f
$,%
\begin{align*}
& \left\Vert \Lambda_{\mathsf{Q}_{K}^{s}}^{2s}f\right\Vert _{L^{2}\left(
\left\vert \widehat{\psi}\right\vert ^{2}\lambda_{n}\right)  }^{2}%
=\int_{\mathbb{R}^{n}}\left\vert \widehat{\left(  f_{K}^{s}\right)  _{\Phi
,2s}}\left(  \xi\right)  \right\vert ^{2}\left\vert \widehat{\psi}\left(
\xi\right)  \right\vert ^{2}d\xi=\int_{\mathbb{R}^{n}}\overline{\widehat
{\left(  f_{K}^{s}\right)  _{\Phi,2s}\ast\psi}\left(  \xi\right)  }%
\ \widehat{\left(  f_{K}^{s}\right)  _{\Phi,2s}\ast\psi}\left(  \xi\right)
d\xi\\
& =\sum_{I,J\in\mathcal{G}_{s}\left[  K\right]  }\int_{\mathbb{R}^{n}%
}\overline{\widehat{f_{\Phi,2s}^{I}\ast\psi}\left(  \xi\right)  }%
\ \widehat{f_{\Phi,2s}^{J}\ast\psi}\left(  \xi\right)  d\xi=\sum
_{I,J\in\mathcal{G}_{s}\left[  K\right]  }\int_{S}\overline{f_{\Phi,2s}%
^{I}\ast\psi\left(  x\right)  }\ \left(  f_{\Phi,2s}^{J}\ast\psi\right)
\left(  x\right)  dx.
\end{align*}

Note first that the supports of $f_{\Phi,2s}^{I}\ast\psi$ and $f_{\Phi,2s}%
^{J}\ast\psi$ are essentially disjoint unless $I\sim J$. Next, if we define
the fattened cube
\[
I_{0}^{\ast}\equiv\left(  \left[  -2^{-s},2^{-s}\right]  ^{n-1}\times\left[
-2^{w-2s},2^{w-2s}\right]  \right)  +\mathbf{e}_{n}\ ,
\]
and $I^{\ast}$ by rotation, then we have%
\[
\left\vert f_{\Phi,2s}^{I}\ast\psi\left(  z\right)  \right\vert \lesssim
\left\vert \left\langle S_{\kappa,\eta}^{-1}f,h_{I;\kappa}^{n-1}\right\rangle
\right\vert 2^{2s-w}2^{s\frac{n-1}{2}}\mathbf{1}_{I^{\ast}}\left(  z\right)  ,
\]
since%
\[
\left\vert f_{\Phi,2s}^{I}\ast\psi\right\vert \approx\left\vert f_{\Phi}%
^{I}\ast\psi\right\vert \lesssim\left\Vert \frac{df_{\Phi}^{I}}{d\sigma_{n-1}%
}\right\Vert _{\infty}\times\left(  \mathbf{1}_{\Phi\left(  I\right)  }%
\sigma_{n-1}\right)  \ast\varphi_{s,2s-w}\left(  z\right)  \approx\left\vert
\left\langle S_{\kappa,\eta}^{-1}f,h_{I;\kappa}^{n-1}\right\rangle \right\vert
2^{s\frac{n-1}{2}}\times\left(  \operatorname*{density}\right)  \mathbf{1}%
_{I^{\ast}}\left(  z\right)  ,
\]
where the quantity $\operatorname*{density}$ (of the convolution with
$\varphi_{s,2s-w}$ ) satisfies,
\begin{align*}
\left(  \operatorname*{density}\right)  2^{-s\left(  n-1\right)  }2^{w-2s}  &
=\left(  \operatorname*{density}\right)  \left\vert I^{\ast}\right\vert
=\left\Vert \mathbf{1}_{\Phi\left(  I\right)  }\sigma_{n-1}\right\Vert
=2^{-s\left(  n-1\right)  }\\
& \Longrightarrow\operatorname*{density}=\frac{2^{-s\left(  n-1\right)  }%
}{2^{-s\left(  n-1\right)  }2^{w-2s}}=2^{2s-w}.
\end{align*}

Altogether then, using $\left\vert I^{\ast}\right\vert =2^{-s\left(
n-1\right)  }2^{w-2s}$, we have from (\ref{cont not}) that%
\begin{align*}
& \ \ \ \ \ \ \ \ \ \ \ \ \ \ \ \left\Vert \Lambda_{\mathsf{Q}_{K}^{s}}%
^{2s}f\right\Vert _{L^{2}\left(  \left\vert \widehat{\psi}\right\vert
^{2}\lambda_{n}\right)  }^{2}\lesssim\int_{\mathbb{R}^{n}}\left\vert \left(
f_{K}^{s}\right)  _{\Phi,2s}\ast\psi\left(  \xi\right)  \right\vert ^{2}%
d\xi=\sum_{I\in\mathcal{G}_{s}\left[  K\right]  }\int_{\mathbb{R}^{n}%
}\left\vert f_{\Phi,2s}^{I}\ast\psi\left(  \xi\right)  \right\vert ^{2}d\xi\\
& \lesssim\sum_{I\in\mathcal{G}_{s}\left[  K\right]  }\int_{\mathbb{R}^{n}%
}\left\vert \left\vert \left\langle S_{\kappa,\eta}^{-1}f,h_{I;\kappa}%
^{n-1}\right\rangle \right\vert 2^{2s-w}2^{s\frac{n-1}{2}}\mathbf{1}_{I^{\ast
}}\left(  \xi\right)  \right\vert ^{2}d\xi\lesssim\sum_{I\in\mathcal{G}%
_{s}\left[  K\right]  }\left\vert \left\langle S_{\kappa,\eta}^{-1}%
f,h_{I;\kappa}^{n-1}\right\rangle \right\vert ^{2}\left(  2^{2s-w}%
2^{s\frac{n-1}{2}}\right)  ^{2}\left\vert I^{\ast}\right\vert \\
& =2^{4s-2w}2^{s\left(  n-1\right)  }2^{-s\left(  n-1\right)  }2^{w-2s}%
\sum_{I\in\mathcal{G}_{s}\left[  K\right]  }\left\vert \left\langle
S_{\kappa,\eta}^{-1}f,h_{I;\kappa}^{n-1}\right\rangle \right\vert
^{2}=2^{2s-w}\sum_{I\in\mathcal{G}_{s}\left[  K\right]  }\left\vert
\left\langle S_{\kappa,\eta}^{-1}f,h_{I;\kappa}^{n-1}\right\rangle \right\vert
^{2}\lesssim2^{2s-w}\left\Vert f_{K}^{s}\right\Vert _{L^{2}\left(  U\right)
}^{2}.
\end{align*}
In terms of the notation $T\left(  \mathsf{Q}_{K}^{s}\right)  ^{\spadesuit}f$,
this implies%
\begin{equation}
\left\Vert T\left(  \mathsf{Q}_{K}^{s}\right)  ^{\spadesuit}f\right\Vert
_{L^{2}\left(  T_{s,w}^{K,\natural}\right)  }^{2}\lesssim2^{2s-w}\left\Vert
\left(  \mathsf{Q}_{K}^{s}\right)  ^{\spadesuit}f\right\Vert _{L^{2}\left(
U\right)  }^{2}\label{est above exp'}%
\end{equation}

\medskip

\textbf{Step 1(b): local average }$L^{4}$\textbf{\ estimate}

\medskip

We run the argument in Subsection \ref{subsubsection L4} up until the estimate
for $\Omega_{t}=\Omega_{t}\left[  K\right]  $, where $2^{-t}\approx
\operatorname*{dist}\left(  I,J\right)  $ for $I,J\in\mathcal{G}_{s}\left[
K\right]  $, i.e. $2^{-t}\lesssim\ell\left(  K\right)  =2^{w-s}$ or $s-w\leq
t\leq s$. It is this restriction to large $t$ that yields the geometric gain
needed for the average $L^{4}$ estimate when $I,J\in\mathcal{G}_{s}\left[
K\right]  $. Then for $s-w<t<s$, and \emph{with notation as in Subsection
\ref{subsubsection L4}}, we have%
\begin{align*}
& \Omega_{t}\left[  K\right]  \lesssim\sum_{I,J\in\mathcal{G}_{s}\left[
K\right]  :\ \operatorname*{dist}\left(  I,J\right)  \approx2^{-t}%
}2^{-s\left(  n-2\right)  }2^{t}\left\vert \left\langle \left(  S_{\kappa
,\eta}\right)  ^{-1}f,h_{I;\kappa}\right\rangle \left\langle \left(
S_{\kappa,\eta}\right)  ^{-1}f,h_{J;\kappa}\right\rangle \right\vert ^{2}\\
& \lesssim2^{-s\left(  n-2\right)  }2^{t}\sum_{I,J\in\mathcal{G}_{s}\left[
K\right]  :\ \operatorname*{dist}\left(  I,J\right)  \approx2^{-t}}\left\vert
\left\langle \left(  S_{\kappa,\eta}\right)  ^{-1}f,h_{I;\kappa}\right\rangle
\right\vert ^{4}\\
& \lesssim2^{-s\left(  n-2\right)  }2^{t}2^{\left(  s-t\right)  \left(
n-1\right)  }\sum_{I\in\mathcal{G}_{s}\left[  K\right]  }\left\vert
\left\langle \left(  S_{\kappa,\eta}\right)  ^{-1}f,h_{I;\kappa}\right\rangle
\right\vert ^{4}=2^{-t\left(  n-2\right)  }2^{-s\left(  n-2\right)
}\left\Vert \mathsf{Q}_{K}^{s}\left(  S_{\kappa,\eta}\right)  ^{-1}%
f\right\Vert _{L^{4}\left(  U\right)  }^{4}\ ,
\end{align*}
which gives%
\begin{align*}
\sum_{t=s-w}^{s}\Psi_{t}\left[  K\right]   & \lesssim\sum_{t=s-w}^{s}%
\Omega_{t}\left[  K\right]  \lesssim\sum_{t=s-w}^{s}2^{-t\left(  n-2\right)
}2^{-s\left(  n-2\right)  }\left\Vert \mathsf{Q}_{K}^{s}\left(  S_{\kappa
,\eta}\right)  ^{-1}f\right\Vert _{L^{4}\left(  U\right)  }^{4}\\
& \approx2^{-\left(  s-w\right)  \left(  n-2\right)  }2^{-s\left(  n-2\right)
}\left\Vert \mathsf{Q}_{K}^{s}\left(  S_{\kappa,\eta}\right)  ^{-1}%
f\right\Vert _{L^{4}\left(  U\right)  }^{4}=2^{-\left(  2s-w\right)  \left(
n-2\right)  }\left\Vert \mathsf{Q}_{K}^{s}\left(  S_{\kappa,\eta}\right)
^{-1}f\right\Vert _{L^{4}\left(  U\right)  }^{4}.
\end{align*}
Similarly we obtain%
\[
\Psi\lesssim2^{-\left(  2s-w\right)  \left(  n-2\right)  }\left\Vert
\mathsf{Q}_{K}^{s}\left(  S_{\kappa,\eta}\right)  ^{-1}f\right\Vert
_{L^{4}\left(  U\right)  }^{4},
\]
and adding these last two inequalities gives,
\[
\mathbb{E}_{2^{\mathcal{G}}}^{\mu}\left\Vert \Lambda_{\mathcal{A}_{\mathbf{a}%
}\mathsf{Q}_{K}^{s}}^{2s}f\right\Vert _{L^{4}\left(  \left\vert \widehat{\psi
}\right\vert ^{2}\lambda_{n}\right)  }^{4}\lesssim2^{-\left(  2s-w\right)
\left(  n-2\right)  }\left\Vert f\right\Vert _{L^{4}\left(  U\right)  }^{4}.
\]
In terms of the notation $T\left(  \mathsf{Q}_{K}^{s}\right)  ^{\spadesuit}f$,
this implies%
\begin{equation}
\mathbb{E}_{2^{\mathcal{G}}}^{\mu}\left\Vert T\left(  \mathsf{Q}_{K}%
^{s}\right)  ^{\spadesuit}f\right\Vert _{L^{4}\left(  T_{s,w}^{K,\natural
}\right)  }^{4}\lesssim2^{-\left(  2s-w\right)  \left(  n-2\right)
}\left\Vert \left(  \mathsf{Q}_{K}^{s}\right)  ^{\spadesuit}f\right\Vert
_{L^{4}\left(  U\right)  }^{4}\label{est above exp''}%
\end{equation}

\medskip

\textbf{Step 1(c): local interpolation}

\medskip

Collecting the bounds (\ref{est above exp'}) and (\ref{est above exp''})
gives,%
\begin{align*}
\left\Vert T\left(  \mathsf{Q}_{K}^{s}\right)  ^{\spadesuit}f\right\Vert
_{L^{2}\left(  T_{s,w}^{K,\natural}\right)  }  & \lesssim2^{\frac{2s-w}{2}%
}\left\Vert f\right\Vert _{L^{2}\left(  K\right)  },\\
\mathbb{E}_{2^{\mathcal{G}}}^{\mu}\left\Vert T\left(  \mathsf{Q}_{K}%
^{s}\right)  ^{\spadesuit}f\right\Vert _{L^{4}\left(  T_{s,w}^{K,\natural
}\right)  }  & \lesssim2^{-\frac{2s-w}{2}\frac{n-2}{2}}\left\Vert f\right\Vert
_{L^{4}\left(  S\right)  }^{4}.
\end{align*}
Now we claim that an application of the interpolation Lemma \ref{interp red}
yields,
\begin{equation}
\mathbb{E}_{2^{\mathcal{G}}}^{\mu}\left\Vert T\left(  \mathcal{A}_{\mathbf{a}%
}\mathsf{Q}_{K}^{s}\right)  ^{\spadesuit}f\right\Vert _{L^{p}\left(  T%
_{s,w}^{K,\natural}\right)  }\lesssim2^{-\left(  2s-w\right)  \varepsilon
_{p,n}^{\prime}}\left\Vert f\right\Vert _{L^{p}\left(  U\right)
},\ \ \ \ \ \text{for }p>\frac{2n}{n-1}.\label{Lp avg}%
\end{equation}
Indeed, the calculation at the end of the proof of Lemma \ref{interp red}
shows that if $p>\frac{2n}{n-1}$, then (with notation as in that proof)
$\theta=\frac{4}{p}-1$ and so
\[
\left[  2^{-\frac{2s-w}{2}\frac{n-2}{2}}\right]  ^{1-\theta}\left[
2^{\frac{2s-w}{2}}\right]  ^{\theta}=2^{-\frac{2s-w}{2}\frac{n-2}{2}%
}2^{\left(  \frac{2s-w}{2}+\frac{2s-w}{2}\frac{n-2}{2}\right)  \theta
}=2^{-\frac{2s-w}{2}\frac{n-2}{2}}2^{\left(  \frac{2s-w}{2}\frac{n}{2}\right)
\theta}=2^{-\left(  2s-w\right)  \varepsilon_{p,n}^{\prime}},
\]
where
\begin{align*}
\varepsilon_{p,n}^{\prime}  & \equiv\frac{1}{2s-w}\left\{  \frac{2s-w}{2}%
\frac{n-2}{2}-\left(  \frac{2s-w}{2}\frac{n}{2}\right)  \left(  \frac{4}%
{p}-1\right)  \right\} \\
& =\frac{n-2}{4}-\frac{n}{4}\left(  \frac{4}{p}-1\right)  =\frac{n-1}{2}%
-\frac{n}{p}=\frac{n-1}{2p}\left(  p-\frac{2n}{n-1}\right)  .
\end{align*}
This completes our proof of (\ref{difficult}) in \emph{Step 1}.

\subsubsection{Step 2: The local expanded probabilistic argument}

Now we turn to proving the expanded analogue of (\ref{difficult}) given by,%
\begin{align}
& \mathbb{E}_{2^{\mathcal{G}_{s}\left[  U\right]  }}^{\mu}\left\Vert T\left(
\mathcal{A}_{\mathbf{a}}\mathsf{Q}_{K}^{s}\right)  ^{\spadesuit}f\right\Vert
_{L^{p}\left(  P_{s,w}^{K}\left[  r\right]  \right)  }^{p}\lesssim
2^{-rp\left(  N-\frac{n}{p^{\prime}}\right)  }2^{-\left(  2s-w\right)
p\varepsilon_{p,n}}\left\Vert f\right\Vert _{L^{p}\left(  \mathbb{R}%
^{n-1}\right)  }^{p}\label{difficult exp}\\
& \ \ \ \ \ \ \ \ \ \ \ \ \ \ \ \ \ \ \ \ \text{for all }K\in\mathcal{G}%
_{s-w}\left[  S\right]  \text{ and }p>\frac{2n}{n-1},\nonumber
\end{align}
where $\delta>0$ and $P_{s,w}^{K}\left[  r\right]  $ is the expanded pipe
corresponding to the tube $T_{s,w}^{K}$. This is proved in the same way as the
case of the tube $T_{s,w}^{K,\natural}$ in the previous subsubsection, except
that we use the geometric decay in $r$ derived from integration by parts and
the fact that the expanded pipe $P_{s,w}^{K}\left[  r\right]  $ is far from
the tube $T_{s,w}^{K}$, to compensate the geometric growth in $r$ that arises
from the expanded pipes.

We will repeat\ the above proof of (\ref{difficult}), but with expanded pipes
$P_{s,w}^{K}\left[  r\right]  $ in place of the tube $T_{s,w}^{K}$, to get
(\ref{difficult exp}). Indeed, the $L^{2}$ and average $L^{4}$ estimates
(\ref{est above exp'}) and (\ref{est above exp''}) are now multiplied by an
additional factor $C_{\delta}2^{-r\delta}$ for some $\delta>0$, which
percolates through the interpolation to give (\ref{difficult exp}).

More precisely, we adapt the arguments surrounding (\ref{Lp avg}),%
\[
\mathbb{E}_{2^{\mathcal{G}}}^{\mu}\left\Vert \Lambda_{\mathcal{A}_{\mathbf{a}%
}\mathsf{Q}_{K}^{s}}^{2s}f\right\Vert _{L^{p}\left(  \left\vert \widehat{\psi
}\right\vert ^{2}\lambda_{n}\right)  }\lesssim2^{-\left(  2s-w\right)
\varepsilon_{p,n}^{\prime}}\left\Vert f\right\Vert _{L^{p}\left(
\mathbb{R}^{n-1}\right)  }\ ,
\]
and (\ref{norm est}),%
\[
\left\Vert T\bigtriangleup_{I;\kappa}^{n-1,\eta}f\right\Vert _{L^{p}\left(
P_{s,0}^{I}\left[  r\right]  \right)  }\leq C_{N}2^{-r\left(  N-\frac{n}%
{p}\right)  }2^{-s\varepsilon_{p,n}}2^{-sN}\left\Vert \partial^{N}%
\bigtriangleup_{I;\kappa}^{n-1,\eta}f\right\Vert _{L^{p}\left(  \mathbb{R}%
^{n-1}\right)  }\ ,
\]
to conclude that%
\begin{align*}
\mathbb{E}_{2^{\mathcal{G}\left[  U\right]  }}^{\mu}\left\Vert T\left(
\mathcal{A}_{\mathbf{a}}\mathsf{Q}_{K}^{s}\right)  ^{\spadesuit}f\right\Vert
_{L^{p}\left(  P_{s,w}^{K}\left[  r\right]  \right)  }  & \lesssim
C_{N}2^{-r\left(  N-\frac{n}{p}\right)  }2^{-\varepsilon_{p,n}s}\left\Vert
2^{-sN}\partial^{N}\left(  \mathsf{Q}_{K}^{s}\right)  ^{\spadesuit
}f\right\Vert _{L^{p}\left[  \mathbb{R}^{n-1}\right]  }\ ,\\
\ \ \ \ \ \text{for }K  & \in\mathcal{G}_{s-w}\left[  U\right]  \text{ and
}p>\frac{2n}{n-1}.
\end{align*}
The following three steps are almost verbatim analogues of Steps 1(a), (b) and
(c) above, but we include the details for the sake of completeness. For use in
Step 2(a) below, we note that the analogue of (\ref{dom in mod'}) in the case
$0\leq w\leq s$ is,%
\begin{equation}
\left\vert \int_{\mathbb{R}^{n-1}}e^{-i\Phi\left(  x\right)  \cdot\xi}\left(
\mathsf{Q}_{K}^{s}\right)  ^{\spadesuit}f\left(  x\right)  dx\right\vert
\lesssim C_{N}2^{-\left(  r+s\right)  N}\int_{\mathbb{R}^{n-1}}\left\vert
\partial^{N}\left(  \mathsf{Q}_{K}^{s}\right)  ^{\spadesuit}f\left(  x\right)
\right\vert dx,\ \ \ \ \ \text{for }\xi\in P_{s,w}^{K}\left[  r\right]
.\label{dom in mod w}%
\end{equation}

\medskip

\textbf{Step 2(a): local expanded }$L^{2}$\textbf{\ estimate}

\medskip

We compute the norm of $\Lambda_{\mathsf{Q}_{K}^{s}}^{2s}$ from $L^{2}\left(
\mathbb{R}^{n-1}\right)  $ to $L^{2}\left(  P_{s,w}^{K}\left[  r\right]
\right)  $. For $I_{0}\in\mathcal{G}_{s}\left[  K\right]  $, whose normal is
$\mathbf{e}_{n}$, we now use the \emph{cylindrical}\ convolver $\varphi
_{s,2s-w}^{r}\left(  z\right)  $ that has outer dimensions $2^{-s-r}%
\times2^{w-2s}$, and we will multiply by a modulation $m\left(  z\right)  $
that translates the pipe whose convex hull is the tube $\left[  -2^{s+r}%
,2^{s+r}\right]  ^{n-1}\times\left[  -2^{2s-w},2^{2s-w}\right]  $ to be
positioned near $P_{s,w}^{K}\left[  r\right]  $. For convenience we
momentarily set%
\begin{equation}
\psi\left(  z\right)  \equiv m\left(  z\right)  \varphi_{s,2s-w}^{r}\left(
z\right)  .\label{def psi'}%
\end{equation}
We then have with using (\ref{cont not}) that,%
\begin{align*}
& \left\Vert \Lambda_{\mathsf{Q}_{K}^{s}}^{2s}f\right\Vert _{L^{2}\left(
\left\vert \widehat{\psi}\right\vert ^{2}\lambda_{n}\right)  }^{2}%
=\int_{\mathbb{R}^{n}}\left\vert \widehat{\left(  f_{K}^{s}\right)  _{\Phi
,2s}}\left(  \xi\right)  \right\vert ^{2}\left\vert \widehat{\psi}\left(
\xi\right)  \right\vert ^{2}d\xi=\int_{\mathbb{R}^{n}}\overline{\widehat
{\left(  f_{K}^{s}\right)  _{\Phi,2s}\ast\psi}\left(  \xi\right)  }%
\ \widehat{\left(  f_{K}^{s}\right)  _{\Phi,2s}\ast\psi}\left(  \xi\right)
d\xi\\
& =\sum_{I,J\in\mathcal{G}_{s}\left[  K\right]  }\int_{\mathbb{R}^{n}%
}\overline{\widehat{f_{\Phi,2s}^{I}\ast\psi}\left(  \xi\right)  }%
\ \widehat{f_{\Phi,2s}^{J}\ast\psi}\left(  \xi\right)  d\xi=\sum
_{I,J\in\mathcal{G}_{s}\left[  K\right]  }\int_{S}\overline{f_{\Phi,2s}%
^{I}\ast\psi\left(  x\right)  }\ \left(  f_{\Phi,2s}^{J}\ast\psi\right)
\left(  x\right)  dx.
\end{align*}
The supports of $f_{\Phi,2s}^{I}\ast\psi$ and $f_{\Phi,2s}^{J}\ast\psi$ are
essentially disjoint unless $I\sim J$. Next, if we define
\[
I_{w}^{\ast}\left[  r\right]  \equiv\left(  \left[  -2^{-s},2^{-s}\right]
^{n-1}\times\left[  -2^{w-2s+r},2^{w-2s+r}\right]  \right)  +\mathbf{e}_{n}\ ,
\]
and $I_{w}^{\ast}\left[  r\right]  $ by rotation, then we have%
\begin{equation}
\left\vert f_{\Phi,2s}^{I}\ast\psi\left(  z\right)  \right\vert \lesssim
2^{-rN}\left\vert \left\langle S_{\kappa,\eta}^{-1}f,h_{I;\kappa}%
^{n-1}\right\rangle \right\vert 2^{2s-w-r}2^{s\frac{n-1}{2}}\mathbf{1}%
_{I_{w}^{\ast}\left[  r\right]  }\left(  z\right)  ,\label{conv equ}%
\end{equation}
since $N$ integrations by part gains $2^{-\left(  r+s\right)  N}$ as in
(\ref{dom in mod w}), while $N$ differentiations%
\[
\partial^{N}\bigtriangleup_{I;\kappa}^{n-1,\eta}f=\left\langle S_{\kappa,\eta
}^{-1}f,h_{I;\kappa}^{n-1}\right\rangle \partial^{N}h_{I;\kappa}^{n-1,\eta}.
\]
loses $2^{sN}$, all of which leads to
\begin{align*}
\left\vert f_{\Phi,2s}^{I}\ast\psi\right\vert  & \approx\left\vert f_{\Phi
}^{I}\ast\psi\right\vert \lesssim\left\Vert 2^{-rN}\frac{df_{\Phi}^{I}%
}{d\sigma_{n-1}}\right\Vert _{\infty}\times\left(  \mathbf{1}_{\Phi\left(
I\right)  }\sigma_{n-1}\right)  \ast\varphi_{s,2s-w}^{r}\left(  z\right) \\
& \approx2^{-rN}\left\vert \left\langle S_{\kappa,\eta}^{-1}f,h_{I;\kappa
}^{n-1}\right\rangle \right\vert 2^{s\frac{n-1}{2}}\times\left(
\operatorname*{density}\right)  \mathbf{1}_{I_{w}^{\ast}\left[  r\right]
}\left(  z\right)  ,
\end{align*}
where the quantity $\operatorname*{density}$ satisfies,
\begin{align*}
\left(  \operatorname*{density}\right)  2^{-s\left(  n-1\right)  }2^{w-2s+r}
& =\left(  \operatorname*{density}\right)  \left\vert I^{\ast}\left[
r\right]  \right\vert =\left\Vert \mathbf{1}_{\Phi\left(  I^{\ast}\left[
r\right]  \right)  }\sigma_{n-1}\right\Vert =2^{-s\left(  n-1\right)  }\\
& \Longrightarrow\operatorname*{density}=\frac{2^{-s\left(  n-1\right)  }%
}{2^{-s\left(  n-1\right)  }2^{w-2s+r}}=2^{2s-w-r}.
\end{align*}

Altogether then, using (\ref{conv equ}) and $\left\vert I^{\ast}\left[
r\right]  \right\vert =2^{-s\left(  n-1\right)  }2^{w-2s+r}$, we have%
\begin{align*}
& \ \ \ \ \ \ \ \ \ \ \ \ \ \ \ \left\Vert \Lambda_{\mathsf{Q}_{K}^{s}}%
^{2s}f\right\Vert _{L^{2}\left(  \left\vert \widehat{\psi}\right\vert
^{2}\lambda_{n}\right)  }^{2}\lesssim\sum_{I\in\mathcal{G}_{s}\left[
K\right]  }\int_{\mathbb{R}^{n}}\left\vert f_{\Phi,2s}^{I}\ast\psi\left(
\xi\right)  \right\vert ^{2}d\xi\\
& \lesssim2^{-2rN}\sum_{I\in\mathcal{G}_{s}\left[  K\right]  }\int
_{\mathbb{R}^{n}}\left\vert \left\vert \left\langle S_{\kappa,\eta}%
^{-1}f,h_{I;\kappa}^{n-1}\right\rangle \right\vert 2^{2s-w-r}2^{s\frac{n-1}%
{2}}\mathbf{1}_{I_{w}^{\ast}\left[  r\right]  }\left(  \xi\right)  \right\vert
^{2}d\xi\\
& \lesssim2^{-2rN}\sum_{I\in\mathcal{G}_{s}\left[  K\right]  }\left\vert
\left\langle S_{\kappa,\eta}^{-1}f,h_{I;\kappa}^{n-1}\right\rangle \right\vert
^{2}\left(  2^{2s-w-r}2^{s\frac{n-1}{2}}\right)  ^{2}\left\vert I_{w}^{\ast
}\right\vert \\
& =2^{-2rN}2^{2s-w-r}\sum_{I\in\mathcal{G}_{s}\left[  K\right]  }\left\vert
\left\langle S_{\kappa,\eta}^{-1}f,h_{I;\kappa}^{n-1}\right\rangle \right\vert
^{2}\lesssim2^{-\left(  2N+1\right)  r}2^{2s-w}\left\Vert f\right\Vert
_{L^{2}\left(  \mathbb{R}^{n-1}\right)  }^{2},
\end{align*}
which in terms of $T\left(  \mathsf{Q}_{K}^{s}\right)  ^{\spadesuit}f$ implies%
\begin{equation}
\left\Vert T\left(  \mathsf{Q}_{K}^{s}\right)  ^{\spadesuit}f\right\Vert
_{L^{2}\left(  P_{s,w}^{K}\left[  r\right]  \right)  }^{2}\lesssim2^{-\left(
2N+1\right)  r}2^{2s-w}\left\Vert \left(  \mathsf{Q}_{K}^{s}\right)
^{\spadesuit}f\right\Vert _{L^{2}\left(  U\right)  }^{2}%
\label{est above exp'''}%
\end{equation}

\medskip

\textbf{Step 2(b): local average expanded }$L^{4}$\textbf{\ estimate}

\medskip

We begin by using (\ref{dom in mod w}) to estimate the $L^{4}\left(  P%
_{s,w}^{K}\left[  r\right]  \right)  $ norm of $\Lambda_{\mathsf{Q}_{K}^{s}%
}^{2s}f$:%
\begin{align*}
& \left\Vert \Lambda_{\mathsf{Q}_{K}^{s}}^{2s}f\right\Vert _{L^{4}\left(
P_{s,w}^{K}\left[  r\right]  \right)  }^{4}=\int_{P_{s,w}^{K}\left[  r\right]
}\left\vert \widehat{\left(  f_{K}^{s}\right)  _{\Phi,2s}}\left(  \xi\right)
\right\vert ^{4}d\xi=\int_{P_{s,w}^{K}\left[  r\right]  }\left\vert \sum
_{I\in\mathcal{G}_{s}\left[  K\right]  }\widehat{\left(  f_{K}^{s}\right)
_{\Phi,2s}^{I}}\left(  \xi\right)  \right\vert ^{4}d\xi\\
& \lesssim2^{-4\left(  r+s\right)  N}\int_{P_{s,w}^{K}\left[  r\right]
}\left\vert \sum_{I\in\mathcal{G}_{s}\left[  K\right]  }\widehat{\partial
^{N}f_{\Phi,2s}^{I}}\left(  \xi\right)  \right\vert ^{4}d\xi=2^{-4\left(
r+s\right)  N}\int_{P_{s,w}^{K}\left[  r\right]  }\left\vert \sum
_{I,J\in\mathcal{G}_{s}\left[  K\right]  }\widehat{\partial^{N}f_{\Phi,2s}%
^{I}}\left(  \xi\right)  \widehat{\partial^{N}f_{\Phi,2s}^{J}}\left(
\xi\right)  \right\vert ^{2}d\xi\\
& =2^{-4\left(  r+s\right)  N}\int_{P_{s,w}^{K}\left[  r\right]  }\left\vert
\sum_{I,J\in\mathcal{G}_{s}\left[  K\right]  }\widehat{\partial^{N}f_{\Phi
,2s}^{I}\ast\partial^{N}f_{\Phi,2s}^{J}}\widehat{\partial}\left(  \xi\right)
\right\vert ^{2}d\xi.
\end{align*}
Then we run the argument in Subsection \ref{subsubsection L4}, with notation
as used there, with the above estimate up until the estimate for $\Omega
_{t}=\Omega_{t}\left[  K\right]  $, where $2^{-t}\approx\operatorname*{dist}%
\left(  I,J\right)  $ for $I,J\in\mathcal{G}_{s}\left[  K\right]  $, i.e.
$2^{-t}\lesssim\ell\left(  K\right)  =2^{w-s}$ or $s-w\leq t\leq s$. Then for
$s-w<t<s$ we have%
\begin{align*}
& \Omega_{t}\left[  K\right]  \lesssim2^{-\left(  4N+2\right)  r}\sum
_{I,J\in\mathcal{G}_{s}\left[  K\right]  :\ \operatorname*{dist}\left(
I,J\right)  \approx2^{-t}}2^{-s\left(  n-2\right)  }2^{t}\left\vert
\left\langle \left(  S_{\kappa,\eta}\right)  ^{-1}f,h_{I;\kappa}\right\rangle
\left\langle \left(  S_{\kappa,\eta}\right)  ^{-1}f,h_{J;\kappa}\right\rangle
\right\vert ^{2}\\
& \lesssim2^{-\left(  4N+2\right)  r}2^{-s\left(  n-2\right)  }2^{t}%
\sum_{I,J\in\mathcal{G}_{s}\left[  K\right]  :\ \operatorname*{dist}\left(
I,J\right)  \approx2^{-t}}\left\vert \left\langle \left(  S_{\kappa,\eta
}\right)  ^{-1}f,h_{I;\kappa}\right\rangle \right\vert ^{4}\\
& \lesssim2^{-\left(  4N+2\right)  r}2^{-s\left(  n-2\right)  }2^{t}2^{\left(
s-t\right)  \left(  n-1\right)  }\sum_{I\in\mathcal{G}_{s}\left[  K\right]
}\left\vert \left\langle \left(  S_{\kappa,\eta}\right)  ^{-1}f,h_{I;\kappa
}\right\rangle \right\vert ^{4}\approx2^{-\left(  4N+2\right)  r}2^{-t\left(
n-2\right)  }2^{-s\left(  n-2\right)  }\left\Vert \left(  \mathsf{Q}_{K}%
^{s}\right)  ^{\spadesuit}f\right\Vert _{L^{4}\left(  S\right)  }^{4}\ ,
\end{align*}
which gives%
\begin{align*}
\sum_{t=s-w}^{s}\Psi_{t}\left[  K\right]   & \lesssim\sum_{t=s-w}^{s}%
\Omega_{t}\left[  K\right]  \lesssim2^{-\left(  4N+2\right)  r}\sum
_{t=s-w}^{s}2^{-t\left(  n-2\right)  }2^{-s\left(  n-2\right)  }\left\Vert
\left(  \mathsf{Q}_{K}^{s}\right)  ^{\spadesuit}f\right\Vert _{L^{4}\left(
S\right)  }^{4}\\
& \approx2^{-\left(  4N+2\right)  r}2^{-\left(  s-w\right)  \left(
n-2\right)  }2^{-s\left(  n-2\right)  }\left\Vert \left(  \mathsf{Q}_{K}%
^{s}\right)  ^{\spadesuit}f\right\Vert _{L^{4}\left(  S\right)  }^{4}%
\lesssim2^{-\left(  4N+2\right)  r}2^{-\left(  2s-w\right)  \left(
n-2\right)  }\left\Vert f\right\Vert _{L^{4}\left(  S\right)  }^{4}.
\end{align*}
Similarly we obtain%
\[
\Psi\lesssim2^{-\left(  4N+2\right)  r}2^{-\left(  2s-w\right)  \left(
n-2\right)  }\left\Vert f\right\Vert _{L^{4}\left(  S\right)  }^{4},
\]
and adding these results gives,
\[
\mathbb{E}_{2^{\mathcal{G}}}^{\mu}\left\Vert \Lambda_{\mathsf{Q}_{K}^{s}}%
^{2s}f\right\Vert _{L^{4}\left(  \lambda_{n}\right)  }^{4}\lesssim2^{-\left(
4N+2\right)  r}2^{-\left(  2s-w\right)  \left(  n-2\right)  }\left\Vert
f\right\Vert _{L^{4}\left(  S\right)  }^{4}.
\]
In terms of $T\left(  \mathsf{Q}_{K}^{s}\right)  ^{\spadesuit}f$ this implies%
\begin{equation}
\mathbb{E}_{2^{\mathcal{G}}}^{\mu}\left\Vert T\left(  \mathcal{A}_{\mathbf{a}%
}\mathsf{Q}_{K}^{s}\right)  ^{\spadesuit}f\right\Vert _{L^{4}\left(  P%
_{s,w}^{K}\left[  r\right]  \right)  }^{4}\lesssim2^{-\left(  4N+2\right)
r}2^{-\left(  2s-w\right)  \left(  n-2\right)  }\left\Vert \left(
\mathsf{Q}_{K}^{s}\right)  ^{\spadesuit}f\right\Vert _{L^{4}\left(  U\right)
}^{4}\label{est above exp''''}%
\end{equation}

\medskip

\textbf{Step 2(c): local expanded interpolation}

\medskip

Collecting the bounds (\ref{est above exp'''}) and (\ref{est above exp''''})
gives,%
\begin{align*}
\left\Vert T\left(  \mathcal{A}_{\mathbf{a}}\mathsf{Q}_{K}^{s}\right)
^{\spadesuit}f\right\Vert _{L^{2}\left(  P_{s,w}^{K}\left[  r\right]  \right)
}  & \lesssim2^{-\left(  N+\frac{1}{2}\right)  r}2^{\frac{2s-w}{2}}\left\Vert
f\right\Vert _{L^{2}\left(  U\right)  },\\
\mathbb{E}_{2^{\mathcal{G}}}^{\mu}\left\Vert T\left(  \mathcal{A}_{\mathbf{a}%
}\mathsf{Q}_{K}^{s}\right)  ^{\spadesuit}f\right\Vert _{L^{4}\left(  P%
_{s,w}^{K}\left[  r\right]  \right)  }  & \lesssim2^{-\left(  N+\frac{1}%
{2}\right)  r}2^{-\frac{2s-w}{2}\frac{n-2}{2}}\left\Vert f\right\Vert
_{L^{4}\left(  U\right)  }.
\end{align*}
Now we claim that an application of the interpolation Lemma \ref{interp red}
yields,
\[
\mathbb{E}_{2^{\mathcal{G}}}^{\mu}\left\Vert T\left(  \mathcal{A}_{\mathbf{a}%
}\mathsf{Q}_{K}^{s}\right)  ^{\spadesuit}f\right\Vert _{L^{p}\left(  P%
_{s,w}^{K}\left[  r\right]  \right)  }\lesssim2^{-\left(  N+\frac{1}%
{2}\right)  r}2^{-\left(  2s-w\right)  \varepsilon_{p,n}^{\prime}}\left\Vert
f\right\Vert _{L^{p}\left(  \mathbb{R}^{n-1}\right)  }.
\]
Indeed, the calculation at the end of the proof of Lemma \ref{interp red}
shows that if $p>\frac{2n}{n-1}$, then (with notation as in that proof)
$\theta=\frac{4}{p}-1$ and so
\begin{align*}
& \left[  2^{-\left(  N+\frac{1}{2}\right)  r}2^{-\frac{2s-w}{2}\frac{n-2}{2}%
}\right]  ^{1-\theta}\left[  2^{-\left(  N+\frac{1}{2}\right)  r}%
2^{\frac{2s-w}{2}}\right]  ^{\theta}=2^{-\left(  N+\frac{1}{2}\right)
r}2^{-\frac{2s-w}{2}\frac{n-2}{2}}2^{\left(  \frac{2s-w}{2}+\frac{2s-w}%
{2}\frac{n-2}{2}\right)  \theta}\\
& =2^{-\left(  N+\frac{1}{2}\right)  r}2^{-\frac{2s-w}{2}\frac{n-2}{2}%
}2^{\left(  \frac{2s-w}{2}\frac{n}{2}\right)  \theta}=2^{-\left(  N+\frac
{1}{2}\right)  r}2^{-\left(  2s-w\right)  \varepsilon_{p,n}^{\prime}},
\end{align*}
where
\begin{align*}
\varepsilon_{p,n}^{\prime}  & \equiv\frac{1}{2s-w}\left\{  \frac{2s-w}{2}%
\frac{n-2}{2}-\left(  \frac{2s-w}{2}\frac{n}{2}\right)  \left(  \frac{4}%
{p}-1\right)  \right\} \\
& =\frac{n-2}{4}-\frac{n}{4}\left(  \frac{4}{p}-1\right)  =\frac{n-1}{2}%
-\frac{n}{p}=\frac{n-1}{2p}\left(  p-\frac{2n}{n-1}\right)  .
\end{align*}
This completes our proof of (\ref{difficult exp}) in \emph{Step 2}.

\medskip

\subsubsection{Step 3: The Fourier square function argument}

\medskip

Momentarily fix $0\leq w\leq s$, and recall (\ref{difficult exp}),%
\[
\mathbb{E}_{2^{\mathcal{G}_{s}\left[  U\right]  }}^{\mu}\left\Vert T\left(
\mathcal{A}_{\mathbf{a}}\mathsf{Q}_{K}^{s}\right)  ^{\spadesuit}f\right\Vert
_{L^{p}\left(  P_{s,w}^{K}\left[  r\right]  \right)  }\lesssim2^{-r\left(
N-\frac{n}{p^{\prime}}\right)  }2^{-\left(  2s-w\right)  \varepsilon_{p,n}%
}\left\Vert f\right\Vert _{L^{p}\left(  U\right)  },
\]
which in terms of the Fourier square function $\mathcal{S}_{T,s}^{\eta
,K}\equiv\left(  \sum_{I\in\mathcal{G}_{s}\left[  K\right]  }\left\vert
\bigtriangleup_{I;\kappa}^{\eta}f\right\vert ^{2}\right)  ^{\frac{1}{2}}$ is%
\begin{equation}
\left\Vert \mathcal{S}_{T,s}^{\eta,K}f\right\Vert _{L^{p}\left(  P_{s,w}%
^{K}\left[  r\right]  \right)  }\lesssim2^{-r\left(  N-\frac{n}{p^{\prime}%
}\right)  }2^{-\left(  2s-w\right)  \varepsilon_{p,n}}\left\Vert f\right\Vert
_{L^{p}\left(  U\right)  },\label{difficult exp'}%
\end{equation}
For every $K\in\mathcal{G}_{s-w}\left[  U\right]  $, we have
\[
\mathbf{1}_{A_{+}\left(  0,2^{2s-w}\right)  }\lesssim\sum_{r=0}^{s-w}%
\mathbf{1}_{P_{s,w}^{K}\left[  r\right]  }\ ,
\]
and so from $\left\vert \mathcal{S}_{T,s}^{\eta}f\right\vert ^{2}=\sum
_{K\in\mathcal{G}_{s-w}\left[  U\right]  }\left\vert \mathcal{S}_{T,s}%
^{\eta,K}f\right\vert ^{2}$ (where $\mathcal{S}_{T,s}^{\eta}\equiv
\mathcal{S}_{T,s}^{\eta,U}$), we obtain using $\left(  \frac{p}{2}\right)
^{\prime}=\frac{p}{p-2}$ that,%
\begin{align*}
& \left\Vert \mathcal{S}_{T,s}^{\eta}f\right\Vert _{L^{p}\left(  A_{+}\left(
0,2^{2s-w}\right)  \right)  }^{p}=\int\left\vert \mathcal{S}_{T,s}^{\eta
}f\right\vert ^{p}\mathbf{1}_{A_{+}\left(  0,2^{2s-w}\right)  }=\int\left\vert
\mathcal{S}_{T,s}^{\eta}f\right\vert ^{p-2}\sum_{K\in\mathcal{G}_{s-w}\left[
U\right]  }\left\vert \mathcal{S}_{T,s}^{\eta,K}f\right\vert ^{2}%
\mathbf{1}_{A_{+}\left(  0,2^{2s-w}\right)  }\\
& \lesssim\int\left\vert \mathcal{S}_{T,s}^{\eta}f\right\vert ^{p-2}\sum
_{K\in\mathcal{G}_{s-w}\left[  U\right]  }\left\vert \mathcal{S}_{T,s}%
^{\eta,K}f\right\vert ^{2}\sum_{r=0}^{s-w}\mathbf{1}_{P_{s,w}^{K}\left[
r\right]  }=\sum_{r=0}^{s-w}\sum_{K\in\mathcal{G}_{s-w}\left[  U\right]  }%
\int_{P_{s,w}^{K}\left[  r\right]  }\left\vert \mathcal{S}_{T,s}^{\eta
,K}f\right\vert ^{2}\left\vert \mathcal{S}_{T,s}^{\eta}f\right\vert ^{p-2}\\
& \leq\sum_{r=0}^{s-w}\sum_{K\in\mathcal{G}_{s-w}\left[  U\right]  }\left(
\int_{P_{s,w}^{K}\left[  r\right]  }\left\vert \mathcal{S}_{T,s}^{\eta
,K}f\right\vert ^{p}\right)  ^{\frac{2}{p}}\left(  \int_{P_{s,w}^{K}\left[
r\right]  }\left\vert \mathcal{S}_{T,s}^{\eta}f\right\vert ^{p}\right)
^{\frac{p-2}{p}}\\
& \leq\sum_{r=0}^{s-w}\left[  \sum_{K\in\mathcal{G}_{s-w}\left[  U\right]
}\left(  \int_{P_{s,w}^{K}\left[  r\right]  }\left\vert \mathcal{S}%
_{T,s}^{\eta,K}f\right\vert ^{p}\right)  \right]  ^{\frac{2}{p}}\left[
\sum_{K\in\mathcal{G}_{s-w}\left[  U\right]  }\int_{P_{s,w}^{K}\left[
r\right]  }\left\vert \mathcal{S}_{T,s}^{\eta}f\right\vert ^{p}\right]
^{\frac{p-2}{p}}.
\end{align*}

Then from (\ref{difficult exp'}) we obtain%
\begin{align*}
& \left\Vert \mathcal{S}_{T,s}^{\eta}f\right\Vert _{L^{p}\left(  A_{+}\left(
0,2^{2s-w}\right)  \right)  }^{p}\\
& \lesssim\sum_{r=0}^{s-w}\left[  \sum_{K\in\mathcal{G}_{s-w}\left[  S\right]
}2^{-rp\left(  N-\frac{n}{p^{\prime}}\right)  }2^{-\left(  2s-w\right)
p\varepsilon_{p,n}}\left\Vert \left(  \mathsf{Q}_{K}^{s}\right)  ^{\spadesuit
}f\right\Vert _{L^{p}\left(  U\right)  }^{p}\right]  ^{\frac{2}{p}}\left[
2^{rn}\int_{A_{+}\left(  0,2^{2s-w}\right)  }\left\vert \mathcal{S}%
_{T,s}^{\eta}f\right\vert ^{p}\right]  ^{\frac{p-2}{p}}\\
& \lesssim\sum_{r=0}^{s-w}\left[  \sum_{K\in\mathcal{G}_{s-w}\left[  S\right]
}2^{-rp\left(  N-\frac{n}{p^{\prime}}\right)  }2^{rn\frac{p-2}{2}}2^{-\left(
2s-w\right)  p\varepsilon_{p,n}}\left\Vert \left(  \mathsf{Q}_{K}^{s}\right)
^{\spadesuit}f\right\Vert _{L^{p}\left(  U\right)  }^{p}\right]  ^{\frac{2}%
{p}}\left\Vert \mathcal{S}_{T,s}^{\eta}f\right\Vert _{L^{p}\left(
A_{+}\left(  0,2^{2s-w}\right)  \right)  }^{p-2}%
\end{align*}
since the overlap constant of the pipes $\left\{  P_{s,w}^{K}\left[  r\right]
\right\}  _{K\in\mathcal{G}_{s-w}\left[  S\right]  }$ is $C2^{rn}$. Using%
\[
\sum_{K\in\mathcal{G}_{s-w}\left[  S\right]  }\left\Vert \left(
\mathsf{Q}_{K}^{s}\right)  ^{\spadesuit}f\right\Vert _{L^{p}\left(  U\right)
}^{p}=\left\Vert \left(  \mathsf{Q}_{U}^{s}\right)  ^{\spadesuit}f\right\Vert
_{L^{p}\left(  U\right)  }^{p}\ ,
\]
we conclude that%
\begin{align*}
& \left\Vert \mathcal{S}_{T,s}^{\eta}f\right\Vert _{L^{p}\left(  A_{+}\left(
0,2^{2s-w}\right)  \right)  }^{2}\lesssim\sum_{r=0}^{s-w}\left[  2^{-rp\left(
N-\frac{n}{p^{\prime}}+\frac{n}{p}-\frac{n}{2}\right)  }2^{-\left(
2s-w\right)  p\varepsilon_{p,n}}\left\Vert \left(  \mathsf{Q}_{U}^{s}\right)
^{\spadesuit}f\right\Vert _{L^{p}\left(  U\right)  }^{p}\right]  ^{\frac{2}%
{p}}\\
& \lesssim\sum_{r=0}^{s-w}2^{-r2\left(  N-\frac{n}{p^{\prime}}+\frac{n}%
{p}-\frac{n}{2}\right)  }2^{-\left(  2s-w\right)  2\varepsilon_{p,n}%
}\left\Vert \left(  \mathsf{Q}_{U}^{s}\right)  ^{\spadesuit}f\right\Vert
_{L^{p}\left(  U\right)  }^{2}\lesssim2^{-\left(  2s-w\right)  2\varepsilon
_{p,n}}\left\Vert \left(  \mathsf{Q}_{U}^{s}\right)  ^{\spadesuit}f\right\Vert
_{L^{p}\left(  U\right)  }^{2}%
\end{align*}
provided $N>\frac{n}{p^{\prime}}-\frac{n}{p}+\frac{n}{2}$. Thus we have proved
the Fourier square function estimate%
\[
\left\Vert \mathcal{S}_{T,s}^{\eta}f\right\Vert _{L^{p}\left(  A_{+}\left(
0,2^{2s-w}\right)  \right)  }\lesssim2^{-\left(  2s-w\right)  \varepsilon
_{p,n}}\left\Vert \left(  \mathsf{Q}_{U}^{s}\right)  ^{\spadesuit}f\right\Vert
_{L^{p}\left(  U\right)  }\ ,
\]
which is (\ref{prob K rem square}) by Khintchine's inequalities,%
\[
\left\Vert \mathcal{S}_{T,s}^{\eta}f\right\Vert _{L^{p}\left(  A_{+}\left(
0,2^{2s-w}\right)  \right)  }\approx\mathbb{E}_{2^{\mathcal{G}\left[
U\right]  }}^{\mu}\left\Vert T\left(  \mathcal{A}_{\mathbf{a}}\mathsf{Q}%
_{U}^{s}\right)  ^{\spadesuit}f\right\Vert _{L^{p}\left(  A_{+}\left(
0,2^{2s-w}\right)  \right)  }.
\]

\subsection{Wrapup}

We have established the norm expectation,%
\begin{equation}
\mathbb{E}_{2^{\mathcal{G}\left[  U\right]  }}^{\mu}\left\Vert T\left(
\mathcal{A}_{\mathbf{a}}\mathsf{Q}_{U}^{s}\right)  ^{\spadesuit}f\right\Vert
_{L^{p}\left(  B\left(  0,2^{2s}\right)  \right)  }\lesssim2^{-\varepsilon
_{n,p}s}\left\Vert f\right\Vert _{L^{p}\left(  U\right)  }^{p}%
\ ,\;\ \ \ \ \text{for }p>\frac{2n}{n-1},\label{norm exp}%
\end{equation}
which will play a critical role in completing the proof of our main theorem in
the next section.

\section{Completion of the proof of the probabilisitic extension Theorem
\ref{FEC}}

Consider the norm $\left\Vert \widehat{\left(  \left(  \mathcal{A}%
_{\mathbf{a}}\mathsf{Q}_{U}^{s}\right)  ^{\spadesuit}f\right)  _{\Phi,2s}%
}\right\Vert _{L^{p}\left(  \mathbf{1}_{\mathbb{R}^{n}\setminus B\left(
0,2^{2s}\right)  }\lambda_{n}\right)  }$ for each fixed $f\in L^{p}$,
$s\in\mathbb{N}$ and $a\in\mathbf{a}$, and choose $g_{f,s,a}\in L^{p^{\prime}%
}\left(  \lambda_{n}\right)  $ such that%
\begin{align}
\bigtriangleup_{J;\kappa}g_{f,s,a}  & =0\text{ for }J\in\mathcal{D}\left[
B\left(  0,2^{2s}\right)  \right]  ,\label{Haar supp}\\
\left\Vert \widehat{\left(  \left(  \mathcal{A}_{\mathbf{a}}\mathsf{Q}_{U}%
^{s}\right)  ^{\spadesuit}f\right)  _{\Phi,2s}}\right\Vert _{L^{p}\left(
\mathbf{1}_{\mathbb{R}^{n}\setminus B\left(  0,2^{2s}\right)  }\lambda
_{n}\right)  }  & =\left\vert \left\langle T\left(  \left(  \mathcal{A}%
_{\mathbf{a}}\mathsf{Q}_{U}^{s}\right)  ^{\spadesuit}f\right)  _{2s}%
,g_{f,s,a}\right\rangle \right\vert \text{ and }\left\Vert g_{f,s,a}%
\right\Vert _{L^{p^{\prime}}\left(  \lambda_{n}\right)  }=1.\nonumber
\end{align}
Since $\mathsf{B}_{\operatorname*{disjoint}}^{\operatorname*{lower}}\left(
\left(  \mathcal{A}_{\mathbf{a}}\mathsf{Q}_{U}^{s}\right)  ^{\spadesuit
}f,g_{f,s,\mathbf{a}}\right)  $ and $\mathsf{B}_{\operatorname*{distal}%
}^{\operatorname*{lower}}\left(  \left(  \mathcal{A}_{\mathbf{a}}%
\mathsf{Q}_{U}^{s}\right)  ^{\spadesuit}f,g_{f,s,\mathbf{a}}\right)  $ each
vanish by the assumption on the Alpert support of $g_{f,s,a}$ in
(\ref{Haar supp}), and the definitions of the lower disjoint and distal forms,
we have%
\begin{align*}
& \mathbb{E}_{2^{\mathcal{G}\left[  S\right]  }}^{\mu}\left\vert \left\langle
T\left(  \left(  \mathcal{A}_{\mathbf{a}}\mathsf{Q}_{U}^{s}\right)
^{\spadesuit}f\right)  _{2s},g_{f,s,\mathbf{a}}\right\rangle \right\vert
=\mathbb{E}_{2^{\mathcal{G}_{s}\left[  S\right]  }}^{\mu}\left\vert
\left\langle T\mathcal{A}\left(  \left(  \mathcal{A}_{\mathbf{a}}%
\mathsf{Q}_{U}^{s}\right)  ^{\spadesuit}f\right)  _{2s},g_{f,s,\mathbf{a}%
}\right\rangle \right\vert \\
& =\mathbb{E}_{2^{\mathcal{G}_{s}\left[  S\right]  }}^{\mu}\left\vert
\mathsf{B}_{\operatorname*{below}}\left(  T\left(  \left(  \mathcal{A}%
_{\mathbf{a}}\mathsf{Q}_{U}^{s}\right)  ^{\spadesuit}f\right)  _{2s}%
,g_{f,s,\mathbf{a}}\right)  +\mathsf{B}_{\operatorname*{above}}\left(
T\left(  \left(  \mathcal{A}_{\mathbf{a}}\mathsf{Q}_{U}^{s}\right)
^{\spadesuit}f\right)  _{2s},g_{f,s,\mathbf{a}}\right)  \right. \\
& \ \ \ \ \ \ \ \ \ \ \ \ \ \ \ \ \ \ \ \ \ \ \ \ \ \ \ \ \left.
+\mathsf{B}_{\operatorname*{disjoint}}^{\operatorname*{upper}}\left(  T\left(
\left(  \mathcal{A}_{\mathbf{a}}\mathsf{Q}_{U}^{s}\right)  ^{\spadesuit
}f\right)  _{2s},g_{f,s,\mathbf{a}}\right)  +\mathsf{B}%
_{\operatorname*{distal}}^{\operatorname*{upper}}\left(  T\left(  \left(
\mathcal{A}_{\mathbf{a}}\mathsf{Q}_{U}^{s}\right)  ^{\spadesuit}f\right)
_{2s},g_{f,s,\mathbf{a}}\right)  \right\} \\
& \lesssim\sup_{\mathbf{a}}2^{-\varepsilon_{n,p}s}\left\Vert \left(
\mathcal{A}_{\mathbf{a}}\mathsf{Q}_{U}^{s}\right)  ^{\spadesuit}f\right\Vert
_{L^{p}\left(  \mathbb{R}^{n}\right)  }\left\Vert g_{f,s,\mathbf{a}%
}\right\Vert _{L^{p^{\prime}}\left(  \mathbb{R}^{n}\right)  }\ ,
\end{align*}
from estimates proved in previous sections, namely (\ref{strong below}),
(\ref{strong above}), (\ref{strong upper disjoint}) and
(\ref{strong upper distal}). From this and (\ref{norm exp}) we conclude that%
\begin{align*}
& \ \ \ \ \ \ \ \ \ \ \ \ \ \ \mathbb{E}_{2^{\mathcal{G}\left[  S\right]  }%
}^{\mu}\left\Vert T\left(  \mathcal{A}_{\mathbf{a}}\mathsf{Q}_{U}^{s}\right)
^{\spadesuit}f\right\Vert _{L^{p}\left(  \mathbb{R}^{n}\right)  }\\
& \lesssim\mathbb{E}_{2^{\mathcal{G}_{s}\left[  S\right]  }}^{\mu}\left\Vert
\widehat{\left(  \left(  \mathcal{A}_{\mathbf{a}}\mathsf{Q}_{U}^{s}\right)
^{\spadesuit}f\right)  _{\Phi,2s}}\right\Vert _{L^{p}\left(  \mathbf{1}%
_{\mathbb{R}^{n}\setminus B\left(  0,2^{2s}\right)  }\lambda_{n}\right)
}+\mathbb{E}_{2^{\mathcal{G}_{s}\left[  S\right]  }}^{\mu}\left\Vert
\widehat{\left(  \left(  \mathcal{A}_{\mathbf{a}}\mathsf{Q}_{U}^{s}\right)
^{\spadesuit}f\right)  _{\Phi,2s}}\right\Vert _{L^{p}\left(  B\left(
0,2^{2s}\right)  \right)  }\\
& =\mathbb{E}_{2^{\mathcal{G}_{s}\left[  S\right]  }}^{\mu}\left\vert
\left\langle T\left(  \left(  \mathcal{A}_{\mathbf{a}}\mathsf{Q}_{U}%
^{s}\right)  ^{\spadesuit}f\right)  _{2s},g_{f,s,\mathbf{a}}\right\rangle
\right\vert +\mathbb{E}_{2^{\mathcal{G}_{s}\left[  S\right]  }}^{\mu
}\left\Vert \widehat{\left(  \left(  \mathcal{A}_{\mathbf{a}}\mathsf{Q}%
_{U}^{s}\right)  ^{\spadesuit}f\right)  _{\Phi,2s}}\right\Vert _{L^{p}\left(
B\left(  0,2^{2s}\right)  \right)  }\\
& \lesssim\sup_{\mathbf{a}}2^{-\varepsilon_{n,p}s}\left\Vert \left(
\mathcal{A}_{\mathbf{a}}\mathsf{Q}_{U}^{s}\right)  ^{\spadesuit}f\right\Vert
_{L^{p}\left(  \mathbb{R}^{n}\right)  }\left\Vert g_{f,s,\mathbf{a}%
}\right\Vert _{L^{p^{\prime}}\left(  \mathbb{R}^{n}\right)  }+2^{-\varepsilon
_{n,p}s}\left\Vert \left(  \mathsf{Q}_{U}^{s}\right)  ^{\spadesuit
}f\right\Vert _{L^{p}\left(  \mathbb{R}^{n}\right)  }\lesssim2^{-\varepsilon
_{n,p}s}\left\Vert f\right\Vert _{L^{p}\left(  U\right)  }\ ,
\end{align*}
since the multipliers $\left(  \mathcal{A}_{\mathbf{a}}\mathsf{Q}_{U}%
^{s}\right)  ^{\spadesuit}$ and the conjugated projection $\left(
\mathsf{Q}_{U}^{s}\right)  ^{\spadesuit}$ are both bounded on $L^{p}$ by the
Alpert square function estimates (\ref{square}). Finally we have%
\begin{align*}
& \mathbb{E}_{2^{\mathcal{G}\left[  S\right]  }}^{\mu}\left\Vert T\left(
\mathcal{A}_{\mathbf{a}}\mathsf{P}_{U}\right)  ^{\spadesuit}f\right\Vert
_{L^{p}\left(  \mathbb{R}^{n}\right)  }=\mathbb{E}_{2^{\mathcal{G}\left[
S\right]  }}^{\mu}\left\Vert \sum_{s=1}^{\infty}T\left(  \mathcal{A}%
_{\mathbf{a}}\mathsf{Q}_{U}^{s}\right)  ^{\spadesuit}f\right\Vert
_{L^{p}\left(  \mathbb{R}^{n}\right)  }\\
& \leq\sum_{s=1}^{\infty}\mathbb{E}_{2^{\mathcal{G}\left[  S\right]  }}^{\mu
}\left\Vert T_{S}\left(  \mathcal{A}_{\mathbf{a}}\mathsf{Q}_{U}^{s}\right)
^{\spadesuit}f\right\Vert _{L^{p}\left(  \mathbb{R}^{n}\right)  }\leq
\sum_{s=1}^{\infty}2^{-\varepsilon_{n,p}s}\left\Vert f\right\Vert
_{L^{p}\left(  U\right)  }\lesssim\left\Vert f\right\Vert _{L^{p}\left(
U\right)  }\ .
\end{align*}
This completes the proof of (\ref{prob ext''}), and hence that of Theorem
\ref{FEC}.

\section{Concluding remarks}

The two weight testing methods used in this paper might also be applicable to
the following open \emph{probabilistic} problems:

\begin{enumerate}
\item proving a probabilistic analogue of the Bochner-Riesz conjecture or even
the stronger local smoothing conjecture. In the context of the
(nonprobabilistic) extension conjecture, see Sogge \cite{Sog} for a proof that
local smoothing implies Bochner-Riesz, and Tao \cite{Tao1}\ for a proof that
Bochner-Riesz implies Fourier restriction,

\item replacing the sphere in Theorem \ref{FEC} with any smooth surface of
nonvanishing Gaussian curvature, and possibly with appropriate smooth surfaces
of finite type (and with altered indices $p$),

\item replacing the Fourier kernel $e^{-ix\cdot\xi}$ in Theorem \ref{FEC} with
a more general kernel $\Omega\left(  x,\xi\right)  $,

\item to multilinear probabilistic variants of the extension conjecture,

\item deciding the endpoint case $q=p^{\prime}\frac{n+1}{n-1}$ when
$2<p<\frac{2n}{n-1}$ in (\ref{prob ext}),

\item and finally to the much more challenging problem of boundedness of the
maximal spherical partial sum operator in a probabilistic sense.
\end{enumerate}

The main open problem is of course the full \emph{deterministic} Fourier
extension conjecture (\ref{extension}).

\end{document}